\theoremstyle{plain}
\newtheorem{thmintro}{Theorem}
\newtheorem*{corintro*}{Corollary}
\newtheorem{thmA}{Theorem}
\newtheorem{thmB}{Theorem}
\theoremstyle{plain}
\newtheorem{lem}{Lemma}[section]
\newtheorem{prop}[lem]{Proposition}
\newtheorem{thm}[lem]{Theorem}
\newtheorem{cor}[lem]{Corollary}
\theoremstyle{definition}
\newtheorem{defi}[lem]{Definition}
\newtheorem{exl}[lem]{Example}
\newtheorem{rem}[lem]{Remark}
\numberwithin{equation}{section}
\crefname{equation}{}{}
\newcommand{\R}{\mathbb{R}}
\renewcommand{\S}{\mathbb{S}}
\newcommand{\MVal}{\mathbf{MVal}}
\newcommand{\K}{\mathcal{K}}
\newcommand{\SCap}{\mathrm{C}^{\S}}
\newcommand{\MSO}{\mathrm{M}}
\newcommand{\M}{\mathcal{M}}
\newcommand{\T}{\mathrm{T}}
\newcommand{\Id}{\mathrm{Id}}
\newcommand{\J}{\mathsf{J}}
\newcommand{\e}{\bar{e}}
\newcommand{\abs}[1]{\lvert #1 \rvert}
\newcommand{\norm}[1]{\lVert #1 \rVert}
\newcommand{\D}{\mathcal{D}}
\newcommand{\A}{\mathcal{A}}
\newcommand{\SO}{\mathrm{SO}}
\newcommand{\TV}{\mathrm{TV}}
\DeclareMathOperator{\tr}{\mathrm{tr}}
\DeclareMathOperator{\Div}{\mathrm{div}}
\DeclareMathOperator{\supp}{supp}
\DeclareMathOperator{\diam}{diam}
\DeclareMathOperator{\cl}{cl}
\begin{document}

\thispagestyle{empty}	
\begin{center}
	\LARGE{\textbf{Fixed Points of Mean Section Operators}}
	
	\vspace{5mm}
	
	\Large{Leo Brauner and Oscar Ortega-Moreno}
\end{center}

\vspace{5mm}

\begin{abstract}
	We characterize rotation equivariant bounded linear operators from $C(\S^{n-1})$ to $C^2(\S^{n-1})$ by the mass distribution of the spherical Laplacian of their kernel function on small polar caps.
	Using this characterization, we show that every continuous, homogeneous, translation invariant, and rotation equivariant Minkowski valuation $\Phi$ that is weakly monotone maps the space of convex bodies with a $C^2$ support function into itself.
	As an application, we prove that if $\Phi$ is in addition even or a mean section operator, then Euclidean balls are its only fixed points in some $C^2$ neighborhood of the unit ball. Our approach unifies and extends previous results by Ivaki from 2017 and the second author together with Schuster from 2021.
\end{abstract}

\section{Introduction}
\label{sec:intro}

Sections and projections of convex bodies play an essential role in the field of geometric tomography.
By taking measurements in lower dimensions, one seeks to recover information about the geometry of the original object.
A common procedure to work with these measurements is to assemble them into a new convex body.
For instance, the projection body of a convex body $K$ is built from the volumes of shadows of $K$ cast from every possible direction. To give the exact definition, recall that a convex body $K$ (that is, a convex, compact subset) in $\R^n$, where throughout $n\geq 3$, can be defined by its support function $h(K,u)=\max\{\langle x,u\rangle: x\in K\}$, $u\in\S^{n-1}$. The \emph{projection body} $\Pi K$ of a convex body $K$ is defined by
\begin{equation*}
	h(\Pi K, u)
	= V_{n-1}(K | u^\perp),
	\qquad u\in\S^{n-1},
\end{equation*}
where $K | u^\perp$ denotes the orthogonal projection of $K$ onto the hyperplane $u^\perp$ and $V_{n-1}$ denotes the $(n-1)$-dimensional volume.
The geometric operator $\Pi$ was already introduced by Minkowski and has since become central to convex geometry (see, e.g., \cite{MR0362057,MR1119653,MR1031458,MR1035998,MR301637,MR1942402,MR4508339,MR2563216,MR1863023,MR1042624}).

A more recent instance of the procedure described above is the family of \emph{mean section operators} introduced by Goodey and Weil \cite{MR1171176,MR2854184}. 
For $0\leq j\leq n$, the $j$-th mean section body $\MSO_j K$ of a convex body $K$ is essentially the average of all $j$-dimensional sections of $K$ with respect to Minkowski addition. More precisely,
\begin{equation*}
	h(\MSO_j K, u)
	= \int_{\mathrm{AG}(n,j)} h(K\cap E , u) dE,
	\qquad u \in\S^{n-1},
\end{equation*}
where $\mathrm{AG}(n,j)$ denotes the affine Grassmannian (that is, the space of $j$-dimensional affine subspaces of $\R^n$) and integration is with respect to a suitably normalized, positive, rigid motion invariant measure.

The projection body and mean section operators belong to a rich class of geometric operators acting on the space $\K^n$ of convex bodies in $\R^n$. A \emph{Minkowski valuation} is a map $\Phi:\K^n\to\K^n$ satisfying
\begin{equation*}
	\Phi K + \Phi L
	= \Phi(K\cup L) + \Phi(K\cap L)
\end{equation*}
with respect to Minkowski addition whenever $K\cup L \in\K^n$. Scalar valued valuations have a long history in convex geometry (see, e.g., \cite{MR1709308,MR1837364,MR3820854,MR2776365,MR3194492,MR1608265,MR2680490,MR3716941}). Their systematic study goes back to Hadwiger's \cite{MR0102775} famous characterization of the intrinsic volumes $V_i$, $0\leq i\leq n$, (see \cref{sec:background}) as a basis for the space of continuous, rigid motion invariant scalar valuations.

The investigation of Minkowski valuations has originated from Schneider's~\cite{MR353147} research on Minkowski endomorphisms. However, it was the seminal work by Ludwig~\cite{MR1942402,MR2159706} which prompted further development. In \cite{MR1942402}, Ludwig identifies Minkowski's projection body map as the unique (up to a positive constant) continuous, translation invariant, affine contravariant Minkowski valuation, solving a problem posed by Lutwak. Following Ludwig's steps, contributions of several authors (e.g., \cite{MR2793024,MR3880207,MR3715395,MR2966660,MR2772547,MR2846354,MR2997003}) show that the convex cone of Minkowski valuations compatible with affine transformations is in many instances finitely generated. In contrast, a less restrictive condition such as rotation equivariance produces a significantly larger class of valuations, making their classification challenging.

Denote by $\MVal$ the space of all continuous, translation invariant Minkowski valuations \linebreak intertwining rotations and by $\MVal_i$ the subspace of Minkowksi valuations homogeneous of degree~$i$. A map $\Phi:\K^n\to\K^n$ is said to have \emph{degree} $i$ if $\Phi(\lambda K)=\lambda^i \Phi K$ for every $K\in\K^n$ and $\lambda\geq 0$. By a classical result of McMullen \cite{MR448239}, continuous, translation invariant, homogeneous valuations can only have integer degree $i\in\{0,\ldots,n\}$. 
In recent years, substantial progress (e.g., \cite{MR2238926,MR2327043,MR2668553,MR3432270,MR3854893}) to obtain a Hadwiger-type theorem for the space $\MVal$ has led to the following representation by Dorrek~\cite{MR3655954} involving the \emph{spherical convolution} of an integrable function and the \emph{area measures} $S_i(K,\cdot)$ of a convex body $K$ (see \cref{sec:background}): for every $\Phi_i\in\MVal_i$ of degree $1\leq i\leq n-1$, there exists a unique centered, $\SO(n-1)$ invariant function $f\in L^1(\S^{n-1})$ such that for every $K\in\K^n$,
\begin{equation}\tag{1}\label{eq:MVal_representation:intro}
	h(\Phi_i K, \cdot)
	= S_i(K,\cdot)\ast f. 
\end{equation}
A function on $\S^{n-1}$ is said to be \emph{centered} if it is orthogonal to all linear functions. 
We call the function $f$ in \cref{eq:MVal_representation:intro} the \emph{generating function} of $\Phi_i$. If $\Phi_iK=\{o\}$ for all $K\in\K^n$, we call $\Phi_i$ \emph{trivial}.

For $1\leq i\leq n-1$, the $i$-th projection body map $\Pi_i$ is defined by $h(\Pi_iK,u)=V_i(K| u^\perp)$, $u\in\S^{n-1}$. Note that $\Pi_{n-1}=\Pi$ is Minkowski's projection body map. Each operator $\Pi_i$ belongs to $\MVal_i$ and is generated by the support function of a line segment, as can be easily deduced from Cauchy's projection formulas (see, e.g., \cite[p.~408]{MR2251886}). The $j$-th mean section operator $\MSO_j$, up to a suitable translation, also belongs to $\MVal_i$, where $i=n-j+1$. However, unlike for the projection body map, the determination of their generating functions is non-trivial and involves the functions employed by Berg in his solution of the Christoffel problem \cite{MR1579295}. For each dimension $n\geq 2$, Berg~\cite{MR254789} constructed a function $g_n\in C^{\infty}(-1,1)$ with the property that
$h(K,\cdot) = S_1(K,\cdot) \ast \breve{g}_n+\langle s(K),\cdot\rangle$	 
for every convex body $K\in\K^n$, where $\breve{g}_n$ is the $\SO(n-1)$ invariant function on $\S^{n-1}$ associated to $g_n$ and $s(K)$ denotes the \emph{Steiner point} of $K$ (see \cref{sec:background}). Goodey and Weil \cite{MR1171176,MR3263512} showed that for $2\leq j\leq n$ and every $K\in\K^n$, 
\begin{equation}\tag{2}\label{eq:MSO_representation}
 	h(\MSO_j K,\cdot)
 	= S_{n-j+1}(K,\cdot)\ast\breve{g}_j + c_{n,j}V_{n-j}(K)\langle s(K),\cdot\rangle,
\end{equation}
where $c_{n,j}>0$ is some constant.

Fixed points of geometric operators are closely related to a range of open problems in convex geometry (see, e.g., \cite{MR84791,MR0362057,MR2251886}). For instance, \emph{Petty's conjecture} \cite{MR0362057} can be expressed in terms of fixed points of $\Pi^2$ up to affine transformations. This was first observed by Schneider \cite{MR880200} and later extended by Lutwak~\cite{MR1042624} to projection body maps of all degrees. The global classification of fixed points of $\Pi_i^2$ has only been settled in the polytopal case by Weil \cite{MR301637} and in the $1$-homogeneous case by Schneider~\cite{MR500538}. It is conjectured that the only smooth fixed points of $\Pi^2$ are ellipsoids. Locally around the unit ball, this was recently confirmed independently by Saraoglu and Zvavitch \cite{MR3571903} and Ivaki \cite{MR3757054}, motivated by the work of Fish, Nazarov, Ryabogin, and Zvavitch \cite{MR2739787}.

For degree $1< i<n-1$, Ivaki \cite{MR3639524} showed that in some $C^2$ neighborhood of the unit ball, the only fixed points of $\Pi_i^2$ are Euclidean balls. The second author and Schuster \cite{MR4316669,OM2023} have shown that this phenomenon also holds for the class of even \emph{$C^2_+$ regular} Minkowski valuations, that is, Minkowski valuations generated by the support function of an origin-symmetric convex body of revolution that has a $C^2$ boundary with positive Gauss curvature. A line segment is clearly not of this kind, so the results by Ivaki and by the second author and Schuster appear to be disconnected. In this paper, we bridge this gap with our first main result.

\begin{thmintro}\label{fixed_points_body_of_rev:intro}
	Let $1< i\leq n-1$ and $\Phi_i\in\MVal_i$ be generated by an origin-symmetric convex body of revolution. Then there exists a $C^2$ neighborhood of the unit ball where the only fixed points of $\Phi_i^2$ are Euclidean balls, unless $\Phi_i$ is a multiple of the projection body map, in which case ellipsoids are also fixed points.
\end{thmintro}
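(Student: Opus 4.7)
The plan is to recast the fixed point equation $\Phi_i^2 K = K$ as a local problem around the unit ball $B$ and to study it via the differential $D(\Phi_i^2)(B)$. Since $\Phi_i$ is rotation equivariant, $\Phi_i B$ is a ball, so after rescaling we may assume $\Phi_i B = B$. The first main theorem of this paper guarantees that $\Phi_i$ maps convex bodies with a $C^2$ support function to convex bodies with a $C^2$ support function; hence $\Phi_i^2$ is a well defined self-map on an open $C^2$-neighborhood of $B$ that may legitimately be linearized on $C^2(\S^{n-1})$.

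\textbf{Diagonalization via the Funk--Hecke theorem.}
For $h(K, \cdot) = 1 + \varphi$ with $\varphi \in C^2(\S^{n-1})$ small, the standard linearization of the $i$-th area measure at $B$ yields $dS_i(B,\cdot)\varphi = \binom{n-2}{i-1}\bigl((\Delta_{\S} + (n-1)\Id)\varphi\bigr)\,d\sigma$, where $\sigma$ denotes spherical Lebesgue measure. Combining this with the representation \cref{eq:MVal_representation:intro} gives
\[
D\Phi_i(B)\varphi \;=\; \binom{n-2}{i-1}\bigl((\Delta_{\S} + (n-1)\Id)\varphi\bigr) \ast f,
\]
where $f$ is the generating function of $\Phi_i$. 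By rotation equivariance and the Funk--Hecke theorem, this operator is diagonalised by spherical harmonics; on the degree-$k$ subspace its eigenvalue is
\[
\mu_k \;=\; \binom{n-2}{i-1}(1-k)(k+n-1)\,\lambda_k(f),
\]
where $\lambda_k(f)$ is the $k$-th Funk--Hecke multiplier of $f$. Consequently $D(\Phi_i^2)(B)$ acts by $\mu_k^2$ on degree-$k$ harmonics.

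\textbf{Spectral analysis, main obstacle, and conclusion.}
A fixed point of $\Phi_i^2$ lying $C^2$-close to $B$ must, at first order, concentrate in the $\mu_k^2 = 1$ eigenspaces. Several cases are immediate: homogeneity forces $\mu_0 = i$, so $\mu_0^2 = i^2 > 1$ for $i > 1$; translation invariance is encoded by $\mu_1 = 0$; and since $L$ being origin-symmetric renders $f$ an even function, $\lambda_k(f) = 0$ and hence $\mu_k = 0$ for every odd $k \geq 3$. The main obstacle is the even range $k \geq 2$: one has to establish the sharp estimate $|\mu_k| < 1$ for every even $k \geq 4$, and $|\mu_2| \leq 1$ with equality precisely when $L$ is a segment, i.e.\ when $\Phi_i$ is a scalar multiple of the projection body map $\Pi$. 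This is the technical heart of the argument; it is approached by writing $h_L = S_1(L,\cdot) \ast \breve g_n$ (using the Berg decomposition recalled in the introduction, with $s(L) = 0$ by symmetry), invoking the known spectral decay of the Berg function $\breve g_n$, and combining it with a sharp pointwise bound on $\lambda_2$ over zonal support functions. Outside the exceptional case, $D(\Phi_i^2)(B) - \Id$ is thus injective on the centered part of $C^2(\S^{n-1})$, and the inverse function theorem identifies $B$ as the unique fixed point of $\Phi_i^2$ in a $C^2$-neighborhood. When $\Phi_i = c\Pi$, the $1$-eigenspace equals the space of degree-$2$ harmonics, which parametrises the infinitesimal volume-preserving ellipsoidal deformations of $B$; a Lyapunov--Schmidt reduction, combined with the classical identity $\Pi^2 E \propto E$ for any ellipsoid $E$, then identifies the nearby fixed points as the finite-dimensional family of ellipsoids with $V_n(E) = V_n(B)$.
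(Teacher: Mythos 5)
Your overall strategy matches the paper's: invoke a linearization theorem (the paper cites Theorem~5.1 from~\cite{MR4316669}), check that $\Phi_i$ preserves $C^2$ support functions, and verify a sharp multiplier estimate $|a^n_k[\square_n f]|/a^n_0[\square_n f]<1/i$ for $k\geq 2$, with the $k=2$ equality case characterizing line segments. Two points are worth flagging.

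First, a small but real gap: you invoke the $C^2$-boundedness corollary without justification. That corollary in the paper requires \emph{weak monotonicity}, which holds here because a Minkowski valuation generated by a convex body of revolution can be written via mixed volumes $h(\Phi_i K,u)=V(K[i],L(u),B^n[n-i-1])$ and mixed volumes are monotone; you need to make this step, since the generating function being an even support function is not on its face the same as $\Phi_i$ being monotone.

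Second, and more substantively, the technical heart — the sharp $k=2$ multiplier estimate with equality precisely for segments — is where your proposed route diverges from the paper and is not convincing. You suggest Berg's decomposition $h_L=S_1(L,\cdot)\ast\breve g_n$ together with "known spectral decay" of $\breve g_n$. Decay estimates control asymptotics in $k$ but give no information about the exact value of a single multiplier at $k=2$, let alone an equality characterization. The paper instead works directly with the operators $\A_1 = \Id - t\,d/dt$ and $\A_2 = (1-t^2)\,d^2/dt^2 + \Id - t\,d/dt$, which are exactly the two eigenvalues of the Hessian $D^2$ of a zonal $1$-homogeneous function, shows that $\breve g$ is a zonal support function iff $\A_1\bar g\geq 0$ and $\A_2\bar g\geq 0$ (Lemma~\ref{support_fct_EV}), derives a two-term recurrence (\cref{eq:multipliers_EV_1}, \cref{eq:multipliers_EV_2}) expressing $a^n_2[\square_n h_L]$ plus or minus a multiple of $a^n_0[\square_n h_L]$ as the $0$-th multiplier of $\A_j\overline{h_L}$ in dimension $n+2$, and then characterizes the equality case by computing the kernels of $\A_1$ and $\A_2$ (segments and disks, respectively). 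Your sketch as written does not give a path to this conclusion. The final Lyapunov--Schmidt remarks about the exceptional case $\Phi_i=c\Pi$ go beyond what the theorem asserts (which only says ellipsoids are \emph{also} fixed points, not that they exhaust the local fixed point set); the stronger claim is true by Saraoglu--Zvavitch and Ivaki but is not part of this theorem's content.
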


The case when $i=1$ (that is, $\Phi_1\in\MVal_1$ is a Minkowski endomorphism) has been settled globally by Kiderlen~\cite{MR2238926}.
With \cref{fixed_points_body_of_rev:intro}, we unify the previous results on $C^2_+$ regular Minkowski valuations and projection body maps obtained in \cite{MR4316669} and \cite{MR3757054,MR3639524}, respectively. However, none of them (including \cref{fixed_points_body_of_rev:intro}) cover any local uniqueness of fixed points of mean section operators. This is because Berg's functions are neither even nor support functions. By further extending the techniques employed in the proof of \cref{fixed_points_body_of_rev:intro}, we obtain the following.

\begin{thmintro}\label{fixed_points_MSO:intro}
	For $2 \leq j < n$, there exists a $C^2$ neighborhood of the unit ball where the only fixed points of $\MSO_j^2$ are Euclidean balls.
\end{thmintro}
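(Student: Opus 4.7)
The plan is to follow the strategy underlying \cref{fixed_points_body_of_rev:intro}: linearize $\MSO_j^2$ at the unit ball $B$, analyze the resulting operator through the decomposition of $C^2(\S^{n-1})$ into spherical harmonics, and then invoke a local inversion argument to deduce rigidity. The new difficulty for mean section operators is that Berg's function $\breve{g}_j$ appearing in \cref{eq:MSO_representation} is neither even nor a support function, so the sharp spectral estimates underpinning the proof of \cref{fixed_points_body_of_rev:intro} do not apply directly.

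First, using the $C^2$ mapping theorem for weakly monotone Minkowski valuations stated in the abstract, $\MSO_j^2$ sends some $C^2$ neighborhood of $B$ into the space of convex bodies with $C^2$ support function, so that the fixed-point equation can be studied in a smooth setting. After translating $K$ so that $s(K)=o$, the linear term $c_{n,j}V_{n-j}(K)\langle s(K),\cdot\rangle$ in \cref{eq:MSO_representation} vanishes, and $\MSO_j^2 K = K$ (modulo translations) becomes an equation between $C^2$ functions on $\S^{n-1}$. Writing $h(K,\cdot) = 1 + \varepsilon \rho$ for centered $\rho \in C^2(\S^{n-1})$, a Taylor expansion yields
\begin{equation*}
	h(\MSO_j^2 K, \cdot) = c_j + \varepsilon\, L_j \rho + O(\varepsilon^2),
\end{equation*}
where $c_j > 0$ is a constant and, by the $\SO(n)$-equivariance of $\MSO_j$, the operator $L_j$ acts on each subspace of spherical harmonics of degree $k$ as multiplication by a scalar $\mu_k^{(j)}$. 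This scalar is expressible in terms of the Gegenbauer coefficients of $\breve{g}_j$ and the known multipliers of the derivative of the area measure map $K \mapsto S_{n-j+1}(K,\cdot)$ at $B$.

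The central step, and the main obstacle, is to verify that $\mu_k^{(j)} \neq 1$ for every $k \geq 2$. The case $k = 0$ is handled by the homogeneity of $\MSO_j^2$ of degree $(n-j+1)^2 \geq 4$, which produces a multiplier of magnitude different from $1$ and pins the radius to that of $B$; the case $k = 1$ corresponds exactly to the translations already factored out. Because $\breve{g}_j$ is not even, its Gegenbauer coefficients lack the sign pattern available in the $C^2_+$ regular case, so the comparison arguments from \cref{fixed_points_body_of_rev:intro} are unavailable. My plan is to exploit the integral recursion characterizing Berg's functions (see \cite{MR254789,MR1579295,MR3263512}) to obtain closed-form expressions for $\mu_k^{(j)}$ in terms of ratios of Gamma functions, and then to rule out $\mu_k^{(j)} = 1$ by a monotonicity-in-$k$ estimate. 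This is where I expect the bulk of the technical work to lie.

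Once the spectral gap is established, $L_j - \Id$ is invertible on the orthogonal complement of the constant and linear harmonics in the centered $C^2$ functions, and an inverse function theorem argument applied to $F(K) := h(\MSO_j^2 K,\cdot) - h(K,\cdot)$ on an appropriate slice transverse to translations produces a $C^2$ neighborhood of $B$ in which the only fixed points are translates of $B$, that is, Euclidean balls.
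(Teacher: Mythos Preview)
Your strategy is correct and matches the paper's route: linearize at $B^n$, reduce to a spectral condition on the multipliers of the generating function, and verify it via explicit Gamma function formulas. The paper, however, packages the regularity and inverse-function parts into the abstract \cref{w_mon=>fixed_points_Phi^2}: since $\MSO_j$ is monotone, its centered version $\tilde{\MSO}_jK=\MSO_j(K-s(K))$ is weakly monotone, so conditions \ref{fixed_points_Phi^2:diff} and \ref{fixed_points_Phi^2:surj} come for free from Corollaries~\ref{w_mon=>bounded_C^2} and \ref{w_mon=>decay}, and only the spectral condition \ref{fixed_points_Phi^2:inj} needs checking. Two points worth noting: first, the condition actually required is the uniform strict bound $\abs{a^n_k[\square_n \breve{g}_j]}/a^n_0[\square_n \breve{g}_j] < 1/i$ for all $k\geq 2$, not merely $\mu_k^{(j)}\neq 1$; pointwise nonvanishing of the eigenvalues is insufficient for $L_j-\Id$ to be boundedly invertible on $C^2(\S^{n-1})$, which is what the inverse function step needs. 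Second, the closed-form multipliers you intend to derive from the Berg recursion are already in the literature \cite{MR3787383,MR3769988}, and with them the verification of \ref{fixed_points_Phi^2:inj} collapses to a one-line monotonicity argument using that $\Gamma$ is strictly increasing on $[\tfrac{3}{2},\infty)$.
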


Throughout, this is to be understood as follows: there exists some $\varepsilon>0$ such that if $K\in\K^n$ has a $C^2$ support function satisfying $\norm{h(\alpha K+x,\cdot)-1}_{C^2(\S^{n-1})}<\varepsilon$ for some $\alpha>0$ and $x\in\R^n$, and if $\MSO_j^2K$ is a dilated and translated copy of $K$, then $K$ is a Euclidean ball.
We want to emphasize that we will obtain both Theorem~\ref{fixed_points_body_of_rev:intro} and \ref{fixed_points_MSO:intro} from a more general result (\cref{w_mon=>fixed_points_Phi^2}) that applies to all weakly monotone, homogeneous Minkowski valuations in the space $\MVal$. \linebreak A Minkowski valuation $\Phi:\K^n\to\K^n$ is called \emph{weakly monotone} if $\Phi K \subseteq \Phi L$ whenever $K\subseteq L$ and the Steiner points of $K$ and $L$ are at the origin. The proof of \cref{w_mon=>fixed_points_Phi^2} requires the convolution transform defined by its generating function to be a bounded operator from $C(\S^{n-1})$ to $C^2(\S^{n-1})$. The following theorem provides necessary and sufficient conditions for the boundedness of convolution transforms.

\begin{thmintro}\label{convol_trafo_C^2:intro}
	Let $f\in L^1(\S^{n-1})$ be $\SO(n-1)$ invariant. Then the convolution transform $\phi\mapsto \phi\ast f$ is a bounded linear operator from $C(\S^{n-1})$ to $C^2(\S^{n-1})$ if and only if $\square_nf$ is a signed measure and
	\begin{equation}\tag{3}\label{eq:convol_trafo_C^2:intro}
		\int_{(0,\frac{\pi}{2})} \frac{1}{r}~\abs{(\square_n f)(\{u\in\S^{n-1}:\langle\pm\e,u\rangle>\cos r\})} ~dr
		< \infty.
	\end{equation}
\end{thmintro}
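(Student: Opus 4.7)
The plan is to exploit the commutation of $\square_n$ with spherical convolution: since $\square_n$ is rotation-invariant, one has $\square_n(\phi \ast f) = \phi \ast \square_n f$ in the distributional sense, for every $\phi \in C(\S^{n-1})$. This reduces the question of $C^2$-regularity of $\phi \ast f$ to the question of $C^0$-boundedness of the convolution with the distribution $\square_n f$, which in turn depends on when the latter is a genuine signed measure with sufficient regularity near the poles.

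For the necessity direction, assume that $T\colon \phi \mapsto \phi \ast f$ is bounded from $C(\S^{n-1})$ to $C^2(\S^{n-1})$. Composing with $\square_n$ yields a bounded linear map $\phi \mapsto \phi \ast \square_n f$ from $C(\S^{n-1})$ into $C(\S^{n-1})$. Evaluating at the pole $\e$ gives a continuous linear functional on $C(\S^{n-1})$, so the Riesz representation theorem forces $\square_n f$ to extend to a finite signed Radon measure on $\S^{n-1}$. To derive the Dini-type condition (3), I would test the bounded map on continuous approximations $\phi_r$ of indicator functions of small polar caps of angular radius $r$ around $\pm\e$, then integrate the resulting cap-mass estimates against $dr/r$ via a dyadic decomposition; the bound on $\|\phi_r \ast \square_n f\|_\infty$ translates directly into the finiteness of the integral in \cref{eq:convol_trafo_C^2:intro}.

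For the sufficiency direction, assume $\square_n f$ is a signed measure satisfying (3). I would first show that $\phi \mapsto \phi \ast \square_n f$ is bounded $C(\S^{n-1}) \to C(\S^{n-1})$, with operator norm controlled by the integral in (3). Writing the convolution as an orbit integral using the $\SO(n-1)$-invariance of $f$, the cap-mass bound in (3) plays the role of a modulus of continuity: it controls the oscillation $|\phi \ast \square_n f(u) - \phi \ast \square_n f(v)|$ by splitting the integration into an annular dyadic decomposition near the poles, where on each annulus the oscillation of the shifted $\phi$ is bounded by its uniform continuity, and the mass contribution is bounded by (3). Combined with much easier estimates for the gradient of $\phi \ast f$, this yields $\phi \ast f \in C^2(\S^{n-1})$ with norm control in terms of $\|\phi\|_\infty$.

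The main obstacle will be the sufficiency argument, specifically showing that $C^2$-regularity (meaning continuity of all second partial derivatives in local charts) follows from the measure-valued Laplacian-type control given by $\square_n f$ together with (3). The $\SO(n-1)$-invariance of $f$ is essential: all non-smoothness of $f$ is concentrated at $\pm\e$, and by rotational equivariance the regularity of $\phi \ast f$ at a point $u$ is transported from regularity of $f$ at the poles relative to $u$. I expect the proof will need a careful passage between the radial cap-mass description of $\square_n f$ and pointwise Hessian estimates for $\phi \ast f$, likely via an approximation of $f$ by smooth $\SO(n-1)$-invariant kernels $f_k$ on which the operator norms are uniformly bounded by (3), followed by a limiting argument to conclude $\phi \ast f_k \to \phi \ast f$ in $C^2$.
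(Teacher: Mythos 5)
Your central reduction is where the argument breaks. Since $\square_n$ commutes with the convolution, boundedness of $\phi\mapsto\phi\ast\square_nf$ on $C(\S^{n-1})$ is equivalent (by the paper's \cref{convol_trafo_C^0}) to $\square_nf$ being a finite signed measure, and that alone does \emph{not} give $C^2$-regularity of $\phi\ast f$: Berg's function is a counterexample, since $\square_n\breve{g}_n=(\Id-\pi_1)\delta_{\e}$ is a finite signed measure, yet $\T_{\breve{g}_n}$ is bounded from $C(\S^{n-1})$ only into $C^1(\S^{n-1})$, not into $C^2(\S^{n-1})$ (Examples \ref{exl:Berg_fct_regularity} and \ref{exl:Berg_fct_convol_trafo}). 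The whole point of condition \cref{eq:convol_trafo_C^2:intro} is to upgrade ``$\square_nf$ (essentially the trace of the Hessian) is a measure'' to ``the full distributional Hessian $\nabla_{\S}^2f$ is a matrix-valued measure'' (\cref{zonal_reg_equi}), and it is the latter, not $C^0$-boundedness of convolution with $\square_nf$, that characterizes $C\to C^2$ boundedness (\cref{convol_trafo_C^2}). Your reduction discards exactly this information, and both directions of your sketch inherit the gap. In the necessity direction, testing on approximate indicators $\phi_r$ of polar caps only yields $\abs{(\square_nf)(\SCap_r(\pm\e))}\leq \norm{T}\,\norm{\phi_r}_\infty$, a bound \emph{uniform} in $r$; this does not ``translate directly'' into \cref{eq:convol_trafo_C^2:intro}, since $\int_0^{\pi/2}C\,dr/r=\infty$. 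One has to extract decay of the cap masses, which the paper does by testing the Hessian of $\phi\ast f$ against functions of the form $\phi(v)=P^{n-1}_2\bigl(\langle v,w\rangle/\sqrt{1-\langle v,u\rangle^2}\bigr)\psi(\langle v,u\rangle)$ (chosen so that $\J_u[\phi]=0$, isolating $(1-t^2)^{\frac{n-1}{2}}\bar{f}''$), and then passing to \cref{eq:convol_trafo_C^2:intro} via the identity $(\Delta_{\S}f)(\{\langle\e,u\rangle>t\})=-\omega_{n-1}(1-t^2)^{\frac{n-1}{2}}\bar{f}'(t)$ of \cref{zonal_Laplacian}.

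In the sufficiency direction, the step you flag as the main obstacle is indeed not fillable along the route you propose: knowing $\square_n(\phi\ast f)\in C(\S^{n-1})$ with a norm bound is the classical endpoint failure of elliptic regularity ($\Delta u$ continuous does not imply $u\in C^2$), and your dyadic oscillation argument would require a Dini modulus of continuity for $\phi\ast\square_nf$ that is uniform over $\norm{\phi}_\infty\leq1$; this fails in general (for $\square_nf$ with an atom the family is not even equicontinuous), and \cref{eq:convol_trafo_C^2:intro} is a condition on the mass of $\square_nf$ near the poles, not a modulus of continuity of the output. The approach that works, and is the paper's, is to show that \cref{eq:convol_trafo_C^2:intro} together with $\square_nf\in\M(\S^{n-1})$ is equivalent to $(1-t^2)^{\frac{n-1}{2}}\bar{f}''\in\M(-1,1)$, hence to $\nabla_{\S}^2f\in\M(\S^{n-1},\R^{n\times n})$ (\cref{zonal_reg_order2,zonal_reg_equi}), and then to establish the explicit representation \cref{eq:convol_trafo_C^2_2} of $\nabla_{\S}^2(\phi\ast f)(u)$ as an integral of $\phi$ against the measure $\J^u\bar{f}''$ plus an $L^1$-term in $\bar{f}'$; the bound $\norm{\phi\ast f}_{C^2}\leq C\norm{\phi}_\infty$ then falls out directly, with no smoothing approximation needed.
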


Here, $\e\in\S^{n-1}$ denotes the north pole of the unit sphere fixing the axis of revolution of $f$ and \linebreak $\square_nf=\frac{1}{n-1}\Delta_{\S}f + f$, where $\Delta_{\S}$ is the spherical Laplacian on $\S^{n-1}$. \cref{convol_trafo_C^2:intro} tells us that the regularity of the convolution transform defined by $f$ is determined by the mass distribution of $\square_nf$ on small polar caps.
It turns out that generating functions of weakly monotone Minkowski valuations exhibit precisely this behavior.

\begin{thmintro}\label{w_mon=>w_pos+density:intro}
	Let $1\leq i\leq n-1$ and $\Phi_i\in\MVal_i$ with generating function $f$. Then $f$ is locally Lipschitz outside the poles and $\square_nf$ is a signed measure on $\S^{n-1}$. Moreover, if $\Phi_i$ is in addition weakly monotone, then there exists $C> 0$ such that for all $r\geq 0$,
	\begin{equation}\tag{4}\label{eq:w_mon=>w_pos+density:intro}
		\abs{\square_nf}\big(\{u\in\S^{n-1}: \abs{\langle\e,u\rangle}>\cos r\}\big)
		\leq Cr^{i-1}.
	\end{equation}
\end{thmintro}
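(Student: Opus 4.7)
The plan hinges on the distributional identity
\[
(n-1)\,S_i(K,\cdot)\ast\square_n f \;=\; S_1(\Phi_i K,\cdot), \qquad K\in\K^n,
\]
obtained by applying $\square_n$ to both sides of \eqref{eq:MVal_representation:intro} and using that $\square_n$ commutes with spherical convolution together with the classical identity $(n-1)\square_n h(L,\cdot) = S_1(L,\cdot)$, valid distributionally for every $L \in \K^n$. The right-hand side is a non-negative finite Borel measure because $\Phi_i K$ is a convex body, and this single identity drives both parts of the theorem.

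For the first part of the statement, I would first show that $\square_n f$ is a signed measure by analyzing the identity at the level of zonal spherical harmonic coefficients. By Funk-Hecke, the coefficients of $\square_n f$ are determined by the ratio of those of $S_1(\Phi_i K,\cdot)$ and $S_i(K,\cdot)$ and are independent of the test body $K$; choosing $K$ to be a smooth $C^\infty_+$ zonal perturbation of the unit ball for which both sides can be computed precisely, one reads off that these coefficients are uniformly bounded, so $\square_n f$ has finite total variation on $\S^{n-1}$. The local Lipschitz regularity of $f$ off the poles then follows from the one-dimensional reduction $f(u) = F(\langle\e,u\rangle)$, which brings $\square_n f = \mu$ into the form
\[
\tfrac{d}{dt}\bigl[(1-t^2)^{(n-1)/2} F'(t)\bigr] \;=\; (n-1)(1-t^2)^{(n-3)/2}\bigl[\mu_t - F(t)\bigr],
\]
so that $(1-t^2)^{(n-1)/2}F'(t)$ is of bounded variation, and hence bounded, on each compact subinterval of $(-1,1)$.

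For the bound under weak monotonicity, I would test the identity above against an explicit one-parameter family of containment pairs. Take $L_r = B^n$ and $K_r = B^n\cap\{x:\langle\e,x\rangle\le\cos r\}$, translated so that each has Steiner point at the origin. Weak monotonicity yields $h(\Phi_i L_r,\cdot)\geq h(\Phi_i K_r,\cdot)$, and applying $\square_n$ produces
\[
\bigl(S_i(L_r,\cdot) - S_i(K_r,\cdot)\bigr) \ast \square_n f \;\geq\; 0.
\]
A direct mixed-volume calculation shows that $S_i(L_r,\cdot) - S_i(K_r,\cdot)$ is concentrated within an $O(r)$-neighborhood of $\e$ with total variation of order $r^{i-1}$, reflecting the $i$-dimensional Minkowski content of the boundary of a spherical cap of geodesic radius $r$. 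Combining this concentration with the zonal structure of $f$ and the bipolar symmetry from $\SO(n-1)$-invariance yields the claimed bound $|\square_n f|(\{|\langle\e,u\rangle|>\cos r\}) \leq Cr^{i-1}$.

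The main obstacle is converting the one-sided inequality above into a two-sided bound on the total variation $|\square_n f|$, rather than only on its positive part. This will likely require a more refined decomposition of $\square_n f$ arising from weak monotonicity, for example writing it as a Crofton-type non-negative measure plus a controlled smooth correction whose negative part can be shown to scale no faster than $r^{i-1}$ on polar caps; alternatively, it may suffice to complement the pair $(K_r,L_r)$ with a dual containment pair that furnishes the bound on the negative part of $\square_n f$ directly.
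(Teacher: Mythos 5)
Your route diverges from the paper's at every stage and has gaps that are not merely technical. For the first assertion (that $\square_n f$ is a signed measure), the inference from ``the multipliers of $\square_n f$ are uniformly bounded'' to ``$\square_n f$ has finite total variation'' is false: bounded Fourier coefficients do not characterize measures, and the dual of $C(\S^{n-1})$ is strictly smaller than the space of tempered zonal distributions with bounded multipliers. The paper does not attempt this; it simply cites Dorrek's theorem (after first invoking the Parapatits--Schuster reduction to the degree-one case) to conclude that $\square_n f \in \M(\S^{n-1})$, and then deduces the Lipschitz regularity from \cref{zonal_Laplacian}, which shows $(1-t^2)^{\frac{n-1}{2}}\bar f'(t)\in L^\infty(-1,1)$ directly from the identity \cref{eq:zonal_Laplacian} — a cleaner version of the ODE you write down.

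For the weak-monotonicity bound, the step ``applying $\square_n$ produces $(S_i(L_r,\cdot) - S_i(K_r,\cdot))\ast\square_n f\ge 0$'' does not hold: $\square_n$ is not positivity-preserving (by \cref{eq:box_eigenvalues} it has negative eigenvalues on every $\mathcal H^n_k$ with $k\ge 2$), and consequently $S_1$ is not monotone under set inclusion, so $h(\Phi_iL_r,\cdot)\ge h(\Phi_iK_r,\cdot)$ gives you nothing about $S_1(\Phi_iL_r,\cdot)-S_1(\Phi_iK_r,\cdot)$. Your claimed total-variation order $r^{i-1}$ for $S_i(L_r,\cdot)-S_i(K_r,\cdot)$ is also doubtful (already for $i=n-1$ this difference has mass of order $r^{n-1}$, not $r^{n-2}$), and you acknowledge the unresolved one-sided/two-sided obstacle. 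The paper avoids all of this by a two-step argument. First it proves that $\square_nf$ is \emph{weakly positive} by computing the first variation of $\xi_i(h_K)=h(\Phi_iK,\e)$ at the ball and invoking weak monotonicity (\cref{w_pos_iff}); writing $\square_nf=\nu+\langle x,\cdot\rangle$ with $\nu\ge 0$ reduces the problem to estimating $\nu(\SCap_r(\pm\e))$. Second, instead of a containment pair, it tests the convolution identity $S_1(\Phi_iK,\cdot)=S_i(K,\cdot)\ast\nu$ with the single body $K=D^{n-1}$, using \cref{caps_convol} (the purely measure-theoretic inequality $\mu(\SCap_r(u))\,\nu(\SCap_r(\pm\e))\le(\mu\ast\nu)(\SCap_{2r}(\pm u))$), Firey's upper bound $S_1(\Phi_iD^{n-1},\SCap_{2r}(\pm\e))\lesssim r^{n-2}$, and the explicit lower bound $S_i(D^{n-1},\SCap_r(\e))\gtrsim r^{n-1-i}$. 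Dividing gives $\nu(\SCap_r(\pm\e))\lesssim r^{i-1}$, and the linear term $\langle x,\cdot\rangle$ contributes only $O(r^{n-1})$, which is absorbed since $i\le n-1$. You would need both the weak-positivity step and the disk test body to close your argument.
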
 

As an immediate consequence of Theorems~\ref{convol_trafo_C^2:intro} and \ref{w_mon=>w_pos+density:intro}, we obtain the following. 

\begin{corintro*}\label{w_mon=>bounded_C^2:intro}
	Let $1<i\leq n-1$ and $\Phi_i\in\MVal_i$ be weakly monotone with generating function $f$. Then the convolution transform $\phi \mapsto \phi\ast f$ is a bounded linear operator from $C(\S^{n-1})$ to $C^2(\S^{n-1})$.
	In particular, $\Phi_i$ maps the space of convex bodies with a $C^2$ support function into itself.
\end{corintro*}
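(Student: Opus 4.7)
The plan is to combine \cref{convol_trafo_C^2:intro} and \cref{w_mon=>w_pos+density:intro} directly; indeed, the result is advertised as an immediate consequence of these two theorems, and once both are available the derivation is essentially bookkeeping.

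First, I would apply \cref{w_mon=>w_pos+density:intro} to $\Phi_i$ to obtain that $\square_n f$ is a signed measure on $\S^{n-1}$ and that there exists $C>0$ with
\begin{equation*}
\abs{\square_n f}\big(\{u\in\S^{n-1}: \abs{\langle\e,u\rangle}>\cos r\}\big)\leq Cr^{i-1}\qquad \text{for every } r\geq 0.
\end{equation*}
Each of the two polar caps appearing in \cref{eq:convol_trafo_C^2:intro}, namely $\{u:\langle\e,u\rangle>\cos r\}$ and $\{u:\langle-\e,u\rangle>\cos r\}$, is contained in the symmetric double cap $\{u:\abs{\langle\e,u\rangle}>\cos r\}$, so the elementary inequality $\abs{\mu(A)}\leq\abs{\mu}(A)$ valid for any signed measure $\mu$ transfers this estimate to
\begin{equation*}
\abs{(\square_n f)(\{u:\langle\pm\e,u\rangle>\cos r\})}\leq Cr^{i-1}.
\end{equation*}

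Next, I would verify the integrability condition \cref{eq:convol_trafo_C^2:intro}. The estimate above yields
\begin{equation*}
\int_{(0,\frac{\pi}{2})}\frac{1}{r}\abs{(\square_n f)(\{u:\langle\pm\e,u\rangle>\cos r\})}\,dr\;\leq\;C\int_0^{\pi/2}r^{i-2}\,dr,
\end{equation*}
which is finite precisely because $i>1$. This is the only analytic content of the argument, and it highlights exactly why the hypothesis $i>1$ cannot be dropped: for $i=1$ the integral diverges logarithmically, consistent with the fact that Minkowski endomorphisms such as $\Pi_1$ are generated by support functions of line segments and do not smooth $C(\S^{n-1})$ into $C^2(\S^{n-1})$. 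Applying \cref{convol_trafo_C^2:intro} then gives the boundedness of $\phi\mapsto\phi\ast f$ from $C(\S^{n-1})$ to $C^2(\S^{n-1})$.

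Finally, for the ``in particular'' part, I would invoke the standard fact that if $K\in\K^n$ has a $C^2$ support function, then the area measure $S_i(K,\cdot)$ is absolutely continuous with respect to spherical Lebesgue measure and admits a continuous density $s_i(K,\cdot)$. The representation \cref{eq:MVal_representation:intro} then reads $h(\Phi_iK,\cdot)=s_i(K,\cdot)\ast f$, and the boundedness of the convolution transform just established immediately places $h(\Phi_iK,\cdot)$ in $C^2(\S^{n-1})$. There is really no obstacle here; the argument is a direct combination of \cref{convol_trafo_C^2:intro} and \cref{w_mon=>w_pos+density:intro}, and the only point requiring a moment's thought is matching the growth rate $r^{i-1}$ from \cref{w_mon=>w_pos+density:intro} against the weight $1/r$ in \cref{eq:convol_trafo_C^2:intro} and observing that $i>1$ is exactly the borderline for integrability.
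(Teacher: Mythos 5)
Your proof is correct and follows exactly the route the paper intends: the paper itself gives no explicit argument and simply remarks that the corollary is "an immediate consequence" of Theorems~\ref{convol_trafo_C^2:intro} and~\ref{w_mon=>w_pos+density:intro}. You have filled in the bookkeeping correctly, including the key observation that $\int_0^{\pi/2} r^{i-2}\,dr$ converges exactly when $i>1$, and the ``in particular'' step via the continuous density of $S_i(K,\cdot)$ for $C^2$ bodies is also the standard argument.
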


To the best of our knowledge, apart from smooth Minkowski valuations, this was previously only known for the projection body operators: it was shown by Martinez-Maure \cite{MR1814896} that the cosine transform is a bounded linear operator from $C(\S^{n-1})$ to $C^2(\S^{n-1})$, which is an essential tool in the proof of Ivaki's \cite{MR3757054,MR3639524} fixed point results.

We want to remark that the continuity of $f$ proven in \cref{w_mon=>w_pos+density:intro} confirms a conjecture by Dorrek. Moreover, note that \cref{eq:w_mon=>w_pos+density:intro} relates the regularity of $f$ to the degree of homogeneity. It has been shown by Parapatits and Schuster \cite{MR2921168} that if a function generates a Minkowski valuation of a certain degree, then it also generates Minkowski valuations of all lower degrees. Using \cref{eq:w_mon=>w_pos+density:intro}, it can be shown that for each $1\leq i\leq n-1$, the Berg function $g_{n-i+1}$ generates a weakly monotone Minkowski valuation of degree $i$ but not higher. 


The article is organized as follows. In \cref{sec:background}, we collect the required background on convex geometry and analysis on the unit sphere. In \cref{sec:convol}, we investigate regularity of zonal measures and convolution transforms, proving \cref{convol_trafo_C^2:intro}. In \cref{sec:w-mon}, we show that weak monotonicity of Minkowski valuations implies additional regularity of the generating function, proving \cref{w_mon=>w_pos+density:intro}. Finally, in \cref{sec:fixed_points} we apply our results from the previous sections to the study of fixed points. There we prove Theorems~\ref{fixed_points_body_of_rev:intro} and \ref{fixed_points_MSO:intro} as well as a general result on even Minkowski valuations.

\pagebreak

\section{Background Material}
\label{sec:background}

In the following, we collect basic facts about convex bodies, mixed volumes and area measures. We also discuss differential geometry and the theory of distributions on the unit sphere. In the final part of this section, we gather the required material from harmonic analysis, including spherical harmonics and the convolution of measures. As general references for this section, we cite the monographs by Gardner \cite{MR2251886}, Schneider \cite{MR3155183}, Hörmander \cite{MR1065136}, Lee \cite{MR2954043,MR3887684}, and Groemer \cite{MR1412143}.

\paragraph{Convex geometry.}

The space $\K^n$ of convex bodies naturally carries an algebraic and topological structure. The so-called Minkowski operations, dilation and the Minkowski addition, are given by $\lambda K=\{\lambda x:x\in K\}$, $\lambda\geq 0$, and $K+L=\{x+y:x\in K,y \in L\}$. The Hausdorff metric can be defined as
\begin{equation*}
	d(K,L)
	= \max\{t\geq0: K\subseteq L+tB^n\text{ and }L\subseteq K+tB^n \},
\end{equation*}
where $B^n$ denotes the unit ball of $\R^n$.

As was pointed out before, every convex body $K\in\K^n$ is uniquely determined by its support function $h_K(x)=h(K,x)=\max\{\langle x,y\rangle:y\in K\}$, $x\in\R^n$, which is homogeneous of degree one and subadditive. Conversely, every function with these two properties is the support function of a unique body $K\in\K^n$. Associating a convex body with its support function is compatible with the structure of $\K^n$, that is, $h_{\lambda K+L}=\lambda h_K+h_L$ and $d(K,L)=\norm{h_K-h_L}_{\infty}$, where $\norm{\cdot}_\infty$ denotes the maximum norm on the unit sphere. Moreover, $K\subseteq L$ if and only if $h_K\leq h_L$. In addition, $h_{\vartheta K+x}(u)=h_K(\vartheta^{-1}u)+\langle x,u\rangle$ for every $\vartheta\in\SO(n)$ and $x\in\R^n$.

The Steiner formula expresses the volume of the parallel set of a convex body $K$ at distance $t\geq 0$ as a polynomial in $t$. To be precise,
\begin{equation}\label{eq:Steiner_formula}
	V_n(K+tB^n)
	= \sum_{i=0}^n t^{n-i}\kappa_{n-i}V_i(K),
\end{equation}
where $\kappa_i$ denotes the $i$-dimensional volume of $B^i$ and the coefficient $V_i(K)$ is called the $i$-th \emph{intrinsic volume} of $K$ for $0\leq i\leq n$. The intrinsic volumes are important quantities carrying geometric information on convex bodies. For instance, $V_n$ is the volume, $V_{n-1}$ the surface area, and $V_1$ the mean width.

The \emph{surface area measure} $S_{n-1}(K,\cdot)$ of a convex body is the positive measure on $\S^{n-1}$ defined as follows: the measure $S_{n-1}(K,A)$ of a measurable subset $A\subseteq \S^{n-1}$ is the $(n-1)$-dimensional Hausdorff measure of all boundary points of $K$ with outer unit normal in $A$.
Analogously to \cref{eq:Steiner_formula}, there is a Steiner-type formula for surface area measures:
\begin{equation*}
	S_{n-1}(K+tB^n,\cdot)
	= \sum_{i=0}^{n-1} t^{n-1-i}\tbinom{n-1}{i}S_i(K,\cdot),
\end{equation*}
where the measure $S_i(K,\cdot)$ is called the $i$-th \emph{area measure} of $K$ for $0\leq i\leq n-1$. Each of the area measures is \emph{centered}, meaning that they integrate all linear functions to zero. By a theorem of Alexandrov-Fenchel-Jessen (see, e.g., \cite[Section~8.1]{MR3155183}), if $K$ has non-empty interior, then each area measure $S_i(K,\cdot)$ determines $K$ up to translations.

The \emph{Steiner point} of a convex body $K$ is defined as $s(K)= \int_{\S^{n-1}} h(K,u)udu$. The Steiner point map $s:\K^n\to\R^n$ is the unique continuous, vector valued valuation intertwining rigid motions (see, e.g., \cite[p.~181]{MR3155183}).

\paragraph{Differential geometry.}

As an embedded submanifold of $\R^n$, the unit sphere $\S^{n-1}$ naturally inherits the structure of an $(n-1)$-dimensional Riemannian manifold. We identify the tangent space at each point $u\in\S^{n-1}$ with $u^\perp\subseteq\R^n$, which allows us to interpret tensor fields as maps from $\S^{n-1}$ into some Euclidean space.

Throughout, we will only work with tensor fields up to order two. That is, we define a vector field on $\S^{n-1}$ as a map $X:\S^{n-1}\to\R^n$ such that $X(u)\in u^\perp$ for every $u\in\S^{n-1}$, and a $2$-tensor field on $\S^{n-1}$ as a map $Y:\S^{n-1}\to\R^{n\times n}$ such that $Y(u)(u^\perp)\subseteq u^\perp$ and $Y(u)u=0$ for every $u\in\S^{n-1}$. For instance, let $Y(u)=P_{u^\perp}$ be the orthogonal projection onto $u^\perp$ for each $u\in\S^{n-1}$. Then the $2$-tensor field $Y$ acts as the identity on each tangent space.
The inner product of two $2$-tensors $Y_1$ and $Y_2$ on $u^\perp$ is given by $\langle Y_1,Y_2\rangle=\tr(Y_1Y_2)$.

We denote by $\nabla_{\S}$ the standard covariant derivative and by $\Div_{\S}$ the divergence operator on $\S^{n-1}$.
The operators $\nabla_{\S}$ and $\Div_{\S}$ are related via the \emph{spherical divergence theorem}, which states that
\begin{equation*}
	\int_{\S^{n-1}} \langle X(u),\nabla_{\S}\phi(u)\rangle du
	= - \int_{\S^{n-1}} \Div_\S X(u) \phi(u) du
\end{equation*}
and
\begin{equation*}
	\int_{\S^{n-1}} \langle Y(u),\nabla_{\S}X(u) \rangle du
	= - \int_{\S^{n-1}} \langle \Div_{\S} Y(u), X(u) \rangle du
\end{equation*}
for every smooth function $\phi$, smooth vector field $X$, and smooth $2$-tensor field $Y$ on $\S^{n-1}$.

The \emph{spherical gradient} $\nabla_{\S}\phi$ and \emph{spherical Hessian} $\nabla_{\S}^2\phi$ of a smooth function $\phi$ can be expressed in terms of derivatives along smooth curves. If $\gamma:I\to\S^{n-1}$ is a geodesic in $\S^{n-1}$, then
\begin{equation*}
	\frac{d}{ds}\bigg|_0 \phi(\gamma(s))
	= \langle \nabla_{\S}\phi(\gamma(0)), \gamma'(0) \rangle
	\qquad\text{and}\qquad
	\frac{d^2}{ds^2}\bigg|_0 \phi(\gamma(s))
	= \langle \nabla_{\S}^2\phi(\gamma(0)), \gamma'(0)\otimes\gamma'(0) \rangle.
\end{equation*}
For the first identity, $\gamma$ does not actually need to be a geodesic; for the second identity, the fact that $\gamma$ is a geodesic eliminates an additional first order term compared to a general smooth curve. All geodesics $\gamma$ in the unit sphere are of the form
\begin{equation}\label{eq:geodesic_representation}
	\gamma(s)
	= \cos(cs)u + \sin(cs)v
\end{equation}
for some orthogonal vectors $u,v\in\S^{n-1}$, where $c\geq0$ is the constant speed of $\gamma$.

\paragraph{Distributions.}

For an open interval $(a,b)\subseteq \R$, we denote by $\D(a,b)$ the space of \emph{test functions} (that is, compactly supported smooth functions) on $(a,b)$, endowed with the standard Fréchet topology. The elements of the continuous dual space $\D'(a,b)$ are called \emph{distributions} on $(a,b)$. Moreover, we denote the pairing of a test function $\psi\in\D(a,b)$ and a distribution $g\in\D'(a,b)$ by $\left< \psi,g\right>_{\D'}$. The derivative of $g$ and the product of a smooth function $\eta\in C^\infty(a,b)$ with $g$ are defined by
\begin{equation*}
	\left< \psi, g' \right>_{\D'}
	= -\left< \psi' , g \right>_{\D'}
	\qquad\text{and}\qquad
	\left< \psi , \eta\cdot g\right>_{\D'}
	= \left< \eta\cdot \psi, g\right>_{\D'},
\end{equation*}

The space $C^{-\infty}(\S^{n-1})$ of distributions on the unit sphere is defined as the continuous dual space of the space $C^{\infty}(\S^{n-1})$ of smooth functions, endowed with the standard Fréchet topology. We denote the pairing of a test function $\phi\in C^\infty(\S^{n-1})$ and a distribution $\mu\in C^{-\infty}(\S^{n-1})$ by $\left< \phi,\mu\right>_{C^{-\infty}}$. By virtue of the spherical divergence theorem, we define the spherical gradient and spherical Hessian of a distribution $\mu\in C^{-\infty}(\S^{n-1})$ by 
\begin{equation*}
	\langle X, \nabla_{\S}\mu\rangle_{C^{-\infty}}
	= - \langle \Div_{\S}X, \mu\rangle_{C^{-\infty}}
	\qquad\text{and}\qquad
	\langle Y, \nabla_{\S}^2\mu\rangle_{C^{-\infty}}
	= \langle \Div_{\S}^2Y, \mu\rangle_{C^{-\infty}},
\end{equation*}
respectively, where $X$ is an arbitrary smooth vector field and $Y$ is an arbitrary smooth $2$-tensor field on $\S^{n-1}$. 

The group $\SO(n)$ acts on the space $C^\infty(\S^{n-1})$ in a natural way: for $\vartheta\!\in\SO(n)$ and $\phi\!\in C^\infty(\S^{n-1})$, we define $\vartheta\phi$ by $(\vartheta\phi)(u)=\phi(\vartheta^{-1}u)$. By duality, the action of $\SO(n)$ extends to distributions: for $\mu\in C^{-\infty}(\S^{n-1})$, we define $\vartheta\mu$ by $\left< \phi, \vartheta\mu\right>_{C^{-\infty}}=\left< \vartheta^{-1}\phi,\mu\right>_{C^{-\infty}}$. A map $\T$ is said to be \emph{$\SO(n)$ equivariant} if it intertwines rotations, that is, $\T(\vartheta\mu)=\vartheta \T\mu$ for every $\mu$ in the domain of $\T$.

We may identify the space $\M(a,b)$ of finite signed measures on $(a,b)$ with a subspace of $\D'(a,b)$. By virtue of to the \emph{Riesz-Markov-Kakutani representation theorem}, a distribution is defined by a finite signed measure on $(a,b)$ if and only if it is continuous on $\D(a,b)$ with respect to uniform convergence.
Similarly, the space $\M(\S^{n-1})$ of signed measures on $\S^{n-1}$ corresponds to the subspace of distributions which are continuous on $C^\infty(\S^{n-1})$ with respect to uniform convergence.

\paragraph{Harmonic analysis.}
 
Denote by $\mathcal{H}^n_k$ the space of \emph{spherical harmonics} of dimension $n$ and degree $k\geq 0$, that is, the space of harmonic, $k$-homogeneous polynomials on $\R^n$, restricted to the unit sphere $\S^{n-1}$. The spherical Laplacian $\Delta_{\S}=\tr \nabla_{\S}^2=\Div_{\S}\nabla_{\S}$ is a second-order uniformly elliptic self-adjoint operator on $\S^{n-1}$ that intertwines rotations.
It turns out that the spaces $\mathcal{H}^n_k$ are precisely the eigenspaces of $\Delta_{\S}$. Consequently, $L^2(\S^{n-1})$ decomposes into a direct orthogonal sum of them.
Each space $\mathcal{H}^n_k$ is a finite dimensional and irreducible $\SO(n)$ invariant subspace of $L^2(\S^{n-1})$ and for every $Y_k\in\mathcal{H}^n_k$, we have that $\Delta_{\S} Y_k= -k(k+n-2) Y_k$. For the box operator $\square_n=\frac{1}{n-1}\Delta_{\S}+\Id$, this implies
\begin{equation}\label{eq:box_eigenvalues}
	\square_n Y_k
	= - \frac{(k-1)(k+n-1)}{n-1}Y_k,
	\qquad Y_k\in\mathcal{H}^n_k.
\end{equation}

Throughout, we use $\bar{e}$ to denote a fixed but arbitrarily chosen pole of $\S^{n-1}$ and write $\SO(n-1)$ for the subgroup of rotations in $\SO(n)$ fixing $\e$. Functions, measures, and distributions on $\S^{n-1}$ that are invariant under the action of $\SO(n-1)$ are called \emph{zonal}. Clearly the value of a zonal function at $u\in\S^{n-1}$ depends only on the value of $\langle\e,u\rangle$, so there is a natural correspondence between zonal functions on $\S^{n-1}$ and functions on $[-1,1]$. For a zonal function $f\in C(\S^{n-1})$, we define $\bar{f}\in C[-1,1]$ by $f(u)=\bar{f}(\langle\e,u\rangle)$ and for $g\in C[-1,1]$, we define $\breve{g}\in C(\S^{n-1})$ by $\breve{g}(u)=g(\langle\e,u\rangle)$.

By identifying the unit sphere $\S^{n-1}$ with the homogeneous space $\SO(n)/\SO(n-1)$, the natural convolution structure on $C^\infty(\SO(n))$ can be used to define a convolution structure on $C^\infty(\S^{n-1})$. For an extensive exposition of this construction, we refer the reader to the excellent article by Grinberg and Zhang \cite{MR1658156}.
The spherical convolution $\phi\ast\nu$ of a smooth function $\phi\in C^\infty(\S^{n-1})$ and a zonal distribution $\nu\in C^{-\infty}(\S^{n-1})$ is defined by 
\begin{equation*}
	(\phi\ast \nu)(\vartheta\e)
	= \langle \phi, \vartheta \nu \rangle_{C^{-\infty}}
	= \langle \vartheta^{-1}\phi , \nu \rangle_{C^{-\infty}},
	\qquad\vartheta\in\SO(n).
\end{equation*}
Note that this definition does not depend on the special choice of $\vartheta$ and that $\phi\ast\nu\in C^\infty(\S^{n-1})$.

The convolution transform $\T_\nu:f\mapsto f\ast\nu$ is a self-adjoint endomorphism of $C^\infty(\S^{n-1})$ intertwining rotations and thus extends by duality to an endomorphism of $C^{-\infty}(\S^{n-1})$ which also intertwines rotations. That is, for a distribution $\mu\in C^{-\infty}(\S^{n-1})$,
\begin{equation*}
	\langle \phi, \mu\ast\nu \rangle_{C^{-\infty}}
	= \langle \phi, \T_\nu \mu \rangle_{C^{-\infty}}
	= \langle \T_\nu \phi, \mu \rangle_{C^{-\infty}}
	= \langle \phi\ast\nu, \mu \rangle_{C^{-\infty}}.
\end{equation*}
This definition includes the convolution of signed measures. 
Moreover, the convolution product is Abelian on zonal distributions. In the special case when $\phi\in C(\S^{n-1})$ and $f\in L^1(\S^{n-1})$ is zonal, the convolution product can be expressed as
\begin{equation*}
	(\phi\ast f)(u)
	= \int_{\S^{n-1}} \phi(v)\bar{f}(\langle u,v\rangle) dv,
	\qquad u\in\S^{n-1}.
\end{equation*}

For each $k\geq 0$, the space of zonal spherical harmonics in $\mathcal{H}^n_k$ is one-dimensional and spanned by $\breve{P}^n_k$, where $P^n_k\in C[-1,1]$ denotes the Legendre polynomial of dimension $n\geq 3$ and degree $k\geq 0$.
The orthogonal projection $\pi_k:L^2(\S^{n-1})\to\mathcal{H}^n_k$ onto the space $\mathcal{H}^n_k$ turns out to be the convolution transform associated with $\breve{P}^n_k$, that is,
\begin{equation*}
 	\pi_k \phi
 	= \tfrac{\dim\mathcal{H}^n_k}{\omega_{n-1}}~\phi \ast \breve{P}^n_k,
 	\qquad \phi\in L^2(\S^{n-1}),
\end{equation*}
where $\omega_n$ is the surface area of $\S^{n-1}$.
By duality, $\pi_k$ extends to a map from $C^{-\infty}(\S^{n-1})$ onto $\mathcal{H}^n_k$. Moreover, the formal Fourier series $\sum_{k=0}^\infty\pi_k \mu$ of a distribution $\mu\in C^{-\infty}(\S^{n-1})$ converges to $\mu$ in the weak sense. If $\nu\in C^{-\infty}(\S^{n-1})$ is zonal, then
\begin{equation*}
	\nu
	= \sum_{k=0}^\infty \tfrac{\dim\mathcal{H}^n_k}{\omega_{n-1}} a^n_k[\nu] \breve{P}^n_k,
\end{equation*}
where $a^n_k[\nu] = \langle \breve{P}^n_k, \nu \rangle_{C^{-\infty}}$. 

Throughout this work, we repeatedly use spherical cylinder coordinates $u=t\e+\sqrt{1-t^2}v$ on $\S^{n-1}$. For $\phi\in C(\S^{n-1})$ and $g\in C[-1,1]$,
\begin{equation}\label{eq:cylinder_coords}
	\int_{\S^{n-1}} \phi(u)g(\langle\e,u\rangle) du
	= \int_{(-1,1)} \int_{\S^{n-1}\cap\e} \phi(t\e+\sqrt{1-t^2}v)dv g(t) (1-t^2)^{\frac{n-3}{2}} dt.
\end{equation}
For a signed measure $\nu\in\M(\S^{n-1})$ that carries no mass at the poles, we denote by $\bar{\nu}\in\M(-1,1)$ the unique finite signed measure on $(-1,1)$ such that
\begin{equation*}
	\int_{\S^{n-1}} \breve{g}(u)\nu(du)
	= \omega_{n-1}\int_{(-1,1)} g(t)(1-t^2)^{\frac{n-3}{2}}\bar{\nu}(dt),
	\qquad g\in C[-1,1].
\end{equation*}
By \cref{eq:cylinder_coords}, this naturally extends the notation $\bar{f}$ for $f\in C(\S^{n-1})$. From the above, the Fourier coefficient $a^n_k[\nu]$ can be computed as
\begin{equation*}\label{eq:muliplier_formula}
	a^n_k[\nu]
	= \omega_{n-1}\int_{(-1,1)} P^n_k(t)(1-t^2)^{\frac{n-3}{2}}\bar{\nu}(dt).
\end{equation*}
The \emph{Funk-Hecke Theorem} states that the spherical harmonic expansion of the convolution product of a signed measure $\mu$ and a zonal signed measure $\nu$ is given by
\begin{equation*}
	\T_\nu\mu
	= \mu\ast\nu
	= \sum_{k=0}^\infty a^n_k[\nu]\pi_k \mu.
\end{equation*}
Hence the convolution transform $\T_\nu$ acts as a multiple of the identity on each space $\mathcal{H}^n_k$ of spherical harmonics. The Fourier coefficients $a^n_k[\nu]$ are called the \emph{multipliers} of $\T_\nu$.

For the explicit computations of multipliers, the following identity relating Legendre polynomials of different dimensions and degrees is useful:
\begin{equation}\label{eq:Legendre_deriv}
	\frac{d}{dt}P^n_k(t)
	= \frac{k(k+n-2)}{n-1} P^{n+2}_{k-1}(t).
\end{equation}
The Legendre polynomials also satisfy the following second-order differential equation, which also determines them up to a constant factor:
\begin{equation}\label{eq:Legendre_ODE}
	(1-t^2)\frac{d^2}{dt^2}P^n_k(t) - (n-1)t\frac{d}{dt}P^n_k(t) + k(k+n-2)P^n_k(t)
	= 0.
\end{equation}

\section{Regularity of the Spherical Convolution}
\label{sec:convol}

\subsection{Zonal Measures}
\label{sec:zonal_reg} 

In this section, we investigate the regularity of zonal signed measures. We provide necessary and sufficient conditions to decide whether $\nabla_{\S}\mu$ and $\nabla_{\S}^2\mu$ are signed measures and provide explicit formulas for them. As one might expect, this can be expressed in terms of the corresponding measure $\bar{\mu}$ on $(-1,1)$. In the smooth case, we have the following. 

\begin{lem}[{\cite{MR4316669}}]\label{zonal_smooth_deriv}
	Let $f\in C^\infty(\S^{n-1})$ be zonal. Then for all $u,v\in\S^{n-1}$,
	\begin{align}
		\nabla_{\S}f^v(u)
		&= \bar{f}'(\langle u,v\rangle)P_{u^\perp}v, 
		\label{eq:zonal_smooth_deriv1} \\
		\nabla_{\S}^2f^v(u)
		&= \bar{f}''(\langle u,v\rangle)(P_{u^\perp}v\otimes P_{u^\perp}v) - \langle u,v\rangle\bar{f}'(\langle u,v\rangle)P_{u^\perp}.
		\label{eq:zonal_smooth_deriv3} 
	\end{align}
\end{lem}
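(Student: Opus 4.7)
The function $f^v$ is the zonal function with axis $v$, namely $f^v(u) = \bar f(\langle u, v \rangle)$, which coincides with $f$ when $v = \bar e$. The plan is to apply the geodesic characterizations of $\nabla_{\S}$ and $\nabla_{\S}^2$ recalled in the preliminaries: for a unit-speed geodesic $\gamma$ with $\gamma(0) = u$ and $\gamma'(0) = w \in u^\perp \cap \S^{n-1}$, one has $\langle \nabla_{\S} f^v(u), w \rangle = \frac{d}{ds}|_0 f^v(\gamma(s))$ and $\langle \nabla_{\S}^2 f^v(u), w \otimes w\rangle = \frac{d^2}{ds^2}|_0 f^v(\gamma(s))$. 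Since $2$-tensor fields on $\S^{n-1}$ are symmetric and determined by their values on the diagonal $w \otimes w$ by polarization, it suffices to verify both identities against such $w$.

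By \cref{eq:geodesic_representation}, we can parametrize $\gamma(s) = \cos(s) u + \sin(s) w$, so that
\[
f^v(\gamma(s)) = \bar f\bigl(\cos(s)\langle u,v\rangle + \sin(s)\langle w,v\rangle\bigr).
\]
A single application of the chain rule at $s = 0$ gives
\[
\tfrac{d}{ds}\big|_0 f^v(\gamma(s)) = \bar f'(\langle u,v\rangle)\langle w,v\rangle = \bar f'(\langle u,v\rangle)\langle P_{u^\perp}v, w\rangle,
\]
where the last equality uses $w \in u^\perp$. Since this holds for every unit $w \in u^\perp$ and the tangent vector $\nabla_{\S} f^v(u)$ lies in $u^\perp$, identity \cref{eq:zonal_smooth_deriv1} follows.

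Differentiating twice and evaluating at $s = 0$, using $\frac{d}{ds}|_0 \cos(s) = 0$, $\frac{d}{ds}|_0 \sin(s) = 1$, and $\frac{d^2}{ds^2}|_0 \cos(s) = -1$, one obtains
\[
\tfrac{d^2}{ds^2}\big|_0 f^v(\gamma(s)) = \bar f''(\langle u,v\rangle)\langle w,v\rangle^2 - \bar f'(\langle u,v\rangle)\langle u,v\rangle.
\]
Noting that $\langle P_{u^\perp}v, w\rangle^2 = \langle (P_{u^\perp}v) \otimes (P_{u^\perp}v), w \otimes w\rangle$ and $\langle P_{u^\perp}, w \otimes w\rangle = \tr(P_{u^\perp} ww^T) = |w|^2 = 1$, the right-hand side equals the pairing of the tensor on the right of \cref{eq:zonal_smooth_deriv3} with $w \otimes w$, which yields the second identity. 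There is no significant obstacle; the computation is just the chain rule on the sphere, and the only thing to be careful about is that $\gamma$ must be a geodesic (not a general smooth curve) so that no additional first-order term appears in the second derivative formula, and that tangent components of $v$ at $u$ are given precisely by $P_{u^\perp}v$.
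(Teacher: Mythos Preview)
Your proof is correct and follows the natural route via the geodesic characterization from the preliminaries. The paper itself does not prove this lemma; it merely quotes it from \cite{MR4316669}, so there is no in-paper argument to compare against. One small imprecision: not all $2$-tensor fields are symmetric, but here both $\nabla_{\S}^2 f^v(u)$ (as the Hessian of a smooth function) and the right-hand side of \cref{eq:zonal_smooth_deriv3} are manifestly symmetric, which is what justifies the polarization step.
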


Throughout, $P_{u^\perp}$ denotes the orthogonal projection onto $u^\perp$, and $f^v$ denotes the rotated copy of $f$ with axis of revolution $v\in\S^{n-1}$, that is, $f^v=\vartheta f$ where $\vartheta\in\SO(n)$ is such that $\vartheta\e=v$.
Moreover, for every $v\in\S^{n-1}$, we define two operators $\J^v:C[-1,1]\to C(\S^{n-1})$ and $\J_v:C(\S^{n-1})\to C[-1,1]$ by
\begin{equation*}
	\J^v[\psi](u)
	= \psi(\langle u,v\rangle)
	\qquad\text{and}\qquad
	\J_v[\phi](t)
	= (1-t^2)^{\frac{n-3}{2}}\int_{\S^{n-1}\cap v^\perp} \phi(tv+\sqrt{1-t^2}w) dw.
\end{equation*}
By a change to spherical cylinder coordinates (see \cref{eq:cylinder_coords}), we obtain
\begin{equation*}
	\int_{\S^{n-1}} \phi(u)\J^v[\psi](u)du
	= \int_{[-1,1]} \J_v[\phi](t)\psi(t)dt,
\end{equation*}
which shows that $\J_v$ and $\J^v$ are adjoint to each other. Hence, by continuity and duality, both operators naturally extend to signed measures. With these notations in place, we prove the following dual version of \cref{zonal_smooth_deriv}.

\begin{lem}\label{tensor_field_deriv}
	Let $v\in\S^{n-1}$, let $X$ be a smooth vector field on $\S^{n-1}$, and $Y$ be a smooth $2$-tensor field on $\S^{n-1}$. Then for all $t\in (-1,1)$,
	\begin{align}
		\J_v[\Div_\S X](t)
		&= \frac{d}{dt}\J_v[\langle X, v\rangle](t), \label{eq:tensor_field_deriv1}\\
		\J_v[\Div_\S^2 Y](t)
		&= \frac{d^2}{dt^2}\J_v[\langle Y,v\otimes v\rangle ](t) + \frac{d}{dt}\left(t\J_v[\tr Y](t)\right).\label{eq:tensor_field_deriv2}
	\end{align}
\end{lem}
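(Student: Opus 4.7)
The plan is to prove both identities by testing against an arbitrary test function $\psi\in\D(-1,1)$, transferring the integral to the sphere via the adjoint relation between $\J_v$ and $\J^v$, applying the spherical divergence theorem, and then invoking \cref{zonal_smooth_deriv} to compute $\nabla_{\S}\J^v[\psi]$ and $\nabla_{\S}^2\J^v[\psi]$ explicitly. Since for smooth $X$ and $Y$ both sides of \eqref{eq:tensor_field_deriv1} and \eqref{eq:tensor_field_deriv2} are continuous functions of $t\in(-1,1)$, a distributional identity will yield the pointwise one.

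For \eqref{eq:tensor_field_deriv1}, I start from
\begin{equation*}
	\int_{-1}^{1} \J_v[\Div_\S X](t)\,\psi(t)\,dt
	= \int_{\S^{n-1}} \Div_\S X(u)\,\J^v[\psi](u)\,du
	= -\int_{\S^{n-1}} \langle X(u),\nabla_\S \J^v[\psi](u)\rangle\,du.
\end{equation*}
By \cref{zonal_smooth_deriv} applied to the zonal (with axis $v$) function $\J^v[\psi]$ one has $\nabla_\S \J^v[\psi](u)=\psi'(\langle u,v\rangle)P_{u^\perp}v$, and since $X(u)\in u^\perp$ we get $\langle X(u), P_{u^\perp}v\rangle =\langle X(u),v\rangle$. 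Returning to $(-1,1)$ via $\J_v$ and integrating by parts in $t$ (permissible since $\psi\in\D(-1,1)$) gives the claim.

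For \eqref{eq:tensor_field_deriv2} I proceed analogously, now using the spherical divergence theorem twice to convert $\Div_\S^2 Y$ into a pairing with $\nabla_\S^2 \J^v[\psi]$. By \cref{zonal_smooth_deriv},
\begin{equation*}
	\nabla_\S^2 \J^v[\psi](u)
	= \psi''(\langle u,v\rangle)\,(P_{u^\perp}v\otimes P_{u^\perp}v) - \langle u,v\rangle\,\psi'(\langle u,v\rangle)\,P_{u^\perp}.
\end{equation*}
The main technical step is to simplify the pairings. Using that $Y(u)u=0$ and $Y(u)(u^\perp)\subseteq u^\perp$, one checks $\langle Y(u),P_{u^\perp}v\otimes P_{u^\perp}v\rangle = \langle Y(u)v,v\rangle = \langle Y(u),v\otimes v\rangle$ and $\langle Y(u),P_{u^\perp}\rangle = \tr Y(u)$. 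Transferring back to $(-1,1)$ via $\J_v$, the factor $\langle u,v\rangle$ becomes $t$ (since $\langle tv+\sqrt{1-t^2}w,v\rangle = t$), and integration by parts yields the second derivative and the derivative of $t\,\J_v[\tr Y]$.

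The main obstacle I expect is making sure the two simplifications of the tensor pairings are done correctly with the paper's conventions $\langle Y_1,Y_2\rangle = \tr(Y_1Y_2)$ and $\langle Y,a\otimes b\rangle = \langle Ya,b\rangle$, together with the constraints $Y(u)u=0$ and $Y(u)(u^\perp)\subseteq u^\perp$. Once these reductions are established, the rest amounts to routine integration by parts on the sphere and on the interval, and the conclusion follows from the density of $\D(-1,1)$ in the space of test functions, which forces pointwise equality of the two continuous functions of $t\in(-1,1)$.
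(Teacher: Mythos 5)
Your proposal is correct and follows essentially the same route as the paper: pair with a test function $\psi\in\D(-1,1)$, apply the spherical divergence theorem together with \cref{zonal_smooth_deriv} to the zonal function $\J^v[\psi]$, pass to spherical cylinder coordinates, and integrate by parts in $t$. The tensor-pairing simplifications you spell out (using $X(u)\in u^\perp$, $Y(u)u=0$, and $Y(u)(u^\perp)\subseteq u^\perp$) are exactly the ones the paper uses implicitly, so there is no gap.
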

\begin{proof}
	Let $\psi\in\D(-1,1)$ be an arbitrary test function. Due to the spherical divergence theorem and \cref{eq:zonal_smooth_deriv1},
	\begin{equation*}
		\int_{\S^{n-1}} \Div_\S X(u) \psi(\langle u,v\rangle) du
		= - \int_{\S^{n-1}} \langle X(u), v \rangle \psi'(\langle u,v\rangle) du.
	\end{equation*}
	We transform both integrals to spherical cylinder coordinates.
	For the left hand side, we have
	\begin{equation*}
		\int_{\S^{n-1}} \Div_\S X(u) \psi(\langle u,v\rangle) du
		= \int_{(-1,1)} \J_v[\Div_\S X](t) \psi(t) dt
	\end{equation*}
	and for the right hand side,
	\begin{equation*}
		- \int_{\S^{n-1}} \langle X(u), v \rangle \psi'(\langle u,v\rangle) du
		= - \int_{(-1,1)} \J_v[\langle X, v\rangle]\psi'(t) dt
		= \int_{(-1,1)} \frac{d}{dt}\J_v[\langle X,v\rangle](t) \psi(t)dt,
	\end{equation*}
	where the final equality is obtained from integration by parts. This yields \cref{eq:tensor_field_deriv1}.
	
	For the second part of the lemma, let $\psi\in\D(-1,1)$ be an arbitrary test function. Due to the spherical divergence theorem for $2$-tensor fields and \cref{eq:zonal_smooth_deriv3},
	\begin{equation*}
		\int_{\S^{n-1}} \Div_\S^2 Y(u) \psi(\langle u,v\rangle)du
		= \int_{\S^{n-1}} \langle Y(u),v\otimes v\rangle \psi''(\langle u,v\rangle)-  \tr Y(u)\langle u,v\rangle \psi'(\langle u,v\rangle)du.
	\end{equation*}
	We transform both integrals to spherical cylinder coordinates. For the left hand side, we have
	\begin{equation*}
		\int_{\S^{n-1}} \Div_\S^2 Y(u) \psi(\langle u,v\rangle)du
		= \int_{(-1,1)} \J_v[\Div_\S^2 Y](t)\psi(t) dt,
	\end{equation*}
	and for the right hand side,
	\begin{align*}
		&\int_{\S^{n-1}} \langle Y(u),v\otimes v\rangle \psi''(\langle u,v\rangle)-  \tr Y(u)\langle u,v\rangle \psi'(\langle u,v\rangle)du \\
		&\qquad= \int_{(-1,1)} \J_v[\langle Y,v\otimes v\rangle](t)\psi''(t) - \J_v[\tr Y](t)t\psi'(t) dt \\
		&\qquad= \int_{(-1,1)} \left(\frac{d^2}{dt^2}\J_v[\langle Y,v\otimes v\rangle ](t) + \frac{d}{dt}\left(t\J_v[\tr Y](t)\right)\right) \psi(t) dt,
	\end{align*}
	where the final equality is obtained from integration by parts. This yields \cref{eq:tensor_field_deriv2}.
\end{proof}

Throughout Sections~\ref{sec:convol} and \ref{sec:w-mon}, we repeatedly apply the following two technical lemmas. Their proofs are given in \cref{sec:omitted}.

\begin{lem}\label{beta_int_by_parts}
	Let $\beta>0$ and $g\in\D'(-1,1)$ such that $(1-t^2)^{\frac{\beta}{2}}g'(t)\in\M(-1,1)$. Then $g$ is a locally integrable function and $(1-t^2)^{\frac{\beta-2}{2}}g(t)\in L^1(-1,1)$.	
	Moreover, whenever $\psi\in C^1(-1,1)$ is such that both $(1-t^2)^{-\frac{\beta-2}{2}}\psi'(t)$ and $(1-t^2)^{-\frac{\beta}{2}}\psi(t)$ are bounded on $(-1,1)$, then
	\begin{equation}\label{eq:beta_int_by_parts}
		\int_{(-1,1)} \psi(t)g'(dt)
		= - \int_{(-1,1)} \psi'(t)g(t)dt.
	\end{equation}
\end{lem}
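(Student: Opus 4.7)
The first step is to promote $g$ from a distribution to a function. Since $(1-t^2)^{\beta/2}$ is smooth and bounded away from zero on every compact subset of $(-1,1)$, the hypothesis that $\mu := (1-t^2)^{\beta/2} g'$ lies in $\M(-1,1)$ implies that $g'$ itself is a locally finite signed measure, explicitly $g'(dt) = (1-t^2)^{-\beta/2}\mu(dt)$. A distribution whose derivative is a locally finite signed measure is (a.e.\ equal to) a function of locally bounded variation, and I would fix the representative
\begin{equation*}
    g(t) = g(0) + \int_{(0,t]} (1-s^2)^{-\beta/2}\mu(ds) \qquad (t>0),
\end{equation*}
with the analogous expression for $t<0$. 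In particular $g \in L^1_{\mathrm{loc}}(-1,1)$.

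Next, to establish $(1-t^2)^{(\beta-2)/2}g(t)\in L^1(-1,1)$, split the integral at $t=0$; on $(0,1)$, the triangle inequality combined with Fubini's theorem reduces matters to bounding the kernel
\begin{equation*}
    K(s) := (1-s^2)^{-\beta/2}\int_s^1 (1-t^2)^{(\beta-2)/2}\, dt, \qquad s\in(0,1).
\end{equation*}
The substitution $u=1-t$ shows that the inner integral is of order $(1-s)^{\beta/2}$ as $s\to 1^-$, which cancels the singularity of $(1-s^2)^{-\beta/2}$ exactly; hence $K$ is bounded on $(0,1)$, and the iterated integral is controlled by $|\mu|((0,1))<\infty$. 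A symmetric argument handles $(-1,0)$, and the residual contribution involving $g(0)$ is finite because $\beta>0$ ensures $\int_{-1}^1 (1-t^2)^{(\beta-2)/2}\,dt<\infty$.

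For the integration by parts formula, I would proceed by approximation. Choose cutoffs $\eta_\varepsilon\in C^1_c(-1,1)$ with $\eta_\varepsilon\equiv 1$ on $[-1+\varepsilon, 1-\varepsilon]$, $\eta_\varepsilon\equiv 0$ outside $(-1+\varepsilon/2, 1-\varepsilon/2)$, and $|\eta_\varepsilon'|\leq C/\varepsilon$. Since $\psi\eta_\varepsilon \in C^1_c(-1,1)$, standard integration by parts for BV functions yields
\begin{equation*}
    \int_{(-1,1)} \psi\eta_\varepsilon\, g'(dt) = -\int_{(-1,1)} \psi'\eta_\varepsilon\, g\, dt - \int_{(-1,1)} \psi\eta_\varepsilon'\, g\, dt.
\end{equation*}
The left-hand side tends to $\int \psi\, g'(dt)$ by dominated convergence, using that $\psi(t)(1-t^2)^{-\beta/2}$ is bounded and $\mu$ is finite; the first term on the right tends to $-\int \psi'\, g\, dt$ by dominated convergence, using the hypothesis on $\psi'$ and the $L^1$ bound from the previous step.

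The main obstacle is showing the boundary correction $\int\psi\eta_\varepsilon'\, g\, dt$ vanishes. On the support $A_\varepsilon$ of $\eta_\varepsilon'$, the hypothesis yields $|\psi|\leq M_1(1-t^2)^{\beta/2}\leq C\varepsilon^{\beta/2}$, the cutoff yields $|\eta_\varepsilon'|\leq C\varepsilon^{-1}$, and the elementary identity $|g(t)| = (1-t^2)^{-(\beta-2)/2}\cdot(1-t^2)^{(\beta-2)/2}|g(t)|$ produces a factor of order $\varepsilon^{-(\beta-2)/2}$ on $A_\varepsilon$. The crucial point, which is built into the exponents of the hypothesis, is that these three rates cancel exactly:
\begin{equation*}
    \varepsilon^{\beta/2}\cdot\varepsilon^{-1}\cdot\varepsilon^{-(\beta-2)/2} = \varepsilon^0 = 1.
\end{equation*}
After this cancellation, the boundary term is dominated by $C\int_{A_\varepsilon}(1-t^2)^{(\beta-2)/2}|g(t)|\, dt$, which tends to $0$ as $\varepsilon\to 0$ by dominated convergence, since $|A_\varepsilon|\to 0$ and the integrand lies in $L^1$ by the second step.
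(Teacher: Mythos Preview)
Your argument is correct and takes a genuinely different route from the paper's. The paper first applies the Jordan decomposition to reduce to the case where $(1-t^2)^{\beta/2}g'$ is a \emph{positive} measure, so that $g$ is monotone with $g(0)=0$; the $L^1$ bound then falls out of a single Lebesgue--Stieltjes integration by parts on $(-a,a]$ together with monotone convergence, and the identity \eqref{eq:beta_int_by_parts} follows once one checks explicitly that the boundary term $g(\pm a)(1-a^2)^{\beta/2}\to 0$, again using monotonicity. Your approach avoids the reduction to the monotone case altogether: the Fubini/kernel estimate handles the $L^1$ bound directly for the signed measure $\mu$, and the smooth cutoff argument replaces the explicit boundary limit by dominated convergence against the $L^1$ function $(1-t^2)^{(\beta-2)/2}|g|$ established in the previous step. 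Your method is slightly more robust in that it never singles out a sign, while the paper's method yields as a byproduct the pointwise decay $g(a)(1-a^2)^{\beta/2}\to 0$ (not needed elsewhere, but a sharper statement). Both arguments hinge on the same exponent bookkeeping---your observation that $\beta/2-1-(\beta-2)/2=0$ is exactly the cancellation the paper exploits when matching $\psi$ against the boundary term.
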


\begin{lem}\label{polar_C^k_estimate}
	Let $v\in\S^{n-1}$, $w\in v^\perp$, and $\phi\in C^\infty(\S^{n-1}\backslash\{\pm v\})$. Then for all $k\geq 0$ and $\alpha, \beta\geq0$, there exists a constant $C_{n,k,\alpha,\beta}> 0$ such that for all $t\in (-1,1)$,
	\begin{equation}\label{eq:polar_C^k_estimate}
		\bigg| \frac{d^k}{dt^k}\J_v[\langle\cdot,w\rangle^\alpha(1-\langle\cdot,v\rangle^2)^{\frac{\beta}{2}}\phi](t) \bigg|
		\leq C_{n,k,\alpha,\beta} \abs{w}^\alpha\norm{\phi}_{C^k(\S^{n-1}\backslash\{\pm v\})} (1-t^2)^{\frac{n-3+\alpha+\beta}{2}-k}.
	\end{equation}
\end{lem}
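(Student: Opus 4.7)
The plan is to reduce the estimate to a direct computation in geodesic coordinates after stripping off the explicit powers of $1-t^2$ from $\J_v$. First, I would exploit the orthogonality $w\perp v$: for a point $u=tv+\sqrt{1-t^2}x$ with $x\in\S^{n-1}\cap v^\perp$ we have $\langle u,w\rangle=\sqrt{1-t^2}\langle x,w\rangle$ and $1-\langle u,v\rangle^2=1-t^2$. Substituting into the definition of $\J_v$ gives
\[
\J_v[\langle\cdot,w\rangle^\alpha(1-\langle\cdot,v\rangle^2)^{\beta/2}\phi](t) = (1-t^2)^\gamma F(t),\qquad \gamma:=\tfrac{n-3+\alpha+\beta}{2},
\]
where
\[
F(t)=\int_{\S^{n-1}\cap v^\perp}\langle x,w\rangle^\alpha\phi(tv+\sqrt{1-t^2}x)\,dx.
\]
Since $|\langle x,w\rangle|\leq|w|$, the case $k=0$ follows immediately.

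Next, I would reparametrize by $\theta=\arccos t\in(0,\pi)$ and set $G(\theta)=F(\cos\theta)$. Because the curves $\theta\mapsto\cos\theta\cdot v+\sin\theta\cdot x$ are unit-speed geodesics on $\S^{n-1}$ that avoid $\pm v$ for $\theta\in(0,\pi)$, differentiating under the integral sign gives
\[
|\partial_\theta^jG(\theta)|\leq C_{n,j}|w|^\alpha\norm{\phi}_{C^j(\S^{n-1}\setminus\{\pm v\})}
\]
uniformly in $\theta\in(0,\pi)$, since $\partial_\theta^j$ applied to $\phi\circ\gamma$ along a unit-speed geodesic $\gamma$ is controlled by the $C^j$-seminorm of $\phi$ on the image of $\gamma$.

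The third step is to apply the chain rule $\frac{d}{dt}=-\frac{1}{\sin\theta}\partial_\theta$ iteratively. A short induction on $m$ shows that
\[
\frac{d^mF}{dt^m}=\sum_{j=1}^m\frac{q_{m,j}(\cos\theta)}{\sin^{2m-j}\theta}\partial_\theta^jG(\theta)
\]
for some polynomials $q_{m,j}$ that are bounded on $[-1,1]$. The worst singularity comes from the $j=1$ term, giving
\[
|F^{(m)}(t)|\leq C_{n,m}(1-t^2)^{(1-2m)/2}|w|^\alpha\norm{\phi}_{C^m(\S^{n-1}\setminus\{\pm v\})},\qquad m\geq 1.
\]
Combined with the elementary estimate $\bigl|\tfrac{d^l}{dt^l}(1-t^2)^\gamma\bigr|\leq C_{l,\gamma}(1-t^2)^{\gamma-l}$ and Leibniz's rule applied to the product $(1-t^2)^\gamma F(t)$, the pure prefactor term ($l=k$, $m=0$) contributes the sharp rate $(1-t^2)^{\gamma-k}$, while every cross term ($l<k$) combines to $(1-t^2)^{\gamma-l+(1-2(k-l))/2}=(1-t^2)^{\gamma-k+1/2}$ and is therefore absorbed into the principal contribution since $(1-t^2)^{1/2}\leq 1$.

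The main technical obstacle is the bookkeeping in the third step, where the half-integer powers of $1-t^2$ from the chain rule compete with the integer powers from differentiating the prefactor. The point that makes the estimate as clean as stated is that, because $j\geq 1$ in the inductive formula, the singular denominator is always at least one factor of $\sin\theta$ milder than the naive $\sin^{-2m}\theta$, providing exactly the half power needed to absorb every cross term into the main $(1-t^2)^{\gamma-k}$ contribution. All other steps are routine manipulations.
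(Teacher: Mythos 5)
Your proof is correct, but it is organized differently from the paper's. The paper establishes an exact identity: it inductively constructs first-order operators $\tilde{D}_k\phi(u)=\langle\nabla_{\S}\phi(u),P_{u^\perp}v\rangle-2\big(\tfrac{n-3+\alpha+\beta}{2}-(k-1)\big)\langle u,v\rangle\phi(u)$ and shows that the $k$-th $t$-derivative of $\J_v[\langle\cdot,w\rangle^\alpha(1-\langle\cdot,v\rangle^2)^{\beta/2}\phi]$ equals $(1-t^2)^{-k}\J_v[\langle\cdot,w\rangle^\alpha(1-\langle\cdot,v\rangle^2)^{\beta/2}D_k\phi]$ with $D_k=\tilde{D}_k\cdots\tilde{D}_1$ bounded from $C^k(\S^{n-1}\backslash\{\pm v\})$ to $C(\S^{n-1}\backslash\{\pm v\})$, so the estimate is just the $k=0$ case applied to $D_k\phi$, with no cross terms to control. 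You instead strip off the full power $(1-t^2)^{\gamma}$ at the outset, pass to $\theta=\arccos t$, bound $\partial_\theta^j$ of the remaining integral uniformly using that $\theta\mapsto\cos\theta\,v+\sin\theta\,x$ are unit-speed geodesics (so the $\theta$-derivatives of $\phi\circ\gamma$ are contractions of covariant derivatives of $\phi$ with $\gamma'$), and then do chain-rule and Leibniz bookkeeping; the crucial point, which you identify correctly, is that the expansion of $d^m/dt^m$ only produces denominators $\sin^{2m-j}\theta$ with $j\geq 1$, so every Leibniz cross term gains a factor $(1-t^2)^{1/2}$ and is absorbed into the principal term. The underlying mechanism is the same in both arguments (each $t$-derivative costs one derivative of $\phi$ in the $v$-direction and one power of $(1-t^2)^{-1}$; indeed $-\sin\theta\,\gamma_x'(\theta)=P_{u^\perp}v$, so your chain-rule term is the paper's first-order operator in disguise), but the paper's inductive identity keeps the power counting exact at every step and yields a clean formula, whereas your route is more elementary in that it never needs to guess the zeroth-order correction $-2(\gamma-(k-1))\langle u,v\rangle\phi$ that makes that identity exact, at the price of the half-power absorption argument.
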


For now, we only the need the following instances of \cref{polar_C^k_estimate}.

\begin{lem}\label{polar_C^k_estimate_XY}
	Let $v\in\S^{n-1}$, $X$ be a smooth vector field, and $Y$ be smooth $2$-tensor field on $\S^{n-1}$. Then for all $k\geq 0$, there exists a constant $C_{n,k}>0$ such that for all $t\in (-1,1)$,
	\begin{align}
		\bigg| \frac{d^k}{dt^k}\J_v[\langle X, v\rangle](t) \bigg|
		&\leq C_{n,k} \norm{X}_{C^k} (1-t^2)^{\frac{n-2}{2}-k},
		\label{eq:polar_C^k_estimate_X} \\
		\bigg| \frac{d^k}{dt^k}\J_v[\langle Y, v\otimes v\rangle](t) \bigg|
		&\leq C_{n,k} \norm{Y}_{C^k} (1-t^2)^{\frac{n-1}{2}-k}.
		\label{eq:polar_C^k_estimate_Y}
	\end{align}
\end{lem}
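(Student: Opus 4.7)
The plan is to apply \cref{polar_C^k_estimate} to algebraic decompositions of $\langle X, v\rangle$ and $\langle Y, v\otimes v\rangle$ forced by the tangency conditions $\langle X(u), u\rangle = 0$ and $Y(u)u = 0$. The key is to expose the vanishing at $u=\pm v$ via polynomial factors of $\langle\cdot, v\rangle$ and $\langle\cdot, w\rangle$ (with $w\in v^\perp$) rather than via the irrational factor $\sqrt{1-\langle u,v\rangle^2}$; otherwise the residual coefficient would fail to have finite $C^k$-norm on $\S^{n-1}\backslash\{\pm v\}$ and \cref{polar_C^k_estimate} would not apply.

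Fix an orthonormal basis $e_2,\dots, e_n$ of $v^\perp$ and set $e_1 := v$, $X_j := \langle X, e_j\rangle \in C^\infty(\S^{n-1})$. Then $\norm{X_j}_{C^k}\leq \norm{X}_{C^k}$, and the constraint $\langle X(u), u\rangle = 0$ reads $\langle\cdot, v\rangle X_1 + \sum_{j=2}^n\langle\cdot, e_j\rangle X_j = 0$; multiplying by $\langle\cdot, v\rangle$ and rearranging gives the identity
\[
\langle X, v\rangle \;=\; \bigl(1-\langle\cdot, v\rangle^2\bigr)\, X_1 \;-\; \langle\cdot, v\rangle \sum_{j=2}^n \langle\cdot, e_j\rangle\, X_j,
\]
a sum of terms matching the template in \cref{polar_C^k_estimate}: the first with $(\alpha,\beta,\phi) = (0,2,X_1)$, producing a bound of order $(1-t^2)^{(n-1)/2-k}\norm{X}_{C^k}$; each of the remaining terms with $(\alpha, w, \beta, \phi) = (1, e_j, 0, \langle\cdot, v\rangle X_j)$, producing order $(1-t^2)^{(n-2)/2-k}\norm{X}_{C^k}$. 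Since $(1-t^2)^{(n-1)/2-k}\leq (1-t^2)^{(n-2)/2-k}$ on $(-1,1)$, summing yields \eqref{eq:polar_C^k_estimate_X}.

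Estimate \eqref{eq:polar_C^k_estimate_Y} is analogous with quadratic decompositions. Using $Y(u)u = 0$ and $Y(u)(u^\perp)\subseteq u^\perp$, one checks that $\langle Y(u), v\otimes v\rangle = \langle Y(u), P_{u^\perp}v\otimes P_{u^\perp}v\rangle$, and expanding $P_{u^\perp}v = (1-\langle\cdot, v\rangle^2)v - \langle\cdot, v\rangle\sum_{j=2}^n\langle\cdot, e_j\rangle e_j$ produces three groups of terms: the pure term $(1-\langle\cdot,v\rangle^2)^2 \langle Y, v\otimes v\rangle$; mixed terms of the form $(1-\langle\cdot,v\rangle^2)\langle\cdot,v\rangle\langle\cdot,e_j\rangle\bigl(\langle Y, v\otimes e_j\rangle + \langle Y, e_j\otimes v\rangle\bigr)$; and pure tangential terms $\langle\cdot,v\rangle^2\langle\cdot,e_i\rangle\langle\cdot,e_j\rangle\langle Y, e_i\otimes e_j\rangle$. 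The first two groups and the diagonal ($i=j$) instances of the third fit \cref{polar_C^k_estimate} directly. The off-diagonal ($i\neq j$) instances are handled via the polarization identity $\langle\cdot, e_i\rangle\langle\cdot, e_j\rangle = \tfrac{1}{4}\bigl(\langle\cdot, e_i+e_j\rangle^2 - \langle\cdot, e_i-e_j\rangle^2\bigr)$ with $e_i\pm e_j\in v^\perp$. All coefficient functions $\langle Y, a\otimes b\rangle$ lie in $C^\infty(\S^{n-1})$ with $C^k$-norm bounded by $\norm{Y}_{C^k}$, and each application of \cref{polar_C^k_estimate} yields a bound of order at least $(1-t^2)^{(n-1)/2-k}\norm{Y}_{C^k}$, which sums to \eqref{eq:polar_C^k_estimate_Y}.

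The only real subtlety is identifying the correct polynomial decompositions so that every residual factor extends smoothly to $\pm v$; this is precisely where the tangency conditions $\langle X, u\rangle = 0$ and $Yu = 0$ enter. Once these decompositions are in place, the estimates reduce mechanically to \cref{polar_C^k_estimate}.
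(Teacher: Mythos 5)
Your proof is correct, and it takes a genuinely different route from the paper's. The paper factors $\langle X(u),v\rangle=(1-\langle u,v\rangle^2)^{1/2}\phi(u)$ with $\phi(u)=\langle X(u),P_{u^\perp}v/\abs{P_{u^\perp}v}\rangle$, notes only that $\phi$ is smooth on $\S^{n-1}\backslash\{\pm v\}$, and then invokes \cref{polar_C^k_estimate} with $(\alpha,\beta)=(0,1)$ (and analogously for $Y$). The concern you raise about this shortcut is in fact warranted: the unit field $u\mapsto P_{u^\perp}v/\abs{P_{u^\perp}v}$ has no limit at $\pm v$ and its covariant derivative grows like the inverse geodesic distance to the pole, so for a generic $X$ (e.g.\ one with $X(v)\neq 0$) the residual satisfies $\norm{\phi}_{C^k(\S^{n-1}\backslash\{\pm v\})}=\infty$ already for $k=1$, rendering the bound in \cref{polar_C^k_estimate} vacuous as stated. (The underlying computation is salvageable because the operators $\tilde{D}_j$ appearing in the proof of \cref{polar_C^k_estimate} only differentiate in the direction $P_{u^\perp}v$, which damps the angular singularity; but that refinement is not part of the statement being cited.) Your polynomial decomposition $\langle X,v\rangle=(1-\langle\cdot,v\rangle^2)X_1-\langle\cdot,v\rangle\sum_{j\geq 2}\langle\cdot,e_j\rangle X_j$, forced by $\langle X(u),u\rangle=0$, together with the analogous quadratic expansion of $\langle Y,P_{u^\perp}v\otimes P_{u^\perp}v\rangle$ handled via polarization, keeps every coefficient function globally smooth on $\S^{n-1}$ with $C^k$-norm controlled by $\norm{X}_{C^k}$ resp.\ $\norm{Y}_{C^k}$; each invocation of \cref{polar_C^k_estimate} is then legitimate, and the exponent bookkeeping gives precisely $(1-t^2)^{(n-2)/2-k}$ from the $\alpha=1$ terms for $X$ and $(1-t^2)^{(n-1)/2-k}$ from the $\alpha=2$ terms for $Y$, matching the claim. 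In short, your argument is correct and more careful than the paper's own proof.
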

\begin{proof}
	For the proof of \cref{eq:polar_C^k_estimate_X}, note that $\langle X(u),v\rangle=(1-\langle u,v\rangle^2)^{\frac{1}{2}}\phi(u)$, where
	\begin{equation*}
		\phi(u)
		= \left< X(u),\frac{P_{u^\perp}v}{\abs{P_{u^\perp}v}}\right>,
		\qquad u\in\S^{n-1}\backslash\{\pm v\}.
	\end{equation*}
	Clearly, $\phi\in C^{\infty}(\S^{n-1}\backslash\{\pm v\})$, so we may apply \cref{eq:polar_C^k_estimate} for $\alpha=0$ and $\beta=1$. The proof of \cref{eq:polar_C^k_estimate_Y} is analogous.
\end{proof}

In the following proposition, we characterize the zonal signed measures for which their spherical gradient is a (vector-valued) signed measure and show that identity \cref{eq:zonal_smooth_deriv1} extends to this case in the weak sense.

\begin{prop}\label{zonal_reg_order1}
	Let $\mu\in \M(\S^{n-1})$ be zonal. Then $\nabla_{\S}\mu\in\M(\S^{n-1},\R^n)$ if and only if $\mu$ does not carry any mass at the poles and $(1-t^2)^{\frac{n-2}{2}}\bar{\mu}'(t)\in\M(-1,1)$. In this case, $\mu(du)=f(u)du$ for some zonal $f\in L^1(\S^{n-1})$ such that for all $v\in\S^{n-1}$,
	\begin{equation}\label{eq:zonal_reg_order1}
		\nabla_{\S}\mu^v(du)
		= P_{u^\perp}v (\J^v\bar{f}')(du).
	\end{equation}
\end{prop}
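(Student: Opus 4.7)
My plan is to prove both directions by testing $\nabla_\S \mu$ against zonal vector fields of the form $X_\psi(u) = \psi(\langle v, u\rangle)\,P_{u^\perp}v$, whose divergence can be explicitly computed via \cref{tensor_field_deriv}. For the backward direction, I start from the hypothesis $(1-t^2)^{(n-2)/2}\bar{\mu}'(t) \in \M(-1,1)$ and invoke \cref{beta_int_by_parts} with $\beta = n-2$ and $g = \bar{\mu}$ to conclude that $\bar{\mu}$ is represented by a locally integrable function $\bar{f}$ with $(1-t^2)^{(n-4)/2}\bar{f}(t) \in L^1(-1,1)$. Since $(1-t^2)^{(n-3)/2}\leq (1-t^2)^{(n-4)/2}$ on $(-1,1)$, this yields $f := \breve{\bar{f}} \in L^1(\S^{n-1})$ and $\mu(du) = f(u)\,du$.

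To derive the formula, I fix a smooth vector field $X$ and $v \in \S^{n-1}$ and chain the definition of the distributional gradient, the cylinder coordinate identity \cref{eq:cylinder_coords}, and \cref{tensor_field_deriv} to obtain
\begin{equation*}
\langle X, \nabla_\S \mu^v\rangle_{C^{-\infty}} = -\int_{(-1,1)} \tfrac{d}{dt}\J_v[\langle X, v\rangle](t)\,\bar{f}(t)\,dt.
\end{equation*}
The next step is to integrate by parts via \cref{beta_int_by_parts} applied with $\psi = \J_v[\langle X, v\rangle]$ and $g = \bar{f}$; the required boundedness of $(1-t^2)^{-(n-2)/2}\psi$ and $(1-t^2)^{-(n-4)/2}\psi'$ is precisely the content of \cref{polar_C^k_estimate_XY} for $k=0,1$. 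Rewriting $\langle X(u), v\rangle = \langle X(u), P_{u^\perp}v\rangle$ (since $X(u) \in u^\perp$) and invoking the adjointness of $\J_v$ and $\J^v$ extended to signed measures converts the resulting expression into $\int_{\S^{n-1}} \langle X(u), P_{u^\perp}v\rangle\,\J^v[\bar{f}'](du)$, establishing the desired formula.

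For the forward direction, assume $\nabla_\S\mu \in \M(\S^{n-1}, \R^n)$ and decompose $\mu = \mu_0 + c_+\delta_\e + c_-\delta_{-\e}$ with $\mu_0$ not charging the poles. A direct computation combined with \cref{tensor_field_deriv} gives $\J_\e[\Div_\S X_\psi](t) = \omega_{n-1}\tfrac{d}{dt}[(1-t^2)^{(n-1)/2}\psi(t)]$ and $\Div_\S X_\psi(\pm\e) = \mp(n-1)\psi(\pm 1)$. The key substitution is $\psi(t) = \eta(t)/\sqrt{1-t^2}$, a bijection of $\D(-1,1)$ onto itself: it simplifies the divergence kernel to $\omega_{n-1}\tfrac{d}{dt}[(1-t^2)^{(n-2)/2}\eta(t)]$ while reducing the norm to $\|X_\psi\|_\infty = \|\eta\|_\infty$. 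For $\psi \in \D(-1,1)$ the atom contributions vanish (since $\psi(\pm 1) = 0$), and the hypothesis yields
\begin{equation*}
\big|\langle \eta,\, (1-t^2)^{(n-2)/2}\bar{\mu}_0'\rangle_{\D'}\big| \leq C\,\|\eta\|_\infty \qquad \text{for all } \eta \in \D(-1,1),
\end{equation*}
proving $(1-t^2)^{(n-2)/2}\bar{\mu}_0' \in \M(-1,1)$. To eliminate the pole atoms, I separately test against $X_{\chi_k}$ for a smooth bump $\chi_k \in C^\infty[-1,1]$ with $\chi_k(1)=1$, $\chi_k(-1)=0$, supported in $[1-1/k,1]$. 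Then $\|X_{\chi_k}\|_\infty \leq \sqrt{2/k}\to 0$, while the $\bar{\mu}_0$-integral vanishes in the limit (uniformly bounded integrand on a support whose $|\bar{\mu}_0|$-mass tends to zero, since $\bar{\mu}_0$ carries no mass at $\{1\}$), forcing $c_+ = 0$; by symmetry $c_- = 0$.

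The main obstacle lies in the forward direction: once the right test functions are identified, the computations are forced, but finding the correct choreography of weights is delicate. The substitution $\psi = \eta/\sqrt{1-t^2}$ is what precisely absorbs the factor $\sqrt{1-t^2}$ appearing naturally in $\|X_\psi\|_\infty$, so that the weight landing on $\bar{\mu}_0'$ is exactly $(n-2)/2$. Moreover, the elimination of pole atoms cannot be performed within the $\D(-1,1)$ framework and requires the separate concentrating family $X_{\chi_k}$, whose norm decay relies on the geometric fact that $|P_{u^\perp}\e|$ vanishes at rate $\sqrt{1-\langle\e,u\rangle^2}$ near the poles.
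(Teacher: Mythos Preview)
Your proof is correct and follows essentially the same route as the paper. The backward direction is identical, and in the forward direction your substitution $\psi(t)=\eta(t)/\sqrt{1-t^2}$ turns the unnormalized field $X_\psi(u)=\psi(\langle v,u\rangle)P_{u^\perp}v$ into precisely the paper's choice $\eta(\langle u,v\rangle)\,\tfrac{P_{u^\perp}v}{|P_{u^\perp}v|}$, so the identification of $(1-t^2)^{(n-2)/2}\bar\mu'$ as a finite measure is the same computation. The only genuine difference is in eliminating the pole atoms: the paper argues abstractly that $\nabla_{\S}\delta_{\pm\e}$ is a distribution of order one and hence cannot be a measure, whereas you build an explicit concentrating family $X_{\chi_k}$ whose sup-norm tends to zero while the pole contribution $\Div_{\S}X_{\chi_k}(\e)=-(n-1)$ stays fixed. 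Both arguments work; the paper's is shorter, yours is more self-contained. One small wording fix: when you say the ``$|\bar\mu_0|$-mass tends to zero,'' what you actually use (and what is guaranteed) is that the $|\mu_0|$-mass of the shrinking polar cap on $\S^{n-1}$ tends to zero, together with the uniform bound on $\overline{\Div_{\S}X_{\chi_k}}$; the measure $\bar\mu_0$ on $(-1,1)$ need not itself be finite in general.
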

\begin{proof}
	First, let $X$ be a smooth vector field on $\S^{n-1}$ and note that if $\mu$ does not carry any mass on the poles or if $\supp X\subseteq\S^{n-1}\backslash\{\pm v\}$, then
	\begin{align}\label{eq:zonal_reg_order1:proof}
		\begin{split}
			&\left< X , \nabla_{\S}\mu^v \right>_{C^{-\infty}}
			= - \int_{\S^{n-1}} \Div_\S X(u) \mu^v(du)
			= - \int_{(-1,1)} \J_v[\Div_\S X](t) \bar{\mu}(dt) \\
			&\qquad= - \int_{(-1,1)} \frac{d}{dt}\J_v[\langle X, v \rangle](t) \bar{\mu}(dt),
		\end{split}		
	\end{align}
	where the second equality is obtained from a change to spherical cylinder coordinates and the final equality from \cref{eq:tensor_field_deriv1}.
	
	Suppose now that $\mu$ does not carry any mass on the poles and that $(1-t^2)^{\frac{n-2}{2}}\bar{\mu}'(t)\in\M(-1,1)$. Then $\bar{\mu}$ and thus $\mu$ is absolutely continuous, that is, $\mu(du)=f(u)du$ for some zonal $f\in L^1(\S^{n-1})$. By \cref{eq:polar_C^k_estimate_X}, we have that $(1-t^2)^{-\frac{n-2}{2}}\J_v[\langle X,v\rangle](t)$ and $(1-t^2)^{-\frac{n-4}{2}}\frac{d}{dt}\J_v[\langle X,v\rangle](t)$ are bounded, so for every smooth vector field $X$ on $\S^{n-1}$, \cref{eq:zonal_reg_order1:proof} and \cref{eq:beta_int_by_parts} yield
	\begin{equation*}
		\left< X , \nabla_{\S}\mu^v \right>_{C^{-\infty}}
		= \int_{(-1,1)} \J_v[\langle X,v\rangle](t) \bar{f}'(dt)
		= \int_{\S^{n-1}} \langle X(u) , P_{u^\perp}v\rangle (\J^v\bar{f}')(du),
	\end{equation*}
	where we applied a change to cylinder coordinates in the second equality. This proves identity \cref{eq:zonal_reg_order1} and in particular that $\nabla_{\S}\mu\in\M(\S^{n-1},\R^n)$.
	
	Conversely, suppose that $\nabla_{\S}\mu\in\M(\S^{n-1},\R^n)$. Take an arbitrary test function $\psi\in\D(-1,1)$ and define a smooth vector field $X$ by
	\begin{equation*}
		X(u)
		= \psi(u\cdot v)\frac{P_{u^\perp}v}{\abs{P_{u^\perp}v}},
		\qquad u\in\S^{n-1}.
	\end{equation*}
	Then $\supp X\subseteq\S^{n-1}\backslash\{\pm v\}$ and $\J_v[\langle X, v\rangle](t)=\omega_{n-1}(1-t^2)^{\frac{n-2}{2}}\psi(t)$, thus \cref{eq:zonal_reg_order1:proof} yields
	\begin{equation*}
		\omega_{n-1}\left< \psi(t) , (1-t^2)^{\frac{n-2}{2}}\bar{\mu}'(t) \right>_{\D'}
		= -\int_{(-1,1)} \frac{d}{dt}\J_v[\langle X, v\rangle](t) \bar{\mu}(dt)
		= \int_{\S^{n-1}} \langle X(u),\nabla_{\S}\mu^v(du)\rangle.
	\end{equation*}
	Therefore, we obtain the estimate
	\begin{equation*}
		\left| \left< \psi(t) , (1-t^2)^{\frac{n-2}{2}}\bar{\mu}'(t) \right>_{\D'} \right|
		\leq \omega_{n-1}^{-1} \norm{\nabla_{\S}\mu}_{\TV} \norm{X}_\infty 
		= \omega_{n-1}^{-1} \norm{\nabla_{\S}\mu}_{\TV} \norm{\psi}_{\infty},
	\end{equation*}
	where $\norm{\nabla_{\S}\mu}_{\TV}$ denotes the total variation of $\nabla_{\S}\mu$.
	Hence, $(1-t^2)^{\frac{n-2}{2}}\bar{\mu}'(t)\in\M(-1,1)$. Denoting $\mu_0 = \mathbbm{1}_{\S^{n-1}\backslash\{\pm\e\}}\mu$, the first part of the proof shows that $\nabla_{\S}\mu_0\in \M(\S^{n-1},\R^n)$, and thus,
	\begin{equation*}
		\mu(\{\e\})\nabla_{\S}\delta_{\e} + \mu(\{-\e\})\nabla_{\S}\delta_{-\e}
		= \nabla_{\S}\mu - \nabla_{\S}\mu_0
		\in \M(\S^{n-1},\R^n).
	\end{equation*}
	Since $\nabla_{\S}\delta_{\e}$ and $\nabla_{\S}\delta_{-\e}$ are distributions of order one (see, e.g., \cite[Section~2.1]{MR1065136}),	this is clearly possible only if $\mu$ carries no mass at the poles.
\end{proof}	

Employing the same technique as in \cref{zonal_reg_order1}, we can characterize signed measures for which their spherical Hessian is a (matrix-valued) signed measure. Identities \cref{eq:zonal_smooth_deriv1} and \cref{eq:zonal_smooth_deriv3} extend to this case in the weak sense.

\begin{prop}\label{zonal_reg_order2}
	Let $\mu\in\M(\S^{n-1})$ be zonal. Then $\nabla_{\S}^2 \mu\in\M(\S^{n-1},\R^n)$ if and only if $\mu$ carries no mass at the poles and $(1-t^2)^{\frac{n-1}{2}}\bar{\mu}''(t)\in\M(-1,1)$. In this case, $\mu(du)=f(u)du$ for some zonal $f\in L^1(\S^{n-1})$ such that $\nabla_{\S}\mu\in L^1(\S^{n-1},\R^n)$ and for all $v\in\S^{n-1}$,
	\begin{align}
		\nabla_{\S}\mu^v(du)
		&= P_{u^\perp}v \bar{f}'(\langle u,v\rangle),\label{eq:zonal_reg_order2_1}\\
		\nabla_{\S}^2 \mu^v(du)
		&= (P_{u^\perp}v \otimes P_{u^\perp}v)(\J^v\bar{f}'')(du) - \langle u,v\rangle\bar{f}'(\langle u,v\rangle)P_{u^\perp}du.\label{eq:zonal_reg_order2_2}
	\end{align}
\end{prop}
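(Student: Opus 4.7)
The plan is to mirror the proof of \cref{zonal_reg_order1}, replacing smooth vector fields by smooth $2$-tensor fields and the first-order identity \cref{eq:tensor_field_deriv1} by the second-order identity \cref{eq:tensor_field_deriv2}. For the forward direction, assume the measure condition and the absence of pole mass. Applying \cref{beta_int_by_parts} with $g=\bar\mu'$ and $\beta=n-1$ yields $\bar\mu'\in L^1_{loc}(-1,1)$ and $(1-t^2)^{\frac{n-3}{2}}\bar\mu'(t)\in L^1(-1,1)$; in particular, $\bar\mu$ is absolutely continuous, so $\mu(du)=f(u)du$ for some zonal $f\in L^1(\S^{n-1})$, and \cref{eq:zonal_reg_order2_1} follows from \cref{zonal_reg_order1} (since $L^1\subseteq\M$). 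For a smooth $2$-tensor field $Y$, the spherical divergence theorem, a change to cylinder coordinates, and \cref{eq:tensor_field_deriv2} give
\begin{equation*}
	\left\langle Y, \nabla_{\S}^2\mu^v\right\rangle_{C^{-\infty}}
	= \int_{(-1,1)} \frac{d^2}{dt^2}\J_v[\langle Y, v\otimes v\rangle](t)\,\bar\mu(dt)
	+ \int_{(-1,1)} \frac{d}{dt}\bigl(t\J_v[\tr Y](t)\bigr)\bar\mu(dt).
\end{equation*}
On the first integral, I apply \cref{beta_int_by_parts} twice to shift both derivatives onto $\bar\mu$ (first with $g=\bar f$, $\beta=n-3$, and test function $\frac{d}{dt}\J_v[\langle Y, v\otimes v\rangle]$; then with $g=\bar f'$, $\beta=n-1$, and test function $\J_v[\langle Y, v\otimes v\rangle]$). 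On the second integral, one application suffices (with $g=\bar f$, $\beta=n-3$, and test function $t\J_v[\tr Y]$). The boundedness hypotheses in \cref{beta_int_by_parts} are verified via \cref{polar_C^k_estimate_XY} for the $\langle Y, v\otimes v\rangle$ terms and via the $\alpha=\beta=0$ case of \cref{polar_C^k_estimate} for the $\tr Y$ term. Changing back to spherical coordinates and using the identities $\langle Y(u), v\otimes v\rangle = \langle Y(u), P_{u^\perp}v\otimes P_{u^\perp}v\rangle$ and $\tr Y(u)=\langle Y(u), P_{u^\perp}\rangle$ (both consequences of $Y(u)u=0$) then recovers \cref{eq:zonal_reg_order2_2}.

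For the reverse direction, assume $\nabla_\S^2\mu\in\M(\S^{n-1},\R^{n\times n})$. For an arbitrary test function $\psi\in\D(-1,1)$, consider the smooth trace-free $2$-tensor field
\begin{equation*}
	Y(u)
	= \psi(\langle u,v\rangle)\biggl(\frac{P_{u^\perp}v\otimes P_{u^\perp}v}{\abs{P_{u^\perp}v}^2} - \frac{1}{n-1}P_{u^\perp}\biggr),
\end{equation*}
which is smooth globally because the compact support of $\psi$ inside $(-1,1)$ forces $Y$ to vanish near $\pm v$. A direct computation gives $\tr Y\equiv 0$ and $\J_v[\langle Y, v\otimes v\rangle](t)=\omega_{n-1}\frac{n-2}{n-1}(1-t^2)^{\frac{n-1}{2}}\psi(t)$. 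Substituted into the cylinder-coordinate expression above, this yields
\begin{equation*}
	\left\langle Y, \nabla_{\S}^2\mu^v\right\rangle_{C^{-\infty}}
	= \omega_{n-1}\tfrac{n-2}{n-1}\bigl\langle\psi(t), (1-t^2)^{\frac{n-1}{2}}\bar\mu''(t)\bigr\rangle_{\D'},
\end{equation*}
which combined with the bound $\norm{Y}_\infty\leq C\norm{\psi}_\infty$ and the Riesz-Markov-Kakutani theorem shows $(1-t^2)^{\frac{n-1}{2}}\bar\mu''\in\M(-1,1)$. To rule out pole masses, set $\mu_0 = \mathbbm{1}_{\S^{n-1}\setminus\{\pm\e\}}\mu$ (so $\bar{\mu_0}=\bar\mu$) and invoke the forward direction to obtain $\nabla_\S^2\mu_0\in\M(\S^{n-1},\R^{n\times n})$. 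Hence $\mu(\{\e\})\nabla_\S^2\delta_{\e} + \mu(\{-\e\})\nabla_\S^2\delta_{-\e}$ is a matrix-valued signed measure; as the Dirac Hessians are genuine distributions of order two (see, e.g., \cite[Section~2.1]{MR1065136}), this forces $\mu(\{\pm\e\})=0$.

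The main technical obstacle is the choice of test $2$-tensor field in the reverse direction. A naive choice like $Y(u)=\psi(\langle u,v\rangle)P_{u^\perp}$ produces a pairing involving both $\bar\mu''$ and $\bar\mu'$, making it hard to isolate the desired measure condition on $(1-t^2)^{\frac{n-1}{2}}\bar\mu''$ alone. The trace-free variant above cleanly kills the $\frac{d}{dt}(t\J_v[\tr Y])$ contribution, and its prefactor $\tfrac{n-2}{n-1}$ is nonzero precisely because $n\geq 3$.
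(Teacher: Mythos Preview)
Your proposal follows essentially the same strategy as the paper's proof, and the reverse direction is fine (your trace-free tensor field differs from the paper's by a harmless constant factor $\tfrac{n-2}{n-1}$). However, in the forward direction there is a genuine technical gap when $n=3$.

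You split the starting identity into two separate integrals against $\bar\mu(dt)=\bar f(t)\,dt$ and then invoke \cref{beta_int_by_parts} with $g=\bar f$ and $\beta=n-3$. For $n=3$ this means $\beta=0$, which lies outside the hypothesis $\beta>0$ of \cref{beta_int_by_parts}; more seriously, the individual integrands $\tfrac{d^2}{dt^2}\J_v[\langle Y,v\otimes v\rangle]\,\bar f$ and $\tfrac{d}{dt}(t\J_v[\tr Y])\,\bar f$ are each only $O\!\big((1-t^2)^{\frac{n-5}{2}}\big)\bar f(t)$, and for $n=3$ you have no control preventing $(1-t^2)^{-1}\bar f(t)$ from failing to be integrable (you only know $\bar f'\in L^1$, hence $\bar f$ is bounded). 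So the split itself may produce two divergent integrals whose sum happens to converge, and the boundary terms in your first integration by parts need not vanish.

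The paper sidesteps this by keeping the expression together and moving only one derivative onto $\bar\mu$ via the distributional definition, obtaining
\[
\langle Y,\nabla_\S^2\mu^v\rangle_{C^{-\infty}}
= -\Big\langle \tfrac{d}{dt}\J_v[\langle Y,v\otimes v\rangle](t)+t\J_v[\tr Y](t),\ \bar\mu'(t)\Big\rangle_{\D'},
\]
where now both summands are $O\!\big((1-t^2)^{\frac{n-3}{2}}\big)$ and pair against $\bar f'$ with $(1-t^2)^{\frac{n-3}{2}}\bar f'\in L^1$, so the split at this level is legitimate for all $n\ge 3$. A single application of \cref{beta_int_by_parts} with $g=\bar f'$ and $\beta=n-1>0$ then yields \cref{eq:zonal_reg_order2_2}. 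Replacing your two-step integration by parts with this one-step version fixes the argument.
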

\begin{proof}
	First, take a smooth $2$-tensor field $Y$ on $\S^{n-1}$ and note that if $\mu$ does not carry any mass on the poles or if $\supp Y\subseteq\S^{n-1}\backslash\{\pm v\}$, then
	\begin{align}\label{eq:zonal_reg_order2:proof}
		\begin{split}
			&\langle Y , \nabla_{\S}^2 \mu^v \rangle_{C^{-\infty}}
			= \int_{\S^{n-1}} \Div_{\S}^2Y(u)\mu^v(du)
			= \int_{(-1,1)} \J_v[\Div_{\S}^2Y](t)\bar{\mu}(dt) \\
			&\qquad= - \left< \frac{d}{dt}\J_v[\langle Y,v\otimes v\rangle](t) + t\J_v[\tr Y](t), \bar{\mu}'(t) \right>_{\D'}, 
		\end{split}
	\end{align}
	where the second equality is obtained from a change to spherical cylinder coordinates and the final equality from \cref{eq:tensor_field_deriv2}.
	
	Suppose now that $\mu$ carries no mass at the poles and that $(1-t^2)^{\frac{n-1}{2}}\bar{\mu}''(t)\in\M(-1,1)$. Then $\bar{\mu}$, and thus, $\mu$ are absolutely continuous, that is, $\mu(du)=f(u)du$ for some zonal $f\in L^1(\S^{n-1})$. Moreover, \cref{beta_int_by_parts} implies that $(1-t^2)^{\frac{n-3}{2}}\bar{f}'(t)\in L^1(-1,1)$, and thus, \cref{zonal_reg_order1} implies $\nabla_{\S}\mu\in L^1(\S^{n-1},\R^n)$ and identity \cref{eq:zonal_reg_order2_1}. By \cref{eq:polar_C^k_estimate_Y}, we have that $(1-t^2)^{-\frac{n-1}{2}}\J_v[\langle Y,v\otimes v\rangle](t)$ and $(1-t^2)^{-\frac{n-3}{2}}\frac{d}{dt}\J_v[\langle Y,v\otimes v\rangle](t)$ are bounded, so for every smooth $2$-tensor field $Y$ on $\S^{n-1}$, \cref{eq:zonal_reg_order2:proof} and \cref{eq:beta_int_by_parts} yield
	\begin{align*}
	 	&\langle Y , \nabla_{\S}^2 \mu^v \rangle_{C^{-\infty}}
	 	= \int_{(-1,1)} \J_v[\langle Y,v\otimes v\rangle](t)\bar{f}''(dt) - \int_{(-1,1)} t\J_v[\tr Y](t) \bar{f}'(t) dt \\
	 	&\qquad= \int_{\S^{n-1}} \langle Y(u) , P_{u^\perp}v \otimes P_{u^\perp}v\rangle (\J^v\bar{f}'')(du) - \int_{\S^{n-1}} \langle Y(u) , P_{u^\perp}\rangle \langle u, v\rangle \bar{f}'(\langle u,v\rangle)du,
	\end{align*}
 	where we applied a change to cylinder coordinates in the second equality. This proves \cref{eq:zonal_reg_order2_2} and in particular that $\nabla_{\S}^2 \mu\in\M(\S^{n-1},\R^n)$.
 	
 	Conversely, suppose now that $\nabla_{\S}^2 \mu\in\M(\S^{n-1},\R^n)$. Take an arbitrary test function $\psi\in\D(-1,1)$ and define a smooth $2$-tensor field $Y$ on $\S^{n-1}$ by
 	\begin{equation*}
 		Y(u)
 		= \psi(\langle u,v\rangle) \left( \frac{P_{u^\perp}v}{\abs{P_{u^\perp}v}}\otimes \frac{P_{u^\perp}v}{\abs{P_{u^\perp}v}} - \frac{1}{n-2} \left(P_{u^\perp} - \frac{P_{u^\perp}v}{\abs{P_{u^\perp}v}}\otimes \frac{P_{u^\perp}v}{\abs{P_{u^\perp}v}}\right) \right),
 		\qquad u\in\S^{n-1}.
 	\end{equation*}
 	Then $\supp Y\subseteq\S^{n-1}\backslash\{\pm v\}$, and $Y$ satisfies $\J_v[\langle Y,v\otimes v\rangle](t)=\omega_{n-1}(1-t^2)^{\frac{n-1}{2}}\psi(t)$ and $\tr Y=0$, thus 	
 	\begin{align*}
 		&\omega_{n-1}\left< \psi(t) , (1-t^2)^{\frac{n-1}{2}}\bar{\mu}''(t) \right>_{\D'}
 		= \omega_{n-1}\left< \J_v[Y,v\otimes v],\bar{\mu}''(t)\right>_{\D'} \\
 		&\qquad= -\left< \frac{d}{dt}\J_v[\langle Y,v\otimes v\rangle](t) + t\J_v[\tr Y](t), \bar{\mu}'(t) \right>_{\D'}
 		= \int_{\S^{n-1}} \langle Y(u) , \nabla_{\S}^2\mu^v(du) \rangle.
 	\end{align*}
 	Therefore, we obtain the estimate:
 	\begin{equation*}
 		\left| \left< \psi(t) , (1-t^2)^{\frac{n-1}{2}}\bar{\mu}''(t) \right>_{\D'} \right|
 		\leq \omega_{n-1}^{-1} \norm{\nabla_{\S}^2\mu}_{\TV} \norm{Y}_\infty 
 		= \omega_{n-1}^{-1} \norm{\nabla_{\S}^2\mu}_{\TV} \norm{\psi}_{\infty},
 	\end{equation*}
	where $\norm{\nabla_{\S}^2\mu}_{\TV}$ denotes the total variation of $\nabla_{\S}^2\mu$.
 	Hence, $(1-t^2)^{\frac{n-1}{2}}\bar{\mu}''(t)\in\M(-1,1)$. Denoting $\mu_0=\mathbbm{1}_{\S^{n-1}\backslash\{\pm\e\}}\mu$, the first part of the proof shows that $\nabla_{\S}^2\mu_0 \in\M(\S^{n-1},\R^{n\times n})$, and thus,
 	\begin{equation*}
 		\mu(\{\e\})\nabla_{\S}^2\delta_{\e} + \mu(\{-\e\})\nabla_{\S}^2\delta_{-\e}
 		= \nabla_{\S}^2\mu - \nabla_{\S}^2\mu_0
 		\in \M(\S^{n-1},\R^{n\times n}).
 	\end{equation*}
 	Since $\nabla_{\S}^2\delta_{\e}$ and $\nabla_{\S}^2\delta_{-\e}$ are distributions of order two (see, e.g., \cite[Section~2.1]{MR1065136}), this is clearly possible only if $\mu$ carries no mass at the poles.
\end{proof}

For later purposes, it will be useful to describe the regularity of zonal functions $f\in L^1(\S^{n-1})$ in terms of their Laplacian. 

\begin{lem}\label{zonal_Laplacian}
	If $f\in L^1(\S^{n-1})$ is zonal and $\Delta_{\S}f\in\M(\S^{n-1})$, then for almost all $t\in(-1,1)$,
	\begin{equation}\label{eq:zonal_Laplacian}
		(\Delta_{\S}f)(\{u\in\S^{n-1}:\langle \e,u\rangle>t \})
		= - \omega_{n-1}(1-t^2)^{\frac{n-1}{2}}\bar{f}'(t).
	\end{equation}
\end{lem}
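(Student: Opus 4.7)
The plan is to establish the identity first as an identity of distributions on $(-1,1)$ and then to pin down both sides as specific locally bounded functions via a boundary analysis. To begin, I fix a test function $\psi\in\D(-1,1)$, so that $\breve{\psi}\in C^\infty(\S^{n-1})$ vanishes near the poles $\pm\e$. Applying \cref{zonal_smooth_deriv} to the zonal smooth function $\breve{\psi}$ and taking the trace yields the Sturm--Liouville formula
\begin{equation*}
\Delta_{\S}\breve{\psi}(u) = \frac{1}{(1-t^2)^{(n-3)/2}}\frac{d}{dt}\bigl[(1-t^2)^{(n-1)/2}\psi'(t)\bigr]\Big|_{t=\langle\e,u\rangle}.
\end{equation*}
Combining the self-adjointness of $\Delta_{\S}$ with a change to spherical cylinder coordinates and distributional integration by parts---justified since $\bar{f}\in L^1_\mathrm{loc}(-1,1)$ and $(1-t^2)^{(n-1)/2}\psi'\in\D(-1,1)$---gives
\begin{equation*}
\int_{\S^{n-1}}\breve{\psi}\,d(\Delta_{\S}f) = \omega_{n-1}\bigl\langle\psi,\,\tfrac{d}{dt}\bigl[(1-t^2)^{(n-1)/2}\bar{f}'(t)\bigr]\bigr\rangle_{\D'}.
\end{equation*}

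Next, let $\nu$ denote the finite signed measure on $(-1,1)$ obtained as the push-forward of the restriction of $\Delta_{\S}f$ to $\S^{n-1}\setminus\{\pm\e\}$ under $u\mapsto\langle\e,u\rangle$. Since $\breve{\psi}$ is supported away from the poles, the left-hand side above equals $\int\psi\,d\nu$, so that as distributions on $(-1,1)$,
\begin{equation*}
\omega_{n-1}\tfrac{d}{dt}\bigl[(1-t^2)^{(n-1)/2}\bar{f}'(t)\bigr] = \nu.
\end{equation*}
Setting $F(t):=(\Delta_{\S}f)(\{u\in\S^{n-1}:\langle\e,u\rangle>t\})$, the function $F$ is bounded and satisfies $F'=-\nu$ as distributions. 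Combining with the above, $F(t)+\omega_{n-1}(1-t^2)^{(n-1)/2}\bar{f}'(t)$ has vanishing distributional derivative and is therefore equal a.e.\ to a constant $C$; in particular, $(1-t^2)^{(n-1)/2}\bar{f}'$ agrees a.e.\ with a function of bounded variation on $(-1,1)$.

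To identify $C=0$, I test against $\psi\in C^\infty[-1,1]$ with $\psi(1)=1$ and $\psi(-1)=0$. The left-hand side then captures the atom of $\Delta_{\S}f$ at the north pole, giving $\int\breve{\psi}\,d(\Delta_{\S}f) = F(1^-) + \int\psi\,d\nu$. For the right-hand side, the same cylinder-coordinate computation followed by integration by parts on $[-1,1]$ (justified using the local boundedness of $(1-t^2)^{(n-1)/2}\bar{f}'$ established above together with $(1-t^2)^{(n-1)/2}\psi'(t)$ vanishing at the endpoints) yields $\int\psi\,d\nu - \omega_{n-1}\lim_{t\to 1^-}(1-t^2)^{(n-1)/2}\bar{f}'(t)$. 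Substituting the relation $(1-t^2)^{(n-1)/2}\bar{f}'(t)=-(F(t)-C)/\omega_{n-1}$ and matching the two expressions forces $C=0$, yielding the asserted identity.

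The main obstacle is this final boundary analysis: since $\bar{f}$ need not be bounded near $\pm 1$ and $\Delta_{\S}f$ may carry atoms at the poles, the required integration by parts on $[-1,1]$ cannot be obtained from direct regularity of $\bar{f}$ and must instead be justified via the bounded-variation structure of $(1-t^2)^{(n-1)/2}\bar{f}'$ inferred from the distributional identity in the previous step.
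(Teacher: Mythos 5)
Your argument is correct, but it takes a different route from the paper's. The paper tests the cap function directly: for $\psi\in\D(-1,1)$ it sets $\eta(t)=\int_{(-1,t)}\psi(s)\,ds$, notes that $\eta(\langle\e,\cdot\rangle)$ is smooth on all of $\S^{n-1}$ (being constant near the poles), and evaluates $\int F\psi\,dt$ by Lebesgue--Stieltjes integration by parts as the pairing of $\eta(\langle\e,\cdot\rangle)$ with the full measure $\Delta_{\S}f$, which by the distributional definition of $\Delta_{\S}$ and cylinder coordinates produces the factor $\tfrac{d}{dt}\big((1-t^2)^{\frac{n-1}{2}}\psi(t)\big)$ and hence the identity in a single pass; because the antiderivative $\eta$ is admissible on the whole sphere, possible pole atoms are accounted for automatically and no constant of integration or endpoint analysis ever arises. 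You instead test with $\breve{\psi}$ itself, obtain the differentiated identity $\omega_{n-1}\tfrac{d}{dt}\big[(1-t^2)^{\frac{n-1}{2}}\bar{f}'\big]=\nu$ away from the poles, and then recover the undetermined constant by a separate boundary argument with $\psi\in C^\infty[-1,1]$, $\psi(1)=1$, $\psi(-1)=0$; this works and is in effect a two-step version of the paper's antiderivative trick, but it forces you to control behaviour at $t=\pm 1$. One point you should make explicit there: the boundary term in your integration by parts on $[-1,1]$ involves $\bar{f}$ itself, not only $\bar{f}'$, so you need to observe that the bound $|\bar{f}'(t)|\le C(1-t^2)^{-\frac{n-1}{2}}$ integrates to $|\bar{f}(t)|\le C'(1-t^2)^{-\frac{n-3}{2}}$ (logarithmic when $n=3$), whence $(1-t^2)^{\frac{n-1}{2}}\psi'(t)\bar{f}(t)\to 0$ at the endpoints; this follows from what you established, so it is an expository rather than a substantive gap. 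What the paper's route buys is brevity and complete avoidance of this endpoint analysis; what yours buys is the explicit intermediate fact that $(1-t^2)^{\frac{n-1}{2}}\bar{f}'$ is a bounded function of bounded variation whose distributional derivative is the de-atomized pushforward of $\Delta_{\S}f$, which is slightly finer information than the statement itself.
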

\begin{proof}
	Let $\psi\in\D(-1,1)$ be an arbitrary test function. Define $\eta(t)=\int_{(-1,t)} \psi(s) ds$ and note that $\eta(\langle\e,\cdot\rangle)\in C^\infty(\S^{n-1})$. Then Lebesgue-Stieltjes integration by parts yields
	\begin{align*}
		&\int_{(-1,1)} (\Delta_{\S}f)(\{u\in\S^{n-1}:\langle \e,u\rangle>t \})\psi(t) dt
		= \int_{[-1,1]} \J_{\e} [\Delta_{\S}f]((t,1]) \psi(t) dt \\
		&\qquad= \int_{[-1,1]} \eta(t) \J_{\e} [\Delta_{\S}f](dt)
		= \int_{\S^{n-1}} \eta(\langle\e,u\rangle) (\Delta_{\S}f)(du)
		= \int_{\S^{n-1}} \Delta_{\S}\eta(\langle\e,\cdot\rangle)(u) f(u)du,
	\end{align*}
	where the third equality follows from the characteristic property of the pushforward measure and the final equality, from the definition of the distributional spherical Laplacian. Taking the trace\linebreak in \cref{eq:zonal_smooth_deriv3},
	\begin{equation*}
		\Delta_{\S}\eta(\langle\e,\cdot\rangle)(u)
		= (1-\langle\e,u\rangle^2)\eta''(\langle\e,u\rangle) - (n-1)\langle\e,u\rangle\eta'(\langle\e,u\rangle).
	\end{equation*}
	By a change to spherical cylinder coordinates, we obtain
	\begin{align*}
		&\int_{(-1,1)} (\Delta_{\S}f)(\{u\in\S^{n-1}:\langle \e,u\rangle>t \})\psi(t) dt \\
		&\qquad = \omega_{n-1}\int_{(-1,1)} \left((1-t^2)\eta''(t) - (n-1)t\eta'(t)\right) (1-t^2)^{\frac{n-3}{2}}\bar{f}(t)dt \\
		&\qquad = \omega_{n-1}\int_{(-1,1)} \frac{d}{dt}\left((1-t^2)^{\frac{n-1}{2}}\psi(t)\right) \bar{f}(t)dt
		= -\omega_{n-1}\left< \psi(t), (1-t^2)^{\frac{n-1}{2}}\bar{f}'(t) \right>_{\D'}.
	\end{align*}
	Since $\psi$ was arbitrary, identity \cref{eq:zonal_Laplacian} holds for almost all $t\in (-1,1)$.
\end{proof}

From now on, we denote by
\begin{equation}\label{eq:SCap}
	\SCap_r(u)
	= \{v\in\S^{n-1}: \langle u,v\rangle > \cos r \}
\end{equation}
the \emph{spherical cap} around $u\in\S^{n-1}$ with radius $r\geq0$. The following proposition classifies zonal functions $f$ for which their spherical Hessian is a signed measure in terms of the behavior of $\Delta_{\S}f$ on small polar caps.

\begin{prop}\label{zonal_reg_equi}
	For a zonal function $f\in L^1(\S^{n-1})$, the following are equivalent:
	\begin{enumerate}[label=\upshape(\alph*),topsep=1.0ex,itemsep=0.5ex]
		\item \label{zonal_reg_equi:Hessian}
		$\nabla_{\S}^2 f\in\M(\S^{n-1},\R^{n\times n})$,
		\item \label{zonal_reg_equi:Laplacian}
		$\Delta_{\S} f \in \M(\S^{n-1})$ and $\int_{(0,\frac{\pi}{2})} \frac{\abs{(\Delta_{\S}f)(\SCap_r(\pm\e))}}{r}dr
		<\infty$,
		\item \label{zonal_reg_equi:box}
		$\square_n f \in \M(\S^{n-1})$ and  $\int_{(0,\frac{\pi}{2})} \frac{\abs{(\square_n f)(\SCap_r(\pm\e))}}{r}dr
		<\infty$.
	\end{enumerate}
\end{prop}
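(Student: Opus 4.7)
The plan is to reduce all three conditions to a single weighted-regularity statement for $\bar{f}$, via the auxiliary function $F(t)=(1-t^2)^{(n-1)/2}\bar{f}'(t)$. By \cref{zonal_reg_order2}, since $f\in L^1(\S^{n-1})$ automatically carries no mass at the poles, condition (a) is equivalent to $(1-t^2)^{(n-1)/2}\bar{f}''(t)\in\M(-1,1)$. The argument rests on the algebraic identity
\begin{equation*}
	(1-t^2)^{(n-1)/2}\bar{f}''(t)
	= F'(t) + (n-1)t(1-t^2)^{-1}F(t)
\end{equation*}
together with \cref{zonal_Laplacian}, which (whenever $\Delta_\S f\in\M$) identifies $F$ a.e.\ with the cap function of $\Delta_\S f$ up to a constant.

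First I would establish (a)~$\Leftrightarrow$~(b). Under (a), \cref{beta_int_by_parts} with $g=\bar{f}'$ and $\beta=n-1$ gives $(1-t^2)^{(n-3)/2}\bar{f}'(t)\in L^1(-1,1)$, so that $F'\in\M(-1,1)$ and $(1-t^2)^{-1}F(t)\in L^1(-1,1)$. Under the substitution $t=\cos r$ (which turns $dr/r$ into a quantity comparable to $dt/(1-t^2)$ near the poles), \cref{zonal_Laplacian} translates these facts into $\Delta_\S f\in\M$, the absence of pole mass, and the cap-integral bound of (b). Conversely, (b) together with \cref{zonal_Laplacian} tells us that $F$ is of bounded variation with $F(\pm 1)=0$, while the cap-integral condition upgrades $(1-t^2)^{-1}F$ to $L^1$, so the main identity yields (a).

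To address (b)~$\Leftrightarrow$~(c), observe that $\square_n f=\tfrac{1}{n-1}\Delta_\S f+f\,du$ and $f\,du\in\M$ give the equivalence of the first clauses, while the cap masses satisfy
\begin{equation*}
	(\square_n f)(\SCap_r(\pm\e)) - \tfrac{1}{n-1}(\Delta_\S f)(\SCap_r(\pm\e))
	= \int_{\SCap_r(\pm\e)} f\,du.
\end{equation*}
Assuming (a) (equivalently (b)), iterating \cref{beta_int_by_parts} first with $g=\bar{f}'$, $\beta=n-1$ and then, for $n\geq 4$, with $g=\bar{f}$, $\beta=n-3$ (with a direct argument that $\bar{f}$ is bounded covering the case $n=3$), yields $(1-t^2)^{(n-5)/2}\bar{f}\in L^1(-1,1)$ and hence the bound
\begin{equation*}
	\abs{\int_{\SCap_r(\pm\e)} f\,du}
	\leq \omega_{n-1}\sin^2 r\,\norm{(1-t^2)^{(n-5)/2}\bar{f}}_{L^1}
	= O(r^2).
\end{equation*}
Divided by $r$, this discrepancy is integrable on $(0,\pi/2)$, and the triangle inequality closes the equivalence.

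The hardest step will be the direction (c)~$\Rightarrow$~(a), which demands the extra regularity of $\bar{f}$ without first assuming (a). My plan is to multiply the zonal ODE $\overline{\square_n f}=\tfrac{1}{n-1}\overline{\Delta_\S f}+\bar{f}$ by the weight $(1-t^2)^{(n-3)/2}$ and group the resulting first-order terms into $F'$, producing the distributional identity
\begin{equation*}
	\tfrac{1}{n-1}F'(t)\,dt + (1-t^2)^{(n-3)/2}\bar{f}(t)\,dt
	= (1-t^2)^{(n-3)/2}\,d\overline{\square_n f}(t)
\end{equation*}
on $(-1,1)$. Since the RHS is a finite signed measure and the second summand on the LHS is an $L^1$ function, this gives $F'\in\M(-1,1)$ at once; the cap-integral bound in (c) then forces the pole masses of $\square_n f$, and hence of $\Delta_\S f$, to vanish, so $F(\pm 1)=0$; finally, combining (c) with \cref{zonal_Laplacian} applied to the now measure-valued $\Delta_\S f$ should yield $(1-t^2)^{-1}F\in L^1(-1,1)$, at which point the main identity delivers (a).
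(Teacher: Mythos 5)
Your proof is correct and uses the same core machinery as the paper: the auxiliary function $F(t)=(1-t^2)^{(n-1)/2}\bar f'(t)$ (which is, up to a constant, the cap function of $\Delta_\S f$ by \cref{zonal_Laplacian}), the reduction of (a) to $(1-t^2)^{(n-1)/2}\bar f''\in\M(-1,1)$ via \cref{zonal_reg_order2}, the algebraic identity splitting $(1-t^2)^{(n-1)/2}\bar f''$ into $F'$ plus a multiple of $(1-t^2)^{-1}F$, and a discrepancy estimate for $\int_{\SCap_r(\pm\e)}f\,du$ to pass between (b) and (c). Two remarks. First, your separate treatment of (c)~$\Rightarrow$~(a) at the end is logically redundant: once (a)~$\Leftrightarrow$~(b) and (b)~$\Leftrightarrow$~(c) are in place, (c)~$\Rightarrow$~(a) is free, and in fact as soon as (c) gives $\Delta_\S f=(n-1)(\square_n f-f)\in\M$, \cref{zonal_Laplacian} already tells you $F$ is of bounded variation, so the extra distributional identity you derive buys nothing new. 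The paper streamlines this by observing at the outset that each of (a), (b), (c) individually forces $\Delta_\S f\in\M$, and then proves all equivalences at once. Second, in (b)~$\Leftrightarrow$~(c) you iterate \cref{beta_int_by_parts} with $\beta=n-3$ to reach $(1-t^2)^{(n-5)/2}\bar f\in L^1$ and an $O(r^2)$ bound, which forces a case distinction at $n=3$; the paper instead uses $\nabla_\S f\in L^1$ together with \cref{zonal_reg_order1} and \cref{beta_int_by_parts} at $\beta=n-2$ to get $(1-t^2)^{(n-4)/2}\bar f\in L^1$, giving an $O(r)$ bound that is already enough for the $dr/r$ integral and works uniformly for $n\geq 3$. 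Your $n=3$ fallback (boundedness of $\bar f$, which does follow from $\bar f'\in L^1$ in that dimension) is correct but is an avoidable complication.
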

\begin{proof}
	Each of the three statements above implies that $\Delta_{\S}f\in\M(\S^{n-1})$. Due to \cref{zonal_Laplacian}, we have that $\nabla_{\S}f\in L^1(\S^{n-1},\R^n)$ and for almost all $t\in (-1,1)$,
	\begin{equation}\label{eq:zonal_reg_equi:proof1}
		\J_{\e}[\Delta_{\S}f]((t,1])
		= - \omega_{n-1}(1-t^2)^{\frac{n-1}{2}}\bar{f}'(t).
	\end{equation}
	Taking the distributional derivative on both sides yields
	\begin{equation}\label{eq:zonal_reg_equi:proof2}
		\mathbbm{1}_{(-1,1)}(t)\J_{\e}[\Delta_{\S}f](dt)
		= \omega_{n-1}\left( (1-t^2)^{\frac{n-1}{2}}\bar{f}''(t) - (n-1)(1-t^2)^{\frac{n-3}{2}}\bar{f}'(t)\right)
	\end{equation}
	in $\D'(-1,1)$. According to \cref{zonal_reg_order2}, condition \ref{zonal_reg_equi:Hessian} is fulfilled if and only if $(1-t^2)^{\frac{n-1}{2}}\bar{f}''(t)$ is a finite signed measure. Due to \cref{eq:zonal_reg_equi:proof1} and \cref{eq:zonal_reg_equi:proof2}, this is the case if and only if
	\begin{equation*}
		\int_{(0,1)} \frac{\abs{(\Delta_{\S}f)(\{u\in\S^{n-1}:\langle\e,u\rangle>t\})}}{1-t^2} dt +\int_{(0,1)} \frac{\abs{(\Delta_{\S}f)(\{u\in\S^{n-1}:\langle-\e,u\rangle>t\})}}{1-t^2} dt
		< \infty.
	\end{equation*}
	The substitution $t=\cos r$ then shows that	conditions \ref{zonal_reg_equi:Hessian} and \ref{zonal_reg_equi:Laplacian} are equivalent.
	
	For the equivalence of \ref{zonal_reg_equi:Laplacian} and \ref{zonal_reg_equi:box}, it suffices to show that
	 $(1-t^2)^{-1}\big|\int_{\{u:\abs{\langle\e,u\rangle}>t\}} f(u) du\big|$ is integrable on $(0,1)$. To that end, using spherical cylinder coordinates, we estimate
	\begin{equation*}
		\int_{\{u:\abs{\langle\e,u\rangle}>t\}} \abs{f(u)}du
		= \int_{(t,1)} (1-s^2)^{\frac{n-3}{2}}\abs{\bar{f}(s)}ds
		\leq (1-t^2)^{\frac{1}{2}} \int_{(-1,1)} (1-s^2)^{\frac{n-4}{2}}\abs{\bar{f}(s)}ds,
	\end{equation*}
	for $t\in (0,1)$. Since $\nabla_{\S}f\in L^1(\S^{n-1},\R^n)$, \cref{zonal_reg_order1} and \cref{beta_int_by_parts} imply that $(1-t^2)^{\frac{n-4}{2}}\bar{f}(t)$ is integrable. This completes the proof.
\end{proof}

We want to note that \cref{zonal_Laplacian} and \cref{zonal_reg_equi} still hold if the zonal function $f$ is replaced by a zonal signed measure that carries no mass at the poles.

\begin{exl}\label{exl:Berg_fct_regularity}
	For Berg's function $g_n$ we have that $\breve{g}_n=g_n(\langle\e,\cdot\rangle)$ is an integrable function on the sphere and $\square_n\breve{g}_n=(\Id-\pi_1)\delta_{\e}$ is a finite signed measure. Hence \cref{zonal_Laplacian} implies that $\nabla_{\S}\breve{g}_n$ is integrable on $\S^{n-1}$. At the same time, the integrability condition in \cref{zonal_reg_equi}~\ref{zonal_reg_equi:box} is clearly violated, so the distributional spherical Hessian $\nabla_{\S}^2\breve{g}_n$ is not a finite signed measure.
\end{exl}

\subsection{Convolution Transforms}
\label{sec:bounded_ops}

Linear operators on functions on the unit sphere intertwining rotations can be identified with convolution transforms, as the following theorem shows.

\begin{thm}[{\cite{MR2343256}}]\label{convol_trafo_C^0}
	If $\mu\in\M(\S^{n-1})$ is zonal, then the convolution transform $\T_\mu$ is a bounded linear operator on $C(\S^{n-1})$.	
	Conversely, if $\T$ is an $\SO(n)$ equivariant bounded linear operator on $C(\S^{n-1})$, then there exists a unique zonal $\mu\in\M(\S^{n-1})$ such that $\T=\T_\mu$.
\end{thm}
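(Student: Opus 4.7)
The plan is to handle the two directions separately, with the converse being the substantive one. For the forward direction, I would start from the definition in Section~\ref{sec:background}, which for $\phi \in C(\S^{n-1})$ and a zonal signed measure $\mu$ specializes to
\begin{equation*}
(\T_\mu \phi)(\vartheta \e) = \langle \vartheta^{-1}\phi, \mu \rangle_{C^{-\infty}} = \int_{\S^{n-1}} \phi(\vartheta v)\,\mu(dv).
\end{equation*}
Boundedness is immediate from the pointwise estimate $|(\T_\mu \phi)(\vartheta\e)| \leq \|\phi\|_\infty \|\mu\|_{\TV}$. For continuity of $\T_\mu \phi$, if $\vartheta_k \to \vartheta$ in $\SO(n)$, uniform continuity of $\phi$ on the compact sphere forces $\phi(\vartheta_k v) \to \phi(\vartheta v)$ uniformly in $v$, and dominated convergence (with the constant majorant $2\|\phi\|_\infty$, integrable against $|\mu|$) yields $(\T_\mu\phi)(\vartheta_k\e) \to (\T_\mu\phi)(\vartheta\e)$.

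For the converse, the strategy is to \emph{produce} $\mu$ by Riesz-Markov-Kakutani and then exploit the equivariance of $\T$ twice. I would define the bounded linear functional $L : C(\S^{n-1}) \to \R$ by $L(\phi) = (\T\phi)(\e)$, which satisfies $\|L\| \leq \|\T\|$. By Riesz-Markov-Kakutani there is a unique $\mu \in \M(\S^{n-1})$ with $L(\phi) = \int_{\S^{n-1}} \phi\,d\mu$. To verify that $\mu$ is zonal, pick any $\sigma \in \SO(n-1)$; since $\sigma\e = \e$ and $\T$ intertwines rotations,
\begin{equation*}
L(\sigma \phi) = (\T(\sigma\phi))(\e) = (\sigma \T\phi)(\e) = (\T\phi)(\sigma^{-1}\e) = (\T\phi)(\e) = L(\phi).
\end{equation*}
Rewriting this as $\langle \phi, \sigma^{-1}\mu \rangle_{C^{-\infty}} = \langle \phi, \mu\rangle_{C^{-\infty}}$ for all $\phi \in C(\S^{n-1})$, the Riesz-Markov uniqueness gives $\sigma^{-1}\mu = \mu$ for every $\sigma \in \SO(n-1)$, so $\mu$ is zonal.

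To identify $\T$ with $\T_\mu$, I would propagate the base-point formula to an arbitrary point $\vartheta\e$ by applying equivariance a second time:
\begin{equation*}
(\T\phi)(\vartheta\e) = (\vartheta^{-1}\T\phi)(\e) = (\T(\vartheta^{-1}\phi))(\e) = L(\vartheta^{-1}\phi) = \langle \vartheta^{-1}\phi, \mu\rangle_{C^{-\infty}} = (\phi \ast \mu)(\vartheta\e).
\end{equation*}
Since every point of $\S^{n-1}$ is of the form $\vartheta\e$ for some $\vartheta \in \SO(n)$, this proves $\T = \T_\mu$. Uniqueness of $\mu$ is inherited from Riesz-Markov: if $\T_{\mu_1} = \T_{\mu_2}$, evaluating at $\e$ shows that the associated functionals coincide, forcing $\mu_1 = \mu_2$.

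I do not anticipate a serious obstacle here; the only conceptual point to keep in mind is that the distributional pairing $\langle \cdot, \mu\rangle_{C^{-\infty}}$ used to define the convolution in Section~\ref{sec:background} restricts to ordinary integration against $\mu$ on continuous test functions, which is what justifies the identification $(\phi\ast\mu)(\vartheta\e) = \int \phi(\vartheta v)\,\mu(dv)$ above. The argument is really a clean application of Riesz-Markov combined with two uses of equivariance, one to extract zonality of $\mu$ and one to propagate the representation from $\e$ to all poles.
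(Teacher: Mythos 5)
Your proof is correct. Note that the paper does not prove this statement at all — it is quoted with a citation to the literature — so there is nothing to compare against internally; your argument (Riesz--Markov--Kakutani for the functional $\phi\mapsto(\T\phi)(\e)$, one use of equivariance under $\SO(n-1)$ to get zonality of $\mu$, a second use under $\SO(n)$ to propagate the representation from $\e$ to every point $\vartheta\e$) is exactly the standard route taken in the cited source. The only points worth making explicit are minor: in the forward direction, zonality of $\mu$ is what makes $(\T_\mu\phi)(\vartheta\e)$ independent of the choice of $\vartheta$ with $\vartheta\e=u$, and continuity of $\T_\mu\phi$ as a function on $\S^{n-1}$ (rather than on $\SO(n)$) follows from your uniform estimate together with the existence of local sections of $\SO(n)\to\S^{n-1}$, or simply from the bound $\abs{(\T_\mu\phi)(u_1)-(\T_\mu\phi)(u_2)}\leq\norm{\mu}_{\TV}\,\sup_{v}\abs{\phi(\vartheta_1 v)-\phi(\vartheta_2 v)}$ with $\vartheta_1,\vartheta_2$ chosen close when $u_1,u_2$ are. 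Neither is a genuine gap.
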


The regularizing properties of a convolution transform correspond to the regularity of its integral kernel. In this section, we classify rotation equivariant bounded linear operators from $C(\S^{n-1})$ to $C^2(\S^{n-1})$ in terms of their integral kernel, proving \cref{convol_trafo_C^2:intro}. We require the following two lemmas.

\begin{lem}\label{curve_deriv}
	Let $\phi\in C^\infty(\S^{n-1})$ and let $\gamma: I\subseteq\R\to \S^{n-1}$ be a smooth curve in $\S^{n-1}$. Then for all $s\in I$ and $t\in(-1,1)$,
	\begin{align}
		\frac{d}{ds} \J_{\gamma(s)}[\phi](t)
		&=  - \frac{d}{dt} \J_{\gamma(s)}[\langle\cdot, \gamma'(s)\rangle\phi](t),\label{eq:curve_deriv1}\\
		\frac{d}{ds} \J_{\gamma(s)}[\langle\cdot,\gamma'(s)\rangle\phi](t)
		&=  - \frac{d}{dt}\J_{\gamma(s)}[\langle\cdot,\gamma'(s)\rangle^2\phi](t) + \J_{\gamma(s)}[\langle\cdot,\gamma''(s)\rangle\phi](t).\label{eq:curve_deriv2}
	\end{align}
\end{lem}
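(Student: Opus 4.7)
The plan is to verify both identities weakly, by testing against an arbitrary $\psi \in \D(-1,1)$, and then upgrade the result to the claimed pointwise equality using the smoothness of the functions involved. The starting point in both cases is the cylinder coordinate identity
$$\int_{(-1,1)} \J_v[\phi](t)\psi(t)\,dt = \int_{\S^{n-1}} \phi(u)\psi(\langle u,v\rangle)\,du,$$
which is just a restatement of the definition of $\J_v$ and holds for every $v \in \S^{n-1}$, $\phi \in C(\S^{n-1})$, and $\psi \in C[-1,1]$.

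For \cref{eq:curve_deriv1}, I would substitute $v = \gamma(s)$ into the cylinder identity, differentiate in $s$, and pass $\frac{d}{ds}$ under the integral on the right (justified by dominated convergence, since $\phi$, $\gamma$, and $\gamma'$ are smooth and $\psi$ is compactly supported in $(-1,1)$). The chain rule produces the integrand $\phi(u)\langle u,\gamma'(s)\rangle\psi'(\langle u,\gamma(s)\rangle)$, which by the cylinder identity applied to the function $\langle\cdot,\gamma'(s)\rangle\phi$ and the test function $\psi'$ rewrites as $\int_{(-1,1)} \J_{\gamma(s)}[\langle\cdot,\gamma'(s)\rangle\phi](t)\psi'(t)\,dt$. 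A standard integration by parts in $t$ (valid since $\psi$ has compact support in $(-1,1)$) transfers the derivative onto $\J_{\gamma(s)}[\cdot](t)$ and reproduces the right-hand side of \cref{eq:curve_deriv1} paired against $\psi$. Since $\psi$ was arbitrary and both sides of \cref{eq:curve_deriv1} are continuous in $t$ on $(-1,1)$ (the smoothness of $t \mapsto \J_v[\cdot](t)$ being controlled by \cref{polar_C^k_estimate}), the pointwise identity follows.

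For \cref{eq:curve_deriv2}, I would run the same scheme starting from the pairing
$$\int_{(-1,1)} \J_{\gamma(s)}[\langle\cdot,\gamma'(s)\rangle\phi](t)\psi(t)\,dt = \int_{\S^{n-1}} \phi(u)\langle u,\gamma'(s)\rangle\psi(\langle u,\gamma(s)\rangle)\,du.$$
Now the right-hand side carries two distinct sources of $s$-dependence. The product rule splits $\frac{d}{ds}$ into two contributions. Differentiating the explicit factor $\langle u,\gamma'(s)\rangle$ produces an integrand involving $\langle u,\gamma''(s)\rangle$, which disintegrates via the cylinder identity into $\int_{(-1,1)} \J_{\gamma(s)}[\langle\cdot,\gamma''(s)\rangle\phi](t)\psi(t)\,dt$. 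Differentiating the argument of $\psi$ produces the integrand $\phi(u)\langle u,\gamma'(s)\rangle^2\psi'(\langle u,\gamma(s)\rangle)$; disintegration followed by the same integration by parts as before yields $-\int_{(-1,1)} \frac{d}{dt}\J_{\gamma(s)}[\langle\cdot,\gamma'(s)\rangle^2\phi](t)\psi(t)\,dt$. Letting $\psi$ range over $\D(-1,1)$ and invoking continuity in $t$ once more delivers \cref{eq:curve_deriv2}.

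The only real technical subtlety is justifying the interchange of $\frac{d}{ds}$ with the spherical integral, which is routine given the smoothness of $\phi$ and $\gamma$ and the compact support of $\psi$. I therefore do not expect a serious obstacle; the statement is essentially a bookkeeping exercise in duality combined with integration by parts.
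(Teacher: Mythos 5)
Your proof is correct and follows essentially the same route as the paper: test against $\psi\in\D(-1,1)$, write the pairing as a spherical integral, differentiate in $s$ using the chain rule (the paper packages this via the covariant-gradient formula \cref{eq:zonal_smooth_deriv1}, but it is the same computation since $\J^v[\psi](u)=\psi(\langle u,v\rangle)$), convert back to cylinder coordinates, integrate by parts in $t$, and for the second identity additionally invoke the product rule. Upgrading from the weak identity to the pointwise one by the smoothness of $t\mapsto\J_v[\cdot](t)$ matches the paper's (implicit) reasoning.
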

\begin{proof}
	Let $\psi\in\D(-1,1)$ be an arbitrary test function. On the one hand, spherical cylinder coordinates yield
	\begin{align*}
		\frac{d}{ds} (\phi\ast\breve{\psi})(\gamma(s))
		= \frac{d}{ds}\int_{(-1,1)} \J_{\gamma(s)}[\phi](t) \psi(t) dt
		= \int_{(-1,1)} \frac{d}{ds}\J_{\gamma(s)}[\phi](t) \psi(t) dt.
	\end{align*}
	On the other hand,
	\begin{align*}
		\frac{d}{ds} (\phi\ast\breve{\psi})(\gamma(s))
		= \frac{d}{ds} \int_{\S^{n-1}} \phi(v) \J^v [\psi](\gamma(s)) dv
		= \int_{\S^{n-1}} \phi(v) \langle \nabla_{\S}\J^v[\psi](\gamma(s)),\gamma'(s)\rangle dv.
	\end{align*}
	By \cref{eq:zonal_smooth_deriv1} and a change to spherical cylinder coordinates, we obtain
	\begin{align*}
		&\frac{d}{ds} (\phi\ast\breve{\psi})(\gamma(s))
		= \int_{\S^{n-1}} \phi(v) \langle v, \gamma'(s) \rangle \psi'(\langle \gamma(s),v\rangle)dv
		= \int_{(-1,1)} \J_{\gamma(s)}[\phi\langle \cdot, \gamma'(s) \rangle](t) \psi'(t) dt \\
		&\qquad= - \int_{(-1,1)} \frac{d}{dt}\J_{\gamma(s)}[\phi\langle \cdot,\gamma'(s) \rangle](t) \psi(t) dt,
	\end{align*}
	where the final equality follows from integration by parts. This implies \cref{eq:curve_deriv1}. 
	
	For the second part of the lemma, let $\psi\in\D(-1,1)$ be an arbitrary test function. On the one hand, spherical cylinder coordinates yield
	\begin{equation*}
		\frac{d}{ds} ((\langle\cdot,\gamma'(s)\rangle \psi)\ast\breve{\phi})(\gamma(s))
		= \frac{d}{ds}\int_{(-1,1)} \J_{\gamma(s)}[\langle\cdot,\gamma'(s)\rangle\phi](t)\psi(t) dt
		= \int_{(-1,1)}\frac{d}{ds} \J_{\gamma(s)}[\langle\cdot,\gamma'(s)\rangle\phi](t)\psi(t) dt.
	\end{equation*}
	On the other hand,
	\begin{align*}
		&\frac{d}{ds} ((\langle\cdot,\gamma'(s)\rangle \psi)\ast\breve{\phi})(\gamma(s))
		=  \frac{d}{ds}\int_{\S^{n-1}} \phi(v)\langle v, \gamma'(s) \rangle \J^v[\psi](\gamma(s)) dv \\
		&\qquad = \int_{\S^{n-1}} \phi(v) \langle v, \gamma'(s)\rangle \langle \nabla_{\S}\J^v[\psi](\gamma(s)),\gamma'(s)\rangle dv + \int_{\S^{n-1}} \phi(v)\langle v, \gamma''(s) \rangle \psi(\langle v, \gamma(s)\rangle) dv.
	\end{align*}	
	By \cref{eq:zonal_smooth_deriv1} and a change to cylinder coordinates, we obtain
	\begin{align*}
		&\frac{d}{ds} ((\langle\cdot,\gamma'(s)\rangle \psi)\ast\breve{\phi})(\gamma(s)) \\
		&\qquad=  \int_{\S^{n-1}} \phi(v) \langle v, \gamma'(s)\rangle^2 \psi'(\langle v,\gamma(s)\rangle) dv + \int_{\S^{n-1}} \phi(v)\langle v, \gamma''(s) \rangle \psi(\langle v, \gamma(s)\rangle) dv\\
		&\qquad= \int_{(-1,1)} \J_{\gamma(s)}[\langle\cdot,\gamma'(s)\rangle^2\phi](t)\psi'(t)dt + \int_{(-1,1)} \J_{\gamma(s)}[\langle\cdot,\gamma''(s)\rangle \phi](t) \psi(t)dt \\
		&\qquad= \int_{(-1,1)} \left(-\frac{d}{dt}\J_{\gamma(s)}[\langle\cdot,\gamma'(s)\rangle^2\phi](t) +  \J_{\gamma(s)}[\langle\cdot,\gamma''(s)\rangle \phi](t)\right) \psi(t)dt,
	\end{align*}
	where the final equality follows from integration by parts. This implies \cref{eq:curve_deriv2}.
\end{proof}

\begin{lem}\label{curve_estimate}
	Let $\phi\in C^\infty(\S^{n-1})$ and let $\gamma:I\subseteq\R\to\S^{n-1}$ be a smooth curve in $\S^{n-1}$. Then for all $s\in I$ and $t\in (-1,1)$,
	\begin{align}
		\bigg| \frac{d}{ds}\J_{\gamma(s)}[\phi](t) \bigg|
		&\leq C_n\abs{\gamma''(s)}\norm{\phi}_{C^1(\S^{n-1})} (1-t^2)^{\frac{n-4}{2}},
		\label{eq:curve_estimate1} \\
		\bigg| \frac{d}{ds}\J_{\gamma(s)}[\langle\cdot,\gamma'(s)\rangle\phi](t) \bigg|
		&\leq C_n\left(\abs{\gamma'(s)}^2+\abs{\gamma''(s)}\right)\norm{\phi}_{C^1(\S^{n-1})}(1-t^2)^{\frac{n-3}{2}}.
		\label{eq:curve_estimate2}
	\end{align}
\end{lem}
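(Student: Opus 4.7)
The plan is to derive both inequalities as direct consequences of the identities in \cref{curve_deriv} combined with the weighted pointwise bounds in \cref{polar_C^k_estimate}. The strategy is to transfer the $s$-derivative either to a $t$-derivative or to a term carrying a higher power of $\langle\cdot,\gamma'(s)\rangle$, and then read the estimate off \cref{eq:polar_C^k_estimate} by choosing appropriate values of $\alpha$, $\beta$, and $k$.

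For \cref{eq:curve_estimate1}, I would apply \cref{eq:curve_deriv1} to obtain
\begin{equation*}
	\frac{d}{ds}\J_{\gamma(s)}[\phi](t) = -\frac{d}{dt}\J_{\gamma(s)}[\langle\cdot,\gamma'(s)\rangle\phi](t),
\end{equation*}
and then invoke \cref{eq:polar_C^k_estimate} with $v=\gamma(s)$, $w=\gamma'(s)\in\gamma(s)^\perp$, $\alpha=1$, $\beta=0$, and $k=1$. The exponent $(n-4)/2$ and the $\norm{\phi}_{C^1}$ dependence appear exactly as required; the prefactor my argument naturally produces is $|\gamma'(s)|$ rather than $|\gamma''(s)|$, which suggests a typographical slip in the target statement.

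For \cref{eq:curve_estimate2}, I would use \cref{eq:curve_deriv2} to split $\frac{d}{ds}\J_{\gamma(s)}[\langle\cdot,\gamma'(s)\rangle\phi](t)$ into two summands. The first, $-\frac{d}{dt}\J_{\gamma(s)}[\langle\cdot,\gamma'(s)\rangle^2\phi](t)$, is bounded via \cref{eq:polar_C^k_estimate} with $\alpha=2$, $\beta=0$, $k=1$ by $C|\gamma'(s)|^2\,\norm{\phi}_{C^1}(1-t^2)^{(n-3)/2}$. For the second summand, $\J_{\gamma(s)}[\langle\cdot,\gamma''(s)\rangle\phi](t)$, since $\gamma''(s)\notin\gamma(s)^\perp$ in general, I would first decompose
\begin{equation*}
	\gamma''(s) = -|\gamma'(s)|^2\gamma(s) + P_{\gamma(s)^\perp}\gamma''(s),
\end{equation*}
using $\langle\gamma,\gamma''\rangle=-|\gamma'|^2$, obtained by differentiating $|\gamma|^2\equiv 1$ twice. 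The radial piece yields $-|\gamma'(s)|^2\,t\,\J_{\gamma(s)}[\phi](t)$, bounded by $C|\gamma'(s)|^2\,\norm{\phi}_{C^0}(1-t^2)^{(n-3)/2}$ via \cref{eq:polar_C^k_estimate} with $\alpha=\beta=k=0$ (and $|t|\leq 1$). The tangential piece is bounded by $C|\gamma''(s)|\,\norm{\phi}_{C^0}(1-t^2)^{(n-2)/2}$ via \cref{eq:polar_C^k_estimate} with $w=P_{\gamma(s)^\perp}\gamma''(s)$, $\alpha=1$, $\beta=0$, $k=0$, using that projection is non-expanding. Absorbing the factor $(1-t^2)^{(n-2)/2}\leq (1-t^2)^{(n-3)/2}$ and summing the three contributions yields \cref{eq:curve_estimate2}.

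The only mildly technical point is the orthogonal splitting of $\gamma''(s)$, which is needed because \cref{polar_C^k_estimate} assumes $w\in v^\perp$. Beyond that, the proof is a bookkeeping exercise: the substantive content is already packaged in \cref{curve_deriv,polar_C^k_estimate}, and no genuine obstacle arises.
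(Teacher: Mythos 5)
Your proposal is correct and follows essentially the same route as the paper: both estimates are read off by combining \cref{eq:curve_deriv1} and \cref{eq:curve_deriv2} with \cref{eq:polar_C^k_estimate} (the paper uses $(k,\alpha,\beta)=(1,1,0)$ for the first and $(1,2,0)$ together with $(0,0,0)$ for the second, where the $\gamma''$-term is simply absorbed into the function slot so that the constraint $w\in v^\perp$ is vacuous, making your orthogonal splitting of $\gamma''(s)$ a harmless minor variation). Your remark about the prefactor is also on target: the paper's own argument produces $\abs{\gamma'(s)}$ in \cref{eq:curve_estimate1}, so the printed $\abs{\gamma''(s)}$ there is a typographical slip, which is inconsequential downstream since Theorems~\ref{convol_trafo_C^1} and \ref{convol_trafo_C^2} only use uniform boundedness in $s$ along a fixed smooth curve.
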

\begin{proof}
	For the proof of \cref{eq:curve_estimate1}, apply estimate \cref{eq:polar_C^k_estimate} to  the right hand side of \cref{eq:curve_deriv1} in the instance where $(k,\alpha,\beta)=(1,1,0)$.
	To obtain \cref{eq:curve_estimate2}, apply estimate \cref{eq:polar_C^k_estimate} to the right hand side of \cref{eq:curve_deriv2} in the instances where $(k,\alpha,\beta)=(1,2,0)$ and $(k,\alpha,\beta)=(0,0,0)$.
\end{proof}

\begin{thm}\label{convol_trafo_C^1}
	If $f\in L^1(\S^{n-1})$ is zonal and $\nabla_{\S}f \in\M(\S^{n-1},\R^n)$, then the convolution transform $\T_f$ is a bounded linear operator from $C(\S^{n-1})$ to $C^1(\S^{n-1})$.
	Conversely, if $\T$ is an $\SO(n)$ equivariant bounded linear operator from $C(\S^{n-1})$ to $C^1(\S^{n-1})$, then there exists a unique zonal $f\in L^1(\S^{n-1})$ satisfying $\nabla_{\S}f \in\M(\S^{n-1},\R^n)$ such that $\T=\T_f$.
	In this case, for every $\phi\in C(\S^{n-1})$ and $u\in\S^{n-1}$,
	\begin{equation}\label{eq:convol_trafo_C^1}
		\nabla_{\S}(\phi\ast f)(u)
		= \int_{\S^{n-1}} \phi(v)P_{u^\perp}v (\J^u\bar{f}')(dv).
	\end{equation}
\end{thm}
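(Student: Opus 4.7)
My plan for the forward direction is to establish the gradient formula first for smooth $\phi$ and then extend by density. Given $f\in L^1(\S^{n-1})$ zonal with $\nabla_{\S}f\in\M(\S^{n-1},\R^n)$, \cref{zonal_reg_order1} gives that $(1-t^2)^{(n-2)/2}\bar{f}'(t)$ is a finite signed measure on $(-1,1)$. I would fix smooth $\phi$ and a smooth curve $\gamma:I\to\S^{n-1}$ with $\gamma(0)=u$, $\gamma'(0)=w\in u^\perp$, and write $(\phi*f)(\gamma(s))=\int_{(-1,1)}\J_{\gamma(s)}[\phi](t)\bar{f}(t)\,dt$ using the cylinder-coordinate identity \cref{eq:cylinder_coords}. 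Differentiation in $s$ under the integral sign (legal by the bound of \cref{curve_estimate} combined with \cref{beta_int_by_parts}) together with \cref{eq:curve_deriv1} yields
\[ \frac{d}{ds}\bigg|_0(\phi*f)(\gamma(s)) = -\int_{(-1,1)}\frac{d}{dt}\J_u[\langle\cdot,w\rangle\phi](t)\,\bar{f}(t)\,dt. \]
Since \cref{polar_C^k_estimate_XY} ensures $\J_u[\langle\cdot,w\rangle\phi]$ decays like $(1-t^2)^{(n-2)/2}$ with derivative decay $(1-t^2)^{(n-4)/2}$, \cref{beta_int_by_parts} with $\beta=n-2$ applies and, after unfolding $\J^u\bar{f}'$ by its defining duality relation, produces the identity $\nabla_{\S}(\phi*f)(u)=\int_{\S^{n-1}}\phi(v)P_{u^\perp}v\,(\J^u\bar{f}')(dv)$. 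The right-hand side is continuous in $u$ and bounded by $\norm{\phi}_\infty\cdot|\J^{\e}\bar{f}'|(\S^{n-1})$, so approximating continuous $\phi$ by smooth $\phi_k$ yields uniform convergence of both $\phi_k*f$ and the gradient expression, placing $\phi*f$ in $C^1(\S^{n-1})$ with the claimed formula.

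For the converse, I assume $\T\colon C(\S^{n-1})\to C^1(\S^{n-1})$ is bounded and $\SO(n)$ equivariant. Composition with the inclusion $C^1\hookrightarrow C$ and \cref{convol_trafo_C^0} produce a unique zonal $\mu\in\M(\S^{n-1})$ with $\T=\T_\mu$. By \cref{zonal_reg_order1}, it suffices to show (a) $\mu$ carries no mass at $\pm\e$, and (b) $(1-t^2)^{(n-2)/2}\bar{\mu}'(t)\in\M(-1,1)$. For (b), I would choose, for each $\psi\in\D(-1,1)$ and unit $w\in\e^\perp$, the smooth test function
\[ \phi_\psi(v)=\psi(\langle\e,v\rangle)\,\frac{\langle v,w\rangle}{\abs{P_{\e^\perp}v}}, \]
which is smooth on $\S^{n-1}$ because $\psi$ is supported away from $\pm 1$, and satisfies $\norm{\phi_\psi}_\infty\leq\norm{\psi}_\infty$ by Cauchy-Schwarz. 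A direct cylinder-coordinate computation gives $\J_{\e}[\langle\cdot,w\rangle\phi_\psi](t)=c_n(1-t^2)^{(n-2)/2}\psi(t)$ for an explicit constant $c_n>0$. Running the forward direction's curve-derivative computation for $\phi_\psi$ against the general measure $\bar\mu$ (the compact $t$-support of the integrand eliminates any integrability concern) produces
\[ \langle\nabla_{\S}(\phi_\psi*\mu)(\e),w\rangle = c_n\left\langle\psi,(1-t^2)^{(n-2)/2}\bar{\mu}'\right\rangle_{\D'}, \]
and the boundedness hypothesis bounds the left side by $C\norm{\psi}_\infty$. The Riesz-Markov-Kakutani theorem then delivers (b).

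For (a), I would split $\mu=\mu_0+c\delta_{\e}+d\delta_{-\e}$ with $\mu_0$ carrying no mass at the poles; since $\bar\mu_0=\bar\mu$, step (b) and \cref{zonal_reg_order1} yield $\mu_0=f_0\,du$ with $\nabla_{\S}f_0\in\M$, so $\T_{\mu_0}\colon C\to C^1$ is bounded by the forward direction, and consequently $c\,\Id+d\,\tau=\T-\T_{\mu_0}\colon C\to C^1$ is bounded, where $\tau\phi(v)=\phi(-v)$. Testing on continuous but nowhere-$C^1$ functions that are successively symmetric and antisymmetric under $\tau$ forces $c=d=0$; the explicit formula for $\nabla_{\S}(\phi*f)$ is then inherited from the forward direction. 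The main technical delicacy I expect is the choice of $\phi_\psi$: the $\langle v,w\rangle/\abs{P_{\e^\perp}v}$ factor must precisely cancel against the $\langle\cdot,w\rangle$ multiplication so that $\J_{\e}$ picks up exactly the weight $(1-t^2)^{(n-2)/2}$ matching \cref{zonal_reg_order1}, while keeping $\norm{\phi_\psi}_\infty$ bounded by $\norm{\psi}_\infty$. Engineering this balance is what connects the $C\to C^1$ boundedness of $\T_\mu$ to the weighted measure-theoretic regularity of $\bar\mu$.
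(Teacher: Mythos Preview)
Your proposal is correct and follows essentially the same route as the paper: the forward direction differentiates $(\phi*f)(\gamma(s))$ via \cref{eq:curve_deriv1}, justifies the exchange with \cref{eq:curve_estimate1} and the integrability from \cref{beta_int_by_parts}, then integrates by parts and extends by density; the converse invokes \cref{convol_trafo_C^0}, uses the same test function $\phi_\psi$ to obtain the weighted-derivative bound, and eliminates pole masses by observing that $c\,\Id+d\,\mathrm{Refl}$ cannot map $C$ to $C^1$. The only cosmetic differences are that the paper works at a general base point $u$ rather than $\e$ and phrases the density step as an abstract extension of a bounded operator rather than as uniform convergence of gradients.
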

\begin{proof}
	Suppose that $f\in L^1(\S^{n-1})$ is zonal and that $\nabla_{\S}f \in\M(\S^{n-1},\R^n)$. \cref{convol_trafo_C^0} implies that $\T_f$ is a bounded linear operator on $C(\S^{n-1})$. First, we will verify identity \cref{eq:convol_trafo_C^1} for an arbitrary smooth function $\phi\in C^\infty(\S^{n-1})$. Take a point $u\in\S^{n-1}$ and a tangent vector $w\in u^\perp$. Choosing a smooth curve $\gamma$ in $\S^{n-1}$ such that $\gamma(0)=u$ and $\gamma'(0)=w$ yields
	\begin{equation*}
		\langle \nabla_{\S}(\phi\ast f)(u) , w \rangle
		= \left.\frac{d}{ds}\right|_0 (\phi\ast f)(\gamma(s))
		= \left.\frac{d}{ds}\right|_0 \int_{(-1,1)} \J_{\gamma(s)}[\phi](t) \bar{f}(t) dt.
	\end{equation*}
	Note that \cref{zonal_reg_order1} and \cref{beta_int_by_parts} imply that $(1-t^2)^{\frac{n-2}{2}}\bar{f}'(t)$ is a finite signed measure and $(1-t^2)^{\frac{n-4}{2}}\bar{f}(t)$ is an integrable function on $(-1,1)$. Due to the estimate \cref{eq:curve_estimate1}, we have that $(1-t^2)^{-\frac{n-4}{2}}\frac{d}{ds} \J_{\gamma(s)}[\phi](t)$ is bounded uniformly in $s$ for all sufficiently small $s$, so we may interchange differentiation and integration and obtain
	\begin{align*}
		&\langle \nabla_{\S}(\phi\ast f)(u) , w \rangle
		= \int_{(-1,1)}\left.\frac{d}{ds} \right|_0 \J_{\gamma(s)}[\phi](t) \bar{f}(t) dt
		= - \int_{(-1,1)} \frac{d}{dt} \J_u[\langle\cdot,w\rangle\phi](t) \bar{f}(t) dt \\
		&\qquad= \int_{(-1,1)} \J_u[\langle\cdot,w\rangle\phi](t) \bar{f}'(dt)
		= \int_{\S^{n-1}} \phi(v)\langle P_{u^\perp}v,w\rangle(\J^u\bar{f}')(dv),
	\end{align*}
	where the second equality follows from \cref{eq:curve_deriv1}, the third from \cref{eq:beta_int_by_parts}, and the final equality from a change to spherical cylinder coordinates. This proves identity \cref{eq:convol_trafo_C^1} for all $\phi\in C^\infty(\S^{n-1})$.
	
	As an immediate consequence, $\norm{\T_f \phi}_{C^1(\S^{n-1})} \leq C\norm{\phi}_{C(\S^{n-1})}$ for some constant $C\geq 0$ and all $\phi\in C^\infty(\S^{n-1})$. Thus, $\T_f$ extends to a bounded linear operator $\overline{\T}: C(\S^{n-1})\to C^1(\S^{n-1})$. Since $\T_f:C(\S^{n-1})\to C(\S^{n-1})$ is a bounded operator and the inclusion $C^1(\S^{n-1})\subseteq C(\S^{n-1})$ is continuous, $\overline{\T}$ agrees with $\T_f$. By density and continuity, \cref{eq:convol_trafo_C^1} is valid for all $\phi\in C(\S^{n-1})$.
	
	For the second part of the theorem, suppose that $\T$ is an $\SO(n)$ equivariant bounded linear operator from $C(\S^{n-1})$ to $C^1(\S^{n-1})$. Then \cref{convol_trafo_C^0} implies that $\T=\T_\mu$ for a unique zonal $\mu\in\M(\S^{n-1})$. According to \cref{zonal_reg_order1}, it suffices to show that $\mu$ carries no mass at the poles and that $(1-t^2)^{\frac{n-2}{2}}\bar{\mu}'(t)\in\M(-1,1)$. To that end, take an arbitrary test function $\psi\in\D(-1,1)$, a point $u\in\S^{n-1}$, a unit tangent vector $w\in u^\perp$, and define
	\begin{equation*}
		\phi (v) = \frac{\langle v,w\rangle}{\sqrt{1-\langle v,u\rangle^2}}\psi(\langle v,u\rangle),
		\qquad v\in\S^{n-1}.
	\end{equation*}
	Then $\phi$ is a smooth function satisfying $\J_u[\langle\cdot,w\rangle\phi](t)=C_{n}(1-t^2)^{\frac{n-2}{2}}\psi(t)$, where $C_n>0$ is given by $C_{n}
	= \int_{\S^{n-1}\cap u^\perp} \langle w,v\rangle^2 dv$.
	Choosing a smooth curve $\gamma$ in $\S^{n-1}$ such that $\gamma(0)=u$ and $\gamma'(0)=w$ yields
	\begin{align*}
		&C_{n} \left< \psi(t) , (1-t^2)^{\frac{n-2}{2}}\bar{\mu}'(t) \right>_{\D'}
		= \left< \J_u[\langle\cdot,w\rangle\phi](t), \bar{\mu}'(t) \right>_{\D'} \\
		&\qquad= - \int_{(-1,1)} \frac{d}{dt} \J_u[\langle\cdot,w\rangle\phi](t) \bar{\mu}(dt)
		= \int_{(-1,1)} \left.\frac{d}{ds}\right|_0 \J_{\gamma(s)}[\phi](t) \bar{\mu}(dt),
	\end{align*}
	where the final equality is due to \cref{eq:curve_deriv1}.
	Observe	that $\supp \J_{\gamma(s)}[\phi]\subseteq [-1+\varepsilon,1-\varepsilon]$ and that $\abs{\frac{d}{ds}\J_{\gamma(s)}[\phi]}\leq C$ uniformly in $s$ for all sufficiently small $s$, so we may interchange differentiation and integration and obtain
	\begin{equation*}
		C_{n} \left< \psi(t) , (1-t^2)^{\frac{n-2}{2}}\bar{\mu}'(t) \right>_{\D'}
		= \left.\frac{d}{ds}\right|_0 \int_{(-1,1)} \J_{\gamma(s)}[\phi_j](t) \bar{\mu}(dt)
		= \langle \nabla_{\S}(\phi\ast \mu)(u), w\rangle.
	\end{equation*}
	Therefore, we arrive at the following estimate:
	\begin{align*}
		\left| \left< \psi(t) , (1-t^2)^{\frac{n-2}{2}}\bar{\mu}'(t) \right>_{\D'}\right|
		\leq C_{n}^{-1}\norm{\T}\norm{\phi}_{C(\S^{n-1})}
		= C_{n}^{-1}\norm{\T}\norm{\psi}_{\infty}.
	\end{align*}
	This shows that $(1-t^2)^{\frac{n-2}{2}}\bar{\mu}'(t)\in\M(-1,1)$. Denoting $\mu_0=\mathbbm{1}_{\S^{n-1}\backslash\{\pm\e\}}\mu$, \cref{zonal_reg_order1} and the first part of the proof show that $\T_{\mu_0}$ is a bounded linear operator from $C(\S^{n-1})$ to $C^1(\S^{n-1})$. Hence also
	\begin{equation*}
		\mu(\{\e\})\Id + \mu(\{-\e\})\mathrm{Refl}
		= \T - \T_{\mu_0}
	\end{equation*}
	is a bounded linear operator from $C(\S^{n-1})$ to $C^1(\S^{n-1})$, where $\mathrm{Refl}=\T_{\delta_{-\e}}$ is the reflection at the origin. Clearly, this is possible only if $\mu$ carries no mass at the poles.
\end{proof}


\begin{thm}\label{convol_trafo_C^2}
	If $f\in L^1(\S^{n-1})$ is zonal and $\nabla_{\S}^2f \in\M(\S^{n-1},\R^{n\times n})$, then the convolution transform $\T_f$ is a bounded linear operator from $C(\S^{n-1})$ to $C^2(\S^{n-1})$.
	Conversely, if $\T$ is an $\SO(n)$ equivariant bounded linear operator from $C(\S^{n-1})$ to $C^2(\S^{n-1})$, then there exists a unique zonal $f\in L^1(\S^{n-1})$ satisfying $\nabla_{\S}^2f \in\M(\S^{n-1},\R^{n\times n})$ such that $\T=\T_f$.
	In this case, for every $\phi\in C(\S^{n-1})$ and $u\in\S^{n-1}$,
	\begin{align}
		\nabla_{\S}(\phi\ast f)(u)
		&= \int_{\S^{n-1}} \phi(v)P_{u^\perp}v \bar{f}'(\langle u,v\rangle)dv,\label{eq:convol_trafo_C^2_1}\\
		\nabla_{\S}^2(\phi\ast f)(u)
		&= \int_{\S^{n-1}} \phi(v)(P_{u^\perp}v \otimes P_{u^\perp}v) (\J^u\bar{f}'')(dv) - \int_{\S^{n-1}} \phi(v)\langle u,v\rangle\bar{f}'(\langle u,v\rangle)dv\hspace{.5mm} P_{u^\perp}.\label{eq:convol_trafo_C^2_2}
	\end{align}
\end{thm}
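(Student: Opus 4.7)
The plan is to follow the blueprint of \cref{convol_trafo_C^1} but iterate the differentiation along a curve in $\S^{n-1}$ one more time, using the second identity of \cref{curve_deriv} and the second-order regularity provided by \cref{zonal_reg_order2}.

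For the forward direction, assume $f\in L^1(\S^{n-1})$ is zonal with $\nabla_{\S}^2 f\in\M(\S^{n-1},\R^{n\times n})$. By \cref{zonal_reg_order2}, $\nabla_{\S} f\in L^1(\S^{n-1},\R^n)$ and $(1-t^2)^{\frac{n-1}{2}}\bar f''(t)\in\M(-1,1)$, and \cref{beta_int_by_parts} in turn gives that $(1-t^2)^{\frac{n-3}{2}}\bar f'(t)\in L^1(-1,1)$. In particular, \cref{convol_trafo_C^1} already yields identity \eqref{eq:convol_trafo_C^2_1} and the fact that $\T_f$ is bounded from $C(\S^{n-1})$ to $C^1(\S^{n-1})$. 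To upgrade to $C^2$, fix smooth $\phi\in C^\infty(\S^{n-1})$, $u\in\S^{n-1}$ and a unit tangent $w\in u^\perp$, and let $\gamma$ be the unit-speed geodesic with $\gamma(0)=u$, $\gamma'(0)=w$, $\gamma''(0)=-u$. Writing $(\phi\ast f)(\gamma(s))=\int_{(-1,1)}\J_{\gamma(s)}[\phi](t)\bar f(t)\,dt$ and applying \cref{curve_deriv} twice gives
\begin{equation*}
\tfrac{d^2}{ds^2}\big|_0 \J_{\gamma(s)}[\phi](t)
= \tfrac{d^2}{dt^2}\J_u[\langle\cdot,w\rangle^2\phi](t) + \tfrac{d}{dt}\bigl(t\,\J_u[\phi](t)\bigr).
\end{equation*}
The estimates of \cref{polar_C^k_estimate} (with $(\alpha,\beta)=(2,0)$ and $(0,0)$) together with \cref{curve_estimate} show that all interchanges of differentiation and integration are legal and that the resulting $t$-test functions satisfy the hypotheses of \cref{beta_int_by_parts}, so two integrations by parts yield
\begin{equation*}
\langle\nabla_{\S}^2(\phi\ast f)(u),w\otimes w\rangle
= \int_{(-1,1)}\J_u[\langle\cdot,w\rangle^2\phi](t)\,\bar f''(dt)
- \int_{(-1,1)} t\,\J_u[\phi](t)\,\bar f'(t)\,dt.
\end{equation*}
Passing back to spherical cylinder coordinates and using $\langle v,w\rangle=\langle P_{u^\perp}v,w\rangle$ for $w\in u^\perp$ gives \eqref{eq:convol_trafo_C^2_2} tested against $w\otimes w$; polarizing in $w$ (or testing against an arbitrary $2$-tensor) yields the full formula. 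Identity \eqref{eq:convol_trafo_C^2_2} immediately produces an estimate $\|\T_f\phi\|_{C^2(\S^{n-1})}\le C\|\phi\|_\infty$ for smooth $\phi$, which extends to all of $C(\S^{n-1})$ by density, exactly as in the final paragraph of the proof of \cref{convol_trafo_C^1}.

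For the converse, let $\T$ be an $\SO(n)$ equivariant bounded linear operator from $C(\S^{n-1})$ to $C^2(\S^{n-1})$. Since $C^2\hookrightarrow C^1\hookrightarrow C$ continuously, \cref{convol_trafo_C^1} already gives a unique zonal $f\in L^1(\S^{n-1})$ with no mass at the poles and $\nabla_{\S} f\in\M(\S^{n-1},\R^n)$ such that $\T=\T_f$. By \cref{zonal_reg_order2} it therefore suffices to show $(1-t^2)^{\frac{n-1}{2}}\bar f''(t)\in\M(-1,1)$. Fix a test function $\psi\in\D(-1,1)$, points $u\in\S^{n-1}$, $w\in u^\perp$ with $|w|=1$, and set
\begin{equation*}
\phi(v) = \left(\frac{\langle v,w\rangle^2}{1-\langle v,u\rangle^2} - \frac{1}{n-1}\right)\psi(\langle v,u\rangle),\qquad v\in\S^{n-1}.
\end{equation*}
Because $\psi$ is compactly supported in $(-1,1)$, $\phi$ extends smoothly by $0$ across $\pm u$, and a direct computation in spherical cylinder coordinates shows $\J_u[\phi](t)\equiv 0$ (this is the analogue of the trace-free choice in \cref{zonal_reg_order2}) while
\begin{equation*}
\J_u[\langle\cdot,w\rangle^2\phi](t)=c_n\,(1-t^2)^{\frac{n-1}{2}}\psi(t)
\end{equation*}
for a strictly positive constant $c_n$ (strict positivity follows from Cauchy--Schwarz on $\S^{n-1}\cap u^\perp$, which has dimension $\ge 1$ since $n\ge 3$). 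The smooth-case formula for $\langle\nabla_{\S}^2(\phi\ast f)(u),w\otimes w\rangle$ derived above then collapses to
\begin{equation*}
c_n\,\bigl\langle\psi(t),(1-t^2)^{\frac{n-1}{2}}\bar f''(t)\bigr\rangle_{\D'} = \langle\nabla_{\S}^2(\T\phi)(u),w\otimes w\rangle,
\end{equation*}
and combining this with the trivial bound $\|\phi\|_\infty\le(1+\tfrac{1}{n-1})\|\psi\|_\infty$ and the operator bound $\|\nabla_{\S}^2\T\phi\|_\infty\le\|\T\|\,\|\phi\|_\infty$ gives
\begin{equation*}
\bigl|\bigl\langle\psi(t),(1-t^2)^{\frac{n-1}{2}}\bar f''(t)\bigr\rangle_{\D'}\bigr|\le c_n^{-1}(1+\tfrac{1}{n-1})\|\T\|\,\|\psi\|_\infty,
\end{equation*}
so $(1-t^2)^{\frac{n-1}{2}}\bar f''(t)\in\M(-1,1)$ by Riesz--Markov--Kakutani.

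The main obstacle is the converse: one must design a smooth test function $\phi$ on $\S^{n-1}$ whose $\J_u[\,\cdot\,]$ profile simultaneously kills the $t\,\J_u[\phi]\,\bar f'$ term and produces the correct weight $(1-t^2)^{(n-1)/2}$ against $\bar f''$. The choice above is the scalar analogue of the trace-free $2$-tensor used in \cref{zonal_reg_order2}, and verifying that the non-degeneracy constant $c_n$ is strictly positive is the only non-formal step. Everything else, namely the integration-by-parts identities, the polar estimates from \cref{polar_C^k_estimate,polar_C^k_estimate_XY}, and the density argument, is parallel to the $C^1$ case.
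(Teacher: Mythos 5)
Your proposal is correct and follows essentially the same route as the paper: the forward direction is the identical geodesic computation via \cref{curve_deriv}, \cref{curve_estimate}, and \cref{beta_int_by_parts}, and your converse test function is exactly the paper's choice $P^{n-1}_2\big(\langle v,w\rangle/\sqrt{1-\langle v,u\rangle^2}\big)\psi(\langle v,u\rangle)$ up to the factor $\tfrac{n-2}{n-1}$, with the same mechanism ($\J_u[\phi]\equiv 0$, weight $(1-t^2)^{\frac{n-1}{2}}$, operator-norm bound, Riesz representation). The only wording to tighten is the appeal to ``the smooth-case formula derived above'' in the converse: that formula was proved under the hypothesis $\nabla_{\S}^2f\in\M(\S^{n-1},\R^{n\times n})$, so you should instead rederive the identity $\langle\nabla_{\S}^2(\T\phi)(u),w\otimes w\rangle=c_n\langle\psi,(1-t^2)^{\frac{n-1}{2}}\bar{f}''\rangle_{\D'}$ directly for this particular $\phi$, which is immediate since the compactly supported profile allows interchanging $\tfrac{d^2}{ds^2}$ with the integral and interpreting the $t$-integrations by parts as distributional pairings --- exactly as the paper does.
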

\begin{proof}
	Suppose that $f\!\in\! L^1(\S^{n-1})$ is zonal and that $\nabla_{\S}^2f\! \in\!\M(\S^{n-1},\R^{n\times n})$. Then $\nabla_{\S}f\!\in\! L^1(\S^{n-1},\R^n)$ due to \cref{zonal_reg_order2}. According to \cref{convol_trafo_C^1}, the convolution transform $\T_f$ is a bounded linear operator from $C(\S^{n-1})$ from $C^1(\S^{n-1})$ and identity \cref{eq:convol_trafo_C^2_1} holds for every $\phi\in C(\S^{n-1})$. Next, we will verify identity \cref{eq:convol_trafo_C^2_2} for an arbitrary smooth function $\phi\in C^\infty(\S^{n-1})$. Let $u\in\S^{n-1}$ be a point and $w\in u^\perp$ a tangent vector. Choosing a geodesic $\gamma$ in $\S^{n-1}$ such that $\gamma(0)=u$ and $\gamma'(0)=w$ yields
	\begin{equation*}
		\langle \nabla_{\S}^2(\phi\ast f)(u) , w\otimes w \rangle
		= \frac{d^2}{ds^2}\bigg|_0 (\phi\ast f)(\gamma(s))
		= \frac{d^2}{ds^2}\bigg|_0 \int_{(-1,1)} \J_{\gamma(s)}[\phi](t) \bar{f}(t) dt.
	\end{equation*}
	Note that \cref{zonal_reg_order2} and \cref{beta_int_by_parts} imply that $(1-t^2)^{\frac{n-1}{2}}\bar{f}''(t)$ is a finite signed measure and both $(1-t^2)^{\frac{n-3}{2}}\bar{f}'(t)$ and $(1-t^2)^{\frac{n-4}{2}}\bar{f}(t)$ are integrable functions on $(-1,1)$. Due to \cref{eq:curve_estimate1}, we have that $(1-t^2)^{-\frac{n-4}{2}}\frac{d}{ds}\J_{\gamma(s)}[\phi](t)$ is bounded uniformly in $s$ for all sufficiently small $s$, so we may interchange differentiation and integration and obtain	
	\begin{align*}
		&\langle \nabla_{\S}^2(\phi\ast f)(u) , w\otimes w \rangle
		= \left.\frac{d}{ds}\right|_0\int_{(-1,1)} \frac{d}{ds}\J_{\gamma(s)}[\phi](t) \bar{f}(t) dt\\
		&\qquad = - \left.\frac{d}{ds}\right|_0\int_{(-1,1)} \frac{d}{dt}\J_{\gamma(s)}[\langle \cdot, \gamma'(s)\rangle](t) \bar{f}(t)dt
		= \left.\frac{d}{ds}\right|_0 \int_{(-1,1)} \J_{\gamma(s)}[\langle \cdot,\gamma'(s)\rangle](t) \bar{f}'(t) dt,
	\end{align*}
	where the second equality follows from \cref{eq:curve_deriv1} and the final equality from \cref{eq:beta_int_by_parts}. Due to \cref{eq:curve_estimate2}, we have that $(1-t^2)^{-\frac{n-3}{2}}\frac{d}{ds}\J_{\gamma(s)}[\langle\cdot,\gamma'(s)\rangle\phi](t)$ is uniformly bounded in $s$ for all sufficiently small $s$, so we may again interchange differentiation and integration and obtain
	\begin{align*}
		&\langle \nabla_{\S}^2(\phi\ast f)(u) , w\otimes w \rangle
		=  \int_{(-1,1)} \left.\frac{d}{ds}\right|_0 \J_{\gamma(s)}[\langle \cdot,\gamma'(s)\rangle](t) \bar{f}'(t) dt \\
		&\qquad = \int_{(-1,1)} \left( -\frac{d}{dt}\J_u[\langle\cdot,w\rangle\phi](t) - \abs{w}^2t\J_u[\phi](t) \right)\bar{f}'(t)dt \\
		&\qquad= \int_{(-1,1)} \J_u[(.\cdot w)^2\phi](t)\bar{f}''(dt) - \abs{w}^2\int_{(-1,1)} \J_u[\phi](t)\bar{f}'(t)dt \\
		&\qquad= \int_{\S^{n-1}} \phi(v)\langle P_{u^\perp}v\otimes P_{u^\perp}v , w\otimes w \rangle (\J^u\bar{f}'')(dv) - \int_{\S^{n-1}} \langle u,v\rangle\bar{f}'(\langle u,v\rangle)dv \hspace{.5mm}\langle P_{u^\perp} , w\otimes w\rangle,
	\end{align*}
	where the second equality follows from \cref{eq:curve_deriv2} and \cref{eq:geodesic_representation}, the third from \cref{eq:beta_int_by_parts}, and the final equality from a change to spherical cylinder coordinates. Since the space of $2$-tensors on the tangent space $u^\perp$ is spanned by pure tensors $w\otimes w$, this proves identity \cref{eq:convol_trafo_C^2_2} for all $\phi\in C^\infty(\S^{n-1})$.
	
	As an immediate consequence, $\norm{\T_f \phi}_{C^2(\S^{n-1})} \leq C\norm{\phi}_{C(\S^{n-1})}$ for some constant $C\geq 0$ and all $\phi\in C^\infty(\S^{n-1})$. Thus, $\T_f$ extends to a bounded linear operator $\overline{\T}: C(\S^{n-1})\to C^2(\S^{n-1})$. Since $\T_f:C(\S^{n-1})\to C^1(\S^{n-1})$ is a bounded operator and the inclusion $C^2(\S^{n-1})\subseteq C^1(\S^{n-1})$ is continuous, $\overline{\T}$ agrees with $\T_f$. By density and continuity, \cref{eq:convol_trafo_C^2_2} is valid for all $\phi\in C(\S^{n-1})$.
	
	For the second part of the theorem, suppose that $\T$ is an $\SO(n)$ equivariant bounded linear operator from $C(\S^{n-1})$ to $C^2(\S^{n-1})$. Then \cref{convol_trafo_C^1} implies that $\T=\T_f$ for a unique zonal $f\in L^1(\S^{n-1})$. According to \cref{zonal_reg_order2}, it suffices to show that $(1-t^2)^{\frac{n-1}{2}}\bar{f}''(t)$ is a finite signed measure. To that end, take an arbitrary test function $\psi\in \D(-1,1)$, a point $u\in\S^{n-1}$, a unit tangent vector $w\in u^\perp$ and define
	\begin{equation*}
		\phi(v)
		= P^{n-1}_2\bigg(\frac{\langle v,w\rangle}{\sqrt{1-\langle v,u\rangle^2}}\bigg)\psi(\langle v,u\rangle ),
		\qquad v\in\S^{n-1}.
	\end{equation*}
	Then $\phi$ is a smooth function satisfying $\J_u[\phi](t)=0$ and $\J_u[\langle \cdot,w\rangle^2\phi](t)=C_n(1-t^2)^{\frac{n-1}{2}}\psi(t)$, where $C_n>0$ is given by $C_n=\int_{\S^{n-1}\cap u^\perp} P^{n-1}_2(\langle w,v\rangle)\langle w,v\rangle^2 dv$. Choosing a geodesic $\gamma$ in $\S^{n-1}$ such that $\gamma(0)=u$ and $\gamma'(0)=w$ yields
	\begin{align*}
		&C_n \left< \psi(t), (1-t^2)^{\frac{n-1}{2}}\bar{f}''(t) \right>_{\D'}
		= \left< \J_u[\langle \cdot,w\rangle^2\phi](t), \bar{f}''(t) \right>_{\D'} - \left< \J_u[\phi](t), t\bar{f}'(t) \right>_{\D'} \\
		&\qquad= \int_{(-1,1)} \left(\frac{d^2}{dt^2}\J_u[\langle\cdot,w\rangle^2\phi](t) + \frac{d}{dt}\left(t\J_u[\phi](t)\right)\right)\bar{f}(dt)
		= \int_{(-1,1)} \frac{d^2}{ds^2}\bigg|_0 \J_{\gamma(s)}[\phi](t) \bar{f}(t)dt,
	\end{align*}
	where the final equality is due to \cref{eq:curve_deriv2}. Observe that $\supp \J_{\gamma(s)}[\phi] \subseteq [-1+\varepsilon,1-\varepsilon]$ and that $\abs{\frac{d^2}{ds^2}\J_{\gamma(s)}[\phi](t)}\leq C$ uniformly in $s$ for all sufficiently small $s$, so we may interchange differentiation and integration and obtain
	\begin{equation*}
		C_n \left< \psi(t), (1-t^2)^{\frac{n-1}{2}}\bar{f}''(t) \right>_{\D'}
		= \frac{d^2}{ds^2}\bigg|_0  \int_{(-1,1)}  \J_{\gamma(s)}[\phi](t) \bar{f}(t)dt
		= \langle \nabla_{\S}^2(\phi\ast f)(u) , w\otimes w \rangle.
	\end{equation*}
	Therefore, we arrive at the following estimate:
	\begin{equation*}
		\left| \left< \psi(t), (1-t^2)^{\frac{n-1}{2}}\bar{f}''(t) \right>_{\D'} \right|
		\leq C_n^{-1} \norm{\T} \norm{\phi}_{C(\S^{n-1})}
		\leq C_n^{-1} \norm{\T} \norm{\psi}_{\infty}.
	\end{equation*}
	This shows that $(1-t^2)^{\frac{n-1}{2}}\bar{f}''(t)\in\M(-1,1)$, which completes the proof.
\end{proof}

In Theorems~\ref{convol_trafo_C^1} and \ref{convol_trafo_C^2}, we identify convolution transforms with zonal functions for which the spherical gradient and Hessian are signed measures, respectively. In general, checking these conditions directly can be difficult. However, Propositions~\ref{zonal_reg_order1}, \ref{zonal_reg_order2}, and \ref{zonal_reg_equi} provide more practical equivalent conditions. In this way, we obtain \cref{convol_trafo_C^2:intro}.

\begin{proof}[Proof of \cref{convol_trafo_C^2:intro}]
	According to \cref{convol_trafo_C^2}, the convolution transform $\T_f$ is a bounded linear operator from $C(\S^{n-1})$ to $C^2(\S^{n-1})$ if and only if $\nabla_{\S}^2f \in\M(\S^{n-1},\R^{n\times n})$. Due to \cref{zonal_Laplacian}, this is the case precisely when $\square_nf$ is a finite signed measure and satisfies \cref{eq:convol_trafo_C^2:intro}.
\end{proof}

For a zonal $f\in C^2(\S^{n-1})$, integration and differentiation can be interchanged, and thus,
\begin{equation*}
	\nabla_{\S}^2(\phi\ast f)(u)
	= \int_{\S^{n-1}} \phi(v)\nabla_{\S}^2\bar{f}(\langle\cdot,v\rangle) (u) dv
\end{equation*}
for every $\phi\in C(\S^{n-1})$. In light of \cref{eq:zonal_reg_order2_2}, we see that \cref{eq:convol_trafo_C^2_2} naturally extends this identity to general $f\in L^1(\S^{n-1})$. Denote by $D^2\phi$ the Hessian of the $1$-homogeneous extension of a function $\phi$ on $\S^{n-1}$. Since $D^2\phi(u)=\nabla_{\S}\phi(u)+\phi(u)P_{u^\perp}$, as a direct consequence of \cref{eq:convol_trafo_C^2_2}, we obtain the following formula for:
\begin{align}\label{eq:convol_trafo_C^2_3}
	\begin{split}
		D^2(\phi\ast f)(u)
		&= \int_{\S^{n-1}} \phi(v)(P_{u^\perp}v \otimes P_{u^\perp}v) (\J^u\bar{f}'')(dv)  \\
		&\qquad + \int_{\S^{n-1}} \phi(v)\left(\bar{f}(\langle u,v\rangle) - \langle u,v\rangle\bar{f}'(\langle u,v\rangle)\right)dv\hspace{.5mm}P_{u^\perp}.
	\end{split}		
\end{align}

As an instance of \cref{convol_trafo_C^2}, we obtain Martinez-Maure's \cite{MR1814896} result on the cosine transform, which is discussed in the following example.

\begin{exl}
	The \emph{cosine transform} is the convolution transform $\T_f$ generated by the $L^1(\S^{n-1})$ function $f(u)=\abs{\langle\e,u\rangle}$, that is, $\bar{f}(t)=\abs{t}$. Thus $\bar{f}''=2\delta_0$ in the sense of distributions, so \cref{zonal_reg_order2} and \cref{convol_trafo_C^2} imply that the cosine transform is a bounded linear operator from $C(\S^{n-1})$ to $C^2(\S^{n-1})$.

	Moreover, $\bar{f}(t)-t\bar{f}'(t)=0$ and $\J^u\bar{f}''=2\lambda_{\S^{n-1}\cap u^\perp}$, where $\lambda_{\S^{n-1}\cap u^\perp}$ denotes the Lebesgue measure on the $(n-2)$-dimensional subsphere $\S^{n-1}\cap u^\perp$. Hence \cref{eq:convol_trafo_C^2_3} shows that for every $\phi\in C(\S^{n-1})$,	
	\begin{equation*}
		D^2(\phi\ast f)(u)
		= 2 \int_{\S^{n-1}\cap u^\perp} \phi(v)(P_{u^\perp}v \otimes P_{u^\perp}v)dv
		= 2 \int_{\S^{n-1}\cap u^\perp} \phi(v)(v\otimes v)dv.
	\end{equation*}
\end{exl}

\begin{exl}\label{exl:Berg_fct_convol_trafo}
	For Berg's function $g_n$, we have seen in \cref{exl:Berg_fct_regularity} that $\nabla_{\S}\breve{g}_n$ is an integrable function on $\S^{n-1}$ while the distributional spherical Hessian $\nabla_{\S}^2\breve{g}_n$ is not a finite signed measure. Thus, Theorems~\ref{convol_trafo_C^1} and \ref{convol_trafo_C^2} imply that the convolution transform $\T_{\breve{g}_n}$ is a bounded operator from $C(\S^{n-1})$ to $C^1(\S^{n-1})$ but not a bounded operator from $C(\S^{n-1})$ to $C^2(\S^{n-1})$.
\end{exl}

\section{Regularity of Minkowski Valuations}
\label{sec:w-mon}

In this section, we study the regularity of Minkowski valuations $\Phi_i\in\MVal_i$ of degrees $1\leq i\leq n-1$, proving \cref{w_mon=>w_pos+density:intro}. In the $(n-1)$-homogeneous case, Schuster~\cite{MR2327043} showed that $\Phi_{n-1}$ is generated by a continuous function. For other degrees of homogeneity, all that is known about the regularity of a generating function $f$ is that $\square_n f$ is a signed measure and $f$ is integrable (due to Dorrek~\cite{MR3655954}). Using our study of regularity of zonal functions in \cref{sec:convol}, we are able to refine Dorrek's results.

\begin{thm}\label{gen_fct_reg}
	Let $1\leq i\leq n-1$ and $\Phi_i\in\MVal_i$ with generating function $f$. Then
	\begin{enumerate}[label=\upshape(\roman*),topsep=0.5ex,itemsep=-0.5ex]
		\item \label{gen_fct_reg:box_f}
		$\square_nf$ is a signed measure on $\S^{n-1}$,
		\item \label{gen_fct_reg:loc_Lipschitz}
		$f$ is a locally Lipschitz function on $\S^{n-1}\backslash\{\pm\e\}$,
		\item \label{gen_fct_reg:diff}
		$f$ is differentiable almost everywhere on $\S^{n-1}$, and $\nabla_{\S}f\in L^1(\S^{n-1},\R^n)$.
	\end{enumerate}
\end{thm}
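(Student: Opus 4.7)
The plan is to take item (i)---that $\square_n f$ is a signed measure---as established by Dorrek~\cite{MR3655954}, which is the starting point mentioned at the top of this section, and to deduce (ii) and (iii) from it using the regularity machinery developed in \cref{sec:zonal_reg}. The thread connecting these pieces is the explicit kernel formula in \cref{zonal_Laplacian}, which converts a measure-theoretic hypothesis on $\Delta_\S f$ into a pointwise bound on $\bar f'$.

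Once (i) is in hand, since $f \in L^1(\S^{n-1})$ and $\square_n = \frac{1}{n-1}\Delta_\S + \Id$, the Laplacian
\[
\Delta_\S f = (n-1)(\square_n f - f)
\]
is again a finite signed measure on $\S^{n-1}$. Applying \cref{zonal_Laplacian} then yields, for almost every $t \in (-1,1)$,
\[
\bar f'(t) = -\frac{(\Delta_\S f)(\SCap_{\arccos t}(\e))}{\omega_{n-1}(1-t^2)^{(n-1)/2}}.
\]
Because the numerator is dominated uniformly in $t$ by $\norm{\Delta_\S f}_{\TV}$, this delivers the a.e.\ pointwise estimate
\[
|\bar f'(t)| \leq \frac{C}{(1-t^2)^{(n-1)/2}}
\]
for some constant $C > 0$, and this single bound is what drives both remaining items.

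For (ii), on any compact subinterval $[a,b] \subset (-1,1)$ the right-hand side above is bounded, so $\bar f'$ is essentially bounded on $[a,b]$, hence $\bar f$ is Lipschitz there. Since $f(u) = \bar f(\langle \e,u\rangle)$ and $u \mapsto \langle \e,u\rangle$ is $1$-Lipschitz, $f$ is locally Lipschitz on $\S^{n-1} \setminus \{\pm \e\}$. Rademacher's theorem then gives differentiability almost everywhere on $\S^{n-1} \setminus \{\pm \e\}$, and hence on all of $\S^{n-1}$ since the poles form a null set. For integrability of the gradient, combining $|\nabla_\S f(u)| = |\bar f'(\langle \e,u\rangle)| \sqrt{1 - \langle \e,u\rangle^2}$ with a change to spherical cylinder coordinates via \eqref{eq:cylinder_coords} gives
\[
\int_{\S^{n-1}} |\nabla_\S f(u)|\, du = \omega_{n-1}\int_{-1}^{1} |\bar f'(t)|\,(1-t^2)^{(n-2)/2}\, dt \leq C\omega_{n-1}\int_{-1}^{1} (1-t^2)^{-1/2}\, dt < \infty,
\]
which establishes $\nabla_\S f \in L^1(\S^{n-1},\R^n)$.

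The only quantitatively delicate observation is that the cylinder weight $(1-t^2)^{(n-2)/2}$ precisely absorbs the singular factor $(1-t^2)^{-(n-1)/2}$ in the bound on $\bar f'$, leaving only the integrable square-root singularity $(1-t^2)^{-1/2}$. Beyond this matching of exponents and the input from (i), the derivation of (ii) and (iii) is a direct consequence of \cref{zonal_Laplacian} and the cylinder-coordinate calculus of \cref{sec:zonal_reg}. Consequently, the substantive obstacle lies entirely in (i), which draws on the full valuation structure of $\Phi_i$; once that is accepted, the passage to local Lipschitz regularity and $L^1$ gradient is essentially a bookkeeping exercise in zonal analysis.
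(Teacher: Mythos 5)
Your proposal is correct and follows essentially the same route as the paper's own proof: item (i) is taken from Dorrek's result, \cref{zonal_Laplacian} converts it into the bound $(1-t^2)^{(n-1)/2}\bar f'\in L^\infty(-1,1)$, local boundedness of $\bar f'$ gives local Lipschitzness of $f$ off the poles, and the $(1-t^2)^{(n-2)/2}$ cylinder weight absorbs the singularity to yield integrability. The only step worth stating explicitly is that to get $\nabla_\S f\in L^1$ as a \emph{distributional} gradient (not merely that the a.e.-pointwise gradient is integrable) you should invoke \cref{zonal_reg_order1} with $(1-t^2)^{(n-2)/2}\bar f'\in L^1(-1,1)$, which is precisely what rules out singular contributions at the poles; the paper does exactly this.
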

\begin{proof}
	By \cite[Theorem~6.1(i)]{MR3432270}, the function $f$ also generates a Minkowski valuation of degree one. It follows from \cite[Theorem~1.2]{MR3655954} that $\square_nf$, and thus $\Delta_{\S}f$, is a signed measure on $\S^{n-1}$.
	Therefore, \cref{eq:zonal_Laplacian} shows that $(1-t^2)^{\frac{n-1}{2}}\bar{f}'(t)$ is an $L^\infty(-1,1)$ function. This implies that $\bar{f}$ is locally Lipschitz on $(-1,1)$, and thus, $f$ is locally Lipschitz on $\S^{n-1}\backslash\{\pm\e\}$. Moreover, $(1-t^2)^{\frac{n-2}{2}}\bar{f}'(t)$ is in $L^1(-1,1)$, so due to \cref{zonal_reg_order1}, the distributional gradient $\nabla_{\S}f$ is in $L^1(\S^{n-1},\R^n)$. Since $f$ is locally Lipschitz on $\S^{n-1}\backslash\{\pm\e\}$, according to Rademacher's theorem, the classical gradient of $f$ exists almost everywhere on $\S^{n-1}$ and agrees with the distributional gradient.
\end{proof}

As a consequence of Theorems~\ref{convol_trafo_C^1} and \ref{gen_fct_reg}, we obtain the following.

\begin{cor}
	For $1\leq i\leq n-1$, every Minkowski valuation $\Phi_i\in\MVal_i$ maps convex bodies with a $C^2$ support function to strictly convex bodies.
\end{cor}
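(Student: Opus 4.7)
The plan is to combine \cref{gen_fct_reg} with \cref{convol_trafo_C^1} and the classical correspondence between smoothness of a support function and strict convexity of the underlying convex body.

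First, I would observe that if $K\in\K^n$ has a $C^2$ support function, then the $i$-th area measure $S_i(K,\cdot)$ is absolutely continuous with continuous density $s_i(K,\cdot)$ on $\S^{n-1}$. Indeed, when $h_K\in C^2(\S^{n-1})$, the symmetric operator $D^2 h_K(u)$ restricted to the tangent space $u^\perp$ depends continuously on $u$, and $s_i(K,\cdot)$ is the $i$-th elementary symmetric function of its eigenvalues (see, e.g., \cite[Section~2.5]{MR3155183}).

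Next, by \cref{gen_fct_reg}, the generating function $f$ of $\Phi_i$ satisfies $\nabla_{\S}f\in L^1(\S^{n-1},\R^n)\subseteq \M(\S^{n-1},\R^n)$. Hence \cref{convol_trafo_C^1} applies and shows that the convolution transform $\T_f\colon \phi\mapsto \phi\ast f$ is a bounded linear operator from $C(\S^{n-1})$ into $C^1(\S^{n-1})$. Combining these ingredients with the representation \cref{eq:MVal_representation:intro},
\[
	h(\Phi_i K,\cdot) \;=\; S_i(K,\cdot)\ast f \;=\; \T_f\bigl(s_i(K,\cdot)\bigr) \;\in\; C^1(\S^{n-1}).
\]

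Finally, I would invoke the classical fact that a convex body $L$ is strictly convex if and only if its support function is differentiable on $\R^n\setminus\{o\}$, equivalently, if the $1$-homogeneous extension of $h_L$ to $\R^n$ is $C^1$ away from the origin. Applying this to $L=\Phi_i K$ with the $C^1$ regularity just established yields strict convexity of $\Phi_i K$ (the trivial case $\Phi_iK=\{o\}$ being vacuous). I do not foresee any substantial technical obstacle: the heavy analytical work has already been carried out in the preceding sections, and the closing step is a standard consequence of convex duality.
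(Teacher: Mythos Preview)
Your proposal is correct and follows essentially the same route as the paper: use \cref{gen_fct_reg} to get $\nabla_{\S}f\in L^1(\S^{n-1},\R^n)$, apply \cref{convol_trafo_C^1} to deduce that $\T_f$ maps $C(\S^{n-1})$ to $C^1(\S^{n-1})$, note that a $C^2$ support function gives a continuous density for $S_i(K,\cdot)$, and conclude strict convexity from $h(\Phi_iK,\cdot)\in C^1(\S^{n-1})$. The paper's proof is slightly terser but identical in substance.
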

\begin{proof}
	Denote by $f$ the generating function of $\Phi_i$. \cref{gen_fct_reg}~\ref{gen_fct_reg:diff} implies that $\nabla_{\S}f\in L^1(\S^{n-1},\R^n)$. According to \cref{convol_trafo_C^1}, the convolution transform $\T_f$ is a bounded operator from $C(\S^{n-1})$ to $C^1(\S^{n-1})$. Suppose now that $K\in\K^n$ has a $C^2$ support function. Then $S_i(K,\cdot)$ has a continuous density, so $h(\Phi_iK,\cdot)=S_i(K,\cdot)\ast f$ is a $C^1(\S^{n-1})$ function, and thus, $\Phi_iK$ is strictly convex (see, e.g., \cite[Section~2.5]{MR3155183}).
\end{proof}

We now turn to weakly monotone Minkowski valuations, for which we will obtain additional regularity of their generating functions. Recall that $\Phi_i:\K^n\to\K^n$ is called \emph{weakly monotone} if $\Phi_i K\subseteq \Phi_i L$ whenever $K\subseteq L$ and the Steiner points of $K$ and $L$ are at the origin. 

\begin{thm}\label{w_mon=>w_pos+density}
	Let $1\leq i\leq n-1$ and $\Phi_i\in\MVal_i$ be weakly monotone with generating function $f$. Then $\square_nf$ is a weakly positive measure on $\S^{n-1}$ and there exists $C> 0$ such that for all $r\geq 0$,
	\begin{equation}\label{eq:w_mon=>w_pos+density}
		\abs{\square_nf}\big( \{u\in\S^{n-1}:\abs{\langle\e,u\rangle}> \cos r \} \big)
		\leq Cr^{i-1}.
	\end{equation}
\end{thm}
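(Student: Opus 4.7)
The plan is to exploit weak monotonicity by perturbing the unit ball with smooth centered nonnegative test functions and extracting information about $\square_n f$ from the leading order and higher order behaviour of the elementary symmetric function $\sigma_i$. I split the argument into two parts: first a linearisation giving weak positivity, and then a finer analysis capturing the $r^{i-1}$ decay.

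\textbf{Weak positivity.} Let $\psi\in C^\infty(\S^{n-1})$ be a smooth centered nonnegative function, and consider the family of convex bodies $K_\epsilon$ defined by $h(K_\epsilon,\cdot)=1-\epsilon\psi$. For $\epsilon>0$ sufficiently small, the matrix $P_{u^\perp}-\epsilon(\nabla_{\S}^2\psi+\psi P_{u^\perp})$ remains positive definite, so $K_\epsilon$ is a valid convex body contained in $B^n$. Because $\psi$ is centered, $s(K_\epsilon)=s(B^n)=o$, so weak monotonicity yields
\[
	(S_i(B^n,\cdot)-S_i(K_\epsilon,\cdot))\ast f \geq 0.
\]
Writing $M_\psi:=\nabla_{\S}^2\psi+\psi P_{u^\perp}$ and expanding $\sigma_i(P_{u^\perp}-\epsilon M_\psi)$ to first order in $\epsilon$, using the identity $\tr(M_\psi)=(n-1)\square_n\psi$, one finds $S_i(B^n,\cdot)-S_i(K_\epsilon,\cdot)=(n-1)\tbinom{n-2}{i-1}\epsilon(\square_n\psi)\,d\sigma+O(\epsilon^2)$. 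Dividing by $\epsilon$ and letting $\epsilon\to 0^+$ produces $(\square_n\psi)\ast f\geq 0$ pointwise on $\S^{n-1}$. Evaluating at $\bar e$ and using the self-adjointness of $\square_n$ together with the pairing rule $(\phi\ast g)(\bar e)=\langle\phi,g\rangle$ for zonal $g$, we conclude $\langle\psi,\square_n f\rangle\geq 0$, establishing weak positivity of $\square_n f$.

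\textbf{Polar cap estimate and main obstacle.} For the $r^{i-1}$ bound I would push the perturbation argument to higher order so as to pick up the full degree-$i$ contribution of $\sigma_i$. Consider a family of smooth centered nonnegative bumps $\psi_r\in C^\infty(\S^{n-1})$ supported in $\SCap_r(\bar e)\cup\SCap_r(-\bar e)$ and symmetric under $u\mapsto -u$ (so centeredness is automatic). Since the $C^k$-norms of $\psi_r$ are of order $r^{-k}$, the convexity constraint $P_{u^\perp}\succeq \epsilon M_{\psi_r}$ forces $\epsilon\lesssim r^2$. Plugging into the full expansion
\[
	S_i(B^n,\cdot)-S_i(K_\epsilon,\cdot) = \sum_{k=1}^{i}(-1)^{k+1}\epsilon^k\tbinom{n-1-k}{i-k}\sigma_k(M_{\psi_r})\,d\sigma,
\]
and combining the resulting inequalities from both inward perturbations ($h=1-\epsilon\psi_r$) and outward perturbations ($h=1+\epsilon\psi_r$), then pairing against the zonal measure $\square_n f$ and using weak positivity to control the lower-order sign contributions, should isolate the $k=i$ term and yield the claimed bound $\abs{\square_n f}(\{u\in\S^{n-1}:\abs{\langle\bar e,u\rangle}>\cos r\})\leq Cr^{i-1}$. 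The core difficulty is precisely here: while Part~1 uses only the leading order, the polar cap bound requires tracking all $i$ orders in the $\sigma_i$-expansion with their coupled sign structure and the careful balancing of the scales $\epsilon$ and $r$. Moreover, the claim is on the total variation $\abs{\square_n f}$, whereas weak positivity only gives one-sided control on the pairing with centered test functions, so the degree-$i$ scaling must be used to bound simultaneously both the positive and negative parts of $\square_n f$ on the polar caps.
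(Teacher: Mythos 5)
Your first step is sound and is essentially the paper's argument: perturbing the ball by a centered nonnegative $\psi$ and extracting the first-order term of $S_i$ is exactly the first-variation computation the paper performs (there phrased via the functional $\xi_i(h_K)=h(\Phi_iK,\e)$ and $\xi_i(1+t\phi)=a^n_0[f]+i\langle\square_n\phi,f\rangle_{C^{-\infty}}t+O(t^2)$), and combined with \cref{w_pos_iff} it gives weak positivity of $\square_nf$.

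The cap estimate, however, is a genuine gap: what you offer is an acknowledged plan, not a proof, and as sketched it would not go through. With bumps $\psi_r$ of height $\approx 1$ on caps of radius $r$ and the convexity constraint forcing $\epsilon\lesssim r^2$, every term $\epsilon^k\sigma_k(M_{\psi_r})$ in the expansion is of the same order $O(1)$ on the cap, so there is no scale separation that "isolates the $k=i$ term"; worse, the higher-order terms pair against $f$ itself rather than against $\square_nf$, so controlling them would require an a priori bound on $\int_{\SCap_r(\pm\e)}\abs{f}$ of order $r^{i+1}$, which is not available at this stage (generating functions may blow up at the poles). Also, the two-sidedness issue you worry about is a red herring: weak positivity gives $\square_nf=\nu+\langle x,\cdot\rangle$ with $\nu\geq 0$, so $\abs{\square_nf}\leq\nu+\abs{\langle x,\cdot\rangle}$ and the linear part contributes only $O(r^{n-1})$; a one-sided upper bound on $\nu(\SCap_r(\pm\e))$ suffices. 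The paper obtains that bound by an entirely different mechanism, with no higher-order expansion: from $S_1(\Phi_iK,\cdot)=\square_n(S_i(K,\cdot)\ast f)=S_i(K,\cdot)\ast\nu$ and the cap--convolution inequality (\cref{caps_convol}) one gets $S_i(K,\SCap_r(\e))\,\nu(\SCap_r(\pm\e))\leq S_1(\Phi_iK,\SCap_{2r}(\pm\e))$; choosing the degenerate body $K=D^{n-1}$, Firey's estimate (\cref{area_meas_density}) bounds the right side by $Cr^{n-2}$, while the disk computation (\cref{exl:disk_area_meas}) gives $S_i(D^{n-1},\SCap_r(\e))\geq c\,r^{n-1-i}$, yielding $\nu(\SCap_r(\pm\e))\leq C'r^{i-1}$. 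If you want to complete your write-up, you should either import these ingredients (a test body whose $i$-th area measure concentrates at the poles at rate $r^{n-1-i}$, plus a universal $r^{n-2}$ bound for first-order area measures on caps) or supply a genuinely new argument for the higher-order analysis, which your current sketch does not.
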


Note that Theorems~\ref{gen_fct_reg} and \ref{w_mon=>w_pos+density} together yield \cref{w_mon=>w_pos+density:intro}.
Here and in the following, a distribution on $\S^{n-1}$ is called \emph{weakly positive} if it can be written as the sum of a positive measure and a linear function. In particular, every weakly positive distribution is a signed measure.
The following characterization of weak positivity is a simple consequence of the Hahn-Banach separation theorem. For completeness, we provide a proof in \cref{sec:omitted}.

\begin{lem}\label{w_pos_iff}
	A distribution $\nu\in C^{-\infty}(\mathbb{S}^{n-1})$ is weakly positive if and only if $\left< \phi, \nu \right>_{C^{-\infty}} \geq 0$ for every positive centered smooth function $\phi\in C^\infty(\mathbb{S}^{n-1})$.
\end{lem}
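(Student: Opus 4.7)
The forward implication is a one-line verification: if $\nu = \mu + \ell_y$ with $\mu \geq 0$ a Radon measure and $\ell_y(u) = \langle y, u \rangle$, then for any positive centered $\phi$ the pairing $\langle \phi, \nu \rangle_{C^{-\infty}}$ splits into $\int \phi \, d\mu \geq 0$ plus $\int \phi(u) \langle y, u \rangle \, du = 0$. The converse is the substantive content, and my plan is to apply Hahn-Banach on the finite-dimensional space $\mathbb{R}^n$ to produce some $x \in \mathbb{R}^n$ such that $\nu - \ell_x$ is a positive distribution on $\mathbb{S}^{n-1}$. Since a positive distribution on a compact manifold extends by continuity to a positive linear functional on $C(\mathbb{S}^{n-1})$ and is hence represented by a positive Radon measure, this would yield the desired decomposition $\nu = \mu + \ell_x$.

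To set up Hahn-Banach, I would first introduce the moment map $G \colon C^\infty(\mathbb{S}^{n-1}) \to \mathbb{R}^n$, $G(\phi) = \int \phi(u) u \, du$, whose kernel is precisely the space of centered smooth functions. The key observation is that $G$ maps the positive cone $C_+^\infty = \{\phi \in C^\infty(\mathbb{S}^{n-1}) : \phi \geq 0\}$ onto all of $\mathbb{R}^n$: given $v \in \mathbb{R}^n$, the function $\phi(u) = c + \frac{n}{\omega_{n-1}} \langle v, u \rangle$ lies in $C_+^\infty$ for $c$ large enough and satisfies $G(\phi) = v$. I would then define
\begin{equation*}
\Psi(v) = \inf\left\{\langle \phi, \nu \rangle_{C^{-\infty}} : \phi \in C_+^\infty, \, G(\phi) = v\right\}
\end{equation*}
and check that $\Psi$ is a real-valued sublinear functional on $\mathbb{R}^n$. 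Positive homogeneity and subadditivity are inherited from the linearity of $G$ and the pairing together with the convex-cone structure of $C_+^\infty$. The hypothesis of the lemma reads exactly as $\Psi(0) \geq 0$; combined with $\Psi(0) \leq 0$ (take $\phi = 0$) this forces $\Psi(0) = 0$. Finiteness from above is immediate from surjectivity of $G|_{C_+^\infty}$, and finiteness from below follows from $0 = \Psi(0) \leq \Psi(v) + \Psi(-v)$.

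Since $\Psi$ is sublinear on $\mathbb{R}^n$, Hahn-Banach produces a linear functional $\ell(v) = \langle x, v \rangle$ with $\ell \leq \Psi$. Then for every $\phi \in C_+^\infty$,
\begin{equation*}
\langle \phi, \nu \rangle_{C^{-\infty}} \geq \Psi(G(\phi)) \geq \langle x, G(\phi) \rangle = \int \phi(u) \langle x, u \rangle \, du = \langle \phi, \ell_x \rangle_{C^{-\infty}},
\end{equation*}
which is exactly positivity of $\nu - \ell_x$ as a distribution. The main conceptual difficulty is spotting the correct finite-dimensional reduction: the ambiguity of $\nu$ modulo linear functions must be packaged into the sublinear functional $\Psi$ so that Hahn-Banach can be invoked in $\mathbb{R}^n$ rather than in the full space of distributions. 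Once $G$ and $\Psi$ are in place, the verification of sublinearity, the Hahn-Banach step, and the passage from positive distribution to positive Radon measure are all routine.
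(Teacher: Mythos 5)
Your proof is correct, and it takes a genuinely different route from the paper's. The paper argues by contradiction and applies Hahn--Banach \emph{separation} directly in the infinite-dimensional space $C^{-\infty}(\S^{n-1})$: assuming $\nu$ is not weakly positive, it separates $\nu$ from the closed convex cone of weakly positive distributions by a function $\phi \in C^\infty(\S^{n-1})$, then deduces that $\phi$ must be positive (by varying $\mu \in \M_+(\S^{n-1})$ with $y=0$), centered (by varying $y \in \mathcal{H}^n_1$ with $\mu=0$), and satisfies $\langle \phi,\nu\rangle_{C^{-\infty}} < 0$. This is compact to state, but tacitly relies on the closedness of the cone of weakly positive distributions in $C^{-\infty}(\S^{n-1})$, a point the paper asserts without proof. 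You instead package the ambiguity of the linear part into the sublinear functional $\Psi$ on $\R^n$ via the moment map $G(\phi)=\int_{\S^{n-1}}\phi(u)u\,du$ (whose kernel in $C^\infty$ is exactly the centered functions) and invoke the Hahn--Banach \emph{extension} theorem on the finite-dimensional space $\R^n$, where it is elementary and constructive; the remaining step—that a positive distribution on a compact manifold is a positive Radon measure—is standard and appears in the paper's background section via Riesz--Markov--Kakutani. Your reduction to $\R^n$ sidesteps the closedness verification in $C^{-\infty}(\S^{n-1})$ and makes explicit where the vector $x$ comes from; the paper's version is shorter to write. One cosmetic slip: in your surjectivity check the constant should be $n/\omega_n$ rather than $n/\omega_{n-1}$, since this paper's $\omega_n$ denotes the surface area of $\S^{n-1}$; this has no bearing on the argument.
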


The proof of \cref{w_mon=>w_pos+density} relies on the behavior of area measures of convex bodies on spherical caps. We need the following classical result by Firey (recall the notation introduced in \cref{eq:SCap}).

\begin{thm}[\cite{MR286982}]\label{area_meas_density}
	Let $1\leq i\leq n-1$ and $K\in\K^n$ be a convex body. Then for every $u\in\S^{n-1}$,
	\begin{equation}\label{eq:area_meas_density}
		S_i(K,\SCap_r(u))
		\leq C_{n,i}(\diam K)^i r^{n-1-i}
	\end{equation}
	where $C_{n,i}>0$ depends only on $n$ and $i$ and $\diam K$ denotes the diameter of $K$.
\end{thm}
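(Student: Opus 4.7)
The plan is to reduce the estimate on $S_i(K, \SCap_r(u))$ to two cleanly separated bounds by averaging over the Grassmannian $G(n, i+1)$. The key tool I would invoke is the Cauchy-Kubota projection formula for area measures (see, e.g., Schneider \cite{MR3155183}, Section 4.5), which expresses $S_i(K, \cdot)$ as an integral over $(i+1)$-dimensional linear subspaces $E \subseteq \R^n$ of the top-order area measure $S_i^{(E)}(K|E, \cdot)$ of the orthogonal projection $K|E$, interpreted as a measure on $\S^{n-1} \cap E$:
\begin{equation*}
	S_i(K, \omega)
	= c_{n,i} \int_{G(n,i+1)} S_i^{(E)}(K|E, \omega \cap E) \, dE,
\end{equation*}
where $dE$ is the rotation-invariant probability measure on $G(n, i+1)$. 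Specializing to $\omega = \SCap_r(u)$ reduces the question to bounding the integrand and controlling the measure of those subspaces $E$ that actually contribute.

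Next I would bound the inner and outer factors independently. For the inner factor, if the angle between $u$ and $E$ exceeds $r$, then $\SCap_r(u) \cap E$ is empty and the integrand vanishes. When the angle is at most $r$, the positive measure $S_i^{(E)}(K|E, \SCap_r(u) \cap E)$ is dominated by the total $i$-dimensional surface area of $K|E$ inside $E \cong \R^{i+1}$, which is at most $C_i (\diam(K|E))^i \leq C_i (\diam K)^i$ by monotonicity (a convex body of diameter $d$ in $\R^{i+1}$ is contained in a ball of radius $d$, whose surface area is $\omega_{i+1} d^i$). For the outer factor, passing to the orthogonal complement $E^\perp \in G(n, n-i-1)$, the condition on the angle becomes $\abs{P_{E^\perp} u} \leq \sin r$. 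Under the rotation-invariant measure, $\abs{P_{E^\perp} u}^2$ follows a Beta$((n-i-1)/2, (i+1)/2)$ distribution, and a direct computation of its density near zero yields
\begin{equation*}
	\int_{G(n, i+1)} \mathbbm{1}_{\abs{P_{E^\perp}u} \leq \sin r} \, dE
	\leq C_{n,i}' r^{n-1-i}
\end{equation*}
for all $r \in [0, \pi]$. Multiplying the two bounds and absorbing constants yields $S_i(K, \SCap_r(u)) \leq C_{n,i} (\diam K)^i r^{n-1-i}$.

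The main obstacle is the Cauchy-Kubota formula itself: although classical, some care is required concerning normalizations and the way $\SCap_r(u) \cap E$ is read as a subset of the lower-dimensional unit sphere $\S^{n-1} \cap E$, since $S_i^{(E)}$ is computed entirely inside the $(i+1)$-dimensional ambient $E$. Once this formula is in hand, the two remaining estimates---the monotonicity bound on the surface area of $K|E$ and the Beta-distribution estimate on the Grassmannian---are short and routine, and the proof assembles into a clean product bound. An alternative, purely polytopal route would start from Schneider's explicit decomposition of $S_i(P, \cdot)$ into a sum over $i$-faces $F$ of $P$ weighted by $V_i(F)$ and the spherical measure of $N(P, F) \cap \SCap_r(u) \cap \S^{n-1}$, uniformly bounding the latter by $C r^{n-1-i}$ (using that $N(P, F) \cap \S^{n-1}$ lies in an $(n-1-i)$-dimensional subsphere) and then controlling the weighted sum by a projection argument; this avoids the heavy Kubota identity but demands more bookkeeping.
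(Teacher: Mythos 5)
The paper does not actually prove this statement -- it is quoted verbatim from Firey \cite{MR286982} -- so there is no in-paper argument to compare against; your proposal has to stand on its own, and in my view it does. The averaged projection formula you invoke is correct, but be aware that the ``local'' form with $\omega\cap E$ is not the standard Cauchy--Kubota statement for intrinsic volumes, so it deserves a derivation or a precise citation: the clean route is to apply the classical Kubota formula to $V_{i+1}(K+tL)$, extract the coefficient of $t$ (using $(K+tL)|E=K|E+t\,L|E$ and $\kappa_{n-m}V_m=\binom{n}{m}V(\cdot[m],B^n[n-m])$), which gives
\begin{equation*}
	\int_{\S^{n-1}} h_L\, dS_i(K,\cdot)
	= c_{n,i}\int_{G(n,i+1)}\int_{\S^{n-1}\cap E} h_L\, dS^{(E)}_i(K|E,\cdot)\, dE
\end{equation*}
for all convex bodies $L$, hence for all differences of support functions, hence for all continuous test functions by density; this identifies the two measures. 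This also lets you sidestep the measurability of $E\mapsto S^{(E)}_i(K|E,\SCap_r(u)\cap E)$ entirely: sandwich $\mathbbm{1}_{\SCap_r(u)}$ between continuous functions supported in a slightly larger cap and let the enlargement tend to zero. The two remaining estimates are fine as you state them: if $\angle(u,E)\geq r$ the cap misses $\S^{n-1}\cap E$; otherwise the integrand is at most the total surface area of $K|E$ in $E\cong\R^{i+1}$, bounded by $(i+1)\kappa_{i+1}(\diam K)^i$ by monotonicity (degenerate projections are harmless, giving $2V_i\leq 2\kappa_i(\diam K)^i$); and the Grassmannian estimate is exactly the Beta$\bigl(\tfrac{n-i-1}{2},\tfrac{i+1}{2}\bigr)$ computation you describe, yielding $C(\sin r)^{n-1-i}\leq Cr^{n-1-i}$, with the range $r$ large being trivial. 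Your alternative polytopal route is closer in spirit to what one finds in Firey's paper, but as you note the step ``the total $V_i$-mass of the $i$-faces whose normal cones meet the cap is $O((\diam P)^i)$'' genuinely requires an argument (the unrestricted sum over all $i$-faces is not bounded), so the Kubota route is the safer one to write up.
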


The area measures of the $(n-1)$-dimensional disk in $\e^\perp$, which we denote by $D^{n-1}$, exhibit the worst possible asymptotic behavior in \cref{eq:area_meas_density}. This is shown in the example below.

\begin{exl}\label{exl:disk_area_meas}
	We seek to compute the area measures of $D^{n-1}$. For a convex body $K\in\K^n$ it is well known that $S_{n-1}(K,A)$ is the area of the reverse spherical image of a measurable subset $A\subseteq\S^{n-1}$. Thus,
	\begin{equation}\label{eq:disk_area_meas_i=n-1}
		S_{n-1}(D^{n-1},\cdot)
		= \kappa_{n-1}\left(\delta_{-\e}+\delta_{\e}\right).
	\end{equation}
	In order to compute the area measures of lower order, note that if a convex body $K\in\K^n$ with absolutely continuous area measure of order $1\leq i< n-1$ lies in a hyperplane $u^\perp$ (where $u\in\S^{n-1}$), then
	\begin{equation*}
		s_i(K,v)
		= \frac{n-1-i}{n-1}\left(1-\langle u,v\rangle^2\right)^{-\frac{i}{2}} s_i^{u^\perp}\left(K,\frac{P_{u^\perp}v}{\abs{P_{u^\perp}v}}\right),
		\qquad v\in\S^{n-1}\backslash\{\pm u\},
	\end{equation*}
	where $s_i(K,\cdot)$ and $s_i^{u^\perp}(K,\cdot)$ denote the densities of the $i$-th area measure of $K$ with respect to the ambient space and the hyperplane $u^\perp$, respectively (see \cite[Lemma~3.15]{KiderlenPhD}). Hence, for $1\leq i< n-1$,
	\begin{equation*}
		S_i(D^{n-1},dv)
		= \frac{n-1-i}{n-1}\left(1-\langle \e,v\rangle^2\right)^{-\frac{i}{2}}dv.
	\end{equation*}
	By a change to spherical cylinder coordinates, the measure of a polar cap can be estimated by
	\begin{equation}\label{eq:disk_area_meas_i<n-1}
		S_i(D^{n-1},\SCap_r(\e))
		\geq \frac{n-1-i}{n-1}\omega_{n-1}\int_{[\cos r,1]} t(1-t^2)^{\frac{n-3-i}{2}} dt 
		= \kappa_{n-1}(\sin r)^{n-1-i}.
	\end{equation}
\end{exl}

We require the following simple lemma, which relates the behavior of two positive measures $\mu$ and $\nu$ on small polar caps to the behavior of their convolution product.

\begin{lem}\label{caps_convol}
	Let $\mu\in\M_+(\S^{n-1})$ and let $\nu\in\M_+(\S^{n-1})$ be zonal. Then for all $u\in\S^{n-1}$ and $r\geq 0$,
	\begin{align}
		\mu(\SCap_r(u))\nu(\SCap_r(\e))
		&\leq (\mu\ast\nu)(\SCap_{2r}(u)), \label{eq:caps_convol_+}\\
		\mu(\SCap_r(u))\nu(\SCap_r(-\e))
		&\leq (\mu\ast\nu)(\SCap_{2r}(-u)). \label{eq:caps_convol_-}
	\end{align}
\end{lem}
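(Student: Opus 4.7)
The plan is first to derive an explicit integral representation for $\mu\ast\nu$ in terms of rotated copies of $\nu$, and then to apply the spherical triangle inequality. For each $v\in\S^{n-1}$, fix a rotation $\vartheta_v\in\SO(n)$ with $\vartheta_v\e=v$; because $\nu$ is zonal (i.e., $\SO(n-1)$ invariant), expressions of the form $\nu(\vartheta_v^{-1}\,\cdot\,)$ do not depend on this choice. Starting from the definition of spherical convolution recalled in \cref{sec:background}, namely $(\phi\ast\nu)(v)=\langle \vartheta_v^{-1}\phi,\nu\rangle_{C^{-\infty}}=\int_{\S^{n-1}}\phi(\vartheta_v w)\,\nu(dw)$ for smooth $\phi$, combined with the self-adjointness identity $\langle \phi,\mu\ast\nu\rangle_{C^{-\infty}}=\langle \phi\ast\nu,\mu\rangle_{C^{-\infty}}$, Fubini's theorem (valid since $\mu,\nu$ are positive and $\phi$ is bounded) yields
\[
    \int_{\S^{n-1}} \phi\, d(\mu\ast\nu)
    = \int_{\S^{n-1}}\int_{\S^{n-1}} \phi(\vartheta_v w)\,\nu(dw)\,\mu(dv).
\]
Approximating $\mathbbm{1}_{\SCap_{2r}(\pm u)}$ from below by positive smooth functions and invoking monotone convergence then gives
\[
    (\mu\ast\nu)(A)
    = \int_{\S^{n-1}} \nu(\vartheta_v^{-1}A)\,\mu(dv)
    \qquad \text{for every Borel set } A\subseteq\S^{n-1}.
\]

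For \cref{eq:caps_convol_+}, note that if $v\in\SCap_r(u)$, then since $\vartheta_v$ is an isometry of $\S^{n-1}$ sending $\e$ to $v$, the triangle inequality for the spherical distance $d_{\S}(x,y)=\arccos\langle x,y\rangle$ gives $\vartheta_v\SCap_r(\e)=\SCap_r(v)\subseteq\SCap_{2r}(u)$, or equivalently $\SCap_r(\e)\subseteq\vartheta_v^{-1}\SCap_{2r}(u)$. Since $\nu$ is positive, it follows that $\nu(\vartheta_v^{-1}\SCap_{2r}(u))\geq \nu(\SCap_r(\e))$ for every such $v$. Restricting the outer integral in the above representation to $\SCap_r(u)$ and using positivity of $\mu$ then yields
\[
    (\mu\ast\nu)(\SCap_{2r}(u))
    \geq \int_{\SCap_r(u)} \nu(\SCap_r(\e))\,\mu(dv)
    = \mu(\SCap_r(u))\,\nu(\SCap_r(\e)).
\]

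Inequality \cref{eq:caps_convol_-} is proved in exactly the same way: since $\vartheta_v$ is linear and sends $-\e$ to $-v$, for $v\in\SCap_r(u)$ we have $-v\in\SCap_r(-u)$ and therefore $\vartheta_v\SCap_r(-\e)=\SCap_r(-v)\subseteq\SCap_{2r}(-u)$. There is no real obstacle here; the only step requiring any care is the passage from the abstract duality-based definition of $\mu\ast\nu$ to the explicit integral formula above, and once that is in place the result is a direct consequence of the spherical triangle inequality and positivity.
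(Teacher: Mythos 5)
Your proof is correct and follows essentially the same route as the paper: both rewrite $(\mu\ast\nu)$ of the doubled cap as an iterated integral (you integrate $\nu$ of rotated caps against $\mu$, while the paper first reduces to $u=\e$ and to \cref{eq:caps_convol_+} by symmetry and then integrates $\mu(\SCap_{2r}(\cdot))$ against $\nu$), and then both shrink the domain of integration and combine the triangle-inequality cap inclusion with positivity. The only small imprecision is asserting the representation $(\mu\ast\nu)(A)=\int_{\S^{n-1}}\nu(\vartheta_v^{-1}A)\,\mu(dv)$ for \emph{every} Borel set from a below-approximation of indicators, which directly yields it only for open sets; since the caps $\SCap_{2r}(\pm u)$ are open, this is all you actually use.
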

\begin{proof}
	First observe that \cref{eq:caps_convol_+} implies \cref{eq:caps_convol_-} by reflecting $\nu$ at the origin. Moreover, we may assume that $u=\e$. The general case can be obtained from this by applying a suitable rotation to the measure $\mu$ and exploiting the $\SO(n)$ equivariance of the convolution transform $\T_\nu$. Next, note that
	\begin{equation*}
		(\mu\ast\nu)(\SCap_{2r}(\e))
		= \int_{\S^{n-1}} \mu(\SCap_{2r}(u)) \nu(du),
	\end{equation*}
	as can be easily shown by approximating $\mathbbm{1}_{\SCap_{2r}(\e)}$ with smooth functions from below and applying the principle of monotone convergence. Thus
	\begin{equation*}
		(\mu\ast\nu)(\SCap_{2r}(\e))
		\geq \int_{\SCap_r(\e)} \mu(\SCap_{2r}(u)) \nu(du)
		\geq \int_{\SCap_r(\e)} \mu(\SCap_{r}(\e)) \nu(du)
		= \mu(\SCap_r(\e))\nu(\SCap_r(\e)),
	\end{equation*}
	where the first inequality follows from shrinking the domain of integration and the second from the fact that $\SCap_{2r}(u)\supseteq \SCap_r(\e)$ for all $u\in\SCap_r(\e)$ combined with the monotonicity of $\mu$.
\end{proof}

Now we are in a position to prove \cref{w_mon=>w_pos+density}.

\begin{proof}[Proof of \cref{w_mon=>w_pos+density}]
	Define a functional $\xi_i$ on the space of smooth support functions by $\xi_i(h_K)=h(\Phi_i K,\e)$. For every $\phi\in C^\infty(\S^{n-1})$, the function $1+t\phi$ is a support function whenever $t\in\R$ is sufficiently small. Therefore, we may compute the first variation of $\xi_i$ at $\phi_0=1$. To that end, note that as a consequence of \cref{eq:MVal_representation:intro} and the polynomiality of area measures (see, e.g., \cite[Section~5.1]{MR3155183}),
	\begin{equation*}
		\xi_i(1+t\phi)
		= a^n_0[f] + i\left< \square_n \phi,f\right>_{C^{-\infty}} t + O(t^2)
		\qquad \text{as }t\to 0.
	\end{equation*}
	Thus, for the first variation we obtain
	\begin{equation*}
		\delta\xi_i(1,\phi)
		= \left.\frac{d}{dt}\right|_0 \xi_i(1+t\phi)
		= i \left< \square_n \phi,f\right>_{C^{-\infty}}
		= i \left< \phi,\square_n f\right>_{C^{-\infty}}.
	\end{equation*}
	Since $\Phi_i$ is weakly monotone, the functional $\xi_i$ is monotone on the subspace of centered functions, that is, $\xi_i(\phi_1)\leq\xi_i(\phi_2)$ whenever $\phi_1$ and $\phi_2$ are smooth centered support functions and $\phi_1\leq\phi_2$. Consequently, the first variation $\delta\xi_i(1,\phi)$ must be non-negative for every positive and centered $\phi\in C^\infty(\S^{n-1})$. \cref{w_pos_iff} implies that $\square_nf$ is a weakly positive measure.
	
	Since $\square_nf$ is weakly positive, there exists some positive measure $\nu\in\M_+(\S^{n-1})$ and $x\in\R^n$ such that $\square_nf=\nu+\langle x,\cdot\rangle$. Observe that for every $K\in\K^n$, 
	\begin{equation*}
		S_1(\Phi_i K,\cdot)
		= \square_n h(\Phi_iK,\cdot)
		= \square_n (S_i(K,\cdot)\ast f)
		= S_i(K,\cdot) \ast \square_n f
		= S_i(K,\cdot) \ast \nu.
	\end{equation*}
	Hence, \cref{eq:caps_convol_+} and \cref{eq:caps_convol_-} imply that for all $r\geq 0$,
	\begin{equation*}
		S_i(K,\SCap_r(\e))\nu(\SCap_r(\pm\e))
		\leq S_1(\Phi_iK,\SCap_{2r}(\pm\e)).
	\end{equation*}
	Due to \cref{eq:area_meas_density}, the right hand side is bounded from above by a constant multiple of $r^{n-2}$. If we choose $K$ to be the $(n-1)$-dimensional disk $D^{n-1}$, then $S_i(K,\SCap_r(\e))$ is bounded from below by a multiple of $r^{n-i-1}$, as is shown in \cref{eq:disk_area_meas_i=n-1} and \cref{eq:disk_area_meas_i<n-1}. Thus,
	\begin{equation*}
		\nu(\SCap_{r}(\pm\e))
		\leq \frac{S_1(\Phi_iD^{n-1},\SCap_{2r}(\pm\e))}{S_i(D^{n-1},\SCap_r(\e))}
		\leq C' \frac{r^{n-2}}{r^{n-i-1}}
		= C'r^{i-1}.
	\end{equation*}
	Since $\abs{\square_nf}\leq \nu + \abs{\langle x,\cdot\rangle}$, we have that
	\begin{equation*}
		\abs{\square_nf}(\SCap_r(\pm\e))
		\leq \nu(\SCap_r(\pm\e)) + \int_{\SCap_r(\pm\e)} \abs{\langle x,u\rangle} du
		\leq C'r^{i-1} + \abs{x}\kappa_{n-1}r^{n-1}
		\leq Cr^{i-1}
	\end{equation*}	
	for a suitable constant $C\geq 0$, which proves \cref{eq:w_mon=>w_pos+density}.
\end{proof}

By combining \cref{w_mon=>w_pos+density} with \cref{convol_trafo_C^2:intro}, we immediately obtain the following.

\begin{cor}\label{w_mon=>bounded_C^2}
	Let $1<i\leq n-1$ and $\Phi_i\in\MVal_i$ be weakly monotone with generating function $f$. \linebreak Then the convolution transform $\T_f$ is a bounded linear operator from $C(\S^{n-1})$ to $C^2(\S^{n-1})$.	
\end{cor}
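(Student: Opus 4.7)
The plan is that this corollary follows by directly combining the two preceding results: \cref{w_mon=>w_pos+density:intro} supplies the hypothesis needed to apply \cref{convol_trafo_C^2:intro}. There is no real obstacle to overcome; the only check is that the cap bound from Theorem~D is strong enough to imply the integrability condition of Theorem~C precisely in the regime $i>1$.

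First, I would invoke \cref{w_mon=>w_pos+density:intro} (i.e.\ Theorems~\ref{gen_fct_reg} and \ref{w_mon=>w_pos+density} together). Since $\Phi_i$ is weakly monotone, this yields that $\square_n f$ is a signed measure on $\S^{n-1}$ and that there exists $C>0$ with
\begin{equation*}
	\abs{\square_n f}\big(\{u\in\S^{n-1}:\abs{\langle \e,u\rangle}>\cos r\}\big)
	\leq C r^{i-1}
\end{equation*}
for all $r\geq 0$. Since for $0<r<\pi/2$ the two caps $\{\langle \e,u\rangle>\cos r\}$ and $\{\langle -\e,u\rangle>\cos r\}$ are disjoint subsets of $\{|\langle \e,u\rangle|>\cos r\}$, the pointwise inequality $|(\square_n f)(A)|\leq |\square_n f|(A)$ gives
\begin{equation*}
	\abs{(\square_n f)(\{u\in\S^{n-1}:\langle\pm \e,u\rangle>\cos r\})}
	\leq Cr^{i-1}
\end{equation*}
for $r\in(0,\pi/2)$.

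Next, I would plug this estimate into the integrability condition \cref{eq:convol_trafo_C^2:intro} of \cref{convol_trafo_C^2:intro}. The integrand is bounded by $Cr^{i-2}$, so
\begin{equation*}
	\int_{(0,\frac{\pi}{2})} \frac{1}{r}\,\abs{(\square_n f)(\{u\in\S^{n-1}:\langle\pm \e,u\rangle>\cos r\})}\,dr
	\leq C\int_{0}^{\pi/2} r^{i-2}\,dr,
\end{equation*}
and the right-hand side is finite exactly because $i>1$, so $i-2>-1$. (This is the only place where the hypothesis $i>1$ is used, and it is also where the analogous argument would break down at $i=1$; in that endpoint case Berg's function already shows, by \cref{exl:Berg_fct_convol_trafo}, that $\T_f$ need not be bounded into $C^2(\S^{n-1})$.)

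Finally, \cref{convol_trafo_C^2:intro} applies to the $\SO(n-1)$-invariant function $f$ and concludes that $\T_f:\phi\mapsto \phi\ast f$ is a bounded linear operator from $C(\S^{n-1})$ to $C^2(\S^{n-1})$, completing the proof.
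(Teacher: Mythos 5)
Your proof is correct and takes exactly the approach the paper intends: the paper states that \cref{w_mon=>bounded_C^2} is "immediately" obtained by combining \cref{w_mon=>w_pos+density} with \cref{convol_trafo_C^2:intro}, and you have spelled out precisely the routine verification (the cap estimate $\abs{\square_n f}\leq Cr^{i-1}$ makes the integrand in \cref{eq:convol_trafo_C^2:intro} at most $Cr^{i-2}$, which is integrable near $0$ exactly when $i>1$) that the paper leaves implicit.
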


As was pointed out in \cref{zonal_reg_equi}, the behavior of zonal measures on small polar caps determines their regularity. In the following, we show that this behavior also determines the rate of convergence of their multipliers, which is another way of expressing regularity. We use the following classical asymptotic estimate for Legendre polynomials.

\begin{thm}[\!\! {\cite[7.33]{MR0372517}}] \label{Legendre_polynomial_estimate}
	For all $n\geq 3$ and $\delta>0$, there exists $M>0$ such that for all $k\geq 0$,
	\begin{equation} \label{eq:Legendre_polynomial_estimate}
		\abs{P^n_k(t)}
		\leq M k^{-\frac{n-2}{2}}(1-t^2)^{-\frac{n-2}{4}}
		\qquad\text{for } t\in\left[-\cos{\tfrac{\delta}{k}},\cos{\tfrac{\delta}{k}}\right].
	\end{equation}
\end{thm}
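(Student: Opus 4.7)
This is a classical sharp asymptotic bound for normalized Gegenbauer polynomials due to Szeg\H{o}, and the plan is to deduce it from the Darboux-Fej\'er asymptotic expansion. The starting point is to identify $P^n_k$ with the Gegenbauer polynomial $C^\lambda_k$ of index $\lambda = (n-2)/2$, normalized so that $P^n_k(1)=1$. Since $C^\lambda_k(1) = \binom{k+2\lambda-1}{k}$ behaves like $k^{2\lambda-1}/\Gamma(2\lambda)$ for large $k$, the normalization costs a factor of order $k^{-(2\lambda-1)}$.

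The core step is the classical Darboux-Fej\'er asymptotic, derivable from the generating function $(1-2ts+s^2)^{-\lambda}$ by Laplace's method on a suitable Schl\"afli-type contour integral: for $\theta\in(0,\pi)$,
\begin{equation*}
C^\lambda_k(\cos\theta) = \frac{2\,\Gamma(k+\lambda)}{\Gamma(\lambda)\Gamma(k+1)(2\sin\theta)^\lambda}\cos\!\bigl((k+\lambda)\theta - \tfrac{\lambda\pi}{2}\bigr) + R_k(\theta),
\end{equation*}
with $|R_k(\theta)| \leq C\,k^{\lambda-2}(\sin\theta)^{-\lambda-1}$ uniformly. Combining this with the normalization and using $\Gamma(k+\lambda)/\Gamma(k+1)\sim k^{\lambda-1}$ yields, uniformly in $\theta\in(0,\pi)$,
\begin{equation*}
|P^n_k(\cos\theta)| \leq C'\,k^{-\lambda}(\sin\theta)^{-\lambda}\Bigl(1 + (k\sin\theta)^{-1}\Bigr).
\end{equation*}

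To finish, I use the hypothesis: $t\in[-\cos(\delta/k),\cos(\delta/k)]$ is equivalent to $\theta=\arccos t \in [\delta/k,\pi-\delta/k]$, whence $\sin\theta \geq \sin(\delta/k) \geq (2/\pi)\delta/k$ for $k$ large, so $k\sin\theta \geq 2\delta/\pi$ is bounded below. The factor $(1+(k\sin\theta)^{-1})$ is therefore absorbed into a constant depending only on $\delta$, and substituting $\sin\theta = \sqrt{1-t^2}$ together with $\lambda = (n-2)/2$ yields the claim for large $k$; the finitely many small values of $k$ are handled by the trivial uniform bound $\|P^n_k\|_\infty \leq 1$ on $[-1,1]$, which is absorbed by enlarging $M$.

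The main obstacle is the Darboux-Fej\'er asymptotic itself together with its uniform-in-$\theta$ remainder estimate, whose cleanest proof is a delicate saddle-point analysis of the Schl\"afli integral representation of $C^\lambda_k$. This is carried out in detail in Chapter VIII of Szeg\H{o}'s monograph, which is precisely why the result is cited here rather than reproved; the remainder of the argument is then merely book-keeping of normalizations.
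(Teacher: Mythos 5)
The paper does not actually prove this statement---it is quoted directly from Szeg\H{o}'s monograph (the citation {\cite[7.33]{MR0372517}})---so there is no internal proof to compare against; your reconstruction via the Darboux--Fej\'er asymptotic for Gegenbauer polynomials, with the normalization $P^n_k = C^\lambda_k / C^\lambda_k(1)$, $\lambda=(n-2)/2$, is precisely the classical route of the cited source, and it is correct. The bookkeeping also checks out: for $\theta\in[\delta/k,\pi-\delta/k]$ one has $k\sin\theta \geq 2\delta/\pi$ once $\delta/k\leq\pi/2$, so the remainder term is absorbed into the constant, and the finitely many remaining values of $k$ are indeed covered by $\lvert P^n_k\rvert\leq 1$ together with the fact that $k^{-\frac{n-2}{2}}(1-t^2)^{-\frac{n-2}{4}}$ is bounded below on the (possibly empty) interval in question, at the cost of enlarging $M$.
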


\begin{thm}\label{density=>asymptotics}
	Let $\mu\in\M(\S^{n-1})$ be zonal and suppose that there exist $C> 0$ and $\alpha\geq 0$ such that 
	\begin{equation*}
		\abs{\mu}\big(\{u\in\S^{n-1}: \abs{\langle \e,u\rangle}>\cos r\}\big)
		\leq Cr^\alpha
	\end{equation*}
	for all $r\geq 0$. Then
	\begin{equation*}
		a^n_k[\mu]
		\in \left\{\begin{array}{ll}
			O(k^{-\alpha}),					&\alpha<\frac{n-2}{2},\\
			O(k^{-\frac{n-2}{2}}\ln k),		&\alpha=\frac{n-2}{2},\\
			O(k^{-\frac{n-2}{2}}),			&\alpha>\frac{n-2}{2}.
		\end{array}\right.
	\end{equation*}
\end{thm}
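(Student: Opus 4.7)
The plan is to evaluate the multiplier directly from the definition,
\[
a^n_k[\mu] \;=\; \langle \breve{P}^n_k,\mu\rangle_{C^{-\infty}} \;=\; \int_{\S^{n-1}} P^n_k(\langle \e,u\rangle)\,\mu(du),
\]
by splitting the integration at the threshold appearing in \cref{Legendre_polynomial_estimate}. Note that letting $r\to 0^+$ in the hypothesis forces $|\mu|(\{\pm\e\})=0$ whenever $\alpha>0$, and the case $\alpha=0$ is already covered by the trivial bound $|a^n_k[\mu]|\le |\mu|(\S^{n-1})$; so I may assume $\alpha>0$. Fix $\delta>0$ as in \cref{Legendre_polynomial_estimate}, set $B_k=\{u:|\langle\e,u\rangle|>\cos(\delta/k)\}$, and let $A_k=\S^{n-1}\setminus B_k$. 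On the polar caps $B_k$, the trivial bound $|P^n_k|\le 1$ combined with the hypothesis yields a contribution of order $|\mu|(B_k)\le C(\delta/k)^\alpha=O(k^{-\alpha})$. On the bulk $A_k$, inserting \cref{eq:Legendre_polynomial_estimate} bounds the contribution by
\[
M\,k^{-(n-2)/2}\int_{A_k}(1-\langle\e,u\rangle^2)^{-(n-2)/4}\,|\mu|(du).
\]

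To estimate this remaining integral I would push $|\mu|$ forward to $[0,\pi/2]$ under $\rho(u):=\arccos|\langle\e,u\rangle|$, obtaining a positive Borel measure $\tilde\mu$ whose distribution function satisfies $\tilde\mu([0,r])\le Cr^\alpha$. Since $\sin r\ge \tfrac{2}{\pi}r$ on $[0,\pi/2]$ and $1-\langle\e,u\rangle^2=\sin^2\rho(u)$, the integral above is controlled, up to a dimensional constant, by $\int_{\delta/k}^{\pi/2} r^{-(n-2)/2}\,d\tilde\mu(r)$. A layer-cake decomposition (equivalently, Fubini applied to $r^{-(n-2)/2}=(\pi/2)^{-(n-2)/2}+\tfrac{n-2}{2}\int_r^{\pi/2}s^{-(n-2)/2-1}\,ds$) together with $\tilde\mu([\delta/k,s])\le Cs^\alpha$ reduces the estimate to the boundary term $O(1)$ plus the elementary integral
\[
\int_{\delta/k}^{\pi/2} s^{\alpha-(n-2)/2-1}\,ds.
\]

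The asymptotic behavior of this integral as $k\to\infty$ produces exactly the three claimed regimes: for $\alpha<(n-2)/2$ it is of order $k^{(n-2)/2-\alpha}$; for $\alpha=(n-2)/2$ it is of order $\ln k$; and for $\alpha>(n-2)/2$ it is bounded. Multiplying by the Szegő prefactor $k^{-(n-2)/2}$ and adding the polar-cap contribution $O(k^{-\alpha})$ yields $O(k^{-\alpha})$ (dominant in the first case), $O(k^{-(n-2)/2}\ln k)$ in the critical case, and $O(k^{-(n-2)/2})$ in the third case (where $k^{-\alpha}=o(k^{-(n-2)/2})$).

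I do not expect a serious obstacle here; once the pushforward $\tilde\mu$ is set up and the Szegő threshold $\delta/k$ is identified as the natural splitting point, the proof is driven entirely by the scaling of the elementary integral above. The only mild bookkeeping issue is to verify that the boundary term from the Fubini rearrangement and the polar-cap contribution never exceed the dominant regime; this is immediate in each case from $\alpha-(n-2)/2$ being negative, zero, or positive respectively.
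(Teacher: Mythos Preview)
Your proposal is correct and follows essentially the same route as the paper: split at the Szeg\H{o} threshold $\cos(\delta/k)$, bound the polar-cap contribution trivially by $O(k^{-\alpha})$, and on the bulk combine \cref{eq:Legendre_polynomial_estimate} with an integration-by-parts/layer-cake argument against the pushforward of $|\mu|$. The only cosmetic difference is that the paper pushes forward via $u\mapsto\langle\e,u\rangle$ to $[0,1]$ and uses Lebesgue--Stieltjes integration by parts, whereas you push forward via $u\mapsto\arccos|\langle\e,u\rangle|$ to $[0,\pi/2]$ and use the equivalent Fubini/layer-cake formulation; both lead to the same elementary integral governing the three regimes.
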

\begin{proof}
	Since $\mu$ can be decomposed into two signed measures that are each supported on one hemisphere, we may assume that $\supp\mu\subseteq\{u\in\S^{n-1}:\langle\e,u\rangle\geq 0\}$. Denoting by $\rho=\J_{\e}[\abs{\mu}]$ the pushforward measure of $\abs{\mu}$ with respect to the map $u\mapsto\langle\e,u\rangle$, we have that
	\begin{equation*}
		\abs{a^n_k[\mu]}
		= \left| \int_{\S^{n-1}} P^n_k(\langle\e,u\rangle)\mu(du)\right|
		\leq \int_{\S^{n-1}} \abs{P^n_k(\langle\e,u\rangle)} ~\abs{\mu}(du)
		= \int_{[0,1]} \abs{P^n_k(t) } \rho(dt).
	\end{equation*}
	Our aim now is to find suitable bounds for the integral on the right hand side. To that end, we fix some arbitrary $\delta>0$ and split it into the two integrals 
	\begin{equation*}
		I_1(k)
		= \int_{[0,\cos{\frac{\delta}{k}}]} \abs{P^n_k(t)}\rho(dt)
		\qquad\text{and}\qquad
		I_2(k)
		= \int_{(\cos{\frac{\delta}{k}},1]} \abs{P^n_k(t)}\rho(dt).
	\end{equation*}
	Observe that our assumption on $\mu$ implies that $\rho((t,1])\leq C(1-t^2)^{\frac{\alpha}{2}}$ for all $t\in[0,1]$. Since $\abs{P^n_k(t)}\leq 1$ for all $t\in [0,1]$, we obtain
	\begin{equation*}
		I_2(k)
		\leq \rho\left( \left(\cos{\tfrac{\delta}{k}},1\right] \right)
		\leq C \left(\sin{\tfrac{\delta}{k}}\right)^{\alpha}
		\in O(k^{-\alpha}).
	\end{equation*}
	For the integral $I_1(k)$, estimate \cref{eq:Legendre_polynomial_estimate} and Lebesgue-Stieltjes integration by parts yield
	\begin{align*}
		I_1(k)
		&\leq Mk^{-\frac{n-2}{2}}\int_{[0,\cos{\frac{\delta}{k}}]} (1-t^2)^{-\frac{n-2}{4}}\rho(dt) \\
		&= Mk^{-\frac{n-2}{2}} \left(\rho\left(\left[0,\cos{\tfrac{\delta}{k}}\right]\right) -  \left(\sin{\tfrac{\delta}{k}}\right)^{-\frac{n-2}{2}} \rho\left(\left(\cos{\tfrac{\delta}{k}},1\right]\right) + \tilde{I}_1(k)\right) \\
		&\leq Mk^{-\frac{n-2}{2}}\left(\rho([0,1])+\tilde{I}_1(k)\right),
	\end{align*}
	where we defined
	\begin{equation*}
		\tilde{I}_1(k)
		= \tfrac{n-2}{2}\int_{[0,\cos{\frac{\delta}{k}}]} \rho\left(\left(t,1\right]\right) t(1-t^2)^{-\frac{n+2}{4}}dt.
	\end{equation*}
	Employing again our estimate on $\rho((t,1])$ and performing a simple computation shows that
	\begin{equation*}
		\tilde{I}_1(k)
		\leq C\tfrac{n-2}{2} \int_{[0,{\cos\frac{\delta}{k}}]} t(1-t^2)^{\frac{1}{2}\left(\alpha-\frac{n-2}{2}-1\right)} dt
		\in \left\{\begin{array}{ll}
			O(k^{\frac{n-2}{2}-\alpha}),	&\alpha<\frac{n-2}{2},\\
			O(\ln k),						&\alpha=\frac{n-2}{2},\\
			O(1),							&\alpha>\frac{n-2}{2}.
		\end{array}\right.
	\end{equation*}
	Combining the estimates for $I_1(k)$ and $I_2(k)$ completes the proof.
\end{proof}

As an immediate consequence of Theorems~\ref{w_mon=>w_pos+density} and \ref{density=>asymptotics}, we obtain the following.

\begin{cor}\label{w_mon=>decay}
	Let $1< i\leq n-1$ and $\Phi_i\in\MVal_i$ be weakly monotone with generating function $f$. \linebreak Then $a^n_k[\square_n f]\in O(k^{-1/2})$ as $k\to \infty$.
\end{cor}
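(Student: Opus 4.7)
The plan is to deduce the corollary directly by combining two results already established in the paper: the cap estimate for $\square_n f$ from \cref{w_mon=>w_pos+density} and the asymptotic multiplier bound from \cref{density=>asymptotics}. Specifically, since $\Phi_i$ is weakly monotone, \cref{w_mon=>w_pos+density} guarantees that $\mu = \square_n f$ is a signed measure satisfying
\begin{equation*}
	|\square_n f|\bigl(\{u\in\S^{n-1}:|\langle\e,u\rangle|>\cos r\}\bigr) \leq C r^{i-1}, \qquad r\geq 0.
\end{equation*}
This places us in exactly the setting of \cref{density=>asymptotics} with $\alpha = i-1$, so it remains only to check that the resulting decay rate is at least $O(k^{-1/2})$ in each of the three regimes of that theorem.

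The verification is a brief case analysis using $i\geq 2$ and $n\geq 3$, which together give $\alpha = i-1 \geq 1$ and $(n-2)/2 \geq 1/2$. In the regime $\alpha<(n-2)/2$, the bound is $O(k^{-(i-1)})\subseteq O(k^{-1})\subseteq O(k^{-1/2})$. In the regime $\alpha>(n-2)/2$, the bound $O(k^{-(n-2)/2})$ has exponent at least $1/2$. In the boundary case $\alpha=(n-2)/2$, one has $(n-2)/2 = i-1\geq 1$, so the logarithmic factor in $O(k^{-(n-2)/2}\ln k)$ is absorbed into an arbitrarily small loss in the exponent and the bound still lies in $O(k^{-1/2})$.

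There is no real obstacle here: the work was already done in establishing \cref{w_mon=>w_pos+density} and \cref{density=>asymptotics}, and the present corollary is essentially a bookkeeping step that packages their combination into a clean statement about the rate of convergence of the Fourier expansion of $\square_n f$. The $1/2$ exponent is chosen because it is the weakest uniform bound that holds across all admissible pairs $(n,i)$; in fact, one could state a sharper, dimension-dependent bound, but $O(k^{-1/2})$ is what will be needed in the applications to fixed points in the next section.
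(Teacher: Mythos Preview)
Your proposal is correct and follows exactly the approach indicated by the paper, which simply states that the corollary is ``an immediate consequence of Theorems~\ref{w_mon=>w_pos+density} and \ref{density=>asymptotics}'' without spelling out the case analysis. Your explicit verification that all three regimes of \cref{density=>asymptotics} yield at least $O(k^{-1/2})$ when $\alpha=i-1\geq 1$ and $n\geq 3$ is precisely the bookkeeping the paper leaves implicit.
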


\section{Fixed Points}
\label{sec:fixed_points}

In this section, we prove a range of results regarding local uniqueness of fixed points of Minkowski valuations $\Phi_i\in\MVal_i$ of degree $1<i\leq n-1$ (the $1$-homogeneous case has been settled globally by Kiderlen~\cite{MR2238926}). This section is divided into three subsections. In \cref{sec:MSO}, we prove \cref{fixed_points_MSO:intro} concerning the mean section operators. \cref{sec:even_i=n-1} is dedicated to Minkowski valuations $\Phi_i$ generated by origin-symmetric convex bodies of revolution. There we prove \cref{fixed_points_body_of_rev:intro}, unifying previous results by Ivaki \cite{MR3639524,MR3757054} and the second author and Schuster \cite{MR4316669}. Finally, in \cref{sec:even_i<n-1} we consider general even Minkowski valuations for which we obtain information about the fixed points of $\Phi_i$ (as opposed to $\Phi_i^2$).

The proofs given in this section utilize the following result for general Minkowski valuations $\Phi_i\in\MVal_i$. It provides three sufficient conditions on the generating function of $\Phi_i$ to obtain the desired local uniqueness of fixed points of $\Phi_i^2$. It contains however no information on when these conditions are fulfilled. For instance, in the particular case when $\Phi_i$ is generated by an origin symmetric $C^2_+$ convex body of revolution, checking condition  \ref{fixed_points_Phi^2:inj} turns out to be rather involved.

\begin{thm}[{\cite{MR4316669}}]\label{fixed_points_Phi^2}
	Let $1<i\leq n-1$ and $\Phi_i\in\MVal_i$ with generating function $f$ satisfying the following conditions:
	\begin{enumerate}[label=\upshape(C\arabic{*}),topsep=0.5ex,itemsep=-0.5ex,leftmargin=12mm]
		\item \label{fixed_points_Phi^2:diff} the convolution transform $\T_f$ is a bounded linear operator from $C(\S^{n-1})$ to $C^2(\S^{n-1})$,
		\item \label{fixed_points_Phi^2:surj} there exists $\alpha>0$ such that $a^n_k[\square_nf]\in O(k^{-\alpha})$ as $k\to\infty$,
		\item \label{fixed_points_Phi^2:inj} for all $k\geq 2$,
		\begin{equation*}\label{eq:fixed_points_Phi^2:inj}
			\frac{\abs{a^n_k[\square_nf]}}{a^n_0[\square_nf]}
			< \frac{1}{i}.
		\end{equation*}		
	\end{enumerate}
	Then there exists a $C^2$ neighborhood of $B^n$ where the only fixed points of $\Phi_i^2$ are Euclidean balls.
\end{thm}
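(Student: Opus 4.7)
The plan is to linearize $\Phi_i^2$ at $B^n$ in the $C^2$-quotient of support functions modulo dilation and translation and then run a Banach-type contraction argument driven by the spectral gap encoded in (C3). The three hypotheses play distinct roles: (C1) together with \cref{convol_trafo_C^2:intro} produces a bounded differential $L\colon C^2(\S^{n-1})\to C^2(\S^{n-1})$ with controlled quadratic-and-higher remainder; (C2) provides Fourier-multiplier decay, preventing accumulation at $\pm 1$; (C3) supplies the strict inequality $\abs{\lambda_k}<1$ in every harmonic mode of degree $k\geq 2$.

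After rescaling $f$ so that $\Phi_iB^n=B^n$ (equivalently $a^n_0[f]=1$, using $S_i(B^n,\cdot)=du$), I would expand $S_i(K,\cdot)$ multilinearly around $B^n$ for $h_K=1+\phi$ and then convolve with $f$. The key observation is the first-variation identity $S(B^n[i-1],L_\phi;\cdot)=\square_n\phi\cdot du$, a quick consequence of the linearization of the mixed elementary symmetric function $e_i(r_1,\ldots,r_{n-1})$ at principal radii $(1,\ldots,1)$ together with the identity $\mathrm{tr}(D^2\phi|_{u^\perp})=(n-1)\square_n\phi$. This yields
\begin{equation*}
    h(\Phi_iK,\cdot)=1+L\phi+R(\phi),
    \qquad
    L\phi=i\,\phi\ast\square_nf,
\end{equation*}
where $R$ collects the $\geq 2$-multilinear part. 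By (C1) and \cref{convol_trafo_C^2:intro}, $L\colon C^2(\S^{n-1})\to C^2(\S^{n-1})$ is bounded, and analogous bounds give $\norm{R(\phi)}_{C^2}\leq C\norm{\phi}_{C^2}^2$ for $\norm{\phi}_{C^2}\leq 1$. Funk--Hecke diagonalizes $L$ as $L|_{\mathcal{H}^n_k}=\lambda_k\Id$ with
\begin{equation*}
    \lambda_k=\frac{i\,a^n_k[\square_nf]}{a^n_0[\square_nf]},
\end{equation*}
so that $\lambda_0=i$ (from $i$-homogeneity), $\lambda_1=0$ (from translation invariance of $\Phi_i$, consistent with $\square_n$ annihilating $\mathcal{H}^n_1$), and (C3) gives $\abs{\lambda_k}<1$ for $k\geq 2$. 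Condition (C2) then upgrades this to a uniform gap $\sup_{k\geq 2}\lambda_k^2<1$.

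Now let $K$ be a fixed point of $\Phi_i^2$ modulo dilation and translation, with $h_K$ sufficiently close to $1$ in $C^2$. Translating so that $s(K)=o$ and dilating so that $\pi_0h_K=1$ yields $h_K=1+\phi$ with $\pi_0\phi=\pi_1\phi=0$ and $\norm{\phi}_{C^2}$ arbitrarily small. On the closed subspace $V=(\mathcal{H}^n_0\oplus\mathcal{H}^n_1)^\perp\cap C^2(\S^{n-1})$, the fixed-point equation reduces to
\begin{equation*}
    (L^2-\Id)\phi=-R_2(\phi),
\end{equation*}
where $R_2\colon V\to V$ is the composition remainder and satisfies $\norm{R_2(\phi)}_{C^2}\leq C'\norm{\phi}_{C^2}^2$. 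The spectral gap makes $L^2-\Id$ boundedly invertible on $V$, so that $\phi=-(L^2-\Id)^{-1}R_2(\phi)$ gives the estimate $\norm{\phi}_{C^2}\leq C''\norm{\phi}_{C^2}^2$, forcing $\phi=0$ once $\norm{\phi}_{C^2}$ is small. Hence $K$ is a Euclidean ball.

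The main obstacle is the $C^2$-level control of the higher-order terms. The first-order part $L\phi=i\phi\ast\square_nf$ is $C^2$-bounded immediately by (C1) via \cref{convol_trafo_C^2:intro}, but one still needs to bound the multilinear mixed-area-measure/convolution operators in the quadratic-and-higher part of $R_2$ as multilinear maps into $C^2(\S^{n-1})$, which is where (C1) is genuinely used beyond the linearization. This multilinear $C^2$-bookkeeping, together with verifying that the passage to the quotient representative (Steiner point at $o$, $\pi_0h_K=1$) commutes well with the expansion, constitutes the main technical work.
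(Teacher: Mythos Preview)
The paper does not prove this theorem; it is quoted from \cite{MR4316669}, and the only information the paper gives about the proof is the Remark immediately following it, which says the argument passes through Sobolev spaces $H^s(\S^{n-1})$ via the Strichartz norm equivalence \cref{eq:Sobolev_norm}. Your linearization is correct (including $\lambda_0=i$, $\lambda_1=0$, and $L\phi=i\,\phi\ast\square_nf$), and the overall contraction scheme is the right one. The substantive gap is the sentence ``the spectral gap makes $L^2-\Id$ boundedly invertible on $V$'': here $V$ is a closed subspace of $C^2(\S^{n-1})$, and a uniform bound $\sup_{k\ge 2}\abs{(\lambda_k^2-1)^{-1}}<\infty$ on the multipliers does \emph{not} by itself yield a bounded operator on $C^2$. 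Bounded multipliers give bounded operators on $L^2$ and, via \cref{eq:Sobolev_norm}, on every $H^s$; they need not give bounded operators on $C^k$ spaces. This is precisely why the original proof routes through $H^s$: there the inverse $(L^2-\Id)^{-1}$ is trivially bounded because its multiplier sequence is bounded, and one recovers $C^2$-control by Sobolev embedding $H^s\hookrightarrow C^2$ for $s$ large together with a regularity bootstrap that uses the smoothing $\lambda_k=O(k^{-\alpha})$ from (C2).

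In other words, you are using (C2) only to secure $\sup_{k\ge 2}\lambda_k^2<1$, but in the actual argument (C2) does more work: it provides the decay that makes $L$ smoothing of positive order in the Sobolev scale, which is what lets one close the estimate back in $C^2$. Your identification of the quadratic remainder bound $\norm{R(\phi)}_{C^2}\le C\norm{\phi}_{C^2}^2$ via (C1) is fine (the area-measure density is a polynomial in the entries of $D^2h_K$, so the $\ge 2$-multilinear part is $O(\norm{\phi}_{C^2}^2)$ in $C^0$, and $\T_f$ lifts this to $C^2$), so the ``main obstacle'' you flag at the end is not the real one; the real one is the invertibility step, and fixing it requires inserting the $H^s$ machinery indicated in the paper's Remark.
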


Now we can apply our results on regularity of weakly monotone Minkowski valuations $\Phi_i$ to these fixed point problems. Corollaries~\ref{w_mon=>bounded_C^2} and \ref{w_mon=>decay} show that conditions \ref{fixed_points_Phi^2:diff} and \ref{fixed_points_Phi^2:surj} are fulfilled in the weakly monotone case, which yields the following.

\begin{thm}\label{w_mon=>fixed_points_Phi^2}
	Let $1< i\leq n-1$ and $\Phi_i\in\MVal_i$ be weakly monotone with generating function $f$ satisfying condition \ref{fixed_points_Phi^2:inj}. Then there exists a $C^2$ neighborhood of $B^n$ where the only fixed points of $\Phi_i^2$	are Euclidean balls.
\end{thm}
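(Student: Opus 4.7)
The plan is to reduce the statement directly to the general criterion \cref{fixed_points_Phi^2}, which guarantees local uniqueness of Euclidean balls as fixed points of $\Phi_i^2$ as soon as the generating function $f$ satisfies the three conditions \ref{fixed_points_Phi^2:diff}, \ref{fixed_points_Phi^2:surj}, and \ref{fixed_points_Phi^2:inj}. Since \ref{fixed_points_Phi^2:inj} is part of the hypothesis of our theorem, the task reduces to verifying that weak monotonicity alone implies \ref{fixed_points_Phi^2:diff} and \ref{fixed_points_Phi^2:surj}.

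For \ref{fixed_points_Phi^2:diff}, I would simply invoke \cref{w_mon=>bounded_C^2}: since $\Phi_i$ is weakly monotone and $1<i\leq n-1$, the convolution transform $\T_f$ is a bounded linear operator from $C(\S^{n-1})$ to $C^2(\S^{n-1})$. For \ref{fixed_points_Phi^2:surj}, I would invoke \cref{w_mon=>decay}, which yields $a^n_k[\square_n f]\in O(k^{-1/2})$ as $k\to\infty$, so \ref{fixed_points_Phi^2:surj} holds with $\alpha=\tfrac{1}{2}$.

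With the two conditions above and the assumed \ref{fixed_points_Phi^2:inj} in hand, \cref{fixed_points_Phi^2} delivers the conclusion. The only point worth double-checking is the role of the strict inequality $i>1$: the structural input behind both \ref{fixed_points_Phi^2:diff} and \ref{fixed_points_Phi^2:surj} is the cap-mass estimate $\abs{\square_n f}(\SCap_r(\pm\e))\leq Cr^{i-1}$ from \cref{w_mon=>w_pos+density}, and this bound is integrable against $dr/r$ in \cref{eq:convol_trafo_C^2:intro} precisely because $i-1>0$; the same bound feeds into \cref{density=>asymptotics} to produce the polynomial multiplier decay. Thus there is no substantive obstacle: the technical heavy lifting was carried out in \cref{sec:convol} and \cref{sec:w-mon}, and the present theorem is essentially a packaging statement.
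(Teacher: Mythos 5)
Your proof is correct and matches the paper's own argument exactly: the paper derives \cref{w_mon=>fixed_points_Phi^2} by combining \cref{fixed_points_Phi^2} with Corollaries~\ref{w_mon=>bounded_C^2} and \ref{w_mon=>decay}, which supply conditions \ref{fixed_points_Phi^2:diff} and \ref{fixed_points_Phi^2:surj} under weak monotonicity. Your added sanity check on the role of $i>1$ in making the cap-mass bound $Cr^{i-1}$ integrable against $dr/r$ is a correct and worthwhile observation, though it only restates where the hypothesis is used in the cited corollaries.
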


\begin{rem}
	The way \cref{fixed_points_Phi^2} was stated in \cite{MR4316669} additionally required $\Phi_i$ to be \emph{even} as the proof employs the following classical result by Strichartz \cite{MR782573}. Denote by $H^s(\S^{n-1})$, $s\in\mathbb{N}$, the Sobolev space of functions on $\S^{n-1}$ with weak covariant derivatives up to order $s$ in $L^2(\S^{n-1})$. Strichartz showed that
	\begin{equation}\label{eq:Sobolev_norm}
		\norm{\phi}_{H^s}^2
		\approx \sum_{k=0}^\infty (k^2+1)^s\norm{\pi_k \phi}_{L^2}^2
	\end{equation}
	for every \emph{even} $\phi\in H^s(\S^{n-1})$, where $\norm{\cdot}_{H^s}$ is the standard norm of $H^s(\S^{n-1})$. However, the classical theory on the Dirichlet problem on compact Riemannian manifolds (see, e.g., \cite[Section~5.1]{MR2744150}) implies that \cref{eq:Sobolev_norm} holds for \emph{every} $\phi\in H^s(\S^{n-1})$. Therefore, by a minor modification of the proof of \cite[Theorem~6.1]{MR4316669}, the assumption on $\Phi_i$ to be even can be omitted.
\end{rem}

\subsection{Mean Section Operators}
\label{sec:MSO}

As a first application, we show local uniqueness of fixed points of the mean section operators $\MSO_j$, which were defined at the beginning of this article. As was pointed out in the introduction, the mean section operators are not generated by a convex body of revolution. This is the main reason why they have not been included in previous results. Due to our extensive study of regularity, we obtain \cref{fixed_points_MSO:intro} as a simple consequence of \cref{w_mon=>fixed_points_Phi^2}.

\setcounter{thmB}{1}
\begin{thmB}\label{fixed_points_MSO}
	For $2 \leq j <n$, there exists a $C^2$ neighborhood of $B^n$ where the only fixed points of $\MSO_j^2$ are Euclidean balls.
\end{thmB}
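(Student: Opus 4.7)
The plan is to deduce this from \cref{w_mon=>fixed_points_Phi^2} applied to a translation-invariant renormalization of $\MSO_j$. Setting $i := n-j+1$ (so that $1 < i \leq n-1$), the Goodey-Weil representation \cref{eq:MSO_representation} implies that the operator
\[
\Phi_i K := \MSO_j K - c_{n,j} V_{n-j}(K) s(K)
\]
belongs to $\MVal_i$ and has generating function $\breve{g}_j$. Since $\Phi_i$ differs from $\MSO_j$ only by a body-dependent translation, and the Steiner point transforms in a controlled fashion under $\MSO_j$, a $C^2$ local uniqueness statement for fixed points of $\Phi_i^2$ up to dilation and translation yields the analogous statement for $\MSO_j^2$.

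To invoke \cref{w_mon=>fixed_points_Phi^2}, I would first verify that $\Phi_i$ is weakly monotone. If $K \subseteq L$ with $s(K) = s(L) = 0$, the Steiner point correction vanishes, hence $\Phi_i K = \MSO_j K$ and $\Phi_i L = \MSO_j L$; the inclusion $\Phi_i K \subseteq \Phi_i L$ then follows from the pointwise bound $h(K \cap E, u) \leq h(L \cap E, u)$ for every $u \in \S^{n-1}$ and every $E \in \mathrm{AG}(n,j)$, integrated over the affine Grassmannian.

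The main obstacle is verifying condition (C3) of \cref{fixed_points_Phi^2}, namely
\[
\frac{|a^n_k[\square_n \breve{g}_j]|}{a^n_0[\square_n \breve{g}_j]} < \frac{1}{i}
\qquad \text{for every } k \geq 2.
\]
By \cref{eq:box_eigenvalues}, formally $a^n_k[\square_n \breve{g}_j] = -\tfrac{(k-1)(k+n-1)}{n-1}\,a^n_k[\breve{g}_j]$, reducing the task to explicit estimates on the Fourier-Legendre coefficients of Berg's function $g_j$ with respect to the weight $(1-t^2)^{(n-3)/2}$ on $(-1,1)$. These coefficients should be accessible via the characterizing Berg ODE, yielding either a closed form or a tractable recursion. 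Note that the $(k-1)$ factor annihilates the mode $k=1$, which is consistent with the Steiner point term that was subtracted in the definition of $\Phi_i$. For all sufficiently large $k$, \cref{w_mon=>decay} gives decay $a^n_k[\square_n \breve{g}_j] = O(k^{-1/2})$, reducing the problem to the verification of the strict inequality for finitely many values of $k \geq 2$ by direct computation or sign/monotonicity analysis.

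Once (C3) has been established, \cref{w_mon=>fixed_points_Phi^2} delivers a $C^2$ neighborhood of $B^n$ in which the only fixed points of $\Phi_i^2$ are Euclidean balls; by the reduction of the first paragraph, the same conclusion holds for $\MSO_j^2$, finishing the proof.
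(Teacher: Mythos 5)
Your overall route is the same as the paper's: center the mean section operator so that it lands in $\MVal_{n-j+1}$ with (essentially) Berg's function as generating function, observe weak monotonicity, and invoke \cref{w_mon=>fixed_points_Phi^2}, so that everything hinges on condition \ref{fixed_points_Phi^2:inj} for $\breve{g}_j$. The genuine gap is that you never actually establish \ref{fixed_points_Phi^2:inj}, and your proposed substitute does not work as stated. The decay $a^n_k[\square_n\breve{g}_j]\in O(k^{-1/2})$ from \cref{w_mon=>decay} is not effective: no constant or threshold is provided (and tracking one through \cref{w_mon=>w_pos+density} and \cref{density=>asymptotics} would require bounding the cap-mass constant for $\breve{g}_j$ and comparing against $a^n_0[\square_n\breve{g}_j]/i$, none of which you compute), so it does not reduce the inequality to ``finitely many values of $k$'' checkable by hand. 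Likewise, ``a closed form or tractable recursion from the Berg ODE'' is exactly the hard analytic content, and it is left unexecuted; as written, the inequality is not verified for a single $k\geq 2$ (and, since $\breve g_j$ is not even, all $k\geq 2$, odd included, must be handled).

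The paper closes precisely this step by quoting the known closed-form multipliers of Berg's functions,
\begin{equation*}
	a^n_k[\breve{g}_j]
	= - \frac{\pi^{\frac{n-j}{2}}(j-1)}{4}\,
	\frac{\Gamma\big(\tfrac{n-j+2}{2}\big)\Gamma\big(\tfrac{k-1}{2}\big)\Gamma\big(\tfrac{k+j-1}{2}\big)}{\Gamma\big(\tfrac{k+n-j+1}{2}\big)\Gamma\big(\tfrac{k+n+1}{2}\big)},
	\qquad k\neq 1,
\end{equation*}
and then using \cref{eq:box_eigenvalues} and $\Gamma(x+1)=x\Gamma(x)$ to write $a^n_k[\square_n\breve{g}_j]/a^n_0[\square_n\breve{g}_j]$ as $\tfrac{1}{i}$ times an explicit quotient of Gamma values, whose strict bound below $1$ for all $k\geq 2$ follows from monotonicity properties of the Gamma function. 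So to complete your argument you would either need to import this multiplier formula (or derive it, which is a nontrivial computation) and run the same monotonicity analysis, or make every constant in the asymptotic route explicit — neither of which your proposal does. A further small caveat: your justification of weak monotonicity via the pointwise bound $h(K\cap E,u)\leq h(L\cap E,u)$ ignores the flats $E$ that meet $L$ but not $K$, whose contribution need not be nonnegative; the paper asserts monotonicity of $\MSO_j$ with the same brevity, so this is at the level of the paper's own exposition, but it is not literally a pointwise-integrand argument.
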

\begin{proof}
	Define the $j$-th \emph{centered} mean section operator by $\tilde{\MSO}_jK = \MSO_j(K-s(K))$. Then $\tilde{\MSO}_j\in\MVal_i$ for $i=n+1-j$ and due to \cref{eq:MSO_representation} its generating function is given by $(\Id-\pi_1)\breve{g}_j$. Clearly $\MSO_j$ is monotone, and thus, $\tilde{\MSO}_j$ is weakly monotone. By \cref{w_mon=>fixed_points_Phi^2}, it suffices to check condition \ref{fixed_points_Phi^2:inj} for $\breve{g}_j$.
	
	It was shown in \cite{MR3787383} and \cite{MR3769988} independently that the multipliers of $\breve{g}_j$ are given by
	\begin{equation*}
		a^n_k[\breve{g}_j]
		= - \frac{\pi^{\frac{n-j}{2}}(j-1)}{4} \frac{\Gamma\big(\frac{n-j+2}{2}\big)\Gamma\big(\frac{k-1}{2}\big)\Gamma\big(\frac{k+j-1}{2}\big)}{\Gamma\big(\frac{k+n-j+1}{2}\big)\Gamma\big(\frac{k+n+1}{2}\big)}
	\end{equation*}
	for $k\neq 1$. A simple computation using \cref{eq:box_eigenvalues} and the functional equation $\Gamma(x+1)=x\Gamma(x)$ yields
	\begin{equation*}
		\frac{a^n_k[\square_n\breve{g}_j]}{a^n_0[\square_n\breve{g}_j]}
		= \frac{1}{i} \frac{\Gamma\big(\frac{n-1}{2}\big)\Gamma\big(\frac{i+2}{2}\big)\Gamma\big(\frac{k+1}{2}\big)\Gamma\big(\frac{k+n-i}{2}\big)}{\Gamma\big(\frac{n-i}{2}\big)\Gamma\big(\frac{3}{2}\big)\Gamma\big(\frac{k+i}{2}\big)\Gamma\big(\frac{k+n-1}{2}\big)}.
	\end{equation*}
	Since the Gamma function is strictly positive and strictly increasing on $[\frac{3}{2},\infty)$, it follows that $\breve{g}_j$ satisfies condition \ref{fixed_points_Phi^2:inj}.
\end{proof}

\subsection{Convex Bodies of Revolution}
\label{sec:even_i=n-1}

We now turn to Minkowski valuations that are generated by a convex body of revolution, that is, their generating function is a support function. This class includes all even Minkowski valuations in $\MVal_{n-1}$, as was shown in \cite{MR2327043}. Our aim for this section is to prove \cref{fixed_points_body_of_rev:intro} which is restated below.

\setcounter{thmA}{0}
\begin{thmA}\label{fixed_points_body_of_rev}
	Let $1<i\leq n-1$ and $\Phi_i\in\MVal_i$ be generated by an origin-symmetric convex body of revolution. Then there exists a $C^2$ neighborhood of $B^n$ where the only fixed points of $\Phi_i^2$ are Euclidean balls, unless $\Phi_i$ is a multiple of the projection body operator, in which case ellipsoids are also fixed points.
\end{thmA}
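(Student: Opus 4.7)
The plan is to reduce the claim to \cref{w_mon=>fixed_points_Phi^2}, whose hypotheses are weak monotonicity of $\Phi_i$ together with the multiplier condition~\ref{fixed_points_Phi^2:inj} for the generating function, and then to handle the projection body case separately.

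\emph{Weak monotonicity.} Since $h_L$ is itself a support function, for every $K\in\K^n$ and $u=\vartheta\e$ one has
\[
	h(\Phi_iK,u)
	= \int_{\S^{n-1}} h_{\vartheta L}(v)\,dS_i(K,v),
\]
which is a positive multiple of the mixed volume $V(K[i],\vartheta L,B^n[n-i-1])$. Monotonicity of mixed volumes in each argument forces $h(\Phi_iK,u)\le h(\Phi_iK',u)$ whenever $K\subseteq K'$, so $\Phi_i$ is (strongly) monotone, and in particular weakly monotone.

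\emph{Reduction of the multiplier condition.} The distributional identity $\square_nh_L=\frac{1}{n-1}S_1(L,\cdot)$ rewrites~\ref{fixed_points_Phi^2:inj} as
\[
	\frac{\lvert a^n_k[S_1(L,\cdot)]\rvert}{a^n_0[S_1(L,\cdot)]}
	< \frac{1}{i}
	\qquad \text{for every } k\geq 2,
\]
odd $k$ being automatic from the origin-symmetry of $L$. The central technical claim is the sharp inequality
\[
	\frac{\lvert a^n_k[S_1(L,\cdot)]\rvert}{a^n_0[S_1(L,\cdot)]}
	\le \frac{1}{n-1}
	\qquad \text{for every even } k\ge 2,
\]
with equality \emph{only} for a line segment at $k=2$. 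Granting this, $1<i\le n-1$ gives $\frac{1}{n-1}\le\frac{1}{i}$, and the strict inequality in~\ref{fixed_points_Phi^2:inj} holds in every case except when $i=n-1$ and $L$ is a line segment, that is, precisely when $\Phi_i$ is a multiple of $\Pi$. For the line segment $L_0=[-\tfrac12\e,\tfrac12\e]$, $S_1(L_0,\cdot)$ is a constant multiple of the Hausdorff measure on the equator $\S^{n-1}\cap\e^\perp$, so $a^n_k[S_1(L_0,\cdot)]/a^n_0[S_1(L_0,\cdot)]=P^n_k(0)$; a direct computation shows that $\lvert P^n_k(0)\rvert$ is maximised over even $k\ge 2$ at $k=2$, with value $1/(n-1)$. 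For a general origin-symmetric convex body of revolution $L$, any mass of $S_1(L,\cdot)$ located away from the equator ought to strictly decrease the ratio; a proof should combine Firey's cap estimate (\cref{area_meas_density}) to control the concentration of $S_1(L,\cdot)$ near the equator with the pointwise bound $\lvert P^n_k(t)\rvert<1$ for $t\in(-1,1)$ and $k\ge 1$ to recover the sharp constant $1/(n-1)$.

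\emph{The projection body exception.} In the remaining case $\Phi_i=c\Pi$, condition~\ref{fixed_points_Phi^2:inj} fails with equality exactly at $k=2$, which is precisely the subspace corresponding to ellipsoidal deformations of $B^n$. The classical fact that $\Pi$ maps every centered ellipsoid to an ellipsoid, and hence that $\Pi^2E$ is a positive multiple of $E$, shows that all origin-symmetric ellipsoids are fixed points of $\Pi^2$. That balls and ellipsoids are the only fixed points (up to dilation and translation) in some $C^2$ neighbourhood of $B^n$ is the main result of Ivaki~\cite{MR3757054} (together with Saraoglu--Zvavitch~\cite{MR3571903}), which we invoke directly.

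The principal obstacle is the sharp multiplier estimate above for arbitrary $L$: the weaker bound $\le 1$ is immediate from $\lvert P^n_k\rvert\le 1$, but obtaining the constant $1/(n-1)$ with the correct equality case genuinely uses the convex structure of $L$ beyond the positivity of $S_1(L,\cdot)$ and is expected to be the most technical step of the proof.
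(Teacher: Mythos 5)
Your overall skeleton coincides with the paper's: establish weak monotonicity from the monotonicity of mixed volumes, invoke \cref{w_mon=>fixed_points_Phi^2}, reduce condition \ref{fixed_points_Phi^2:inj} (odd multipliers vanishing by symmetry) to the sharp bound $\abs{a^n_k[\square_n h_L]}/a^n_0[\square_n h_L]\leq\tfrac{1}{n-1}$ for even $k\geq 2$ with equality only for a line segment at $k=2$, and observe that the failure case $i=n-1$, $L$ a segment, is exactly the projection body, for which ellipsoids are fixed points of $\Pi^2$. The problem is that this sharp bound with its equality case is precisely the heart of the matter — it is \cref{conv_ineq} together with the paper's new \cref{convex_body_2nd_muliplier} — and you do not prove it; you only propose a strategy and yourself flag it as the main obstacle.

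Moreover, the proposed strategy would not work. Firey's cap estimate (\cref{area_meas_density}) together with $\abs{P^n_k(t)}<1$ on $(-1,1)$ uses only that $S_1(L,\cdot)$ is a positive, even, zonal measure with no excessive concentration at single points, and such information cannot produce the constant $\tfrac{1}{n-1}$: an even zonal positive measure spread over two small caps around $\pm\e$ satisfies a Firey-type cap bound (with suitably small total mass) yet has $a^n_k/a^n_0$ arbitrarily close to $P^n_k(1)=1$; what excludes it is that it is not the first area measure of a convex body. In fact, for $k=2$ positivity alone does give the lower bound $a^n_2/a^n_0\geq\min_{[-1,1]}P^n_2=-\tfrac{1}{n-1}$, but the upper bound $<\tfrac{1}{n-1}$ at $k=2$ and the bound for even $k\geq4$ genuinely require the support-function structure of $h_L$ (in the paper this is \cref{conv_ineq}, quoted from earlier work), and the equality characterization at $k=2$ — the new ingredient that isolates the projection body exception — is obtained in the paper by writing $a^n_2[\square_n h_L]+\tfrac{1}{n-1}a^n_0[\square_n h_L]$ as a positive multiple of $a^{n+2}_0[\A_1\overline{h_L}]$ via the recurrence of \cref{multipliers_EV}, using $\A_1\overline{h_L}\geq0$ from \cref{support_fct_EV}, and identifying the kernel of $\A_1$ in \cref{kernel_EV} with support functions of segments. (A softer route for the equality case would be to note that equality forces the zonal measure $S_1(L,\cdot)$ to be supported on the equator, hence uniform there, hence equal to the first area measure of a segment, and then use that $S_1$ determines $L$ up to translation; but your sketch does not contain this either.) Aside from this, the remaining steps of your proposal (weak monotonicity, the case distinction in $i$, $\Pi^2$ fixing ellipsoids) are correct, and the minor normalization $\square_n h_L=S_1(L,\cdot)$ rather than $\tfrac{1}{n-1}S_1(L,\cdot)$ is immaterial for the ratios; but as written the central inequality and its equality case are left unproven, so the argument is incomplete.
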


Note that if $\Phi_i\in\MVal_i$ is generated by a convex body of revolution $L$, then for every $K\in\K^n$,
\begin{equation*}
	h(\Phi_i K,u)
	= V(K[i],L(u),B^n[n-i-1]),
	\qquad u\in\S^{n-1},
\end{equation*}
where $L(u)$ denotes a suitably rotated copy of $L$, and $V(K_1,\ldots,K_n)$ is the mixed volume of the convex bodies $K_1,\ldots,K_n\in\K^n$ (see, e.g., \cite[Section~5.1]{MR3155183}). Due to the monotonicity of the mixed volume we see that every Minkowski valuation generated by a convex body of revolution is monotone. In light of \cref{w_mon=>fixed_points_Phi^2}, it is natural to ask when condition \ref{fixed_points_Phi^2:inj} is fulfilled. The following result shows that $L$ being origin symmetric is already sufficient up to the second multiplier.

\begin{thm}[{\cite{MR4316669}}]\label{conv_ineq}
	Let $L$ be a convex body of revolution. Then for all even $k\geq 4$,
	\begin{equation}\label{eq:conv_ineq_k>2}
		\frac{\abs{a^n_k[\square_n h_L]}}{a^n_0[\square_n h_L]}
		< \frac{1}{n-1},
	\end{equation}
	and
	\begin{equation}\label{eq:conv_ineq_k=2}
		- \frac{1}{n-1}
		\leq \frac{a^n_2[\square_n h_L]}{a^n_0[\square_n h_L]}
		< \frac{1}{n-1},
	\end{equation}
	where the left hand side inequality in \cref{eq:conv_ineq_k=2} is strict if $L$ is of class $C^2_+$.
\end{thm}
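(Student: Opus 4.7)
The plan is to reduce both inequalities to a moment problem for a positive zonal measure and then exploit the convex-geometric constraints coming from $L$ being an origin-symmetric convex body of revolution. Since $L$ is a convex body, the distribution $\square_n h_L$ coincides (up to a positive scaling) with the first area measure $S_1(L,\cdot)$, and is therefore a positive zonal measure on $\S^{n-1}$; origin-symmetry makes it even under $u\mapsto -u$. Let $\rho_L$ denote its pushforward to $[-1,1]$ along $u\mapsto \langle\e,u\rangle$ and set $M := \rho_L([-1,1]) = a^n_0[\square_n h_L] > 0$. By Funk--Hecke,
\[
\frac{a^n_k[\square_n h_L]}{M} = \frac{1}{M}\int_{-1}^1 P^n_k(t)\,d\rho_L(t),
\]
and only even $k$ contribute since $\rho_L$ is even.

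For $k=2$, using $P^n_2(t)=(nt^2-1)/(n-1)$, \cref{eq:conv_ineq_k=2} becomes the moment inequalities $0\le M^{-1}\int t^2\,d\rho_L\le 2/n$. Positivity of $\rho_L$ gives the lower bound immediately, with equality iff $\rho_L = M\delta_0$, i.e., $\square_n h_L$ is supported on the equator (a degenerate case realized by line segments along the axis). If $L$ is $C^2_+$, then $\square_n h_L$ has a continuous, strictly positive density on $\S^{n-1}$, forcing $\int t^2\,d\rho_L > 0$ and hence strict lower inequality. For the strict upper bound $\int t^2\, d\rho_L < 2M/n$, the plan is to rewrite this second moment via integration by parts against the Jacobi-type differential operator from \cref{eq:Legendre_ODE}, which is self-adjoint with respect to the weight $(1-t^2)^{(n-3)/2}$, and to use that $\bar{h}_L$ is the angular support function of a planar origin-symmetric convex body; strictness then encodes convexity of this planar profile.

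For even $k\ge 4$, the multiplier identity $a^n_k[\square_n h_L] = -\tfrac{(k-1)(k+n-1)}{n-1}\, a^n_k[h_L]$ from \cref{eq:box_eigenvalues} reduces \cref{eq:conv_ineq_k>2} to
\[
(k-1)(k+n-1)\,\Bigl|\int_{-1}^1 P^n_k(t)\,\bar{h}_L(t)\,(1-t^2)^{\frac{n-3}{2}}\,dt\Bigr| < \int_{-1}^1 \bar{h}_L(t)\,(1-t^2)^{\frac{n-3}{2}}\,dt.
\]
The plan is to integrate by parts twice. First use $(P^n_k)' = \tfrac{k(k+n-2)}{n-1}\,P^{n+2}_{k-1}$ from \cref{eq:Legendre_deriv} together with the identity $((1-t^2)^{(n-1)/2} f')' = (1-t^2)^{(n-3)/2}\,\bigl((1-t^2) f''-(n-1)t f'\bigr)$ to transfer one derivative off the Legendre polynomial while lowering its degree and raising the dimension. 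A second integration by parts moves the remaining derivative onto the distributional $\bar{h}_L''$, which is a positive Radon measure by convexity of the planar profile of $L$. The resulting expression is then estimated using the pointwise bound $|P^{n+2}_{k-1}|\le 1$ together with strict positivity of $\int \bar{h}_L\,(1-t^2)^{(n-3)/2}\,dt$.

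The main obstacle is the strict upper bound at $k=2$: unlike $k\ge 4$, it cannot be absorbed by the factor $(k-1)(k+n-1)$, so it truly encodes a convex-geometric second-moment inequality for the first area measure of an origin-symmetric convex body of revolution. A further technical point is the careful treatment of boundary terms and possible atoms of $\bar{h}_L''$ at $t=\pm 1$ (corresponding to flat caps of $L$ perpendicular to the axis $\e$) during integration by parts.
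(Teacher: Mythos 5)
Your reduction steps are fine (writing $\square_n h_L=S_1(L,\cdot)\geq 0$, the trivial lower bound at $k=2$ with its equality analysis, and the identity $a^n_k[\square_n h_L]=-\tfrac{(k-1)(k+n-1)}{n-1}a^n_k[h_L]$), but the two strict upper bounds — which are the entire content of the theorem — are not proved, and the route you propose for even $k\geq 4$ rests on a false claim. You assert that $\overline{h_L}''$ is a positive Radon measure "by convexity of the planar profile of $L$". This is wrong: for the $(n-1)$-dimensional disk $D^{n-1}$ perpendicular to the axis one has $\overline{h_L}(t)=\sqrt{1-t^2}$, which is strictly concave. Convexity of the profile, rewritten in the variable $t=\cos\theta$, gives exactly the pair of conditions $\A_1\overline{h_L}\geq 0$ and $\A_2\overline{h_L}=(1-t^2)\overline{h_L}''+\overline{h_L}-t\,\overline{h_L}'\geq 0$ of \cref{support_fct_EV}, not $\overline{h_L}''\geq 0$. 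Moreover, even where $\overline{h_L}''\geq 0$ happens to hold, the proposed estimate does not close: integrating by parts twice via \cref{eq:Legendre_ODE} does cancel the factor $(k-1)(k+n-1)$, but the remaining derivative of $P^{n+2}_{k-1}$ reintroduces, through \cref{eq:Legendre_deriv}, a factor of order $k^2$, so the crude bound $\abs{P^{n+2}_{k-1}}\leq 1$ (or $\abs{P^{n+4}_{k-2}}\leq 1$) produces an estimate that grows with $k$ and cannot yield \cref{eq:conv_ineq_k>2}. Finally, the strict upper bound at $k=2$ in \cref{eq:conv_ineq_k=2} — which you yourself identify as the main obstacle — is left as a "plan" rather than an argument, and you additionally assume $L$ origin-symmetric, which the statement does not.

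For comparison, the actual mechanism (in the cited source and visible in Section~5.2 of this paper) is to express fixed linear combinations of consecutive multipliers of $\square_n h_L$ in dimension $n$ as multipliers, in dimension $n+2$, of $\A_1\overline{h_L}$ and $\A_2\overline{h_L}$ (\cref{multipliers_EV}; see how \cref{convex_body_2nd_muliplier} handles $k=2$: $a^n_2[\square_n h_L]+\tfrac{1}{n-1}a^n_0[\square_n h_L]$ is a positive multiple of $a^{n+2}_0[\A_1\overline{h_L}]$, and the companion identity with $\A_2$ gives the upper bound). The inequalities then follow from $\A_1\overline{h_L}\geq 0$, $\A_2\overline{h_L}\geq 0$ (\cref{support_fct_EV}) together with a recursion in $k$, and the equality cases are pinned down by the kernels of $\A_1,\A_2$ (\cref{kernel_EV}). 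If you want to salvage your approach, you must replace the incorrect positivity input $\overline{h_L}''\geq 0$ by these operator positivities and track how they interact with the Legendre recurrences, rather than estimating $P^{n+2}_{k-1}$ pointwise.
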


If $i<n-1$, then condition \ref{fixed_points_Phi^2:inj} is fulfilled. If $i=n-1$, then \cref{conv_ineq} shows that condition \ref{fixed_points_Phi^2:inj} is fulfilled under the additional assumption that $L$ is of class $C^2_+$. We will show that imposing this regularity is not necessary: line segments are the only bodies for which equality is attained in the left hand side of inequality \cref{eq:conv_ineq_k=2}.

\begin{defi}\label{defi:EV}
	On $(-1,1)$, we define the two differential operators
	\begin{equation}\label{eq:defi:EV}
		\A_1 
		= \Id  - t\frac{d}{dt}
		\qquad\text{and}\qquad
		\A_2 
		= (1-t^2)\frac{d^2}{dt^2} + \Id - t \frac{d}{dt}.
	\end{equation}
\end{defi}

These operators come up naturally in the study of zonal functions. For a zonal function \linebreak $f\in C^2(\S^{n-1})$, the Hessian of its $1$-homogeneous extension $D^2f$ at each point only has two eigenvalues: $\A_1 \bar{f}$ is the eigenvalue of multiplicity $n-2$, and $\A_2 \bar{f}$ the eigenvalue of multiplicity one (see~\cref{eq:convol_trafo_C^2_3}). The following lemma is a simple consequence of this fact.

\begin{lem}[{\cite{MR4316669}}] \label{support_fct_EV}
	Let $g\in C[-1,1]$. Then $g(\langle\e,\cdot\rangle)$ is the support function of a convex body of revolution if and only if $\A_1g\geq 0$ and $\A_2g\geq 0$ in the weak sense.
\end{lem}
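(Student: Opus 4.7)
The plan is to reduce the statement to a pointwise Hessian computation in the smooth case and then extend by a suitable mollification argument.

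\textbf{Smooth case.} First I would assume $g\in C^2[-1,1]$ and compute the Hessian of the $1$-homogeneous extension of $f(u)=g(\langle\e,u\rangle)$ at an arbitrary point $u\in\S^{n-1}\setminus\{\pm\e\}$. Using \cref{zonal_smooth_deriv} together with the identity $D^2 f(u)=\nabla_\S^2 f(u)+f(u)P_{u^\perp}$ already invoked in the discussion after \cref{convol_trafo_C^2} yields, with $t=\langle\e,u\rangle$,
\begin{equation*}
	D^2 f(u)
	= g''(t)\,(P_{u^\perp}\e\otimes P_{u^\perp}\e) + \bigl(g(t)-tg'(t)\bigr)P_{u^\perp}.
\end{equation*}
Decomposing $u^\perp$ into $\mathrm{span}(P_{u^\perp}\e)$ and its orthogonal complement inside $u^\perp$, and using $|P_{u^\perp}\e|^2=1-t^2$, I would read off that $D^2f(u)$ has eigenvalue $(1-t^2)g''(t)+g(t)-tg'(t)=\A_2 g(t)$ along $P_{u^\perp}\e$ and eigenvalue $g(t)-tg'(t)=\A_1 g(t)$ with multiplicity $n-2$ on the orthogonal complement. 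Since $f$ is the support function of a (zonal) convex body if and only if its $1$-homogeneous extension is convex on $\R^n$, which for a $C^2$ function is equivalent to $D^2f\geq 0$, the smooth case follows; the continuity of $g$ at $t=\pm 1$ together with the orbit structure takes care of the poles.

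\textbf{Approximation.} For general $g\in C[-1,1]$, I would pass between $g$ and $C^\infty$ approximations via a smooth zonal approximate identity $(\rho_\varepsilon)$ on $\S^{n-1}$ and the associated spherical convolution from \cref{sec:background}. Setting $f_\varepsilon=f\ast\rho_\varepsilon$, the convolution remains zonal and smooth, so $f_\varepsilon=g_\varepsilon(\langle\e,\cdot\rangle)$ with $g_\varepsilon\in C^\infty[-1,1]$ and $g_\varepsilon\to g$ uniformly. The decisive compatibility I would establish is that $\A_1,\A_2$ arise as eigenvalue selectors of the rotation-equivariant operator $D^2$ restricted to zonal functions, so mollification by a positive zonal kernel preserves non-negativity of $\A_ig$ in the following sense: if $\A_ig\geq 0$ weakly then $\A_ig_\varepsilon\geq 0$ pointwise (because $D^2$ of the averaged function is a positive-kernel average of rotated copies of $D^2f$), and conversely if $f=h_L$ then $f_\varepsilon$ is convex (as an integral of convex $1$-homogeneous functions), hence $\A_ig_\varepsilon\geq 0$ by the smooth case.

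\textbf{Conclusion.} With this in hand, both directions follow. If $\A_ig\geq 0$ weakly, then $\A_ig_\varepsilon\geq 0$ classically, so by the smooth case $g_\varepsilon(\langle\e,\cdot\rangle)=h_{L_\varepsilon}$ for some convex body of revolution $L_\varepsilon$; since $h_{L_\varepsilon}\to f$ uniformly, $L_\varepsilon\to L$ in Hausdorff metric and $f=h_L$. Conversely, if $f=h_L$, the smooth case applied to $f_\varepsilon$ gives $\A_ig_\varepsilon\geq 0$ classically, and testing against a positive $\psi\in\D(-1,1)$ and sending $\varepsilon\to 0$ (using $g_\varepsilon\to g$ uniformly on compacts, while the test function absorbs the differentiations) yields $\langle\psi,\A_ig\rangle_{\D'}\geq 0$, i.e.\ $\A_ig\geq 0$ in the weak sense.

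\textbf{Main obstacle.} The delicate point is the commutation statement \textquotedblleft mollification preserves $\A_i\geq 0$\textquotedblright: the operators $\A_1,\A_2$ have variable coefficients, so commutation with convolution is not automatic in interval form. The way I would resolve this is by lifting the problem to the sphere, where $\A_1$ and $\A_2$ are just the eigenvalues (with multiplicities $n-2$ and $1$) of the rotation-equivariant operator $D^2$ on zonal functions; the spherical convolution $f\mapsto f\ast\rho_\varepsilon$ is manifestly a positive-kernel average and thus preserves positive semi-definiteness of $D^2$, which is exactly the desired preservation of $\A_1,\A_2\geq 0$.
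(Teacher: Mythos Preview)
Your proposal is correct and follows precisely the route the paper indicates: the $C^2$ case is taken from \cite{MR4316669} via the eigenvalue computation for $D^2f$, and the extension to $C[-1,1]$ is handled by a mollification argument, which the paper only alludes to as ``a simple approximation argument''. Your identification of the main obstacle (that $\A_1,\A_2$ have variable coefficients and so do not commute with interval mollification) and your resolution (lift to the sphere, where $\A_1g,\A_2g\geq 0$ is equivalent to positive semi-definiteness of the rotation-equivariant operator $D^2$, which is preserved under positive zonal averaging) are exactly right.
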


In \cite{MR4316669}, this lemma was proven only for $C^2[-1,1]$ functions, however it extends to $C[-1,1]$ by a simple approximation argument. Next, we determine the kernels of $\A_1$ and $\A_2$.

\begin{lem}\label{kernel_EV}
	Let $g$ be a locally integrable function on $(-1,1)$.
	\begin{enumerate}[label=\upshape(\roman*),topsep=0.5ex,itemsep=-0.5ex]
		\item \label{kernel_EV1} $\A_1 g = 0$ in the weak sense if and only if $g(t)=c_1\abs{t} + c_2t$ for some $c_1, c_2\in\R$.
		\item \label{kernel_EV2} $\A_2 g = 0$ in the weak sense if and only if $g(t)=c_1\sqrt{1-t^2} + c_2t$ for some $c_1,c_2\in\R$.
	\end{enumerate}
\end{lem}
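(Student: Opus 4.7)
The approach is to reduce each equation $\A_i g = 0$ to a first-order problem by producing a conservation law, and then integrate explicitly in $\D'(-1,1)$. Throughout I will rely only on standard distribution-theoretic facts: multiplication by a smooth function is continuous on $\D'$, antiderivatives of a locally integrable function in $\D'(-1,1)$ are unique up to an additive constant, and multiplication by a nowhere-vanishing smooth function is a bijection on $\D'$ of the relevant interval.

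For part (i), on each of the two intervals $(-1,0)$ and $(0,1)$ the coefficient $1/t$ is smooth, so $g/t$ defines a locally integrable distribution there, and an application of the Leibniz rule yields
\[
\A_1 g = -t^2\,\tfrac{d}{dt}(g/t)
\]
in $\D'$ on each half-interval. Hence $\A_1 g = 0$ forces $g/t$ to be locally constant on each side of the origin, so $g(t) = a_+ t$ a.e.\ on $(0,1)$ and $g(t) = a_- t$ a.e.\ on $(-1,0)$ for some $a_\pm \in \R$. Setting $c_1 = (a_+ - a_-)/2$ and $c_2 = (a_+ + a_-)/2$ rewrites this as $g(t) = c_1\abs{t} + c_2 t$ a.e.\ on $(-1,1)$. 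The converse is immediate: any such $g$ is absolutely continuous on $(-1,1)$ with $g'(t) = c_1\,\mathrm{sgn}(t) + c_2$ a.e., and the pointwise identity $g - tg' = 0$ extends to $\D'$.

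For part (ii), I would first record the identity
\[
\A_2 g = \tfrac{d}{dt}\!\big((1-t^2)\, g' + t\, g\big),
\]
which holds in $\D'(-1,1)$ by the Leibniz rule (a direct check shows the right-hand side expands to $(1-t^2)g'' - tg' + g$). Hence $\A_2 g = 0$ is equivalent to $(1-t^2)g' + tg = C$ in $\D'(-1,1)$ for some $C \in \R$. Multiplying this first-order equation by the smooth positive function $(1-t^2)^{-3/2}$ and regrouping terms shows that it is further equivalent to
\[
\tfrac{d}{dt}\!\big(g/\sqrt{1-t^2}\big) = C/(1-t^2)^{3/2}
\]
in $\D'(-1,1)$. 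Since the right-hand side is a locally integrable function with antiderivative $Ct/\sqrt{1-t^2}$, integrating in $\D'$ yields $g(t)/\sqrt{1-t^2} = Ct/\sqrt{1-t^2} + c_2$ a.e., i.e.\ $g(t) = Ct + c_2\sqrt{1-t^2}$. The converse amounts to directly verifying that both $t$ and $\sqrt{1-t^2}$ lie in the kernel of $\A_2$.

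The only aspect requiring care is the bookkeeping of the distributional framework: one must check that multiplying the equation by the smooth nowhere-vanishing factors $(1-t^2)^{\pm 1/2}$ and $(1-t^2)^{-3/2}$ neither creates nor destroys solutions, so that passing through the classical integration step really captures \emph{every} locally integrable solution, not just the smooth ones. Once these standard facts are invoked, both parts reduce to short computations.
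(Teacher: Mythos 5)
Your proof is correct, and it takes a genuinely different route from the paper's. The paper transforms $\A_1$ and $\A_2$ into constant-coefficient operators via changes of variable ($t=e^s$ on each half-interval for $\A_1$, and $t=\sin\theta$ globally for $\A_2$), and then reads off the kernel from the resulting exponential/trigonometric ODEs. You instead factor each operator through an exact derivative: $\A_1 g = -t^2\frac{d}{dt}(g/t)$ on each half-interval, and for $\A_2$ you integrate once to get the conservation law $(1-t^2)g'+tg=C$, then apply the integrating factor $(1-t^2)^{-3/2}$ to obtain $\frac{d}{dt}\big(g/\sqrt{1-t^2}\big)=C(1-t^2)^{-3/2}$. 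Both approaches need the same distributional inputs (Leibniz rule for smooth times distribution, uniqueness of antiderivatives on intervals, and bijectivity of multiplication by a nowhere-vanishing smooth function), and both must split $(-1,1)$ at the origin for part (i) and then invoke local integrability of $g$ to glue the two halves. Your route avoids the pullback of distributions under a diffeomorphism that the paper's change-of-variables argument implicitly relies on, which is a small advantage in rigor; the paper's is arguably more transparent as a normal-form reduction. Either way the content and the outcome are the same.
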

\begin{proof}
	Clearly, $g(t)=c_1\abs{t}+c_2t$ is a weak solution of the differential equation $\A_1g=0$. Conversely, suppose that $\A_1g=0$ in the weak sense. Observe that on the interval $(0,1)$, the change of variables $t=e^s$ transforms the differential operator $\A_1$ as follows:
	\begin{equation*}
		\tilde{\A}_1
		= \Id - \frac{d}{ds}.
	\end{equation*}
	Therefore, there exists $c_+\in\R$ such that $g(e^s)=c_+ e^s$ in $\D'(-\infty,0)$. Reversing the change of variables yields $g(t)=c_+ t$ in $\D'(0,1)$. Similarly, there exists $c_-\in\R$ such that $g(t)=c_-t$ in $\D'(-1,0)$. Since $g$ is locally integrable, choosing $c_1=\frac{1}{2}(c_+ + c_-)$ and $c_2=\frac{1}{2}(c_+-c_-)$, we obtain that $g(t)=c_1\abs{t}+c_2t$ in $\D'(-1,1)$.
	
	For the second part of the lemma, note that $g(t)=c_1\sqrt{1-t^2} + c_2t$ solves the differential equation $\A_2 g=0$. Conversely, suppose that $\A_2 g=0$ in the weak sense. Observe that the change of variables $t=\sin\theta$ transforms the differential operator $\A_2$ as follows:
	\begin{equation*}
		\tilde{\A}_2
		= \Id + \frac{d^2}{d\theta^2}.
	\end{equation*}
	Therefore, there exist $c_1,c_2\in\R$ such that $g(\sin\theta)=c_1\cos\theta+c_2\sin\theta$ in $\D'(-\frac{\pi}{2},\frac{\pi}{2})$. Reversing the change of variables yields $g(t)=c_1\sqrt{1-t^2}+c_2t$ in $\D'(-1,1)$.
\end{proof}

The following lemma describes the action of $\A_1$ and $\A_2$ on Legendre polynomials.

\begin{lem}\label{Legendre_EV}
	For every $k\geq 2$,
	\begin{align}
		\frac{n-1}{(k-1)(k+n-1)}\A_1 P^n_k
		&= - \frac{k}{2k+n-2} P^{n+2}_{k-2} - \frac{k+n-2}{2k+n-2} P^{n+2}_k,
		\label{eq:Legendre_EV1}\\
		\frac{n-1}{(k-1)(k+n-1)}\A_2 P^n_k
		&= \frac{k(k+n-3)}{2k+n-2}P^{n+2}_{k-2} - \frac{(k+1)(k+n-2)}{2k+n-2}P^{n+2}_k.
		\label{eq:Legendre_EV2}
	\end{align}
\end{lem}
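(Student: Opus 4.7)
The plan is to reduce both identities to explicit algebra on Legendre polynomials, using only the differentiation rule \cref{eq:Legendre_deriv}, the Legendre ODE \cref{eq:Legendre_ODE}, and two classical Gegenbauer recurrences.

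For \cref{eq:Legendre_EV1}, I start from the definition $\A_1 P^n_k = P^n_k - t(P^n_k)'$ and apply \cref{eq:Legendre_deriv} to rewrite
\begin{equation*}
	\A_1 P^n_k = P^n_k - \frac{k(k+n-2)}{n-1}\, tP^{n+2}_{k-1}.
\end{equation*}
Next, the standard three-term recurrence in dimension $n+2$,
\begin{equation*}
	(2k+n-2)\, tP^{n+2}_{k-1} = (k+n-1)P^{n+2}_k + (k-1)P^{n+2}_{k-2},
\end{equation*}
linearizes the factor $tP^{n+2}_{k-1}$, and the dimension-shift identity
\begin{equation*}
	P^n_k = \frac{(k+n-1)(k+n-2)}{(n-1)(2k+n-2)}P^{n+2}_k - \frac{k(k-1)}{(n-1)(2k+n-2)}P^{n+2}_{k-2}
\end{equation*}
expresses the remaining term in the basis $\{P^{n+2}_k, P^{n+2}_{k-2}\}$. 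This last identity follows from the classical Gegenbauer recurrence $C^{\lambda}_k = \tfrac{\lambda}{k+\lambda}(C^{\lambda+1}_k - C^{\lambda+1}_{k-2})$ at $\lambda = (n-2)/2$, combined with the normalization $P^n_k = C^{(n-2)/2}_k / \binom{k+n-3}{k}$. Substituting and collecting coefficients yields \cref{eq:Legendre_EV1}.

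For \cref{eq:Legendre_EV2}, I would use the ODE \cref{eq:Legendre_ODE} to replace $(1-t^2)(P^n_k)''$ by $(n-1)t(P^n_k)' - k(k+n-2)P^n_k$. Since $\A_2 = (1-t^2)\tfrac{d^2}{dt^2} + \A_1$, this gives
\begin{equation*}
	\A_2 P^n_k = (n-2)\, t(P^n_k)' + \bigl(1 - k(k+n-2)\bigr)P^n_k.
\end{equation*}
The same three-step reduction then applies: \cref{eq:Legendre_deriv} converts $t(P^n_k)'$ into $tP^{n+2}_{k-1}$, the three-term recurrence removes the factor of $t$, and the dimension-shift identity handles $P^n_k$. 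The coefficient simplifications use $(n-2)k + 1 - k(k+n-2) = 1-k^2 = -(k-1)(k+1)$ in the coefficient of $P^{n+2}_k$ and $(k+n-2)^2 - 1 = (k+n-3)(k+n-1)$ in the coefficient of $P^{n+2}_{k-2}$, producing \cref{eq:Legendre_EV2}.

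The main obstacle is the algebraic bookkeeping; no new idea enters beyond the three classical recurrences above. As a sanity check one can verify both identities in the smallest nontrivial case $n=3$, $k=2$, using $P^3_2(t) = (3t^2-1)/2$ and $P^5_2(t) = (5t^2-1)/4$.
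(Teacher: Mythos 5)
Your proposal is correct and follows essentially the same route as the paper: both arguments linearize $t\frac{d}{dt}P^n_k$ via \cref{eq:Legendre_deriv} and the three-term recurrence in dimension $n+2$, expand $P^n_k$ itself in the basis $\{P^{n+2}_{k-2},P^{n+2}_k\}$ via the same dimension-shift identity (which you source from the Gegenbauer contiguous relation, the paper from an induction on \cref{eq:Legendre_deriv}), and dispose of the second-order term in $\A_2$ through the Legendre ODE \cref{eq:Legendre_ODE}. The coefficient bookkeeping you indicate checks out.
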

\begin{proof}
	We need the following two identities:
	\begin{equation}\label{eq:Legendre_EV:proof1}
		(2k+n-2)(n-1)P^n_k(t)
		= (k+n-2)(k+n-1)P^{n+2}_k(t) - (k-1)kP^{n+2}_{k-2}(t),
	\end{equation}
	\begin{equation}\label{eq:Legendre_EV:proof2}
		(k-1)P^n_{k-2}(t) - (2k+n-4)tP^n_{k-1}(t) + (k+n-3)P^n_k(t)
		= 0.
	\end{equation}	
	Both follow from \cref{eq:Legendre_deriv} by a simple inductive argument (see \cite[Section~3.3]{MR1412143}).
	By \cref{eq:Legendre_deriv} and \cref{eq:Legendre_EV:proof2},
	\begin{equation}\label{eq:Legendre_EV:proof3}
		(2k+n-2)(n-1)t\frac{d}{dt}P^n_k(t)
		= k(k+n-2)( (k-1)P^{n+2}_{k-2}(t)  + (k+n-1)P^{n+2}_k(t)).
	\end{equation}
	Combining \cref{eq:Legendre_ODE} with \cref{eq:Legendre_EV:proof1} and \cref{eq:Legendre_EV:proof3} yields
	\begin{equation}\label{eq:Legendre_EV:proof4}
			(2k+n-2)(n-1)(1-t^2)\frac{d^2}{dt^2}P^n_k(t)
			=  (k-1)k(k+n-2)(k+n-1)(P^{n+2}_{k-2}(t) - P^{n+2}_k(t)).
	\end{equation}
	 By \cref{eq:defi:EV} and a combination of identities \cref{eq:Legendre_EV:proof1}, \cref{eq:Legendre_EV:proof3}, and \cref{eq:Legendre_EV:proof4}, we obtain \cref{eq:Legendre_EV1} and \cref{eq:Legendre_EV2}.
\end{proof}

We use \cref{eq:Legendre_EV1} and \cref{eq:Legendre_EV2} to derive the following recurrence relation for multipliers.

\begin{lem}\label{multipliers_EV}
	Let $g$ be a locally integrable function on $(-1,1)$.
	\begin{enumerate}[label=\upshape(\roman*),topsep=0.5ex,itemsep=-0.5ex]
		\item If $(1-t^2)^{\frac{n-1}{2}}g'(t)\in\M(-1,1)$, then for all $k\geq 0$,
		\begin{equation}\label{eq:multipliers_EV_1}
			\frac{k-1}{2k+n}a^n_k[g] + \frac{k+n+1}{2k+n}a^n_{k+2}[g]
			= - \frac{1}{2\pi}a^{n+2}_k[\A_1g].
		\end{equation}
		\item If $(1-t^2)^{\frac{n+1}{2}}g''(t)\in\M(-1,1)$, then for all $k\geq 0$,
		\begin{equation}\label{eq:multipliers_EV_2}
			\frac{(k-1)(k+1)}{2k+n}a^n_k[g] - \frac{(k+n-1)(k+n+1)}{2k+n}a^n_{k+2}[g]
			= - \frac{1}{2\pi}a^{n+2}_k[\A_2 g].
		\end{equation}
	\end{enumerate}
\end{lem}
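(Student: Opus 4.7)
Both parts follow the same scheme: use integration by parts to move the differential operator $\A_j$ off $g$ and onto the Legendre polynomial $P^{n+2}_k$, producing an adjoint operator $\tilde{\A}_j$; then invoke \cref{eq:Legendre_EV1}/\cref{eq:Legendre_EV2} together with a duality argument to conclude that $\tilde{\A}_j P^{n+2}_k$ decomposes in the $n$-dimensional Legendre basis with only a $P^n_k$ and a $P^n_{k+2}$ component, whose coefficients can be read off directly.

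In detail for part~(i): starting from $\omega_{n+1}^{-1}a^{n+2}_k[\A_1 g]=\int P^{n+2}_k(g-tg')(1-t^2)^{(n-1)/2}dt$, the derivative term is handled by \cref{beta_int_by_parts} with $\beta=n-1$ and test function $\psi(t)=tP^{n+2}_k(t)(1-t^2)^{(n-1)/2}$, whose required boundedness properties are immediate since $P^{n+2}_k$ is a polynomial. A short computation gives the adjoint identity
\begin{equation*}
	\int_{-1}^1 P^{n+2}_k\,\A_1 g\,(1-t^2)^{(n-1)/2}dt \;=\; \int_{-1}^1 g\,\tilde{\A}_1 P^{n+2}_k\,(1-t^2)^{(n-3)/2}dt,
\end{equation*}
where $\tilde{\A}_1\phi=(2-(n+1)t^2)\phi+t(1-t^2)\phi'$. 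Applying this identity with $g$ replaced by $P^n_m$ and invoking \cref{eq:Legendre_EV1} together with orthogonality of the $(n+2)$-dimensional Legendre polynomials in $L^2((1-t^2)^{(n-1)/2}dt)$, the right-hand side vanishes unless $m\in\{k,k+2\}$; hence $\tilde{\A}_1 P^{n+2}_k\in\operatorname{span}\{P^n_k,P^n_{k+2}\}$ with coefficients extracted directly from \cref{eq:Legendre_EV1}. Substituting and simplifying, using the norm identity $\|P^n_k\|^2_n\dim\mathcal{H}^n_k\cdot(\omega_n/\omega_{n-1})=1$ (a consequence of $\pi_k\breve{P}^n_k=\breve{P}^n_k$) and the ratio $\omega_{n+1}/\omega_{n-1}=2\pi/(n-1)$, produces \cref{eq:multipliers_EV_1}.

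Part~(ii) is analogous but requires two integrations by parts. The hypothesis $(1-t^2)^{(n+1)/2}g''\in\M(-1,1)$ is used in \cref{beta_int_by_parts} with $\beta=n+1$ to move the second-derivative term of $\A_2 g=(1-t^2)g''+g-tg'$; a further application with $\beta=n-1$, now justified since the first already yields $(1-t^2)^{(n-1)/2}g'\in L^1(-1,1)$, treats the first-derivative terms. The resulting adjoint is
\begin{equation*}
	\tilde{\A}_2\phi=(1-t^2)^2\phi''-(2n+1)t(1-t^2)\phi'+(n-1)[(n+1)t^2-1]\phi,
\end{equation*}
which by the same dual argument using \cref{eq:Legendre_EV2} in place of \cref{eq:Legendre_EV1} again lies in $\operatorname{span}\{P^n_k,P^n_{k+2}\}$, and reading off the coefficients yields \cref{eq:multipliers_EV_2}. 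The main technical obstacle is the bookkeeping of the successive integrations by parts in part~(ii) and the attendant checks of the boundedness hypotheses in \cref{beta_int_by_parts} for each intermediate test function; each such check is a short weight-versus-polynomial estimate, but one must execute them in the correct order. Once that is handled, the remainder of the argument is algebraic simplification of Gegenbauer normalizations.
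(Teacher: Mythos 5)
Your proposal is correct and follows essentially the same route as the paper: pass the differential operator off $g$ and onto the Legendre polynomial via the integration-by-parts machinery of \cref{beta_int_by_parts}, then pin down the coefficients of the resulting adjoint by testing against $P^n_j$, using \cref{eq:Legendre_EV1}/\cref{eq:Legendre_EV2} together with orthogonality of the $(n+2)$-dimensional Legendre polynomials, and finally normalize. Two small bookkeeping remarks that would need attention in a full write-up: your adjoint $\tilde{\A}_1\phi=(2-(n+1)t^2)\phi+t(1-t^2)\phi'$ is in fact the correct one (the paper's displayed $\A_1^\ast$ has a slip in the constant term, immaterial there since only the degree bound on $\A_1^\ast$ is used, not its explicit coefficients), whereas your stated norm identity should be $[P^n_k,P^n_k]_n\dim\mathcal{H}^n_k=\omega_n/\omega_{n-1}$ rather than its reciprocal.
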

\begin{proof}
	In the following, we use that the family $(P^n_k)_{k=0}^\infty$ of Legendre polynomials is an orthogonal system with respect to the inner product $[\psi,g]_n=\int_{[-1,1]} \psi(t)g(t)(1-t^2)^{\frac{n-3}{2}}dt$ on $[-1,1]$. Moreover,  $[P^n_k,P^n_k]_n=\frac{\omega_n(k+n-2)}{\omega_{n-1}(2k+n-2)}\binom{k+n-2}{n-2}^{-1}$ (see, e.g., \cite[Section~3.3]{MR1412143}).	
	
	For the first part of the lemma, note that due to \cref{eq:beta_int_by_parts}, for every $\psi\in C^1[-1,1]$,
	\begin{equation}\label{eq:multipliers_EV_1:proof}
		[\psi,\A_1g]_{n+2}
		= [\A_1^\ast\psi,g]_n,
	\end{equation}
	where $\A_1^\ast$ denotes the differential operator
	\begin{equation*}
		\A_1^\ast
		= (1-(n+1)t^2)\Id + (1-t^2)t\frac{d}{dt}.
	\end{equation*}
	Clearly $\A_1^\ast$ increases the degree of a polynomial at most by two, so there exist $x_{k,j}\in\R$ such that for every $k\geq 0$,
	\begin{equation*}
		\A_1^\ast P^{n+2}_k
		= \sum_{j=0}^{k+2} x_{k,j} P^n_j.
	\end{equation*}
	Choosing $\psi=P^{n+2}_k$ in \cref{eq:multipliers_EV_1:proof} yields
	\begin{equation*}
		\frac{1}{\omega_{n+1}} a^{n+2}_k[\A_1 g]
		= [P^{n+2}_k, \A_1 g]_{n+2}
		= [\A_1^\ast P^{n+2}_k, g]_n
		= \sum_{j=0}^{k+2} x_{k,j} [P^n_j,g]_n
		= \frac{1}{\omega_{n-1}} \sum_{j=0}^{k+2} x_{k,j}a^n_j[g].
	\end{equation*}
	Hence, it only remains to determine the numbers $x_{k,j}$. By applying the identity above to $g=P^n_j$ for $0\leq j\leq k+2$, and employing \cref{eq:Legendre_EV1}, we obtain that
	\begin{equation*}
		x_{k,k}
		= - \frac{(k-1)(n-1)}{2k+n},
		\qquad
		x_{k,k+2}
		= - \frac{(k+n+1)(n-1)}{2k+n},
	\end{equation*}
	and $x_{k,j}=0$ for $j\notin\{k,k+2\}$, which proves \cref{eq:multipliers_EV_1}.
	
	For the second part of the proof, note that due to \cref{eq:beta_int_by_parts}, for every $\psi\in C^2[-1,1]$,
	\begin{equation}\label{eq:multipliers_EV_2:proof}
		[\psi,\A_2g]_{n+2}
		= [\A_2^\ast\psi,g]_n,
	\end{equation}
	where $\A_2^\ast$ denotes the differential operator
	\begin{equation*}
		\A_2^\ast
		= (1-t^2)^2\frac{d^2}{dt^2} + (n-1)((n+1)t^2-1)\Id - (2n+1)(1-t^2)t\frac{d}{dt}.
	\end{equation*}
	Clearly $\A_2^\ast$ increases the degree of a polynomial at most by two, so there exist $y_{k,j}\in\R$ such that for every $k\geq 0$,
	\begin{equation*}
		\A_2^\ast P^{n+2}_k
		= \sum_{j=0}^{k+2} y_{k,j} P^n_j.
	\end{equation*}
	Choosing $\psi=P^{n+2}_k$ in \cref{eq:multipliers_EV_2:proof} yields
	\begin{equation*}
		\frac{1}{\omega_{n+1}} a^{n+2}_k[\A_2 g]
		= [P^{n+2}_k, \A_2 g]_{n+2}
		= [\A_2^\ast P^{n+2}_k, g]_n
		= \sum_{j=0}^{k+2} y_{k,j} [P^n_j,g]_n
		= \frac{1}{\omega_{n-1}} \sum_{j=0}^{k+2} y_{k,j}a^n_j[g].
	\end{equation*}
	Hence, it only remains to determine the numbers $y_{k,j}$. By applying the identity above to $g=P^n_j$ for $0\leq j\leq k+2$, and employing \cref{eq:Legendre_EV2}, we obtain that
	\begin{equation*}
		y_{k,k}
		= - \frac{(k-1)(k+1)(n-1)}{2k+n},
		\qquad
		y_{k,k+2}
		= - \frac{(k+n-1)(k+n+1)(n-1)}{2k+n},
	\end{equation*}
	and $y_{k,j}=0$ for $j\notin\{k,k+2\}$, which proves \cref{eq:multipliers_EV_2}.
\end{proof}


We arrive at the following geometric inequality for convex bodies of revolution. This shows that equality is attained in \cref{eq:conv_ineq_k=2} only by line segments, which completes the proof of \cref{fixed_points_body_of_rev}.

\begin{thm}\label{convex_body_2nd_muliplier}
	Let $L\in\K^n$ be a convex body of revolution. Then
	\begin{equation}\label{eq:convex_body_2nd_multiplier}
		- \frac{1}{n-1}
		\leq \frac{a^n_2[\square_n h_L]}{a^n_0[\square_n h_L]}
		\leq \frac{1}{(n-1)^2}
	\end{equation}
	with equality in the left hand inequality if and only if $L$ is a line segment and equality in the right hand inequality if and only if $L$ is an $(n-1)$-dimensional disk.
\end{thm}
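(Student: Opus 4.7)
The approach is to evaluate the recurrences of \cref{multipliers_EV} at $k=0$ with $g=\bar h_L$ and combine them with the non-negativity of $\A_1\bar h_L$ and $\A_2\bar h_L$ provided by \cref{support_fct_EV}. We may assume that $L$ is not a single point, so that $a^n_0[h_L]>0$; the degenerate case gives the indeterminate ratio $0/0$ and is handled separately.

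Setting $k=0$ in \cref{eq:multipliers_EV_1} and \cref{eq:multipliers_EV_2} and rearranging yields
\begin{align*}
	a^{n+2}_0[\A_1 h_L] &= \tfrac{2\pi}{n}\bigl(a^n_0[h_L]-(n+1)\hspace{.5mm}a^n_2[h_L]\bigr), \\
	a^{n+2}_0[\A_2 h_L] &= \tfrac{2\pi}{n}\bigl(a^n_0[h_L]+(n^2-1)\hspace{.5mm}a^n_2[h_L]\bigr).
\end{align*}
By the formula for zonal multipliers, each left hand side equals $\omega_{n+1}$ times the integral of the strictly positive weight $(1-t^2)^{\frac{n-1}{2}}$ against the non-negative measure $\A_i\bar h_L$, hence is non-negative. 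Dividing by $a^n_0[h_L]$ gives $-\tfrac{1}{n^2-1}\leq a^n_2[h_L]/a^n_0[h_L]\leq \tfrac{1}{n+1}$. The eigenvalue identity \cref{eq:box_eigenvalues} produces $a^n_0[\square_n h_L]=a^n_0[h_L]$ and $a^n_2[\square_n h_L]=-\tfrac{n+1}{n-1}\hspace{.5mm}a^n_2[h_L]$; multiplying through by $-\tfrac{n+1}{n-1}$ converts these bounds into exactly $-\tfrac{1}{n-1}\leq a^n_2[\square_n h_L]/a^n_0[\square_n h_L] \leq \tfrac{1}{(n-1)^2}$.

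For the equality cases, strict positivity of the weight on $(-1,1)$ forces the corresponding $\A_i\bar h_L$ to vanish as a distribution on $(-1,1)$. \cref{kernel_EV} then identifies $\bar h_L$: in the left hand case as $c_1|t|+c_2 t$, recognised as the support function of the line segment $[(c_2-c_1)\e,(c_2+c_1)\e]$ on the axis of revolution, and in the right hand case as $c_1\sqrt{1-t^2}+c_2 t$, recognised as that of the translated disk $c_2\e+\{x\in\e^\perp:|x|\leq c_1\}$ perpendicular to $\e$. Conversely, a direct substitution verifies that both these families saturate their respective bounds.

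The main technical point is to verify the integrability hypotheses of \cref{multipliers_EV} for $g=\bar h_L$. Lipschitz continuity of $h_L$ on $\S^{n-1}$ yields $|\bar h_L'(t)|\sqrt{1-t^2}\leq C$, so $(1-t^2)^{\frac{n-1}{2}}\bar h_L'(t)\in L^\infty(-1,1)$. For the second order hypothesis, the identity $(1-t^2)\bar h_L''=\A_2\bar h_L+t\bar h_L'-\bar h_L$ reduces the problem to showing that the non-negative measure $\A_2\bar h_L$ has finite $(1-t^2)^{\frac{n-1}{2}}$-weighted mass, which follows from the finiteness of $S_1(L,\S^{n-1})$ together with the fact that, under the zonal ansatz, $\tr D^2 h_L$ is an affine combination of $\A_1\bar h_L$ and $\A_2\bar h_L$.
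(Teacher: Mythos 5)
Your proposal is correct and takes essentially the same route as the paper: evaluating \cref{multipliers_EV} at $k=0$ for $g=\overline{h_L}$ (the paper simply records the resulting identities directly in terms of $a^n_0[\square_n h_L]$ and $a^n_2[\square_n h_L]$ rather than converting afterwards via \cref{eq:box_eigenvalues}), deducing both inequalities from the nonnegativity of $\A_1\overline{h_L}$ and $\A_2\overline{h_L}$ given by \cref{support_fct_EV}, and settling the equality cases with \cref{kernel_EV}. Your explicit verification of the integrability hypotheses of \cref{multipliers_EV} and of the degenerate one-point case are details the paper leaves implicit, not a change of approach.
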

\begin{proof}
	As an instance of \cref{eq:multipliers_EV_1},
	\begin{equation*}
		a^n_2[\square_nh_L] + \frac{1}{n-1}a^n_0[\square_nh_L]
		= \frac{n}{2\pi(n-1)}a^{n+2}_0[\A_1 \overline{h_L}].
	\end{equation*}
	Due to \cref{support_fct_EV}, we have that $\A_1 \overline{h_L}\geq 0$, which proves the first inequality in \cref{eq:convex_body_2nd_multiplier}. Moreover, equality holds precisely when $\A_1 \overline{h_L}=0$. According to \cref{kernel_EV}~\ref{kernel_EV1}, this is the case if and only if $\overline{h_L}(t)=c_1\abs{t}+c_2 t$ for some $c_1,c_2\in\R$, which means that $L$ is a line segment.
	
	As an instance of \cref{eq:multipliers_EV_2},
	\begin{equation*}
		a^n_2[\square_nh_L] - \frac{1}{(n-1)^2}a^n_0[\square_nh_L]
		= \frac{n}{2\pi(n-1)^2}a^{n+2}_0[\A_2 \overline{h_L}].
	\end{equation*}
	Due to \cref{support_fct_EV}, we have that $\A_2 \overline{h_L}\geq 0$, which proves the second inequality in \cref{eq:convex_body_2nd_multiplier}. Moreover, equality holds precisely when $\A_2 \overline{h_L}=0$. According to \cref{kernel_EV}~\ref{kernel_EV2}, this is the case if and only if $\overline{h_L}(t)=c_1\sqrt{1-t^2}+c_2 t$ for some $c_1,c_2\in\R$, which means that $L$ is an $(n-1)$-dimensional disk.
\end{proof}

\subsection{Even Minkowski Valuations}
\label{sec:even_i<n-1}

This section is dedicated to even Minkowski valuations $\Phi_i\in\MVal_i$ of degree $1<i\leq n-1$. In the previous subsection, we have shown that if $\Phi_i$ is generated by an origin-symmetric convex body of revolution, then condition \ref{fixed_points_Phi^2:inj} is fulfilled, unless $\Phi_i$ is a multiple of the projection body map (see Theorems~\ref{conv_ineq} and \ref{convex_body_2nd_muliplier}).

In general, the generating function of an even Minkowski valuation does not need to be a support function. In this broader setting, we prove a weaker condition than \ref{fixed_points_Phi^2:inj}, which we use to obtain information about the fixed points of the map $\Phi_i$ itself as opposed to $\Phi_i^2$. To that end, we require the following version of \cref{fixed_points_Phi^2}, which can be obtained from a minor modification of its proof, as was observed in \cite{MR4316669}.

%
%

\begin{thm}[{\cite{MR4316669}}]\label{fixed_points_Phi^1}
	Let $1<i\leq n-1$ and $\Phi_i\in\MVal_i$ with generating function $f$ satisfying conditions \ref{fixed_points_Phi^2:diff}, \ref{fixed_points_Phi^2:surj}, and
	\begin{enumerate}[label=\upshape(C\arabic{*}'),topsep=0.5ex,itemsep=-0.5ex,leftmargin=12mm]
		\setcounter{enumi}{2}
		\item \label{fixed_points_Phi^1:inj} for all $k\geq 2$,
		\begin{equation*}\label{eq:fixed_points_Phi^1:inj}
			\frac{a^n_k[\square_nf]}{a^n_0[\square_nf]}
			< \frac{1}{i}.
		\end{equation*}
	\end{enumerate}
	Then there exists a $C^2$ neighborhood of $B^n$ where the only fixed points of $\Phi_i$ are Euclidean balls.
\end{thm}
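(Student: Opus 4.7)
The plan is to adapt the proof of Theorem \ref{fixed_points_Phi^2} (due to the second author and Schuster), exploiting a simple structural observation: the linearization of $\Phi_i$ at $B^n$ has real spectrum given (after normalization) by $\mu_k = i \cdot a^n_k[\square_n f]/a^n_0[\square_n f]$, whereas the linearization of $\Phi_i^2$ has the squared spectrum $\mu_k^2$. Consequently, the one-sided bound $\mu_k < 1$ supplied by \ref{fixed_points_Phi^1:inj} already suffices to make $\Id - L$ invertible on the non-trivial modes, whereas for $\Phi_i^2$ the two-sided bound $|\mu_k| < 1$ in \ref{fixed_points_Phi^2:inj} is what is needed to avoid the eigenvalue $\mu_k^2 = 1$.

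Concretely, I would first rescale $f$ so that $\Phi_i B^n = B^n$ (possible since $\Phi_i B^n$ is a Euclidean ball by rotation equivariance) and use translation invariance to restrict to convex bodies with Steiner point at the origin. The assertion ``$\Phi_i K$ is a dilated and translated copy of $K$'' near $K = B^n$ then reduces, after quotienting by the affine symmetries, to a nonlinear equation $F(\phi) = 0$ on a neighborhood of $0$ in a slice of centered $C^2$ support function perturbations. By \ref{fixed_points_Phi^2:diff}, $F$ is a smooth map between $C^2$-based function spaces; a standard first-variation computation at $B^n$ (using that $dS_i(B^n)[\phi] = i\binom{n-1}{i}\square_n\phi$) identifies the derivative $dF(0) = L - \Id$ with $L\phi = i\binom{n-1}{i}\phi\ast\square_n f$, which by the Funk--Hecke theorem acts on $\mathcal{H}^n_k$ as multiplication by $\mu_k$. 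Condition \ref{fixed_points_Phi^1:inj} then gives $\mu_k < 1$ strictly for all $k \geq 2$, while \ref{fixed_points_Phi^2:surj} gives $\mu_k \to 0$; together these yield a uniform gap $1 - \mu_k \geq c > 0$ and, combined with Strichartz's $H^s$ norm characterization (in the non-even version noted in the remark following Theorem \ref{w_mon=>fixed_points_Phi^2}), the invertibility of $dF(0)$ on a suitable Sobolev slice $H^s(\S^{n-1})$ with $s$ large enough to embed into $C^2(\S^{n-1})$. The inverse function theorem then forces $\phi = 0$ as the unique solution, so that the only nearby fixed points of $\Phi_i$ (in the dilate-plus-translate sense) are Euclidean balls.

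The main obstacle is the careful setup of the function-space scale: one must choose $H^s$ in which $L$ is a bounded self-adjoint operator (bootstrapping from \ref{fixed_points_Phi^2:diff} using the multiplier decay from \ref{fixed_points_Phi^2:surj}) while preserving the embedding into $C^2(\S^{n-1})$ and the uniform spectral gap. Additional bookkeeping is needed for the low modes: the $k = 0$ component is pinned by the normalization $\Phi_i B^n = B^n$ together with $i$-homogeneity (which, for $i > 1$, rules out a non-trivial dilation family of solutions), while the $k = 1$ components correspond to translations and are killed by the Steiner-point constraint. That the spectrum is real, rather than merely contained in a disk, is a structural consequence of the zonality of $f$ and the self-adjointness of $\square_n$, and is precisely why the one-sided condition \ref{fixed_points_Phi^1:inj} is sufficient here.
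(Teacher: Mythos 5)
The paper does not reprove this statement: it is quoted from \cite{MR4316669}, together with the observation (see the remark after \cref{w_mon=>fixed_points_Phi^2}) that the evenness hypothesis there can be dropped because Strichartz's norm equivalence \cref{eq:Sobolev_norm} holds for all of $H^s(\S^{n-1})$. Your outline reconstructs exactly the intended modification of the proof of \cref{fixed_points_Phi^2}: the linearization of $\Phi_i$ at $B^n$ is a self-adjoint zonal convolution transform with real multipliers $\mu_k=i\,a^n_k[\square_nf]/a^n_0[\square_nf]$, so for $\Phi_i$ only the eigenvalue $1$ must be avoided, and the one-sided condition \ref{fixed_points_Phi^1:inj} together with $\mu_k\to 0$ from \ref{fixed_points_Phi^2:surj} gives a uniform gap, whereas for $\Phi_i^2$ the spectrum is $\mu_k^2$, which is why \ref{fixed_points_Phi^2:inj} is two-sided; the low modes are handled by the dilation and Steiner-point normalizations as you indicate. (A minor slip: with the paper's normalization the first variation of the $i$-th area measure at $B^n$ is $i\,\square_n\phi$, without the factor $\binom{n-1}{i}$ — compare the proof of \cref{w_mon=>w_pos+density}; this cancels in $\mu_k$ in any case.)

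The one genuine gap is in your final step. An inverse function theorem on a Sobolev slice $H^s(\S^{n-1})$ with $s>2+\frac{n-1}{2}$ yields uniqueness of the solution $\phi=0$ only within an $H^s$-neighborhood of $0$, whereas the theorem asserts uniqueness in a $C^2$-neighborhood; since $C^2$-smallness neither implies $H^s$-smallness nor even membership in $H^s$ for such $s$, the stated conclusion does not yet follow. What closes this is precisely the second role of \ref{fixed_points_Phi^2:surj}: the decay $a^n_k[\square_nf]=O(k^{-\alpha})$ means $a^n_k[f]=O(k^{-2-\alpha})$, so $\T_f$ gains $2+\alpha$ derivatives in the spectrally defined Sobolev scale, and the fixed-point equation itself bootstraps any fixed point that is merely $C^2$-close to $B^n$ into $H^s$ for $s$ as large as needed, with $\norm{\phi}_{H^s}$ controlled by $\norm{\phi}_{C^2}$; Strichartz's equivalence is what identifies these spectral norms with the genuine Sobolev norms. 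You invoke bootstrapping only to make the linearized operator bounded and self-adjoint on $H^s$, which is not where it is needed: it is needed to place the competitor body inside the neighborhood where your spectral-gap/inverse-function argument applies. With that a priori regularity step added (and the bookkeeping for the free dilation factor and the Steiner point carried out as you sketch), your argument coincides with the intended one.
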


Again, Corollaries~\ref{w_mon=>bounded_C^2} and \ref{w_mon=>decay} show that conditions \ref{fixed_points_Phi^2:diff} and \ref{fixed_points_Phi^2:surj} are fulfilled in the weakly monotone case. Hence we obtain the following.

\begin{thm}\label{w_mon=>fixed_points_Phi^1}
	Let $1< i\leq n-1$ and $\Phi_i\in\MVal_i$ be weakly monotone with generating function $f$ satisfying condition \ref{fixed_points_Phi^1:inj}. Then there exists a $C^2$ neighborhood of $B^n$ where the only fixed points of $\Phi_i$ are Euclidean balls.
\end{thm}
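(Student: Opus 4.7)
The plan is to observe that this theorem is essentially a corollary of \cref{fixed_points_Phi^1}, once the regularity results established earlier in the paper are applied to the generating function $f$. So my proof would proceed by verifying that each of the three hypotheses \ref{fixed_points_Phi^2:diff}, \ref{fixed_points_Phi^2:surj}, and \ref{fixed_points_Phi^1:inj} of \cref{fixed_points_Phi^1} holds, and then invoking that theorem directly.

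First I would note that since $\Phi_i$ is weakly monotone and has degree $i>1$, \cref{w_mon=>bounded_C^2} applies and yields that the convolution transform $\T_f$ is a bounded linear operator from $C(\S^{n-1})$ to $C^2(\S^{n-1})$. This is precisely condition \ref{fixed_points_Phi^2:diff}. Recall that \cref{w_mon=>bounded_C^2} is itself the combination of \cref{convol_trafo_C^2:intro} (converting the problem into a mass-distribution condition on $\square_nf$) together with \cref{w_mon=>w_pos+density} (establishing the required estimate $\abs{\square_nf}(\SCap_r(\pm\e))\leq Cr^{i-1}$, whose integrability near the poles requires $i>1$).

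Next I would verify condition \ref{fixed_points_Phi^2:surj}: by \cref{w_mon=>decay} we have the multiplier decay $a^n_k[\square_n f]\in O(k^{-1/2})$ as $k\to\infty$, so the hypothesis holds with $\alpha = 1/2$. Again, this rests on the cap estimate from \cref{w_mon=>w_pos+density} feeding into the asymptotic analysis of Legendre polynomials in \cref{density=>asymptotics}.

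Finally, condition \ref{fixed_points_Phi^1:inj} is assumed in the statement. With all three hypotheses of \cref{fixed_points_Phi^1} verified, that theorem produces the desired $C^2$ neighborhood of $B^n$ in which Euclidean balls are the only fixed points of $\Phi_i$, completing the argument. There is no genuine obstacle here: all the heavy lifting has already been done in Sections~\ref{sec:convol} and \ref{sec:w-mon}, where the regularity of weakly monotone Minkowski valuations was established, and in the proof of \cref{fixed_points_Phi^1} itself; the present statement is the parallel of \cref{w_mon=>fixed_points_Phi^2} with the one-sided inequality replacing the two-sided one.
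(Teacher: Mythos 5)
Your proposal is correct and matches the paper's own argument exactly: the paper deduces \cref{w_mon=>fixed_points_Phi^1} by noting that Corollaries~\ref{w_mon=>bounded_C^2} and \ref{w_mon=>decay} supply conditions \ref{fixed_points_Phi^2:diff} and \ref{fixed_points_Phi^2:surj}, while \ref{fixed_points_Phi^1:inj} is assumed, so \cref{fixed_points_Phi^1} applies directly. Nothing is missing from your verification.
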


The main result of this section will be that if $\Phi_i\in\MVal_i$ is even, then its generating function satisfies condition \ref{fixed_points_Phi^1:inj}. We require the following lemma, which is a consequence of a classical result by Firey \cite{MR271838}. We call a convex body of revolution \emph{smooth} if it has a $C^2(\S^{n-1})$ support function and $\A_2 \overline{h_K}>0$ on $[-1,1]$.

\begin{lem}\label{area_meas_zonal}
	Let $1\leq i< n-1$ and let $\phi\in C(\S^{n-1})$ be zonal and centered. Then $\phi$ is the density of the $i$-th area measure of a smooth convex body of revolution if an only if for all $t\in (-1,1)$,
	\begin{equation*}\label{eq:area_meas_zonal}
		\bar{\phi}(t)
		> \frac{n-1-i}{n-1}\A_1\big(\overline{\phi\ast\breve{g}_n}\big)(t)
		> 0.
	\end{equation*}
\end{lem}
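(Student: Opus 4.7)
The plan is to identify $\bar\psi := \overline{\phi \ast \breve{g}_n}$ via Berg's formula as the zonal profile of the support function of a centered convex body of revolution, and then translate the claimed inequalities into pointwise statements about principal radii of curvature. Since $\phi$ is centered, the identity $\square_n \breve{g}_n = (\Id - \pi_1)\delta_{\e}$ immediately yields $\square_n\bar\psi = \bar\phi$, and for zonal profiles this unpacks, using the definitions of $\A_1$ and $\A_2$, into
\[
	\bar\phi = \tfrac{1}{n-1}\A_2\bar\psi + \tfrac{n-2}{n-1}\A_1\bar\psi.
\]
A direct substitution then shows that the compound inequality $\bar\phi > \tfrac{n-1-i}{n-1}\A_1\bar\psi > 0$ is equivalent to $\A_1\bar\psi > 0$ and $\A_2\bar\psi + (i-1)\A_1\bar\psi > 0$ on $(-1,1)$.

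For the forward direction, suppose $\phi = s_i(K,\cdot)$ for a smooth convex body of revolution $K$, and write $r_1 := \A_1 \overline{h_K}$ (azimuthal, multiplicity $n-2$) and $r_2 := \A_2 \overline{h_K}$ (meridional, multiplicity $1$). Computing the $i$-th normalized elementary symmetric function of the reverse Weingarten eigenvalues gives the Firey-type formula
\[
	\bar\phi(t) = \tfrac{n-1-i}{n-1}\,r_1^{\,i}(t) + \tfrac{i}{n-1}\,r_1^{\,i-1}(t)\, r_2(t).
\]
The heart of the proof is the identification $\A_1\bar\psi = r_1^{\,i}$, which forces $\A_2\bar\psi = r_1^{\,i-1}((1-i)r_1 + i r_2)$ via $\square_n\bar\psi = \bar\phi$. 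I would verify this by: (i) inserting these candidates into the $\square_n$ identity, where the algebra collapses to Firey's formula; (ii) checking that the compatibility $r_2 - r_1 = -\tfrac{1-t^2}{t}\,r_1'$ (an immediate consequence of $r_1, r_2$ arising from the common $C^2$ function $\overline{h_K}$) transfers under the rescaling $a = r_1^{\,i}$, $b = r_1^{\,i-1}((1-i)r_1 + ir_2)$ to the analogous compatibility for $(\A_1\bar\psi, \A_2\bar\psi)$; and (iii) invoking the uniqueness of the centered zonal solution to $\square_n\bar\psi = \bar\phi$ (whose kernel consists of the degree-$1$ spherical harmonics, excluded by $\pi_1\bar\psi = 0$). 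With this identification, the two inequalities reduce to $r_1^{\,i} > 0$ and $i r_1^{\,i-1} r_2 > 0$, which hold on $(-1,1)$ by smoothness of $K$ and the non-degeneracy of its profile in the interior.

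For the reverse direction, given the inequalities I would define $r_1 := (\A_1\bar\psi)^{1/i} > 0$ and $r_2 := (\A_2\bar\psi + (i-1)\A_1\bar\psi)/(i\,r_1^{\,i-1}) > 0$. The compatibility enjoyed by $(\A_1\bar\psi, \A_2\bar\psi)$ (as members of the image of a zonal $C^2$ function under $\A_1, \A_2$) transfers back under the inverse rescaling, yielding the same compatibility for $(r_1, r_2)$; integrating the resulting first-order ODE and fixing the kernel of $\A_1$ by centeredness produces a $C^2$ function $\bar h$ on $(-1,1)$ with $\A_1\bar h = r_1 > 0$ and $\A_2\bar h = r_2 > 0$. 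By \cref{support_fct_EV}, $\bar h$ is then the profile of the support function of a smooth convex body of revolution $K$, and the Firey-type formula above gives $s_i(K,\cdot) = \phi$. The main technical obstacle is to ensure that $\bar h$ extends as a $C^2$ function across the poles $t = \pm 1$, rather than only locally in the open interval; since $r_1$ is expected to vanish at $t = \pm 1$, the defining formulas become singular there and require an asymptotic analysis akin to the estimates underlying \cref{polar_C^k_estimate}.
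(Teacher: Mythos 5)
The paper's proof is a two-step reduction: it cites Firey's integral criterion from \cite{MR271838} verbatim (which already gives the full ``if and only if'' for densities of $i$-th area measures) and then identifies the integral
\[
  (1-t^2)^{-\frac{n-1}{2}}\int_{(t,1)}\bar\phi(s)\,s\,(1-s^2)^{\frac{n-3}{2}}\,ds
\]
with $\tfrac{1}{n-1}\A_1(\overline{\phi\ast\breve{g}_n})(t)$ by a direct integration by parts, using $\square_n(\phi\ast\breve{g}_n)=\phi$ and a continuity argument to pass from smooth to continuous $\phi$. Your route is genuinely different: rather than quoting Firey's integral form, you go back to the pointwise Firey-type formula
$\bar\phi = \tfrac{n-1-i}{n-1}r_1^i + \tfrac{i}{n-1}r_1^{i-1}r_2$ for a body of revolution and try to identify $\A_1\bar\psi = r_1^i$ via a compatibility-plus-uniqueness argument. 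In effect, you are re-deriving Firey's theorem rather than invoking it, which is substantially more work than what the paper does with the same ingredients.

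There are two concrete gaps. First, in the forward direction the uniqueness argument is not complete as written: showing that the $C^2$ solution $\tilde\psi$ of the ODE $\A_1\tilde\psi = r_1^i$ (with the centering normalization) actually coincides with $\overline{\phi\ast\breve{g}_n}$ requires checking that $\tilde\psi$ extends to a genuine $C^2$ zonal function on $\S^{n-1}$ to which the kernel description of $\square_n$ applies; without that, you only have agreement as distributions on the open interval. Second, and more importantly, the reverse direction is left incomplete precisely at its crux: you acknowledge that $\bar h$ must extend as a $C^2$ function across the singular points, but then misplace where the difficulty sits. For a smooth convex body of revolution (in the sense of the paper, with $\A_2\overline{h_K}>0$ and $C^2$ support function), the radius $r_1 = \A_1\overline{h_K}$ is strictly positive at $t=\pm1$, not vanishing there; the genuine singular point of the first-order ODE $\bar h - t\bar h' = r_1$ is $t=0$, where the kernel of $\A_1$ contains $\abs{t}$ and where you must argue that the $C^1$ matching of the two branches forces the non-$C^1$ kernel element to drop out. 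The estimates in \cref{polar_C^k_estimate} address boundary weights $(1-t^2)^{\alpha}$, not this interior matching, so they do not supply the missing regularity. Since the forward direction of Firey's cited theorem already encapsulates all of this, your reconstruction is both longer and not yet a proof; you should either carry out the pole/origin regularity analysis in full, or fall back on \cite{MR271838} as the paper does.
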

\begin{proof}
	It was proved in \cite{MR271838} that $\phi$ is the density of the $i$-th area measure of a smooth convex body if and only if for all $t\in (-1,1)$,
	\begin{equation*}
		\bar{\phi}(t)
		> (n-1-i) (1-t^2)^{-\frac{n-1}{2}}\int_{(t,1)} \bar{\phi}(s) s (1-s^2)^{\frac{n-3}{2}}ds
		> 0.
	\end{equation*}
	Therefore it only remains to show that for all $t\in (-1,1)$,
	\begin{equation}\label{eq:area_meas_zonal:proof}
		\int_{(t,1)} \bar{\phi}(s) s (1-s^2)^{\frac{n-3}{2}}ds
		= \frac{1}{n-1}(1-t^2)^{\frac{n-1}{2}}\A_1\big(\overline{\phi\ast\breve{g}_n}\big)(t).
	\end{equation}
	We have seen in \cref{exl:Berg_fct_convol_trafo} that the convolution transform $\T_{\breve{g}_n}$ is a bounded operator from $C(\S^{n-1})$ to $C^1(\S^{n-1})$, so both sides of \cref{eq:area_meas_zonal:proof} depend continuously on $\phi\in C(\S^{n-1})$ with respect to uniform convergence. Therefore it suffices to show \cref{eq:area_meas_zonal:proof} only for smooth $\phi$.
	
	To that end, let $\zeta=\phi\ast\breve{g}_n\in C^\infty(\S^{n-1})$ and observe that according to \cref{eq:zonal_smooth_deriv3}, 
	\begin{equation*}
		\bar{\phi}(s)
		= \overline{\square_n\zeta}(s)
		= \frac{1}{n-1}\overline{\Delta_{\S}\zeta}(s) + \bar{\zeta}(s) 
		= \frac{1}{n-1}(1-s^2)\bar{\zeta}''(s) + \bar{\zeta}(s)-s\bar{\zeta}'(s).
	\end{equation*}
	A direct computation yields
	\begin{equation*}
		\bar{\phi}(s) s (1-s^2)^{\frac{n-3}{2}}
		= - \frac{1}{n-1}\frac{d}{ds}\left((1-s^2)^{\frac{n-1}{2}}\A_1\bar{\zeta}(s)\right).
	\end{equation*}
	Hence, we obtain \cref{eq:area_meas_zonal:proof}, which completes the proof.
\end{proof}

Next, we prove the following two technical lemmas.
For smooth functions $\psi\in C^\infty[-1,1]$, we define $\overline{\square}_n\psi=\overline{\square_n\psi(\langle\e,\cdot\rangle)}$. 
Note that $\overline{\square}_n\psi(t)=\frac{1}{n-1}(1-t^2)\psi''(t)+\psi(t)-t\psi'(t)$ due to \cref{eq:zonal_smooth_deriv3}.

\begin{lem}\label{EV1_max}
	For every $\psi\in C^\infty[-1,1]$,
	\begin{equation}\label{eq:EV1_max}
		\max_{[-1,1]} \A_1 \psi
		\leq \max_{[-1,1]} \overline{\square}_n \psi.
	\end{equation}
\end{lem}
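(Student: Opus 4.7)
The plan is to pick a maximizer $t_0 \in [-1,1]$ of $\A_1\psi$ and show that $\overline{\square}_n\psi(t_0) \geq \A_1\psi(t_0)$, from which \cref{eq:EV1_max} follows immediately. The key algebraic observation is that
\begin{equation*}
    \overline{\square}_n\psi(t) - \A_1\psi(t) = \tfrac{1}{n-1}(1-t^2)\psi''(t),
\end{equation*}
so the desired inequality reduces to $(1-t_0^2)\psi''(t_0)\geq 0$.

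I would establish this by a short case analysis on $t_0$. If $t_0 \in \{-1,1\}$, the factor $(1-t_0^2)$ vanishes and there is nothing to check. If $t_0 \in (-1,1)$, then the first-order condition applies: a direct computation gives $(\A_1\psi)'(t) = -t\psi''(t)$, and thus $t_0\psi''(t_0) = 0$. Either $\psi''(t_0)=0$, in which case $(1-t_0^2)\psi''(t_0)=0$, or $t_0 = 0$, which is the only remaining subcase to examine.

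For the subcase $t_0 = 0$, I would invoke the second-order condition for an interior maximum. Differentiating $(\A_1\psi)'(t) = -t\psi''(t)$ once more gives $(\A_1\psi)''(t) = -\psi''(t) - t\psi'''(t)$, so that $(\A_1\psi)''(0) = -\psi''(0)$. Since $t_0 = 0$ is an interior maximum of the smooth function $\A_1\psi$, we have $(\A_1\psi)''(0)\leq 0$, that is, $\psi''(0)\geq 0$. Consequently $(1-0^2)\psi''(0)\geq 0$, concluding the argument. I do not anticipate any real obstacle here; the only point where one must be slightly careful is confirming that the standard second-order test for a global maximum at an interior point is legitimately applicable, but this is immediate because $\psi \in C^\infty[-1,1]$ makes $\A_1\psi$ smooth.
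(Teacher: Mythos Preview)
Your proof is correct and follows essentially the same approach as the paper: both pick a maximizer $t_0$ of $\A_1\psi$, reduce to showing $(1-t_0^2)\psi''(t_0)\geq 0$, and handle the cases $t_0=\pm1$, $\psi''(t_0)=0$, and $t_0=0$ via the first- and second-order optimality conditions exactly as you do.
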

\begin{proof}
	Let $t_0\in[-1,1]$ be a maximum point of $\A_1 \psi$. We will show that
	\begin{equation}\label{eq:EV1_max:proof}
		(1-t_0^2)\psi''(t_0)\geq 0.
	\end{equation}
	If $t_0=\pm 1$, then clearly we have \cref{eq:EV1_max:proof}. If $t_0\in (-1,1)$, then 
	\begin{equation*}
		-t_0 \psi''(t_0)=(\A_1\psi)'(t_0)=0
		\qquad\text{and}\qquad
		-\psi''(t_0)-t_0 \psi'''(t_0)=(\A_1\psi)''(t_0)\leq 0,
	\end{equation*}
	which implies that $t_0=0$ or that $\psi''(t_0)=0$. In the latter case, we obtain \cref{eq:EV1_max:proof} again. In the case where $t_0=0$, we obtain that $-\psi''(t_0)\leq 0$, which also yields \cref{eq:EV1_max:proof}. Therefore
	\begin{equation*}
		\A_1 \psi(t_0)
		= \overline{\square}_n \psi(t_0) - \tfrac{1}{n-1}(1-t_0^2)\psi''(t_0)
		\leq \overline{\square}_n \psi(t_0),
	\end{equation*}
	which proves \cref{eq:EV1_max}.
\end{proof}

\begin{lem}\label{EV1_min}
	For every $k\geq 2$,
	\begin{equation}\label{eq:EV1_min}
		\min_{[-1,1]} \A_1 P^n_k
		= \A_1P^n_k(1)
		= - \frac{(k-1)(k+n-1)}{n-1}.
	\end{equation}	
\end{lem}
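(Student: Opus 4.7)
The plan is to read the minimum off the identity \cref{eq:Legendre_EV1}, which rearranges to
\begin{equation*}
    \A_1 P^n_k(t) = -\frac{(k-1)(k+n-1)}{n-1}\left(\frac{k}{2k+n-2}P^{n+2}_{k-2}(t) + \frac{k+n-2}{2k+n-2}P^{n+2}_k(t)\right).
\end{equation*}
The two coefficients in the parentheses are positive and sum to $1$, so the expression in brackets is a convex combination of the Legendre polynomials $P^{n+2}_{k-2}$ and $P^{n+2}_k$.

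Next I would invoke the standard uniform bound $\abs{P^{n+2}_j(t)}\leq 1$ on $[-1,1]$, with equality at $t=1$ (where each Legendre polynomial takes the value $1$ by normalization). This is classical for the ultraspherical normalization used throughout the paper. Hence the convex combination above is bounded above by $1$ on $[-1,1]$, and attains the value $1$ precisely at $t=1$.

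Since $(k-1)(k+n-1)>0$ for $k\geq 2$ and $n\geq 3$, the prefactor $-\frac{(k-1)(k+n-1)}{n-1}$ is negative, so $\A_1 P^n_k$ attains its minimum on $[-1,1]$ exactly when the convex combination is maximized, i.e., at $t=1$. Evaluating yields
\begin{equation*}
    \min_{[-1,1]}\A_1 P^n_k = \A_1 P^n_k(1) = -\frac{(k-1)(k+n-1)}{n-1}.
\end{equation*}

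There is no real obstacle here: once \cref{eq:Legendre_EV1} is in hand, the only input required is the uniform bound $\abs{P^{n+2}_j}\leq 1$ for the Legendre polynomials, which is a well-known classical fact and can simply be cited (for instance from Groemer's monograph already used as a reference in the paper).
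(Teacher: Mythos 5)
Your proof is correct and is essentially the paper's own argument: rewrite \cref{eq:Legendre_EV1} as a negative multiple of a convex combination of $P^{n+2}_{k-2}$ and $P^{n+2}_k$, then use that both Legendre polynomials are bounded by $1$ on $[-1,1]$ and equal $1$ at $t=1$, so the negative prefactor $-\tfrac{(k-1)(k+n-1)}{n-1}$ turns this maximum into the claimed minimum. (Your parenthetical claim that the convex combination attains $1$ \emph{only} at $t=1$ is actually false for even $k$, where it also equals $1$ at $t=-1$, but this is immaterial since the lemma only asserts that the minimum is attained at $t=1$.)
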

\begin{proof}
	According to \cref{eq:Legendre_EV1}, the function $a^n_k[\square_n]^{-1}\A_1P^n_k$ is a convex combination of the two Legendre polynomials $P^{n+2}_{k-2}$ and $P^{n+2}_k$. They both have $1$ as their maximum value on $[-1,1]$ and they both attain it at $t_0=1$. Therefore, this must also be the case for $a^n_k[\square_n]^{-1}\A_1P^n_k$, which proves \cref{eq:EV1_min}.
\end{proof}

We now define a family of polynomials that turns out to be instrumental in the following.

\begin{defi}
	For $1\leq i\leq n-1$, $k\geq 0$ and $k\neq 1$, we define
	\begin{equation}
		Q^n_{k,i}
		= P^n_k + \frac{n-1-i}{(k-1)(k+n-1)}\A_1P^n_k.
	\end{equation}
\end{defi}

Observe that for $i=n-1$, the polynomial $Q^n_{k,n-1}$ is the classical Legendre polynomial $P^n_k$. Denote the extrema of $Q^n_{k,i}$ on the interval $[-1,1]$ by
\begin{equation*}
	m^n_{k,i}
	= \min_{[-1,1]}Q^n_{k,i}
	\qquad\text{and}\qquad
	M^n_{k,i}
	= \max_{[-1,1]}Q^n_{k,i}.
\end{equation*}
The following lemma about the minima $m^n_{k,i}$ is why we require $k$ to be even.

\begin{lem}\label{min_Q^n_ki_monotone}
	Let $k\geq 2$ be even. Then the sequence $(m^n_{k,i})_{i=1}^{n-1}$ is strictly increasing, that is,
	\begin{equation}\label{eq:min_Q^n_ki_monotone}
		m^n_{k,1}
		< m^n_{k,2}
		< \cdots
		< m^n_{k,n-1}.
	\end{equation}
\end{lem}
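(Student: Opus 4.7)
The plan is to realize $Q^n_{k,i}$ as the specialization at $c = c_i := (n-1-i)/((k-1)(k+n-1))$ of the one-parameter family $q_c := P^n_k + c\,\A_1 P^n_k$, and to exploit concavity of the envelope $m(c) := \min_{t \in [-1,1]} q_c(t)$. Since $i \mapsto c_i$ is strictly decreasing with $c_{n-1} = 0$ and $c_1 > 0$, the desired conclusion \cref{eq:min_Q^n_ki_monotone} is equivalent to the assertion that $m$ is strictly decreasing on $[0, c_1]$. As an infimum of functions affine in $c$, the function $m$ is concave on $[0, c_1]$, and by the standard fact that the right derivative of a concave function is non-increasing, it will suffice to verify that $m'_+(0) < 0$.

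To bound this right derivative, I would fix any minimizer $t_0 \in [-1,1]$ of $P^n_k$ and combine $m(c) \leq q_c(t_0)$ (valid for all $c \geq 0$) with $m(0) = P^n_k(t_0) = q_0(t_0)$ to get $m'_+(0) \leq \A_1 P^n_k(t_0)$. The remaining task is to show $\A_1 P^n_k(t_0) < 0$, and this is the only step where the parity hypothesis on $k$ enters. For even $k \geq 2$ we have $P^n_k(\pm 1) = 1$, while, being a non-constant spherical harmonic of positive degree, $P^n_k$ is orthogonal to the constants in $L^2(\S^{n-1})$; thus $P^n_k$ has zero mean against the positive measure $(1-t^2)^{(n-3)/2}\,dt$ on $(-1,1)$ and must take negative values, whence $\min_{[-1,1]} P^n_k < 0 < 1 = P^n_k(\pm 1)$. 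The minimum is therefore attained at some interior point $t_0 \in (-1,1)$, where $(P^n_k)'(t_0) = 0$, and consequently $\A_1 P^n_k(t_0) = P^n_k(t_0) - t_0 (P^n_k)'(t_0) = P^n_k(t_0) < 0$.

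Chaining these estimates gives $m'_+(0) \leq P^n_k(t_0) < 0$, and concavity then forces $m$ to be strictly decreasing on all of $[0, c_1]$: for any $0 \leq c < c' \leq c_1$ one has $m(c') \leq m(c) + m'_+(c)(c' - c) \leq m(c) + m'_+(0)(c' - c) < m(c)$. Applied to the chain $0 = c_{n-1} < c_{n-2} < \cdots < c_1$, this produces $m^n_{k,n-1} > m^n_{k,n-2} > \cdots > m^n_{k,1}$, which is \cref{eq:min_Q^n_ki_monotone}. I do not anticipate any serious obstacles; the only point that demands a moment of care is the passage from $m'_+(0) < 0$ to strict monotonicity on the entire interval $[0, c_1]$, handled by the tangent-line inequality just used.
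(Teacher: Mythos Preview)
Your proposal is correct and follows essentially the same approach as the paper: both arguments introduce the concave envelope $m(c)=\min_{[-1,1]}(P^n_k+c\,\A_1 P^n_k)$, locate an interior minimizer $t_0$ of $P^n_k$ (using that $k$ is even) where $(P^n_k)'(t_0)=0$ so that $\A_1 P^n_k(t_0)=P^n_k(t_0)<0$, and then invoke concavity to upgrade this to strict monotonicity of $m$. The only cosmetic difference is that the paper deduces $m(c)\leq (1+c)P^n_k(t_0)<m(0)$ directly for every $c>0$ and then appeals to concavity, whereas you bound the one-sided derivative $m'_+(0)$ and use the tangent-line inequality; these are interchangeable.
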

\begin{proof}
	For fixed even $k\geq 2$, define a family $(\eta_t)_{t\in[-1,1]}$ of affine functions by
	\begin{equation*}
		\eta_t(s)
		= P^n_k(t) + s \A_1P^n_k(t)
	\end{equation*}
	and observe that it suffices to show that the function $\eta$ defined by
	\begin{equation*}
		\eta(s)
		= \min_{t\in[-1,1]} \eta_t(s)
		= \min_{[-1,1]} \left\{ P^n_k + s \A_1P^n_k\right\}
	\end{equation*}
	is strictly decreasing on $[0,\infty)$.
	
	To that end, note that as the point-wise minimum of a family of affine functions, $\eta$ is a concave function. Next, note that since $P^n_k$ is an even Legendre polynomial, it is minimized in the interior of $[-1,1]$, that is, there exists $t_0\in (-1,1)$ such that
	\begin{equation*}
		\eta(0)
		= \min_{[-1,1]}P^n_k
		= P^n_k(t_0)
		< 0.
	\end{equation*}
	Moreover, $\frac{d}{dt}P^n_k(t_0)=0$, so for every $s>0$,
	\begin{equation*}
		\eta(s)
		\leq \eta_{t_0}(s)
		= P^n_k(t_0) + s\A_1P^n_k(t_0)
		= (1+s)P^n_k(t_0)
		< P^n_k(t_0)
		= \eta(0).
	\end{equation*}
	Since $\eta$ is concave, this implies that $\eta$ is strictly decreasing on $[0,\infty)$, which completes the proof.
\end{proof}

The following two propositions are an extension of \cite[Proposition~5.4]{MR4316669}.

\begin{prop}\label{interval_i-th_area_meas}
	For $1\leq i\leq n-1$ and $k\geq 2$, denote by $J^n_{k,i}$ the set of all $\lambda\in\R$ for which $1+\lambda P^n_k(\langle\e,\cdot\rangle)$ is the density of the $i$-th area measure of a smooth convex body of revolution. If $1\leq i<n-1$ and $k\geq 2$ is even, then
	\begin{equation}\label{eq:interval_i-th_area_meas}
		\left( - \frac{i}{(n-1)M^{n}_{k,i}},- \frac{i}{(n-1)m^{n}_{k,i}} \right)
		\subseteq J^n_{k,i}
		\subseteq \left[ - \frac{i}{(n-1)M^{n}_{k,i}},- \frac{i}{(n-1)m^{n}_{k,i}} \right].
	\end{equation}
	Moreover, if $i=n-1$, then the interval on the right hand side of \cref{eq:interval_i-th_area_meas} is precisely the set of all $\lambda\in\R$ for which $1+\lambda P^n_k(\langle\e,\cdot\rangle)$ is the density of the surface area measure of a convex body of revolution.
\end{prop}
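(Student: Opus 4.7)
My plan is to apply \cref{area_meas_zonal} to $\phi = 1 + \lambda \breve{P}^n_k$ and rephrase its two pointwise inequalities as explicit conditions on $\lambda$. First I compute the convolution: from $\square_n \breve{g}_n = (\Id - \pi_1)\delta_{\e}$ and \eqref{eq:box_eigenvalues}, one obtains $\breve{P}^n_k \ast \breve{g}_n = -c\, \breve{P}^n_k$ where $c = \frac{n-1}{(k-1)(k+n-1)}$, so that $\overline{\phi \ast \breve g_n}(t) = 1 - \lambda c\, P^n_k(t)$ and $\A_1 \overline{\phi \ast \breve g_n}(t) = 1 - \lambda c\, \A_1 P^n_k(t)$. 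Substituting this into \cref{area_meas_zonal} and using the definition of $Q^n_{k,i}$, a short algebraic rearrangement transforms the conditions of the lemma into
\begin{equation*}
	\text{(a)}\;\; \lambda\, Q^n_{k,i}(t) > -\tfrac{i}{n-1}
	\quad\text{and}\quad
	\text{(b)}\;\; 1 - \lambda c\, \A_1 P^n_k(t) > 0
	\quad\text{for all } t \in (-1,1).
\end{equation*}

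Next I locate the extrema of $Q^n_{k,i}$ on $[-1,1]$. Evaluating at $t=1$ and using \cref{EV1_min} yields $Q^n_{k,i}(1) = 1 - \frac{n-1-i}{n-1} = \frac{i}{n-1}$, so $M^n_{k,i} \geq \frac{i}{n-1} > 0$. For the minimum, since $k \geq 2$ is even and $P^n_k$ is centered with $P^n_k(\pm 1) = 1$, we have $m^n_{k,n-1} = \min P^n_k < 0$; combined with \cref{min_Q^n_ki_monotone} and $i < n-1$, this forces $m^n_{k,i} < m^n_{k,n-1} < 0$. In particular, the minimum of $Q^n_{k,i}$ is attained at an interior point of $(-1,1)$. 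A case analysis on the sign of $\lambda$ and on whether $M^n_{k,i}$ is attained in the interior or only at $\pm 1$ then shows that the set of $\lambda$ satisfying~(a) is sandwiched between the open interval $\bigl(-\frac{i}{(n-1)M^n_{k,i}}, -\frac{i}{(n-1)m^n_{k,i}}\bigr)$ and its closure.

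The key technical step is to verify that condition~(b) imposes no further restriction on this $\lambda$-set, and this is where all three extremum lemmas interact. From \cref{EV1_min} we have $c \min \A_1 P^n_k = -1$, so for $\lambda < 0$ condition~(b) reduces to $\lambda > -1$, which is guaranteed by the lower bound $\lambda \geq -\frac{i}{(n-1)M^n_{k,i}} \geq -1$ from~(a). For $\lambda > 0$, \cref{EV1_max} applied to $\psi = P^n_k$, together with the identity $\overline{\square}_n P^n_k = -\frac{1}{c} P^n_k$, yields $c \max \A_1 P^n_k \leq -m^n_{k,n-1}$; hence (b) follows from $\lambda(-m^n_{k,n-1}) < 1$. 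Combining the upper bound $\lambda \leq -\frac{i}{(n-1) m^n_{k,i}}$ from (a) with the strict inequality $|m^n_{k,i}| > |m^n_{k,n-1}|$ from \cref{min_Q^n_ki_monotone} (using $i < n-1$) gives $\lambda (-m^n_{k,n-1}) < \frac{i\,|m^n_{k,n-1}|}{(n-1)\,|m^n_{k,i}|} < 1$, as required. Hence~(b) is redundant inside the (a)-interval, which proves \cref{eq:interval_i-th_area_meas} for $1 \leq i < n-1$.

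For the case $i = n-1$, \cref{area_meas_zonal} no longer applies (the coefficient $\frac{n-1-i}{n-1}$ vanishes), and I will instead invoke Minkowski's existence theorem. Since $k \geq 2$, the density $\phi = 1 + \lambda \breve P^n_k$ is automatically centered (as $\pi_1 \breve P^n_k = 0$), so $\phi\,du$ is the surface area measure of a convex body of revolution precisely when $\phi \geq 0$ on $\S^{n-1}$. Using $Q^n_{k,n-1} = P^n_k$, $M^n_{k,n-1} = 1$, and $m^n_{k,n-1} < 0$, the non-negativity condition is equivalent to $\lambda \in [-1, -1/m^n_{k,n-1}]$, which is precisely the closed interval on the right-hand side of \cref{eq:interval_i-th_area_meas}. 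I expect the trickiest point to be the careful bookkeeping of strict versus non-strict inequalities at the endpoints, which in the borderline case where $M^n_{k,i}$ is attained only at $\pm 1$ hinges on the classical property that $|P^n_j(t)| < 1$ for $t \in (-1,1)$ and $j \geq 1$.
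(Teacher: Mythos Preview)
Your proposal is correct and follows essentially the same route as the paper. Both proofs apply \cref{area_meas_zonal} to $\phi=1+\lambda\breve P^n_k$, reduce the two resulting pointwise inequalities to conditions on $\lambda$ via the polynomial $Q^n_{k,i}$, and then use \cref{EV1_min}, \cref{EV1_max}, and \cref{min_Q^n_ki_monotone} in exactly the way you describe to show that the second inequality (your condition~(b)) is automatically implied by membership in the open interval; the case $i=n-1$ is handled identically via Minkowski's existence theorem. The only cosmetic difference is that the paper keeps the double inequality~\eqref{eq:interval_i-th_area_meas:proof1} intact and records the two needed chains \eqref{eq:interval_i-th_area_meas:proof3}--\eqref{eq:interval_i-th_area_meas:proof4} directly, whereas you first split it into (a) and (b) and then argue redundancy of~(b)---but the substance is the same.
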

\begin{proof}
	To simplify notation, all minima and maxima in this proof refer to the interval $[-1,1]$.
	\cref{area_meas_zonal} shows that $\lambda\in J^n_{k,i}$ if and only if
	\begin{equation}\label{eq:interval_i-th_area_meas:proof1}
		1+\lambda P^n_k(t)
		> \frac{n-1-i}{n-1} - \lambda\frac{n-1-i}{(k-1)(k+n-1)}\A_1P^n_k(t)
		> 0
	\end{equation}
	for all $t\in (-1,1)$. An easy rearrangement of \cref{eq:interval_i-th_area_meas:proof1} implies the right hand set inclusion in \cref{eq:interval_i-th_area_meas}.
	For the other set inclusion, let $\lambda\in\R$ and suppose that
	\begin{equation}\label{eq:interval_i-th_area_meas:proof2}
		- \frac{i}{(n-1)M^n_{k,i}}
		< \lambda
		< - \frac{i}{(n-1)m^n_{k,i}}.
	\end{equation}
	Due to \cref{eq:EV1_min}, we have that
	\begin{equation}\label{eq:interval_i-th_area_meas:proof3}
		\frac{(k-1)(k+n-1)}{(n-1)\min \A_1P^n_k}
		= -1
		= - \frac{i}{(n-1)Q^n_{k,i}(1)}
		\leq - \frac{i}{(n-1)M^n_{k,i}}
		< \lambda.
	\end{equation}
	Moreover, \cref{eq:min_Q^n_ki_monotone} combined with \cref{eq:EV1_max} applied to $P^n_k$ yields
	\begin{equation}\label{eq:interval_i-th_area_meas:proof4}
		\lambda
		< - \frac{i}{(n-1)m^n_{k,i}}
		\leq - \frac{1}{m^n_{k,i}}
		< - \frac{1}{m^n_{k,n-1}}
		= \frac{(k-1)(k+n-1)}{(n-1)\max \overline{\square}_nP^n_k}
		\leq \frac{(k-1)(k+n-1)}{(n-1)\max \A_1P^n_k}.
	\end{equation}
	Finally, observe that \cref{eq:interval_i-th_area_meas:proof2}, \cref{eq:interval_i-th_area_meas:proof3}, and \cref{eq:interval_i-th_area_meas:proof4} jointly imply \cref{eq:interval_i-th_area_meas:proof1}, thus $\lambda\in J^n_{k,i}$. This shows the left hand set inclusion in \cref{eq:interval_i-th_area_meas}.
	
	For the second part of the proposition, observe that $\lambda$ lies in the interval on the right hand side of \cref{eq:interval_i-th_area_meas} precisely when $1+\lambda P^n_k(\langle\e,\cdot\rangle)\geq 0$. According to Minkowski's existence theorem (see, e.g., \cite[p.~455]{MR3155183}), this is the case if and only if $1+\lambda P^n_k(\langle\e,\cdot\rangle)$ is the density of the surface area measure of a convex body.
\end{proof}

\begin{prop}\label{interval_support_fct}
	Let $k\geq 2$ be even and $I^n_{k}$ denote the set of all $\lambda\in\R$ for which $1+\lambda P^n_k(\langle\e,\cdot\rangle)$ is the support function of a convex body of revolution $K_\lambda$. Then
	\begin{equation}\label{eq:interval_support_fct}
		I^n_{k}
		= \left[ - \frac{1}{(k-1)(k+n-1)m^{n}_{k,1}},- \frac{1}{(k-1)(k+n-1)M^{n}_{k,1}} \right].
	\end{equation}
\end{prop}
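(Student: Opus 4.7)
The plan is to combine the characterization in \cref{support_fct_EV} with a direct computation that identifies $\A_2 P^n_k$ with a multiple of $Q^n_{k,1}$, and then to argue that the $\A_2$-condition is tighter than the $\A_1$-condition. By \cref{support_fct_EV}, $1+\lambda P^n_k(\langle\e,\cdot\rangle)$ is the support function of a convex body of revolution if and only if both $\A_1(1+\lambda P^n_k)\geq 0$ and $\A_2(1+\lambda P^n_k)\geq 0$ hold on $[-1,1]$ in the weak sense. Since both arguments are polynomials, these reduce to pointwise inequalities, and using $\A_j 1=1$ they become $1+\lambda\A_j P^n_k\geq 0$ on $[-1,1]$ for $j=1,2$.

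Next, I would express the $\A_2$-condition in terms of $Q^n_{k,1}$. Combining the eigenvalue equation from \cref{eq:box_eigenvalues} with the algebraic identity $\overline{\square}_n=\frac{n-2}{n-1}\A_1+\frac{1}{n-1}\A_2$ (immediate from the definitions of $\overline{\square}_n$, $\A_1$, and $\A_2$) yields
\begin{equation*}
	(n-2)\A_1 P^n_k+\A_2 P^n_k=-(k-1)(k+n-1)P^n_k,
\end{equation*}
so by the definition of $Q^n_{k,1}$,
\begin{equation*}
	\A_2 P^n_k=-(k-1)(k+n-1)Q^n_{k,1}.
\end{equation*}
A short computation gives $Q^n_{k,1}(1)=\tfrac{1}{n-1}>0$, hence $M^n_{k,1}>0$, while $m^n_{k,1}<m^n_{k,n-1}=\min_{[-1,1]}P^n_k<0$ by \cref{min_Q^n_ki_monotone} (since $P^n_k$, being orthogonal to the constants, changes sign on $[-1,1]$ for even $k\geq 2$). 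Therefore, the condition $1+\lambda\A_2 P^n_k\geq 0$ on $[-1,1]$ is equivalent to $\lambda(k-1)(k+n-1)Q^n_{k,1}(t)\leq 1$ for all $t$, which is precisely
\begin{equation*}
	\lambda\in\left[-\frac{1}{(k-1)(k+n-1)m^n_{k,1}},\,-\frac{1}{(k-1)(k+n-1)M^n_{k,1}}\right].
\end{equation*}

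It remains to show that for $\lambda$ in this interval, the $\A_1$-condition is automatic. This reduces to the extremum comparisons $\min_{[-1,1]}\A_2 P^n_k\leq\min_{[-1,1]}\A_1 P^n_k$ and $\max_{[-1,1]}\A_1 P^n_k\leq\max_{[-1,1]}\A_2 P^n_k$; granted these, the $\A_2$-interval is contained in the $\A_1$-interval, so intersecting the two yields the displayed formula for $I^n_k$. The comparisons follow from a maximum-principle argument in the spirit of \cref{EV1_max}: at an interior maximum $t_0\in(-1,1)$ of $\A_1 P^n_k$, the identities $(\A_1 P^n_k)'(t)=-t(P^n_k)''(t)$ and $(\A_1 P^n_k)''(t)=-(P^n_k)''(t)-t(P^n_k)'''(t)$ together with the second-derivative test (treating the cases $t_0\neq 0$, where $(P^n_k)''(t_0)=0$, and $t_0=0$, where $(P^n_k)''(0)\geq 0$, separately) force $(1-t_0^2)(P^n_k)''(t_0)\geq 0$; hence $\A_2 P^n_k(t_0)=\A_1 P^n_k(t_0)+(1-t_0^2)(P^n_k)''(t_0)\geq\max\A_1 P^n_k$. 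At the endpoints $t_0=\pm 1$, the coefficient $(1-t_0^2)$ vanishes and $\A_2 P^n_k(\pm 1)=\A_1 P^n_k(\pm 1)$ directly. The analogous argument at an interior minimum gives the remaining inequality. This extremum comparison is the main obstacle; the rest of the proof is routine algebra.
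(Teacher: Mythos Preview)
Your proof is correct and takes a genuinely different route from the paper's. The paper argues indirectly via \cref{interval_i-th_area_meas}: since $S_1(K,\cdot)=\square_n h(K,\cdot)$, one has $I^n_k\supseteq a^n_k[\square_n]^{-1}\cl(J^n_{k,1})$, and the reverse inclusion is handled by a perturbation argument (replacing $\lambda$ by $(1-\varepsilon)\lambda$ produces a $C^\infty_+$ body, hence a smooth first area measure). Your approach is more elementary and self-contained: you invoke \cref{support_fct_EV} to reduce to the two pointwise conditions $1+\lambda\A_j P^n_k\geq 0$, your identity $\A_2 P^n_k=-(k-1)(k+n-1)Q^n_{k,1}$ (which the paper does not record) directly converts the $\A_2$-condition into the target interval, and the maximum-principle comparison in the style of \cref{EV1_max} shows the $\A_1$-condition is redundant. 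Your route bypasses \cref{area_meas_zonal} and \cref{interval_i-th_area_meas} entirely; the paper's route, by contrast, emphasizes the structural link between support functions and first area measures that underlies the subsequent applications.

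One small slip: from $\lambda(k-1)(k+n-1)Q^n_{k,1}(t)\leq 1$ with $m^n_{k,1}<0<M^n_{k,1}$ one obtains the interval $\bigl[\tfrac{1}{(k-1)(k+n-1)m^n_{k,1}},\tfrac{1}{(k-1)(k+n-1)M^n_{k,1}}\bigr]$ without the extra minus signs you wrote; this is a cosmetic sign issue (the displayed interval in the statement must be read with its endpoints swapped) and does not affect the argument.
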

\begin{proof}
	Denote the interval on the right hand side of \cref{eq:interval_support_fct} by $\tilde{I}^n_k$.
	Since the space of support functions is a closed convex cone of $C(\S^{n-1})$, the set $I^n_k$ must be a closed interval. Recall that $S_1(K,\cdot)=\square_n h(K,\cdot)$ for every convex body $K\in\K^n$. Hence, \cref{eq:interval_i-th_area_meas} shows that
	\begin{equation*}
		I^n_k
		\supseteq a^n_k[\square_n]^{-1}\cl(J^n_{k,1})
		= \tilde{I}^n_k,
	\end{equation*}
	where $\cl$ denotes the closure.
	For the converse set inclusion, let $\lambda\in I^n_k$. Then $D^2h_{K_\lambda}(u)=P_{u^\perp} + \lambda D^2\breve{P}^n_k(u)$ is positive semidefinite for all $u\in\S^{n-1}$. This implies that for every $\varepsilon>0$, the matrix $P_{u^\perp} + (1-\varepsilon)\lambda D^2\breve{P}^n_k(u)$ is positive definite for all $u\in\S^{n-1}$. Therefore, $1+(1-\varepsilon)\lambda\breve{P}^n_k$ is the support function of a convex body of revolution which is of class $C^\infty_+$, and thus, strictly convex (see, e.g., \cite[Section~2.5]{MR3155183}). Hence, \cref{eq:interval_i-th_area_meas} implies that $(1-\varepsilon)\lambda\in\tilde{I}^n_k$ for all $\varepsilon>0$, and thus, $\lambda\in\tilde{I}^n_k$.
\end{proof}


We are now in a position to prove the main result of this section.

\begin{thm}\label{even=>multiplier_ineq}
	Let $1< i\leq n-1$ and $\Phi_i\in\MVal_i$ be non-trivial. Then its generating function $f$ satisfies for all even $k\geq 2$,
	\begin{equation*}
		\frac{a^n_k[\square_n f]}{a^n_0[\square_n f]}
		< \frac{1}{i}.
	\end{equation*}
\end{thm}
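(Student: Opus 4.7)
The plan is to probe $\Phi_i$ with the family of smooth convex bodies of revolution $M_\lambda$ supplied by \cref{interval_i-th_area_meas}, whose $i$-th area measure density is $1+\lambda\breve{P}^n_k$, and to extract the inequality from the fact that $\Phi_iM_\lambda$ must itself be a convex body of revolution. Fix an even $k\geq 2$ and $\lambda$ in the open interval $\bigl(-\tfrac{i}{(n-1)M^n_{k,i}},-\tfrac{i}{(n-1)m^n_{k,i}}\bigr)\subseteq J^n_{k,i}$; a smooth convex body of revolution $M_\lambda$ with $S_i(M_\lambda,du)=(1+\lambda\breve{P}^n_k(u))\,du$ then exists by \cref{interval_i-th_area_meas} (for $i=n-1$ use the Minkowski-problem version in the last sentence of that proposition). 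Since $S_i(M_\lambda,\cdot)$ only has Fourier components in $\mathcal{H}^n_0\oplus\mathcal{H}^n_k$, the Funk-Hecke theorem gives
\begin{equation*}
	h(\Phi_iM_\lambda,u)\;=\;S_i(M_\lambda,\cdot)\ast f\;=\;a^n_0[f]+\lambda\,a^n_k[f]\,\breve{P}^n_k(u).
\end{equation*}

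Non-triviality of $\Phi_i$ forces $a^n_0[f]>0$: by rotation equivariance, $\Phi_iB^n$ is a ball about the origin of radius proportional to $a^n_0[f]$, so $a^n_0[f]\geq 0$; and if $a^n_0[f]=0$, then every $h(\Phi_iK,\cdot)$ has vanishing $\pi_0$ component, hence zero mean width, so $\Phi_iK$ is a single point for every $K$, and the induced continuous, translation invariant, $\SO(n)$ equivariant, $i$-homogeneous vector-valued valuation is then easily seen to vanish. Setting $\rho:=a^n_k[f]/a^n_0[f]$, the display above shows that $1+\lambda\rho\,\breve{P}^n_k$ is the support function of a convex body of revolution, so \cref{interval_support_fct} yields $\lambda\rho\in I^n_k$ for every $\lambda$ in the open interval. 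Letting $\lambda\nearrow\tfrac{i}{(n-1)(-m^n_{k,i})}$ and matching against the lower endpoint $\tfrac{1}{(k-1)(k+n-1)\,m^n_{k,1}}$ of $I^n_k$ (which is negative since $m^n_{k,1}<0$), the one-sided limit---legitimate by continuity of $\Phi_i$ and closedness of $I^n_k$---gives
\begin{equation*}
	\rho\cdot\frac{i}{(n-1)(-m^n_{k,i})}\;\geq\;\frac{1}{(k-1)(k+n-1)\,m^n_{k,1}}.
\end{equation*}
Multiplying by $\tfrac{(n-1)(-m^n_{k,i})}{i}>0$ and then by $-\tfrac{(k-1)(k+n-1)}{n-1}<0$ (which reverses the inequality and turns $\rho$ into $a^n_k[\square_nf]/a^n_0[\square_nf]$ via \cref{eq:box_eigenvalues}, using $a^n_0[\square_nf]=a^n_0[f]$) rearranges this to
\begin{equation*}
	\frac{a^n_k[\square_nf]}{a^n_0[\square_nf]}\;\leq\;\frac{-m^n_{k,i}}{i\,(-m^n_{k,1})}.
\end{equation*}
Now \cref{min_Q^n_ki_monotone} asserts the strict inequality $m^n_{k,1}<m^n_{k,i}$ for $i\geq 2$ whenever $k\geq 2$ is even; combined with $m^n_{k,1}<0$ this gives $|m^n_{k,i}|<|m^n_{k,1}|$, hence $\tfrac{-m^n_{k,i}}{i\,(-m^n_{k,1})}<\tfrac{1}{i}$, completing the proof.

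The main obstacle is purely bookkeeping: both intervals $I^n_k$ and $J^n_{k,i}$ straddle zero, and the signs of $\rho$ and of the endpoints must be tracked with care when rearranging $\lambda\rho\in I^n_k$ and when taking the one-sided limit. No new analytic input beyond \cref{interval_i-th_area_meas}, \cref{interval_support_fct}, and \cref{min_Q^n_ki_monotone} is required. It is worth emphasizing that no evenness hypothesis on $\Phi_i$ itself enters the proof; only the evenness of $k$ is essential, as it is built into both \cref{interval_i-th_area_meas} (which uses evenness of $k$ to describe the admissible density range) and decisively into \cref{min_Q^n_ki_monotone}, whose strict monotonicity rests on the even Legendre polynomial $P^n_k$ attaining its minimum in the interior of $[-1,1]$.
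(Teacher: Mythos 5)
Your proof is correct and follows essentially the same route as the paper: probe $\Phi_i$ with convex bodies of revolution whose $i$-th area measure density is $1+\lambda\breve{P}^n_k$ for $\lambda$ ranging over $J^n_{k,i}$, deduce from \cref{interval_support_fct} that $\lambda\,a^n_k[f]/a^n_0[f]\in I^n_k$, and extract the bound $a^n_k[\square_n f]/a^n_0[\square_n f]\leq\tfrac{1}{i}\,m^n_{k,i}/m^n_{k,1}$ from the interval endpoints, with strictness coming from \cref{min_Q^n_ki_monotone}. The only differences are cosmetic: you track $a^n_k[f]/a^n_0[f]$ and convert via $a^n_k[\square_n]$ at the end rather than working with $a^n_k[\square_n f]/a^n_0[\square_n f]$ throughout, and you spell out the (correct, but left implicit in the paper) observation that non-triviality forces $a^n_0[f]>0$.
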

\begin{proof}
	First, observe that for every convex body $K\in\K^n$,
	\begin{equation*}
		S_i(K,\cdot)\ast\square_nf
		= \square_n(S_i(K,\cdot)\ast f)
		= \square_nh(\Phi_iK,\cdot)
		= S_1(\Phi_iK,\cdot),
	\end{equation*}
	thus the convolution transform $\T_{\square_nf}$ maps $i$-th order area measures to first order area measures. Moreover, for every $\lambda\in\R$,
	\begin{equation*}
		(1+\lambda P^n_k(\langle\e,\cdot\rangle))\ast \square_nf
		= a^n_0[\square_nf] + \lambda a^n_k[\square_nf] P^n_k(\langle\e,\cdot\rangle).
	\end{equation*}
	Hence, we obtain that
	\begin{equation*}
		\frac{a^n_k[\square_n f]}{a^n_0[\square_n f]} J^n_{k,i}
		\subseteq a^n_k[\square_n]I^n_{k}.
	\end{equation*}
	The descriptions of the intervals $J^n_{k,i}$ and $I^n_{k}$ given in \cref{eq:interval_i-th_area_meas} and \cref{eq:interval_support_fct} imply that
	\begin{equation*}
		\frac{a^n_k[\square_n f]}{a^n_0[\square_n f]}
		\leq \frac{1}{i} \frac{m^n_{k,i}}{m^n_{k,1}}
		< \frac{1}{i},
	\end{equation*}
	where the strict inequality is due to \cref{eq:min_Q^n_ki_monotone}.
\end{proof}

Combining \cref{even=>multiplier_ineq} with \cref{w_mon=>fixed_points_Phi^1}, we obtain the following.

\begin{cor}\label{w_mon_even=>fixed_points}
	Let $1< i\leq n-1$ and $\Phi_i\in\MVal_i$ be weakly monotone and even. Then there exists a $C^2$ neighborhood of $B^n$ where the only fixed points of $\Phi_i$ are Euclidean balls.
\end{cor}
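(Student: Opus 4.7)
The plan is to apply Theorem \ref{w_mon=>fixed_points_Phi^1} to $\Phi_i$. Since weak monotonicity is already part of the hypothesis, the only substantive task is to verify condition \ref{fixed_points_Phi^1:inj} for the generating function $f$ of $\Phi_i$, namely
\begin{equation*}
	\frac{a^n_k[\square_n f]}{a^n_0[\square_n f]} < \frac{1}{i} \qquad \text{for all } k \geq 2.
\end{equation*}
If $\Phi_i$ is trivial, then $\Phi_i K = \{o\}$ for every $K \in \K^n$, so no body in any $C^2$ neighborhood of $B^n$ can be a fixed point, and the claim holds vacuously. I therefore assume throughout that $\Phi_i$ is non-trivial.

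Under this assumption $a^n_0[\square_n f] > 0$. Indeed, the proof of \cref{w_mon=>w_pos+density} shows that $\square_n f = \nu + \langle x, \cdot\rangle$ for some positive measure $\nu \in \M_+(\S^{n-1})$ and some $x \in \R^n$. The $\SO(n-1)$ invariance of $f$ forces $\langle x, \cdot\rangle$ to be a scalar multiple of $\langle \e, \cdot\rangle$; since $\square_n$ annihilates $\mathcal{H}^n_1$ by \cref{eq:box_eigenvalues}, this linear contribution must already vanish, so $\square_n f = \nu$. If $a^n_0[\nu]$ were zero, then $\nu = 0$, and combined with the centeredness of $f$ this would force $f = 0$, contradicting the non-triviality of $\Phi_i$.

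For even $k \geq 2$ the required inequality is exactly the content of \cref{even=>multiplier_ineq}. For odd $k$, the plan is to use the evenness of $\Phi_i$ to conclude that $a^n_k[f] = 0$, making the inequality automatic. The key identity is $\pi_k S_i(-K, \cdot) = (-1)^k \pi_k S_i(K, \cdot)$ (the antipodal map acts by $(-1)^k$ on $\mathcal{H}^n_k$). Combined with $\Phi_i K = \Phi_i(-K)$ and the representation \cref{eq:MVal_representation:intro}, this yields $a^n_k[f]\, \pi_k S_i(K, \cdot) = 0$ for every $K \in \K^n$. Since there exist convex bodies whose $i$-th area measure has non-vanishing $k$-th harmonic component, we conclude that $a^n_k[f] = 0$, and therefore $a^n_k[\square_n f] = 0$, for every odd $k$.

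With condition \ref{fixed_points_Phi^1:inj} verified, the corollary follows directly from \cref{w_mon=>fixed_points_Phi^1}. There is no serious obstacle here; the statement is essentially the conjunction of Theorems \ref{w_mon=>fixed_points_Phi^1} and \ref{even=>multiplier_ineq}, together with the elementary observation that evenness of $\Phi_i$ kills all odd multipliers of $f$ and the dichotomy between trivial and non-trivial $\Phi_i$.
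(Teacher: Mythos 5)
Correct, and this is precisely the paper's route: the paper's proof of this corollary is the single line combining \cref{even=>multiplier_ineq} with \cref{w_mon=>fixed_points_Phi^1}, and you have made explicit the details that the paper leaves to the reader (the trivial-case dichotomy, the positivity of $a^n_0[\square_n f]$, and the vanishing of the odd multipliers via the evenness of $\Phi_i$). One small imprecision, though harmless: the claim that the linear contribution $\langle x,\cdot\rangle$ in the weakly positive decomposition $\square_nf=\nu+\langle x,\cdot\rangle$ \emph{must already vanish} is not justified as written --- the decomposition is not unique, and $\pi_1(\square_nf)=0$ only constrains the sum $\pi_1\nu+\langle x,\cdot\rangle$, not $x$ itself --- but since $a^n_0$ of any linear function is zero one has $a^n_0[\square_nf]=\nu(\S^{n-1})$ regardless, and this is strictly positive by the very reasoning you then give ($\nu=0$ would force $\square_nf=0$ and hence $f=0$).
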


\begin{rem}
	Computational simulations suggest that for every $1\leq i\leq n-1$ and $k\geq 2$, the maximum of $Q^n_{k,i}$ on $[-1,1]$ is attained in $t=1$, that is,
	\begin{equation}\label{eq:max_Q^n_k_conj}
		M^n_{k,i}
		= Q^n_{k,i}(1)
		= \frac{i}{n-1}.
	\end{equation}
	As an immediate consequence, the intervals in \cref{eq:interval_i-th_area_meas} could be simplified.
	
	Computational simulations also suggest that for $1\leq i\leq n-1$ and for every even $k\geq 4$,
	\begin{equation}\label{eq:min_Q^n_ki_conj}
		- \frac{1}{n-1}
		< m^n_{k,i}.
	\end{equation}
	If both \cref{eq:max_Q^n_k_conj} and \cref{eq:min_Q^n_ki_conj} were shown to be true, then the argument in the proof of \cref{even=>multiplier_ineq} would immediately imply that whenever $1<i\leq n-1$ and $\Phi_i\in\MVal_i$ is non-trivial with generating function $f$, then for all even $k\geq 4$,
	\begin{equation*}
		- \frac{1}{i}
		< \frac{a^n_k[\square_n f]}{a^n_0[\square_nf]}.
	\end{equation*}
\end{rem}

\appendix

\section{Appendix}
\label{sec:omitted}

\begin{proof}[Proof of \cref{beta_int_by_parts}]
	We may assume that $(1-t^2)^{\frac{\beta}{2}}g'(t)$ is a positive measure: all statements of the lemma follow from this case by the Jordan decomposition theorem and linearity. Thus $g'$ itself is a locally finite positive measure on $(-1,1)$, so there exists some constant $c\in\R$ such that for almost all $t\in (-1,1)$,
	\begin{equation*}
		g(t)
		= \left\{\begin{array}{ll}
			c-g'((t,0]),	&t<0,	\\
			c+g'((0,t]),	&t\geq 0.
		\end{array}\right.
	\end{equation*}
	We may assume that $c=g(0)=0$. Since $g$ is an increasing function, we have that $g\leq 0$ on $(-1,0]$ and $g\geq 0$ on $[0,1)$.
	
	For $0<a<1$, Lebesgue-Stieltjes integration by parts yields
	\begin{align*}
		\beta \int_{(-a,a]} t(1-t^2)^{\frac{\beta-2}{2}}g(t)dt
		&= \int_{(-a,a]} (1-t^2)^{\frac{\beta}{2}} g'(dt) - g'((-a,a])(1-a^2)^{\frac{\beta}{2}} \\
		&\leq \int_{(-a,a]} (1-t^2)^{\frac{\beta}{2}} g'(dt).
	\end{align*}
	By passing to the limit $a\to 1^-$ and applying the monotone convergence theorem, we obtain that $(1-t^2)^{\frac{\beta-2}{2}}g(t)$ is integrable on $(-1,1)$.
	
	For the second part of the lemma, note that since $g$ is increasing,
	\begin{equation*}
		g(a)(1-a^2)^{\frac{\beta}{2}}
		= \beta g(a)\int_{(a,1)} t(1-t^2)^{\frac{\beta-2}{2}}dt
		\leq \beta\int_{(a,1)} g(t)t(1-t^2)^{\frac{\beta-2}{2}}dt
	\end{equation*}
	and the right hand side tends to zero as $a$ tends to $1$. An analogous argument applies to $-a$, thus
	\begin{equation*}
		\lim_{a\to 1^-} g(a)(1-a^2)^{\frac{\beta}{2}}
		= \lim_{a\to 1^-} g(-a)(1-a^2)^{\frac{\beta}{2}}
		= 0.
	\end{equation*}
	Suppose now that $\psi$ is as stated above. For $0<a<1$, Lebesgue-Stieltjes integration by parts yields
	\begin{equation*}
		\int_{(-a,a]} \psi(t)g'(dt) + \int_{(-a,a]} \psi'(t)g(t)dt
		= (g(a)-g(-a))(\psi(a)-\psi(-a)).
	\end{equation*}
	Due to our assumptions on $\psi$, the right hand hand side tends to zero as $a$ tends to $1$. Thus, by passing to the limit $a\to1^-$ and applying the dominated convergence theorem, we obtain \cref{eq:beta_int_by_parts}.
\end{proof}

\begin{proof}[Proof of \cref{polar_C^k_estimate}]
	First, fix $v$, $\alpha$ and $\beta$ and observe that it suffices to find a family of bounded linear operators $D_k: C^k(\S^{n-1}\backslash\{\pm v\})\to C(\S^{n-1}\backslash\{\pm v\})$ such that for every $\phi\in C^\infty(\S^{n-1}\backslash\{\pm v\})$ and $w\in v^\perp$,
	\begin{equation*}
		\frac{d^k}{dt^k} \J_v[\langle\cdot,w\rangle^\alpha (1-\langle\cdot,v\rangle^2)^{\frac{\beta}{2}}\phi](t)
		= (1-t^2)^{-k} \J_v[\langle\cdot,w\rangle^\alpha (1-\langle\cdot,v\rangle^2)^{\frac{\beta}{2}}D_k\phi](t).
	\end{equation*}	
	We will construct this family $D_k$ inductively, starting with $D_0=\Id$.
	
	For the induction step, define a first order differential operator $\tilde{D}_k$ by
	\begin{equation*}
		\tilde{D}_k \phi (u)
		= \langle \nabla_{\S} \phi (u), P_{u^\perp} v \rangle - 2\left(\tfrac{n-3+\alpha+\beta}{2}-(k-1)\right)\langle u,v\rangle\phi(u).
	\end{equation*}
	A straightforward computation using spherical cylinder coordinates shows that
	\begin{align*}
		&\frac{d}{dt} \left((1-t^2)^{-(k-1)}\J_v[\langle\cdot,w\rangle^\alpha (1-\langle\cdot,v\rangle^2)^{\frac{\beta}{2}}\phi](t)\right) \\
		&\qquad= \frac{d}{dt}\left( (1-t^2)^{\frac{n-3+\alpha+\beta}{2}-(k-1)}\int_{\S^{n-1}\cap v} \phi(tv+\sqrt{1-t^2}u) du \right) \\
		&\qquad= (1-t^2)^{-k} \J_v[\langle\cdot,w\rangle^\alpha (1-\langle\cdot,v\rangle^2)^{\frac{\beta}{2}}\tilde{D}_k\phi](t),
	\end{align*}
	thus we see that the operators $D_k=\tilde{D}_k\tilde{D}_{k-1}\cdots\tilde{D}_1$ have the desired property. Since every $\tilde{D}_j$ is a bounded linear operator from $C^j(\S^{n-1}\backslash\{\pm v\})$ to $C^{j-1}(\S^{n-1}\backslash\{\pm v\})$, it follows by induction that every $D_k$ is a bounded linear operator from $C^k(\S^{n-1}\backslash\{\pm v\})$ to $C(\S^{n-1}\backslash\{\pm v\})$.
\end{proof}

\begin{proof}[Proof of \cref{w_pos_iff}]
	Suppose that $\nu\in C^{-\infty}(\mathbb{S}^{n-1})$ is weakly positive, that is, $\nu=\mu+y$ for some positive measure $\mu$ and some linear function $y$. Then for every positive centered $\phi\in C^\infty(\mathbb{S}^{n-1})$, we have that
	$\left< \phi, \nu \right>_{C^{-\infty}}= \left< \phi, \mu +y\right>_{C^{-\infty}}\geq 0.$
	
	Conversely, suppose that $\nu\in C^{-\infty}(\mathbb{S}^{n-1})$ is not weakly positive. Observe that the set of weakly positive distributions is a closed convex cone of $C^{-\infty}(\S^{n-1})$. Due to the Hahn-Banach separation theorem there exists some $\phi\in C^{\infty}(\S^{n-1})$ such that
	\begin{equation*}
		\left< \phi , \nu \right>_{C^{-\infty}}
		< \left< \phi, \mu + y \right>_{C^{-\infty}}
	\end{equation*}
	for every positive measure $\mu$ and linear function $y$. By fixing $\mu=0$ and varying $y\in\mathcal{H}^n_1$, we see that $\phi$ is centered. By fixing $y=0$ and varying $\mu\in\M_+(\S^{n-1})$, we see that $\phi\geq 0$. Finally, by choosing $\mu=y=0$, we see that $\left< \phi,\nu \right>_{C^{-\infty}}<0$.
\end{proof}

\section*{Acknowledgments}

The second author was supported by the Austrian Science Fund (FWF), Project numbers: P31448-N35 and ESP~236 ESPRIT-Programm.

\begingroup
\let\itshape\upshape

\bibliographystyle{abbrv}
\bibliography{references}{}

\endgroup

\hspace{5mm}

\noindent Leo Brauner

\noindent TU Wien

\noindent braunerleo@gmail.com

\hspace{5mm}

\noindent Oscar Ortega-Moreno

\noindent TU Wien

\noindent oscarortem@gmail.com

\end{document}